\numberwithin{equation}{section}
\def\argmax{\mathop{\rm arg\, max}}
\def\argmin{\mathop{\rm arg\, min}}
\newcommand{\bel}{\begin{eqnarray}\label}
\newcommand{\eel}{\end{eqnarray}}
\newcommand{\bes}{\begin{eqnarray*}}
\newcommand{\ees}{\end{eqnarray*}}
\newcommand{\bei}{\begin{itemize}}
\newcommand{\eei}{\end{itemize}}
\newcommand{\beiftnt}{\begin{itemize}\footnotesize}
\def\benu{\begin{enumerate}}
\def\eenu{\end{enumerate}}
\def\argmax{\mathop{\rm arg\, max}}
\def\argmin{\mathop{\rm arg\, min}}
\def\real{{\mathbb{R}}}
\def\R{{\real}}
\def\E{{\mathbb{E}}}
\def\P{{\mathbb{P}}}
\def\complex{\mathop{{\rm I}\kern-.58em\hbox{\rm C}}\nolimits}
\def\pa{\partial}
\def\diag{\hbox{\rm diag}}
\def\rank{\hbox{\rm rank}}
\def\Rem{\hbox{\rm Rem}}
\def\sgn{\hbox{\rm sgn}}
\DeclareMathOperator{\trace}{trace}
\def\Var{\hbox{\rm Var}}
\DeclareMathOperator{\supp}{supp}
\def\mathbold{\boldsymbol} 
\def\ba{\mathbold{a}}
\def\tba{{\widetilde{\ba}}}
\def\bA{\mathbold{A}}
\def\bb{\mathbold{b}}
\def\hbb{{\widehat{\bb}}}
\def\bB{\mathbold{B}}\def\calB{{\cal B}}\def\scrB{{\mathscr B}}
\def\bD{\mathbold{D}}
\def\bfe{\mathbold{e}}
\def\bg{\mathbold{g}}
\def\bh{\mathbold{h}}
\def\tbh{{\widetilde{\bh}}}
\def\bI{\mathbold{I}}
\def\calL{{\cal L}}\def\scrL{{\mathscr L}}
\def\bM{\mathbold{M}}
\def\bp{\mathbold{p}}
\def\bP{\mathbold{P}}\def\hbP{{\widehat{\bP}}}
\def\bQ{\mathbold{Q}}\def\tbQ{{\widetilde{\bQ}}}
\def\br{\mathbold{r}}
\def\Shat{{\widehat{S}}}
\def\be{\mathbold{e}}
\def\bu{\mathbold{u}}
\def\tbu{{\widetilde{\bu}}}
\def\scrU{{\mathscr U}}
\def\bv{\mathbold{v}}
\def\tbv{{\widetilde{\bv}}}
\def\bw{\mathbold{w}}
\def\bW{\mathbold{W}}
\def\bx{\mathbold{x}}
\def\bX{\mathbold{X}}\def\tbX{{\widetilde{\bX}}}
\def\by{\mathbold{y}}
\def\bz{\mathbold{z}}
\def\hbz{{\widehat{\bz}}}\def\tbz{{\widetilde{\bz}}}
\def\bbeta{\mathbold{\beta}}\def\hbeta{\widehat{\beta}}
\def\hbbeta{\hat{\bbeta}}
\def\tbbeta{\tilde{\bbeta}}
\def\bbetabar{{\overline\bbeta}}
\def\bgamma{\mathbold{\gamma}}
\def\hbgamma{{\widehat{\bgamma}}}\def\tbgamma{{\widetilde{\bgamma}}}
\def\ep{\varepsilon}\def\eps{\epsilon}
\def\bep{ {\mathbold{\ep} }}
\def\btheta{\mathbold{\theta}}\def\htheta{\widehat{\theta}}
\def\ttheta{\widetilde{\theta}}
\def\hbtheta{{\widehat{\btheta}}}
\def\lam{\lambda}
\def\lambdabar{{\overline{\lambda}}}
\def\bSigma{\mathbold{\Sigma}}
\def\bSigmabar{{\overline\bSigma}}
\def\htau{\widehat{\tau}}
\def\bOmega{\mathbold{\Omega}}
\declaretheorem[name=Theorem,numberwithin=section]{theorem}
\declaretheorem[name=Proposition,sibling=theorem]{proposition}
\declaretheorem[name=Lemma,sibling=theorem]{lemma}
\declaretheorem[name=Corollary,sibling=theorem]{corollary}
\declaretheorem[name=Assumption,numberwithin=section]{assumption}
\declaretheorem[name=Remark,style=remark,numberwithin=section]{remark}
\declaretheorem[name=Example,style=definition,numberwithin=section]{example}
\crefname{assumption}{assumption}{assumptions}
\newcommand\pb[1]{{\color{magenta} #1}}
\def\argmax{\mathop{\rm arg\, max}}
\def\argmin{\mathop{\rm arg\, min}}
\def\calB{{\mathscr B}}
\def\lasso{\hbbeta{}^{\text{\tiny (lasso)} } }
\def\lassoNoBold{\hbeta{}^{\text{\tiny (lasso)} } }
\def\tildelasso{\tbbeta{}^{\text{\tiny (lasso)} } }
\def\bhlasso{\bh{}^{\text{\tiny (lasso)} } }
\def\tildehlasso{\tbh{}^{\text{\tiny (lasso)} } }
\def\hbinit{\hbbeta{}^{\text{\tiny (init)} } }
\def\lamuniv{\lambda_{\text{\tiny univ} } }
\def\calL{\scrL}
\def\constants{\{\rho_*,\eta_2,\eta_3,\eps_1,\eps_2,\eps_3,\eps_4\}}
\begin{document}

\title{De-Biasing The Lasso With Degrees-of-Freedom Adjustment}
\runtitle{De-Biasing The Lasso With Degrees-of-Freedom Adjustment}
\date{\today}
\begin{aug}
  \author{\fnms{Pierre C.}  \snm{Bellec}\thanksref{t1}\ead[label=e1]{pierre.bellec@rutgers.edu}}
  \and
  \author{\fnms{Cun-Hui} \snm{Zhang}\thanksref{t2}\ead[label=e2]{czhang@stat.rutgers.edu}}

  \thankstext{t1}{
    Research partially supported by the NSF Grant DMS-1811976
    and DMS-1945428.
  }
  
  \thankstext{t2}{
      Research partially supported by the NSF Grants DMS-1513378,
  IIS-1407939, DMS-1721495, IIS-1741390 and CCF-1934924.  }

  \runauthor{Bellec and Zhang}

  \affiliation{Rutgers University}

  \address{Department of Statistics, Hill Center, Busch Campus, \\
      Rutgers University, Piscataway, NJ 08854, USA. \\
  }

\end{aug}

\begin{abstract}
    This paper studies schemes to de-bias the Lasso
    in sparse linear regression
    {with Gaussian design}
    where the goal is to estimate and construct confidence intervals 
    for a low-dimensional projection of the unknown coefficient vector
    in a preconceived direction $\ba_0$.
    Our analysis reveals that 
    previously analyzed
    propositions to de-bias the Lasso
    require a modification in order to enjoy 
    nominal coverage and asymptotic efficiency 
    in a full range of the level of sparsity.
    This modification takes the form of a degrees-of-freedom adjustment
    that accounts for the dimension of the model selected by the Lasso.
        The degrees-of-freedom adjustment (a) preserves the success of de-biasing methodologies
        in regimes where previous proposals were successful,
        and (b)
        repairs the nominal coverage and provides efficiency in regimes where
        previous proposals produce spurious inferences and
        provably fail to achieve the nominal coverage.
        Hence our theoretical and simulation results call for the implementation of
        this degrees-of-freedom adjustment in de-biasing methodologies. 

    Let $s_0$ denote the number of nonzero coefficients of the true coefficient vector 
    and $\bSigma$ the population Gram matrix.  
    The unadjusted de-biasing scheme may fail to achieve the nominal
        coverage 
        as soon as $s_0\ggg n^{2/3}$ if $\bSigma$ is known.
    If $\bSigma$ is unknown, the degrees-of-freedom adjustment
    grants efficiency for the contrast in a general direction $\ba_0$ when 
    $$
    \frac{s_0\log p}{n}
    + \min\Big\{\frac{s_\Omega\log p}{n}, \frac{\|\bSigma^{-1}\ba_0\|_1 \sqrt{\log p}}{\|\bSigma^{-1/2}\ba_0\|_2 \sqrt n  }\Big\} 
+ \frac{\min(s_\Omega,s_0)\log p}{\sqrt n} \to 0
    $$
    where $s_\Omega = \|\bSigma^{-1}\ba_0\|_0$.
    The dependence in $s_0,s_\Omega$ and $\|\bSigma^{-1}\ba_0\|_1$
    is optimal and closes a gap in previous upper and lower bounds.
    Our construction of the estimated score vector 
    provides a novel methodology to handle dense directions $\ba_0$.



    Beyond the degrees-of-freedom adjustment, our proof techniques yield a
    sharp $\ell_\infty$ error bound for the Lasso which is of independent
    interest.

\end{abstract}

\maketitle


\textbf{MSC subject classification:}
62J07 (primary), 62G15.

\textbf{Key words:}
Statistical inference, Lasso, semiparametric model, Fisher information, efficiency, confidence interval, p-value, regression, high-dimensional data.

\section{Introduction}

Consider a linear regression model 
\bel{LM}
\by = \bX\bbeta + \bep
\eel
with a sparse coefficient vector $\bbeta\in \R^p$, a Gaussian noise vector 
$\bep\sim N({\bf 0},\sigma^2\bI_n)$, and a Gaussian design matrix $\bX\in \R^{n\times p}$ with iid 
$N({\bf 0},\bSigma)$ rows. 
The purpose of this paper is to study the sample size requirement in de-biasing the Lasso 
for regular statistical inference of a linear contrast 
\bel{theta}
\theta  = \big\langle \ba_0, \bbeta \big\rangle 
\eel
at the $n^{-1/2}$ rate in the case of $p\gg n$ for both known and unknown $\bSigma$. 
{As a consequence of regularity, the $n^{-1/2}$ rate also corresponds to the length of confidence
    intervals for $\theta$.
} 

The problem was considered in \cite{ZhangOber11} in a general semi-low-dimensional (LD) 
approach where high-dimensional (HD) models are decomposed as 
\bel{gen-model}
\hbox{HD model = LD component + HD component}
\eel
in the same fashion as in semi-parametric inference \cite{BickelKRW98}. 
For the estimation of a real function $\theta = \theta(\bbeta)$ of a HD 
unknown parameter $\bbeta$, the decomposition in (\ref{gen-model}) was written 
in the vicinity of a given $\bbeta_0$ as 
\bel{decomp-beta}
\bbeta - \bbeta_0 = \bu_0\big(\theta - \theta_0\big) 
+ \bQ_0\big(\bbeta - \bbeta_0\big), 
\eel
where $\bu_0$ specifies the least favorable one-dimensional local sub-model giving the minimum Fisher information 
for the estimation of $\theta$, subject to $\big\langle \bu_0,\nabla\theta(\bbeta_0)\big\rangle =1$, 
and $\bQ_0 = \bI_{p\times p} - \bu_0(\nabla\theta(\bbeta_0)){}^\top$ projects $\bbeta - \bbeta_0$ 
to a space of nuisance parameters. \cite{ZhangOber11} went on to propose a 
low-dimensional projection estimator (LDPE) as a  
one-step maximum likelihood correction of an initial estimator $\hbinit$ in the direction
of the least favorable one-dimensional sub-model, 
\bel{one-step}
\htheta = \theta\big(\hbinit\big) + 
\argmax_{\phi\in\R}\,\hbox{log-likelihood}\big(\hbinit + \bu_0\phi \big),
\eel
and stated without proof that the asymptotic variance of 
such a one-step estimator achieves the lower bound given by the reciprocal of the Fisher information. 

For the estimation of a contrast (\ref{theta}) in linear regression (\ref{LM}), we have
$\nabla\theta(\bbeta_0)=\ba_0$, 
{the Fischer information in the one dimension sub-model
$\{\bbeta+\phi\bu,\phi\in\R\}$ is $\langle\bu,\bSigma \bu\rangle\sigma^{-2}$,
the least favorable sub-model is given by}
\begin{equation}
\label{u_0}
\bu_0 = \frac{\bSigma^{-1}\ba_0}{\langle\ba_0,\bSigma^{-1}\ba_0\rangle},
\quad
\text{i.e., the minimizer}
\quad
\bu_0=\argmin_{\bu\in\R^p:\langle\bu,\ba_0\rangle = 1} 
\frac{\langle\bu, \bSigma \bu\rangle}{\sigma^2}
\end{equation}
and the Fisher information for the estimation of $\theta$ is 
\bel{Fisher}
F_\theta = 1\big/\left(\sigma^2 \big\langle\ba_0,\bSigma^{-1}\ba_0\big\rangle \right).
\eel
In the linear model \eqref{LM}, the log-likelood function is
$\bb\to-\|\by - \bX\bb\|_2^2/(2\sigma^2)$ 
up to a constant term 
and the one-step log-likelihood correction (\ref{one-step})
can be explicitly written as a linear bias correction, 
\bel{LDPE}
\htheta  = \left\langle \ba_0, \hbinit \right\rangle + 
\frac{\left\langle \bz_0, \by -\bX \hbinit \right\rangle}{\|\bz_0\|_2^2}
\quad
\text{ with }
\quad
\bz_0 = \bX\bu_0.
\eel
Here, $\bz_0 = \bX\bu_0$ can be viewed as an efficient score vector for the 
estimation of $\theta$. 

In the case of unknown $\bSigma$, the efficient score 
vector $\bz_0$  has to be estimated from the data. 
For statistical inference of a preconceived regression coefficient $\beta_j$ or a linear 
combination of a small number of $\beta_j$, such one-step linear bias correction was considered in 
\cite{ZhangSteph14,BelloniCH14,Buhlmann13,GeerBR14,JavanmardM14a,javanmard2018debiasing} among others. 
The focus of the present paper is to find sharper sample size requirements,
in the case of 
Gaussian design, than the typical $n\gg (s_0\log p)^2$ required 
in the aforementioned previous studies. 
Here and in the sequel, 
\begin{equation}
    \label{support-S}
    s_0 = \big|S\big|\ \hbox{ with }\ S = \supp(\bbeta). 
\end{equation}

Our results study both known $\bSigma$---in that case the ideal
score vector $\bz_0$ can be used---and unknown $\bSigma$
where estimated score vectors $\hbz\approx\bz_0$ are used. 
The results of \cite{cai2017confidence} show that
for unknown $\bSigma$ with bounded condition number,
it is impossible to construct confidence intervals for $\theta = \ba_0^\top\bbeta$
with length of order {$n^{-1/2}\|\ba_0\|_2$} 
in the sparsity regime $s_0\ggg \sqrt n$.
Proposition 4.2 in \cite{javanmard2018debiasing} extends the
lower bound from \cite{cai2017confidence} 
to account for the sparsity and $\ell_1$ norm of $\bu_0$ in \eqref{u_0} 
as follows. Let $\Theta(s_0,s_\Omega,\rho)$ be the collection 
of all pairs $(\bbeta,\bSigma)$ such that
$\lambda_{\min}(\bSigma)^{-1} \vee \lambda_{\max}(\bSigma) \le c_0$
for some absolute constant $c_0 > 1$ and
$$\|\bSigma^{-1}\be_j\|_0 \le s_\Omega, 
\qquad 
\|\bbeta\|_0\le s_0,
\qquad
\|\bSigma^{-1}\be_j\|_1 \le 1.02\vee   \rho.$$
When $\ba_0=\be_j$ for a fixed canonical basis vector and
$s_0\le c_1 \min(p^{0.49},n/\log p)$, 
\bel{eq:lower-bound-Prop42-javanmard-montanari}
    && \sup_{(\bbeta,\bSigma)\in\Theta(s_0,s_\Omega,\rho)}
    \E_{\bbeta,\bSigma}\Big[n^{1/2}\sigma^{-1}|\hat \beta_j - \beta_j|
        \Big]
    \ge
    c_2+
    c_3 r_n(s_0,s_\Omega,\rho)
    \\ \nonumber
    &\text{ {with} }&
    r_n(s_0,s_\Omega,\rho) =
    \min\Big\{{\min(s_0,s_\Omega)}\log(p)n^{-1/2}
        ,{(\rho\vee 1.02)}\sqrt{\log p}
    \Big\}
\eel
for any estimator $\hat\beta_j$ as a measurable function of $(\by,\bX)$,
where $c_1,c_2,c_3>0$ are absolute constants. 

Hence the minimax rate of estimation of
$\beta_j$ over $\Theta(s_0,s_\Omega,\rho)$
is at least $\sigma n^{-1/2} (1+r_n(s_0,s_\Omega,\rho))$,
and {any} $(1-\alpha)$-confidence interval\footnote{(footnote)
Note that \eqref{eq:lower-bound-Prop42-javanmard-montanari} is stated slightly
differently than in \cite{javanmard2018debiasing}: It can be equivalently
stated as a lower bound on the expected length of $(1-\alpha)$-confidence
intervals for $\beta_j$ valid uniformly over $\Theta(s_0,s_\Omega,\rho)$ up to
constants depending on $\alpha$. This follows by picking as $\hat\beta_j$ any
point in the confidence interval, or by constructing a confidence interval from
an estimate $\hat\beta_j$ and its maximal expected length over
$\Theta(s_0,s_\Omega,\rho)$ by Markov's inequality.
}
for $\beta_j$ valid uniformly
over $\Theta(s_0,s_\Omega,\rho)$
must incur a length of order $\sigma n^{-1/2}(1+r_n(s_0,s_\Omega,\rho))$
up to {a constant} depending on $\alpha$.
Since the focus of the present paper is on efficiency results
and other phenomena for sparsity $s_0\ggg \sqrt n$,
these impossibility results from \cite{cai2017confidence,javanmard2018debiasing}
motivate either the known $\bSigma$ assumption 
(in \Cref{sec:2.2-known,sec-3-main-results} below)
or the sparsity assumptions on $\bSigma^{-1}\ba_0$
for unknown $\bSigma$ in \Cref{sec:unknown-Sigma} 
where we prove that the lower bound \eqref{eq:lower-bound-Prop42-javanmard-montanari}
is sharp. 
For known $\bSigma$, our analysis reveals that the de-biasing scheme \eqref{LDPE} needs
to be modified to enjoy efficiency in the regime $s_0 \ggg n^{2/3}$
when the initial estimator is the Lasso.
For unknown $\bSigma$, 
the modification of \eqref{LDPE} is also required for efficiency
when $s_0, s_\Omega$ satisfy the conditions in
\Cref{thm:unknown-sigma-lb} of
\Cref{sec:unknown-Sigma}.

The required modification of \eqref{LDPE} 
takes the form of a multiplicative adjustment to
account for the degrees-of-freedom of the initial estimator.
Interestingly, \cite{JavanmardM14b} proved that for the Gaussian design with known 
$\bSigma = \bI_{p\times p}$, the sample size $n \ge C s_0 \log(p/s_0)$ is sufficient 
in de-biasing the Lasso for the estimation of $\beta_j$ at the $n^{-1/2}$ rate.
More recently, \cite{javanmard2018debiasing} extended this result
and showed that 
$n \ge C s_0(\log p)^2$ is sufficient to de-bias the Lasso for 
the estimation of $\beta_j$ at the $n^{-1/2}$ rate for Gaussian designs with 
known covariance matrices $\bSigma$ 
when the $\ell_1$ norm of each column of $\bSigma^{-1}$ is bounded,
i.e., for some constant $\rho>0$
\begin{equation}
    \max_{j=1,...,p}\|\bSigma^{-1}\be_j\|_1 \le \rho
    \label{ell_1-norm-bounded}
\end{equation}
holds, where $(\be_1,...,\be_p)$ is the canonical basis in $\R^p$.
From this perspective, the present paper provides an extension of 
these results to more general $\bSigma$: We will see below 
that for $n \ge C s_0\log(p)^2$, the efficiency of the de-biasing scheme \eqref{LDPE}
is specific to assumption \eqref{ell_1-norm-bounded} and that
the de-biasing scheme \eqref{LDPE} requires a modification to be efficient in cases where \eqref{ell_1-norm-bounded}
is violated.

The paper is organized as follows.
\Cref{sec-2-approach-and-dof-adjustment} provides a description of our proposed estimator,
which is a modification of the de-biasing scheme \eqref{LDPE} that accounts for the degrees-of-freedom of the initial estimator.
\Cref{sec-3-main-results} describes our strongest results in linear regression with known covariance matrix for the Lasso.
This includes several efficiency results for the de-biasing scheme
modified with degrees-of-freedom adjustment
and a characterization of the asymptotic regime where this adjustment is necessary.
\Cref{sec-4-initial-bias-bounded} studies the specific situation
where bounds on the $\ell_1$ norm of $\bSigma^{-1}\ba_0$ are available,
similarly to \eqref{ell_1-norm-bounded} when $\ba_0$ is a 
canonical basis vector. 
The additional assumptions on $\bSigma^{-1}$ and the results of
\Cref{sec-4-initial-bias-bounded} explain why the necessity of degrees-of-freedom
adjustment did not appear in some previous works.
\Cref{sec-5-ell_infty} provides a new $\ell_\infty$ bound for estimation of $\bbeta$ by the Lasso
under assumptions similar to \eqref{ell_1-norm-bounded}.
    \Cref{section:efficiency} discusses efficiency and regularity,
    and shows that asymptotic normality remains unchanged under
    non-sparse $n^{-1/2}$-perturbations of $\bbeta$.
    \Cref{section:subgaussian} shows that the degrees-of-freedom adjustment
is also needed for certain non-Gaussian designs.
The proofs of the main results are given in 
\Cref{sec:proof,sec:proof-probabilistic-lemma,sec:proof-thm-infty-norm,sec:proof-th-1}. 
The proofs of intermediary lemmas 
and propositions can be found in 
\Cref{appendix:1,appendix:2,appendix:3,appendix:4}.
Our main technical tool is a carefully constructed Gaussian interpolation path described in \Cref{sec:proof-path}.

\section*{Notation}
We use the following notation throughout the paper. 
Let $\bI_d$ be the identity matrix of size $d\times d$, e.g. $d=n,p$. 
For any $p\ge 1$, let $[p]$ be the set $\{1,...,p\}$.
For any vector $\bv=(v_1,...,v_p)^\top \in\R^p$ and any set $A\subset [p]$,
the vector $\bv_A\in\R^{|A|}$ is the restriction $(v_j)_{j\in A}$.
For any $n\times p$ matrix $\bM$ with columns $(\bM_1,\ldots,\bM_p)$
and any subset $A\subset [p]$, let $\bM_A = (\bM_j, j\in A)$ be 
the matrix composed of columns of $\bM$ indexed by $A$, 
and $\bM_A^\dag$ be the Moore-Penrose generalized inverse of $\bM_A$. 
If $\bM$ is a symmetric matrix of size $p\times p$ and $A\subset [p]$,
then $\bM_{A,A}$ denotes the sub-matrix of $\bM$ with rows and columns in $A$,
and $\bM_{A,A}^{-1}$ is the inverse of $\bM_{A,A}$.
Let $\|\cdot\|_q$ denote the $\ell_q$ norm of vectors, 
$\|\cdot\|_{op}$ the operator norm (largest singular value) of matrices and $\|\cdot\|_F$ the Frobenius norm. 
We use the notation $\langle\cdot,\cdot\rangle$ for the canonical scalar product of vectors in 
$\R^n$ or $\R^p$, i.e., $\langle \ba,\bb\rangle  = \ba^\top \bb$ for two vectors $\ba,\bb$ of the same dimension. 

Throughout the paper, $C_0=\|\bSigma^{-1/2}\ba_0\|_2$, 
$\bu_0$ {is} as in \eqref{u_0} {and $F_\theta$ as in} \eqref{Fisher}.
The score vector $\bz_0$ is always defined as $\bz_0=\bX\bu_0$
and $\bQ_0$ is the matrix $\bQ_0 = \bI_{p\times p} - \bu_0\ba_0^\top$, so that
$$\bX = \bX\bQ_0 + \bz_0 \ba_0^\top$$ always holds.
{As in \eqref{support-S},}  
$S$ and $s_0$ are the support and number of nonzero coefficients
of the unknown coefficient vector $\bbeta$. 
For any event $\Omega$, denote by $I_\Omega$ its indicator function and $a_+=\max(0,a)$ for $a\in\R$.

\section{Degrees of freedom adjustment}
\label{sec-2-approach-and-dof-adjustment}

\subsection{Known $\bSigma$}
\label{sec:2.2-known}
In addition to the de-biasing scheme (\ref{LDPE}), we consider the following degrees-of-freedom adjusted version 
of it.  Suppose that the Lasso estimator $\lasso$ is used as 
the initial estimator $\hbinit$, where 
\bel{lasso}
\lasso = \argmin_{\bb\in\R^p}\left\{\|\by - \bX\bb\|_2^2/(2n) + \lam \|\bb\|_1\right\}. 
\eel
The degrees-of-freedom adjusted LDPE is defined as
\bel{LDPE-df}
\htheta_{\nu}  = \left\langle \ba_0, \lasso \right\rangle + 
\frac{\left\langle \bz_0, \by -\bX \lasso \right\rangle}{\|\bz_0\|_2^2(1-\nu/n)}, 
\eel
where $\bz_0$ is as in (\ref{LDPE}) and $\nu\in[0,n)$ is a degrees-of-freedom adjustment; $\nu$ is allowed to be random.
Our theoretical results will justify the degrees-of-freedom adjustment $\nu=|\Shat|$
where $\Shat = \supp(\lasso)$. 
The size of the selected model has the interpretation of degrees of freedom 
for the Lasso estimator
in the context of Stein's Unbiased Risk Estimate (SURE)
\cite{zou2007,Zhang10-mc+,tibshirani2012}. 

We still retain other possibilities for $\nu$ such as $\nu=0$ in order to
analyse the unadjusted de-biasing scheme (\ref{LDPE}). 
With some abuse of notation, in order to avoid any ambiguity we may sometimes use
the notation $\htheta_{\nu=0}$ for the unadjusted \eqref{LDPE}
and $\htheta{}_{\nu=|\Shat|}$ for \eqref{LDPE-df} with 
$|\Shat|$ 
being the size of the support of the Lasso.

Our main results will be developed in \Cref{sec-3-main-results}.
Here is a simpler version of the story.  

\begin{theorem}
    \label{thm:teaser}
Let $s_0,n$ and $p$ be positive integers satisfying $p/s_0\to\infty$ 
and $(s_0/n)\log(p/s_0) \to 0$. 
Assume that $\bSigma_{jj}\le1$ for all $j\in[p]$ and that 
the spectrum of $\bSigma$ is uniformly 
bounded away from 0 and $\infty$; e.g. $\max(\|\bSigma\|_{op},\|\bSigma^{-1}\|_{op})\le 2$.
Let $\lambda = 1.01\sigma\sqrt{2\log(8 p/s_0)/n}$.

(i) Then $|\Shat| = O_{\P}(s_0) = o_{\P}(n)$ and for $\nu=|\Shat|$ we have for every $\ba_0$
\begin{equation}
 \sqrt{nF_\theta} \left( 1- |\Shat|/n\right)\left(\htheta_{\nu = |\Shat|}-\theta\right) = T_n + o_\P(1)
\label{teaser-asymptotic-efficiency}
\end{equation}
where $T_n = \sqrt{nF_\theta}\langle \bz_0,\bep\rangle/\|\bz_0\|_2^2$ 
has the $t$-distribution with $n$ degrees of freedom.
Thus the estimator (\ref{LDPE-df}) enjoys asymptotic efficiency 
when $\nu = |\Shat|$.

(ii) {The} quantity
$\sqrt{n F_\theta} (\htheta_{\nu=0}-\theta) - T_n$ is unbounded 
for certain $\bbeta$ satisfying $n/\log(p/s_0) \ggg s_0 \ggg n^{2/3}/\log(p/s_0)^{1/3}$ 
and $\ba_0$ depending on $S$ and $\bSigma$ only. 
Consequently, the unadjusted \eqref{LDPE} cannot be efficient.
\end{theorem}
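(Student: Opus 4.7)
The starting point is the algebraic identity obtained by substituting $\by-\bX\lasso = \bX(\bbeta-\lasso)+\bep$ together with the decomposition $\bX = \bX\bQ_0 + \bz_0\ba_0^\top$ in the definition of $\htheta_\nu$; collecting terms yields
\begin{equation*}
(1-\nu/n)(\htheta_\nu - \theta) \;=\; \frac{\langle \bz_0,\bep\rangle}{\|\bz_0\|_2^2} \;+\; \underbrace{\tfrac{\nu}{n}\langle \ba_0,\bbeta-\lasso\rangle}_{R_1(\nu)} \;+\; \underbrace{\frac{\langle \bz_0,\bX\bQ_0(\bbeta-\lasso)\rangle}{\|\bz_0\|_2^2}}_{R_2}.
\end{equation*}
Because $\bu_0^\top\bSigma\bQ_0^\top = \mathbf 0$, the Gaussian vector $\bz_0=\bX\bu_0$ is independent of $\bX\bQ_0$ and of $\bep$; conditionally on $\bX\bQ_0$ it satisfies $\bz_0\sim N(\mathbf 0,\sigma^2F_\theta\bI_n)$. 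Direct Gaussian computations then show that $T_n = \sqrt{nF_\theta}\langle\bz_0,\bep\rangle/\|\bz_0\|_2^2$ has exactly the $t_n$-distribution and that $\|\bz_0\|_2^2/(n\sigma^2F_\theta) \to 1$ in probability. So part (i) reduces to showing $\sqrt{nF_\theta}\bigl(R_1(|\Shat|)+R_2\bigr)=o_\P(1)$, and part (ii) to exhibiting $(\bbeta,\ba_0)$ for which $\sqrt{nF_\theta}R_2$ diverges.

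A deterministic $\ell_1$-$\ell_\infty$ bound gives only $|R_2|=O_\P(s_0\log(p/s_0)/n)$, missing the target $o_\P(1/\sqrt n)$ as soon as $s_0\ggg\sqrt n/\log(p/s_0)$. The key idea is that $R_2$ has a systematic part equal to $-R_1(|\Shat|)$ up to higher-order fluctuations. To extract it, I would use a Gaussian interpolation path $\bz_0^{(t)}=\cos(t)\bz_0 + \sin(t)\tilde\bz_0$, $t\in[0,\pi/2]$, where $\tilde\bz_0$ is an independent copy of $\bz_0$ (independent of $\bX\bQ_0,\bep$), and the corresponding Lasso $\hbbeta^{(t)}$ built from the design $\bX^{(t)} = \bX\bQ_0+\bz_0^{(t)}\ba_0^\top$ and response $\by^{(t)} = \bX^{(t)}\bbeta+\bep$. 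At $t=0$ this recovers $\lasso$; at $t=\pi/2$ the active direction $\tilde\bz_0$ is independent of $\bz_0$ so the analogue of $R_2$ has conditional mean zero. Writing the difference as $\int_0^{\pi/2}\tfrac{d}{dt}\langle\bz_0,\bX\bQ_0(\bbeta-\hbbeta^{(t)})\rangle\,dt$ and applying Gaussian integration by parts on the pair $(\bz_0,\tilde\bz_0)$ converts the integrand into a trace of the Jacobian $\partial\hbbeta^{(t)}/\partial\bz_0^{(t)}$. The Lasso KKT conditions on the active set $\Shat^{(t)}$ make this Jacobian explicit via $(\bX_{\Shat^{(t)}}^{(t)})^\dag$, and the trace evaluates to $|\Shat^{(t)}|$ to leading order, thereby producing the identity $R_2 = -R_1(|\Shat|)+o_\P(1/\sqrt{nF_\theta})$. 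The leftover is controlled using the sharp $\ell_\infty$ bound of \Cref{sec-5-ell_infty} combined with SRC-type bounds on $\bX\bQ_0$ and concentration of $\|\bz_0\|_2^2$.

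For part (ii), the plan is to pick $\ba_0$ aligned with the systematic shrinkage direction of the Lasso. In the tractable case $\bSigma=\bI_p$, take $\bbeta$ supported on $S$ with signs $\sgn(\beta_j)$ and magnitudes well above $\lambda$, and set $\ba_0 = s_0^{-1/2}\sum_{j\in S}\sgn(\beta_j)\be_j$. The Lasso KKT conditions, or AMP-type asymptotics, give $(\lasso_j-\beta_j)\approx -\lambda\sgn(\beta_j)$ on $S$ with high probability, and therefore $\langle\ba_0,\lasso-\bbeta\rangle\approx -\lambda\sqrt{s_0}$. Combined with $|\Shat|$ of order $s_0$ and the cancellation identity of part (i), this forces $\sqrt{nF_\theta}R_2$ of order $s_0^{3/2}\sqrt{\log(p/s_0)}/n$, which diverges precisely when $s_0\ggg n^{2/3}/\log(p/s_0)^{1/3}$. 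Since the noise contribution $T_n$ is $O_\P(1)$, the difference $\sqrt{nF_\theta}(\htheta_{\nu=0}-\theta)-T_n$ is unbounded, and multiplying by $\sqrt{F_\theta}$ gives the statement as written.

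The principal technical obstacle is the Stein-type step along the path: the Lasso map $\bz_0^{(t)}\mapsto\hbbeta^{(t)}$ is only piecewise smooth, with $\Shat^{(t)}$ jumping at countably many $t$. I would handle this either by smoothing the $\ell_1$ penalty (Huber-type regularization, derive the identity in the smooth setting, then pass to the limit) or by working with the generalized Jacobian on the events where $\Shat^{(t)}$ is locally constant. The real difficulty is to control the remainder with sub-$\sqrt n$ precision, which requires uniform-in-$t$ bounds on $|\Shat^{(t)}|$, $\|\hbbeta^{(t)}-\bbeta\|_1$, $\|\hbbeta^{(t)}-\bbeta\|_2$, and $\|\hbbeta^{(t)}-\bbeta\|_\infty$ on a high-probability event; the sharp $\ell_\infty$ bound of \Cref{sec-5-ell_infty} is the ingredient that makes this viable beyond the $s_0\ll\sqrt n$ regime.
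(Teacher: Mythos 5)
Your high-level plan (decomposition into noise plus two remainder terms, Gaussian interpolation of the design along a continuous path of Lasso solutions, chain-rule/Stein computation of the Jacobian) matches the paper's strategy, but there are two concrete gaps.

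First, your interpolation path $\bz_0^{(t)} = \cos(t)\bz_0 + \sin(t)\tilde\bz_0$ differs from the paper's in a way that matters. The paper instead uses $\bz_0(t) = \bP_\bep \bz_0 + \bP_\bep^\perp[(\cos t)\bz_0 + (\sin t)\bg]$, which rotates only the orthogonal complement of $\bep$. This guarantees $\bep^\top \dot\bz_0(t) \equiv 0$ along the path, and that identity is used in an essential way: when you differentiate the KKT conditions, the Jacobian contains a factor $(\ba_0)_{\Shat(t)}\bigl(\bep - \bX(t)\bh(t)\bigr)^\top\dot\bz_0(t)$, and the $\bep^\top\dot\bz_0(t)$ piece simplifies to zero. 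With your path $\bep^\top\dot\bz_0^{(t)}$ is a nondegenerate Gaussian of standard deviation $\sim \sqrt n/C_0$, and after pairing with $\langle\bz_0,\bw_0(t)\rangle$ (of order $O_\P(1)$ on the good events), this extra term contributes $O_\P(\sqrt n)$ to $\langle\bz_0,\bX\bQ_0(\tildelasso-\lasso)\rangle$, which is not $o_\P(\sqrt n)$ and so would destroy the required $o_\P(1)$ bound on $\sqrt{nF_\theta}\,\Rem_\nu$. You would need a separate argument showing this cross term actually cancels or concentrates, and that argument is absent. The $\bP_\bep$ projection is not a cosmetic choice but the device that removes this obstruction at the outset.

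Second, there is a circularity in the logic of part (i). You invoke ``the sharp $\ell_\infty$ bound of \Cref{sec-5-ell_infty}'' as an input to controlling the remainder, but in the paper that $\ell_\infty$ bound is a \emph{consequence} of the same Slepian-path machinery applied simultaneously to all $\ba_0=\be_j$; it is not available as a prerequisite. The controls that are actually used to bound $\Rem_\nu$ are the prediction-error, population-risk, model-size and sparse-eigenvalue bounds in \eqref{conds-limited}, proved to hold uniformly in $t$ with high probability in \Cref{lm-7-probability-of-Omega_1-Omega_2}, together with the exponential-moment bounds on $W,W',W'',W'''$. If your plan relies on the $\ell_\infty$ result you would have to prove it independently, which in the regime $s_0 \ggg \sqrt n$ is precisely what requires the interpolation argument in the first place. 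For part (ii), your choice $\ba_0\propto\sum_{j\in S}\sgn(\beta_j)\be_j$ and the estimate $\langle\ba_0,\lasso-\bbeta\rangle\approx -\lambda\sqrt{s_0}$ is the same example the paper uses; just note that the rigorous route is via the KKT conditions under sign consistency (\Cref{th-1}) or the lower bound of \Cref{thm:unknown-sigma-lb}, not via AMP asymptotics, which are not cited or established here.
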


\Cref{thm:teaser}(ii) implies that with $\nu=0$,
the unadjusted \eqref{LDPE} cannot be efficient
in the whole range $\{s_0: s_0\log(p/s_0)\lll n\}$ of sparsity levels
unless extra assumptions are made on the covariance matrix
$\bSigma$ such as \eqref{ell_1-norm-bounded}.
\Cref{thm:teaser}(i) shows that using the adjustment
$\nu=|\Shat|$ repairs this: The efficiency in \eqref{teaser-asymptotic-efficiency} then
holds in the whole range $\{s_0: s_0\log(p/s_0)\lll n\}$ of sparsity levels.
\Cref{thm:teaser}(i) is proved after \Cref{corollary:well-adjusted-|Shat|} below
while (ii) is a consequence of 
{the following proposition}.

\begin{restatable}{proposition}{propositionUnbounded}
    \label{prop:unbounded}
{Let the setting and assumptions of \Cref{thm:teaser} be fulfilled
    and let $\nu$ be a random variable with $\nu\in[0, n)$ almost surely.
Then}
\bel{second-term-lower-bound}
     \sqrt{nF_\theta}(\htheta_{\nu}-\theta)
  &=&
  T_n + o_\P(1) +  n^{-1}(\nu-|\Shat|) \Lambda_\nu
\eel
where
$\Lambda_\nu 
= \sqrt{nF_\theta}(1-\nu/n)^{-1}(1-|\Shat|/n)^{-1} \|\bz_0\|^{-2} \langle \bz_0,\by-\bX{\lasso}\rangle.
$
Furthermore 
for any $(s_\Omega,s_0)$ with
    $s_\Omega\le s_0 =o(n/\log(p/s_0))$, 
    and any $\ba_0$ with $\|\bSigma^{-1/2}\ba_0\|_2=1$ and 
    $\|\bSigma^{-1}\ba_0\|_0=s_\Omega$, 
    there exists $\bbeta$ with $\|\bbeta\|_0=s_0$ such that
\begin{equation}
\P\bigl[
    |\Lambda_{\nu}| \ge \|\bSigma^{-1}\ba_0\|_1 
    \sqrt{\log(8p/s_0)}
\bigr]\to 1,
\quad
\P\bigl[|\Shat|\ge s_0\bigr]\to 1. 
\label{eq:Lambda_nu-approaching-one-conclusion}
\end{equation}
In particular, it is possible to pick $\ba_0$ satisfying in addition 
    $\|\bSigma^{-1}\ba_0\|_1\ge s_\Omega^{1/2}/\|\bSigma\|_{op}^{1/2}$.
\end{restatable}

\Cref{prop:unbounded} is proved in 
\Cref{sec:proofs-unknown-Sigma}.
\Cref{thm:teaser}(ii) is implied by \Cref{prop:unbounded}
with $s_\Omega = s_0$:
If $s_0^{3/2}\sqrt{\log(8p/s_0)}/n\to +\infty$
then $n^{-1}|\Shat|\Lambda_{\nu=0}$ is unbounded with probability
approaching one by \eqref{eq:Lambda_nu-approaching-one-conclusion},
while the other terms in \eqref{second-term-lower-bound}
are stochastically bounded.

{
\begin{example}
    \label{example:e_j}
It is informative to unpack from the proof of
\Cref{prop:unbounded} how $(\ba_0,\bu_0,\bbeta)$ is constructed
so that \eqref{eq:Lambda_nu-approaching-one-conclusion} holds.
\Cref{thm:teaser}
and \Cref{prop:unbounded} apply to any $\bSigma$ with bounded
spectrum and $\Sigma_{ii}\le1$. Let $\bbeta$ be an $s_0$-sparse vector
with large enough non-zero coefficients and $\beta_j > 0$
for some index $j=1,...,p$ that is fixed throughout this example.
Then let $\bv\in \{-1,0,1\}^p$ be an $s_\Omega$-sparse vector
with $\supp(\bv)\subset \supp(\bbeta)$
and $v_k=\sgn(\beta_k)$ for all $k\in\supp(\bv)$.
Consider
\begin{equation*}
    \bOmega = \bI_p +  (1/4) s_\Omega^{-1/2}[\be_j \bv^\top + \bv \be_j^\top].
\end{equation*}
which has bounded spectrum since $\|\bOmega - \bI_p\|_{op} \le 1/2$
and set $\bSigma = \bOmega^{-1} \kappa$ for some constant $\kappa>0$
such that $\max_{j=1,...,p}\Sigma_{jj} = 1$.
Since $\bOmega$ has bounded spectrum, $\kappa$ is also bounded
and the spectrum of $\bSigma$ is bounded as required.
From the proof of \Cref{prop:unbounded}, we see
that the requirement for
$\bu_0$ is that
\begin{equation}
\langle \bu_0, \sgn(\bbeta) \rangle = \|\bu_0\|_1
\label{eq:condition-u0}
\end{equation}
must hold.
For the $\bSigma$ just defined, set
$\bu_0 
=
\bOmega \be_j
=
\bigl(1+(1/4)s_\Omega^{-1/2}v_j\bigr)\be_j
+ (1/4) s_{\Omega}^{-1/2}\bv$.
Since $\bbeta$ was chosen with $\beta_j > 0$, we have $v_j\ge0$
by definition of $\bv$ and $\bu_0$
satisfies \eqref{eq:condition-u0}.
These quantities $(\bSigma,\bbeta,\bu_0)$, when $\bbeta$
has large enough coefficients, satisfy
\eqref{eq:Lambda_nu-approaching-one-conclusion}
by the proof of \Cref{prop:unbounded}.
Finally, from \eqref{u_0} there is a one-to-one correspondence
between $\bu_0$ and $\ba_0$ given by
$\ba_0 = \bSigma \bu_0 / \langle \bu_0, \bSigma \bu_0\rangle$.
This implies $\ba_0
= \bOmega^{-1} \bu_0 /\langle \bu_0,\bOmega^{-1}\bu_0\rangle
$ and since $\bu_0=\bOmega\be_j$, the direction
$\ba_0$ for this example is proportional to the canonical basis vector $\be_j$.
\Cref{prop:unbounded} thus proves 
the necessity of the degrees-of-freedom adjustment
with $\ba_0$ proportional to $\be_j$.
\Cref{fig-e_j} illustrates this phenomenon on simulated data.
\end{example}
}

The adjustment in (\ref{LDPE-df}) was proposed by \cite{JavanmardM14b} in the form of 
\bel{est-JM} 
\hbbeta = \lasso + \frac{\bSigma^{-1}\bX^\top(\by -\bX \lasso)}{n-\nu}
\eel
based on 
heuristics of the replica method from statistical physics and a theoretical justification 
in the case of 
$\bSigma = \bI_p$. 
As $\bz_0 = \bX\bu_0$ with $\bu_0=\bSigma^{-1}\ba_0/\big\langle\ba_0,\bSigma^{-1}\ba_0\big\rangle$ in (\ref{LDPE}), 
$\E\|\bz_0\|_2^2/n 
= 1/\big\langle \ba_0,\bSigma^{-1}\ba_0\big\rangle$ and 
\bes
\frac{\left\langle \bz_0, \by -\bX \hbinit \right\rangle}{\big(\E \|\bz_0\|_2^2\big)(1-\nu/n)} 
= \left\langle \ba_0, \frac{\bSigma^{-1}\bX^\top(\by -\bX \lasso)}{n-\nu} \right\rangle. 
\ees
Thus, the plug-in estimator
\bel{htheta-JM}
\htheta_{\nu}  = \left\langle \ba_0, \hbbeta \right\rangle\ \hbox{with the $\hbbeta$ in (\ref{est-JM})}, 
\eel
is equivalent to replacing $\|\bz_0\|_2^2$ with its expectation 
in the denominator of the bias correction term 
in (\ref{LDPE-df}). 
Another version of the estimator, akin to the version of the LDPE proposed in \cite{ZhangSteph14}, is 
\bel{est-ZZ}
\htheta_{\nu}  = \left\langle \ba_0, \lasso \right\rangle + 
\frac{\left\langle \bz_0, \by -\bX \lasso \right\rangle}
{\big\langle \bz_0, \bX\bu\big\rangle (1-\nu/n)}
\eel
with a vector $\bu\in\R^p$ satisfying $\langle \bu,\ba_0\rangle=1$. 
Since $\E \big\langle \bz_0, {\bX\bu}\big\rangle =\E \|\bz_0\|_2^2$, 
the estimator \eqref{est-JM} 
{also} 
corresponds to \eqref{est-ZZ} with $\langle \bz_0,{\bX\bu}\rangle$ replaced
by its expectation in the denominator of the bias correction term.

Let $\bhlasso = (\lasso-\bbeta)$.
It is worthwhile to mention here that when $\|\bX\bhlasso\|_2/\sqrt n = o_\P(1)$
based on existing results on the Lasso, 
the asymptotic distribution of (\ref{LDPE-df}) adjusted at the $n^{-1/2}$ rate
does not change 
when $\|\bz_0\|_2^2$ is replaced by a quantity of type $\|\bz_0\|_2^2(1+O(n^{-1/2}))$ 
in the denominator of the bias correction term. 
Indeed, 
\bel{hat-theta-equivalence}
&& \sqrt{n F_\theta}
\bigg|
\frac{\left\langle \bz_0, \by -\bX \lasso \right\rangle}{\|\bz_0\|_2^2(1-\nu/n)}
-
\frac{\left\langle \bz_0, \by -\bX \lasso \right\rangle}{\|\bz_0\|_2^2(1+O(n^{1/2}))(1-\nu/n)}
\bigg| \\
&\le&
O(1)(1-\nu/n)^{-1}\left(
    |T_n| n^{-1/2}
+
\|\bX\bhlasso\|_2/(\sigma C_0\|\bz_0\|_2)
\right).
\nonumber
\eel
The right-hand side converges to 0 in probability
if $(1-\nu/n)^{-1}=O_\P(1)$
and
$\|\bX\bhlasso\|_2/\sqrt n = o_\P(1)$
since $T_n$ has the $t$-distribution with $n$ degrees of freedom.
Thus, as (\ref{LDPE-df}), (\ref{htheta-JM}) and (\ref{est-ZZ}) are asymptotically equivalent, 
the most notable feature of these estimators is the degrees-of-freedom adjustment 
with the choice $\nu = |\Shat|$, as proposed in  \cite{JavanmardM14b}, compared with 
earlier proposals with $\nu=0$. 
While the properties of these estimators for general $\bbeta$ and $\bSigma$ will be studied in the 
next section, we highlight in the following theorem the requirement of either a degrees-of-freedom 
adjustment or some extra condition on the bias of the Lasso
in the special case where the Lasso is sign consistent. 

\begin{restatable}{theorem}{thmNecessityDofGaussianDesign}\label{th-1}
Suppose that the Lasso is sign consistent in the sense of 
\bel{selection}
\P\Big\{\sgn(\lasso) = \sgn(\bbeta)\Big\}\to 1. 
\eel
Let $C_0=\|\bSigma^{-1/2}\ba_0\|_2$ and $C_{\bbeta}= \|\bSigma_{S,S}^{-1/2}\sgn(\bbeta_S)\|_2/\sqrt{s_0}$. 
Suppose that $\sqrt{(1\vee s_0)/n}+C_{\bbeta}\sqrt{s_0}(\lam/\sigma)\big) \le \eta_n$
for a sufficiently small $\eta_n<1$.
Let $F_\theta = 1/(\sigma C_0)^2$ be the Fisher information as in (\ref{Fisher}), and 
$T_n = \sqrt{nF_\theta}\langle \bz_0,\bep\rangle/\|\bz_0\|_2^2$ so that $T_n$ has the $t$-distribution with 
$n$ degrees of freedom. 
Let $\htheta_{\nu}$ be as in (\ref{LDPE-df}) or (\ref{htheta-JM}). 
Then, 
\bel{th-1-1}
(1-\nu/n)\sqrt{n F_\theta}\big(\htheta_{\nu} - \theta\big) = T_n + O_{\P}(\eta_n)
\eel
for a random variable $\nu\in [0,s_0]$ if and only if 
\bel{th-1-2a}
\sqrt{F_\theta/n}\big(s_0 - \nu\big)\big\langle \ba_0, \lasso - \bbeta \big\rangle 
= O_{\P}(\eta_n), 
\eel
if and only if 
\bel{th-1-2}&& 
\sqrt{F_\theta/n}\big(s_0 - \nu\big)\big\langle (\ba_0)_S, \lam(\bX_S^\top\bX_S/n)^{-1}\sgn(\bbeta_S)\big\rangle 
= O_{\P}(\eta_n). 
\eel
The conclusion also holds for the $\htheta_{\nu}$ in (\ref{est-ZZ}) 
when {$C_0\|\bSigma^{1/2}\bu\|_2=O(1)$.}
\end{restatable}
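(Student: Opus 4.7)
The plan is to reduce the three-way equivalence to an algebraic identity on the sign-consistency event, followed by a concentration estimate for a remainder that does not depend on $\nu$. Under \eqref{selection} (probability $\to 1$), the Lasso KKT conditions force $\supp(\lasso-\bbeta)\subseteq S$ with closed form
$(\lasso-\bbeta)_S = G\bX_S^\top\bep/n - \lam G\sgn(\bbeta_S),$
where $G = (\bX_S^\top\bX_S/n)^{-1}$. Substituting $\by-\bX\lasso = \bep - \bX(\lasso-\bbeta)$ into \eqref{LDPE-df}, multiplying by $(1-\nu/n)\sqrt{nF_\theta}$, splitting $\bX = \bz_0\ba_0^\top + \bX\bQ_0$ in $\langle\bz_0, \bX(\lasso-\bbeta)\rangle$, and writing $\nu = s_0 - (s_0-\nu)$ yields
\begin{equation*}
(1-\nu/n)\sqrt{nF_\theta}(\htheta_\nu - \theta) - T_n
= \sqrt{F_\theta/n}\,(s_0-\nu)\langle\ba_0,\lasso-\bbeta\rangle + R,
\end{equation*}
where the remainder $R = -\sqrt{F_\theta/n}\,s_0\,\langle\ba_0,\lasso-\bbeta\rangle - \sqrt{nF_\theta}\,\langle\bz_0, \bX\bQ_0(\lasso-\bbeta)\rangle/\|\bz_0\|_2^2$ has no $\nu$ dependence. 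Thus \eqref{th-1-1}$\Leftrightarrow$\eqref{th-1-2a} is immediate once $R = O_\P(\eta_n)$, and \eqref{th-1-2a}$\Leftrightarrow$\eqref{th-1-2} follows by substituting the closed form of $(\lasso-\bbeta)_S$ into $\langle\ba_0,\lasso-\bbeta\rangle$: the noise piece $(\ba_0)_S^\top G\bX_S^\top\bep/n$ has conditional standard deviation of order $\sigma C_0/\sqrt n$, so multiplying by $\sqrt{F_\theta/n}(s_0-\nu) \le \sqrt{F_\theta/n}\,s_0$ gives $O_\P(s_0/n) = O_\P(\eta_n^2)$, leaving only the bias expression of \eqref{th-1-2}.

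Controlling $R$ is the main technical step. The key identity is $\bu_0^\top\bSigma\bQ_0 = 0$, immediate from $\bu_0 = \bSigma^{-1}\ba_0/C_0^2$, which under the Gaussian design makes $\bz_0 = \bX\bu_0$ independent of $\bX\bQ_0$. Writing $(\lasso-\bbeta)_S = \bh^{\rm n} + \bh^{\rm b}$ with $\bh^{\rm n} = G\bX_S^\top\bep/n$ (conditionally Gaussian given $\bX_S$) and $\bh^{\rm b} = -\lam G\sgn(\bbeta_S)$ (measurable with respect to $\bX_S$), I would bound the contributions of $\bh^{\rm n}$ and $\bh^{\rm b}$ to both terms of $R$ separately. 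Wishart-type concentration of $\bX_S^\top\bX_S/n$ around $\bSigma_{S,S}$ (enabled by $\sqrt{s_0/n}\le\eta_n$) controls $G$; the noise contribution uses conditional Gaussianity of $\bh^{\rm n}$ given $\bX_S$; the bias contribution uses $\|G^{1/2}\sgn(\bbeta_S)\|_2 \lesssim C_\bbeta\sqrt{s_0}$ from the definition of $C_\bbeta$, together with the hypothesis $C_\bbeta\sqrt{s_0}\lam/\sigma \le \eta_n$. For the variants \eqref{htheta-JM} and \eqref{est-ZZ}: the former replaces $\|\bz_0\|_2^2$ by $n/C_0^2 = \E\|\bz_0\|_2^2$, while for the latter, $\langle\bz_0,\bX\bu\rangle/\|\bz_0\|_2^2 = 1 + O_\P(n^{-1/2})$ under $C_0\|\bSigma^{1/2}\bu\|_2 = O(1)$ from Gaussian concentration around $\E\langle\bz_0,\bX\bu\rangle = \E\|\bz_0\|_2^2 = n/C_0^2$; in both cases the already-derived \eqref{hat-theta-equivalence} absorbs the denominator discrepancy.

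The hard part will be matching the $\eta_n$ rate for the bias contribution to $R$. A naive Cauchy--Schwarz bound on $\sqrt{nF_\theta}\langle\bz_0, \bX\bQ_0\bh^{\rm b}\rangle/\|\bz_0\|_2^2$, using $\|\bh^{\rm b}\|_2 \asymp \lam C_\bbeta\sqrt{s_0}$ together with an order $C_0\sqrt{s_0/n}$ bound on $\|(\bX\bQ_0)^\top\bz_0\|_2/\|\bz_0\|_2^2$, overshoots $\eta_n$ by a factor of order $\sqrt{s_0}$. The fix is to exploit the conditional structure given $\bX_S$: conditional on $\bX_S$, $\bz_0$ decomposes as the measurable piece $\bX_S\bSigma_{S,S}^{-1}(\ba_0)_S/C_0^2$ plus an independent centered Gaussian (with conditional covariance controlled by $\bSigma_{S^c,S^c\mid S}$), while $\bh^{\rm b}$ is deterministic. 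Careful bookkeeping of these conditional moments exposes a cancellation between the two summands defining $R$ that aligns their combined bias contribution with the desired $\eta_n$ rate.
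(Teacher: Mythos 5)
Your algebraic reduction is correct and matches the paper's: after substituting the closed-form of $\lasso-\bbeta$ on the sign-recovery event and splitting $\bX = \bz_0\ba_0^\top + \bX\bQ_0$, the $\nu$-dependence isolates into $\sqrt{F_\theta/n}\,(s_0-\nu)\langle\ba_0,\lasso-\bbeta\rangle$, and the remainder $R$ you identify is exactly $\sqrt{nF_\theta}(\Rem_2+\Rem_3)$ in the paper's notation. Your handling of the noise piece via conditional Gaussianity of $\bh^{\rm n}$ given $\bX_S$, and of the three estimator variants via \eqref{hat-theta-equivalence}, is fine.

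The gap is at the step you yourself flag as the hard one. Your proposed fix --- condition on $\bX_S$, write $\bz_0 = \bX_S\bSigma_{S,S}^{-1}(\ba_0)_S + \bE$ with $\bE$ independent of $\bX_S$, and invoke ``careful bookkeeping of these conditional moments [that] exposes a cancellation'' --- is a genuinely different route from the paper's, and it \emph{can} be made to work, but as written it hand-waves past exactly the subtlety that makes the theorem true. If you carry out the decomposition, the bias contribution to $R$ reduces to $\sqrt{n}(\lambda/\sigma)\bv_1^\top\!\left[\tfrac{n}{\|\bz_0\|_2^2}\bI - (1-s_0/n)\bA^{-1}\right]\!\bv_2$ plus a term linear in $\bE$, where $\bA = \bSigma_{S,S}^{-1/2}(\bX_S^\top\bX_S/n)\bSigma_{S,S}^{-1/2}$, $\bv_1 = \bSigma_{S,S}^{-1/2}(\ba_0)_S$ ($\|\bv_1\|\le 1$), $\bv_2 = \bSigma_{S,S}^{-1/2}\sgn(\bbeta_S)$ ($\|\bv_2\| = C_\bbeta\sqrt{s_0}$). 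The operator-norm bound $\|\bA^{-1}-\bI\|_S = O_\P(\sqrt{s_0/n})$ gives $\sqrt n(\lambda/\sigma)\cdot C_\bbeta\sqrt{s_0}\cdot\sqrt{s_0/n} = C_\bbeta(\lambda/\sigma) s_0$, which still overshoots by $\sqrt{s_0}$ --- the conditioning alone does not fix the overshoot you diagnosed. The actual cancellation you need is not between ``the two summands defining $R$'' in a soft sense: it is the algebraic cancellation of the $(s_0/n)$ factor against the second-order bias of the inverse Wishart, $\E[\bA^{-1}] - \bI = \tfrac{s_0+1}{n-s_0-1}\bI$, which leaves only $\bv_1^\top(\bA^{-1}-\E\bA^{-1})\bv_2$, and you then need the \emph{bilinear-form} concentration $\bv_1^\top(\bA^{-1}-\E\bA^{-1})\bv_2 = O_\P(\|\bv_1\|\|\bv_2\|/\sqrt n)$ rather than the operator-norm bound. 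None of this is named in the proposal, and without it the sketch collapses back to the $\sqrt{s_0}$ overshoot. (A parallel cancellation, $(1-s_0/n)^2 - 2(1-s_0/n) + 1 = (s_0/n)^2$, is needed in the noise part to get the claimed $O_\P(s_0/n)$ rate; a naive bound there gives $O_\P(1)$, not $O_\P(\eta_n)$.)

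The paper's proof avoids all Wishart computations. It introduces an auxiliary $\tilde\bz_0 = \bP_\bep\bz_0 + \bP_\bep^\perp\bg$ with $\bg$ an independent copy of $\bz_0$, sets $\tilde\bX = \bX\bQ_0 + \tilde\bz_0\ba_0^\top$, and uses the fact that on the double sign-recovery event the KKT system gives the deterministic identity $\bX_S^\top\bX\bhlasso = \tilde\bX_S^\top\tilde\bX\tilde\bh = \bX_S^\top\bep - n\lambda\sgn(\bbeta_S)$; this lets one swap $\bhlasso$ for $\tilde\bh$ inside $\langle\bz_0,\bP_S\bX_S\bhlasso\rangle$, after which $\bP_\bep^\perp\bz_0$ is conditionally independent of $\bP_S\tilde\bX\tilde\bh$ and ordinary Gaussian concentration finishes. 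That ``swap'' is the genuine key lemma; your proposal replaces it with conditional-Gaussian-plus-Wishart bookkeeping, which is viable but leaves the crucial bias cancellation unverified.
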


The proof is given in 
\Cref{sec:proof-th-1}.
{
    \Cref{th-1} provides an alternative negative result,
    similar in flavor to \Cref{thm:teaser}(ii) and
    \Cref{prop:unbounded} above. The settings may not match exactly
    since the tuning parameter $\lambda$
    required for sign consistency is larger than the one featured
    in \Cref{thm:teaser}.
    Compared with \Cref{prop:unbounded}, the sign consistency lets
    us derive the two explicit conditions \eqref{th-1-2a}-\eqref{th-1-2}
    for efficiency that are useful to pinpoint situations, such as those
    described in the next two paragraphs, where efficiency does not hold.
}

Theorem \ref{th-1} implies that for efficient statistical inference of $\theta$ at the $n^{-1/2}$ rate, 
the unadjusted de-biasing scheme \eqref{LDPE} requires either a degrees-of-freedom 
adjustment or the extra condition that the bias of the initial Lasso estimator of $\theta$, 
{given by $\big\langle (\ba_0)_S, \lam(\bX_S^\top\bX_S/n)^{-1}\sgn(\bbeta_S)\big\rangle$,} 
is of order
$o_{\P}(n^{1/2}/s_0)$, even when the initial Lasso estimator is sign-consistent. 
For example, if $(\ba_0)_{S^c}=0$ and 
$(\ba_0)_S = \sgn(\bbeta_S)/\|\bSigma^{-1/2}\sgn(\bbeta_S)\|_2$, 
then $\ba_0$ is standardized with $\|\bSigma^{-1/2}\ba_0\|_2=1$ and condition (\ref{th-1-2}) on the bias 
can be written as 
\begin{equation*}
(\lam/\sigma)n^{-1/2}(s_0-\nu)\|\bSigma_{S,S}^{-1/2}\sgn(\bbeta_S)\|_2=O_\P(\eta_n)
\end{equation*}
because the singular values of the Wishart matrix 
$\bSigma_{S,S}^{-1/2}(\bX_S^\top\bX_S/n) \bSigma_{S,S}^{-1/2}$
are bounded away from 0 and $+\infty$ with high probability.
For $\nu=0$, this is equivalent to 
$C_{\bbeta} (\lam/\sigma) s_0^{3/2}/\sqrt n = O(\eta_n)$.
If $C_{\bbeta}$ is of order of a constant and $\eta_n< 1$, 
this implies that
the unadjusted de-biasing scheme \eqref{LDPE}
cannot be efficient in the asymptotic regime when 
\begin{equation}
    (\lam/\sigma) s_0^{3/2}/\sqrt n \ggg 1.
    \label{regime-unadjusted-cannot-be-efficient}
\end{equation}
Interestingly, the condition $(\lam/\sigma) s_0^{3/2}/\sqrt n =O(1)$ 
is weaker than 
the typical sample size requirement $n\gg (s_0\log p)^2$ in the case of unknown $\bSigma$. 

Another enlightening situation is
$\ba_0 = \be_j$ the $j$-th canonical basis vector for some $j\in S$,
    $S=\{1,...,s_0\}$ and $\bSigma^{-1}$ diagonal by block with two blocks
    $$
    \bSigma^{-1} = \begin{pmatrix}
        \bI_{|S|} + 
        (1/4) |S|^{-1/2}
        [\sgn(\bbeta_S) \be_j^\top + \be_j \sgn(\bbeta_S)^\top]& \mathbf{0}_{|S|,p-|S|} \\
    \mathbf{0}_{p-|S|, |S|} & \bI_{p-|S|}
    \end{pmatrix}.
    $$
    The eigenvalues of $\bSigma^{-1/2}$ belong to $[1/2,3/2]$ by construction
    since $(|S|)^{-1/2} \sgn(\bbeta_S) \be_j^\top$ has operator norm equal to one. Again using properties of
the singular values of the Wishart matrix 
$\bSigma_{S,S}^{-1/2}(\bX_S^\top\bX_S/n) \bSigma_{S,S}^{-1/2}$,
the left hand-side of condition \eqref{th-1-2} is of order
$$\sqrt{F_\theta/n}(s_0 - \nu) \lambda
\be_j^\top(\bSigma_{S,S})^{-1} \sgn(\bbeta_S) 
\asymp
(\lambda/\sigma)n^{-1/2} (s_0-\nu) \sqrt{s_0}.
$$
Similarly to the previous paragraph,
this implies that with $\nu=0$
the unadjusted de-biasing scheme \eqref{LDPE}
cannot be efficient 
if \eqref{regime-unadjusted-cannot-be-efficient} holds.
Up to a multiplicative constant in $\bSigma$, this example is similar
to \Cref{example:e_j} with $s_\Omega=s_0$. 

The novelty of our contributions 
resides in the $s_0^2\ggg n$ regime up to logarithmic factor,
in the sparsity range where the transition \eqref{regime-unadjusted-cannot-be-efficient} happens.
The necessity of the degrees-of-freedom adjustment can be seen 
in simulated data {as follows.} 
\Cref{fig:comparison} presents the distribution of
$\sqrt n(\hat\theta_\nu - \theta)$ with and without the adjustment
for $\bSigma=\bI_p,\sigma=1$ for $(n,p)=(4000,6000)$ and $s_0=20,40,80,120$.
Although
classical results on de-biasing in the regime $s_0^2\lll n$ 
proves that $\sqrt n(\hat\theta_\nu - \theta)\approx N(0,1)$
\cite{ZhangSteph14,JavanmardM14a,GeerBR14}
with $\nu=0$, simulations reveal that
$\sqrt n(\hat\theta_\nu - \theta)$ is substantially biased (downward in \Cref{fig:comparison}),
and any confidence interval constructed from $\sqrt n(\hat\theta_\nu -
\theta)\approx N(0,1)$ would not correctly control Type-I error
due to this substantial bias.
This substantial bias is present for sparsity as small as $s_0=20$
(for which $s_0^2/n = 0.1$).
On the other hand, the adjustment $\nu=|\Shat|$ repairs this,
as shown both in the simulation in \Cref{fig:comparison}
and by the theory in \Cref{thm:teaser} and in the next sections.
Thus our novel results on the necessity of the degrees-of-freedom adjustment is not only 
{theoretical; It} 
also explains the gap between
simulations and the predictions
from the early literature on de-biasing 
\cite{ZhangSteph14,JavanmardM14a,GeerBR14} 
{where the degrees-of-freedom adjustment is absent.} 

\begin{figure}
\begin{subfigure}[b]{0.475\textwidth}
        \centering
        \includegraphics[width=1.0\textwidth]{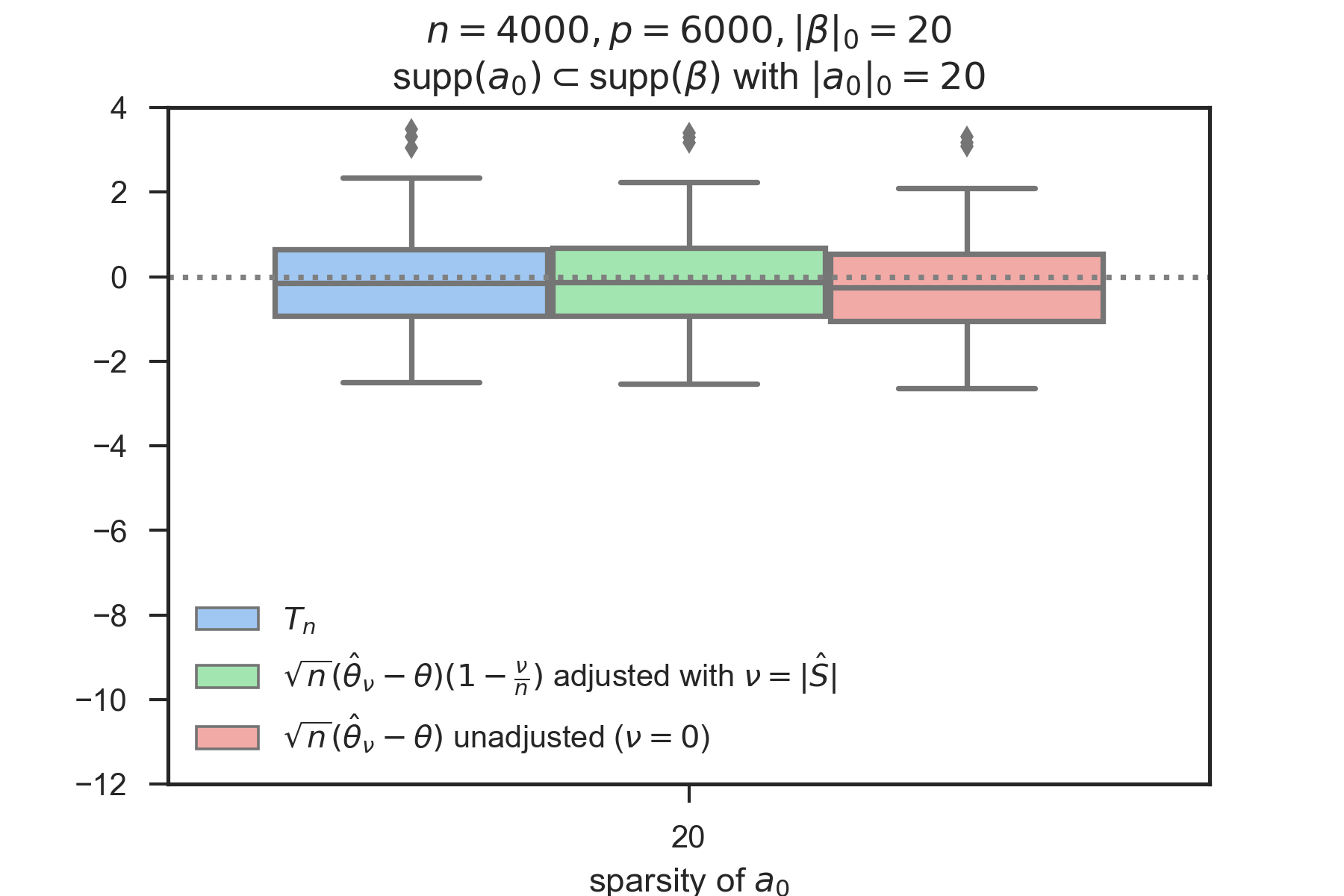}
\end{subfigure}
\begin{subfigure}[b]{0.475\textwidth}
        \centering
        \includegraphics[width=1.0\textwidth]{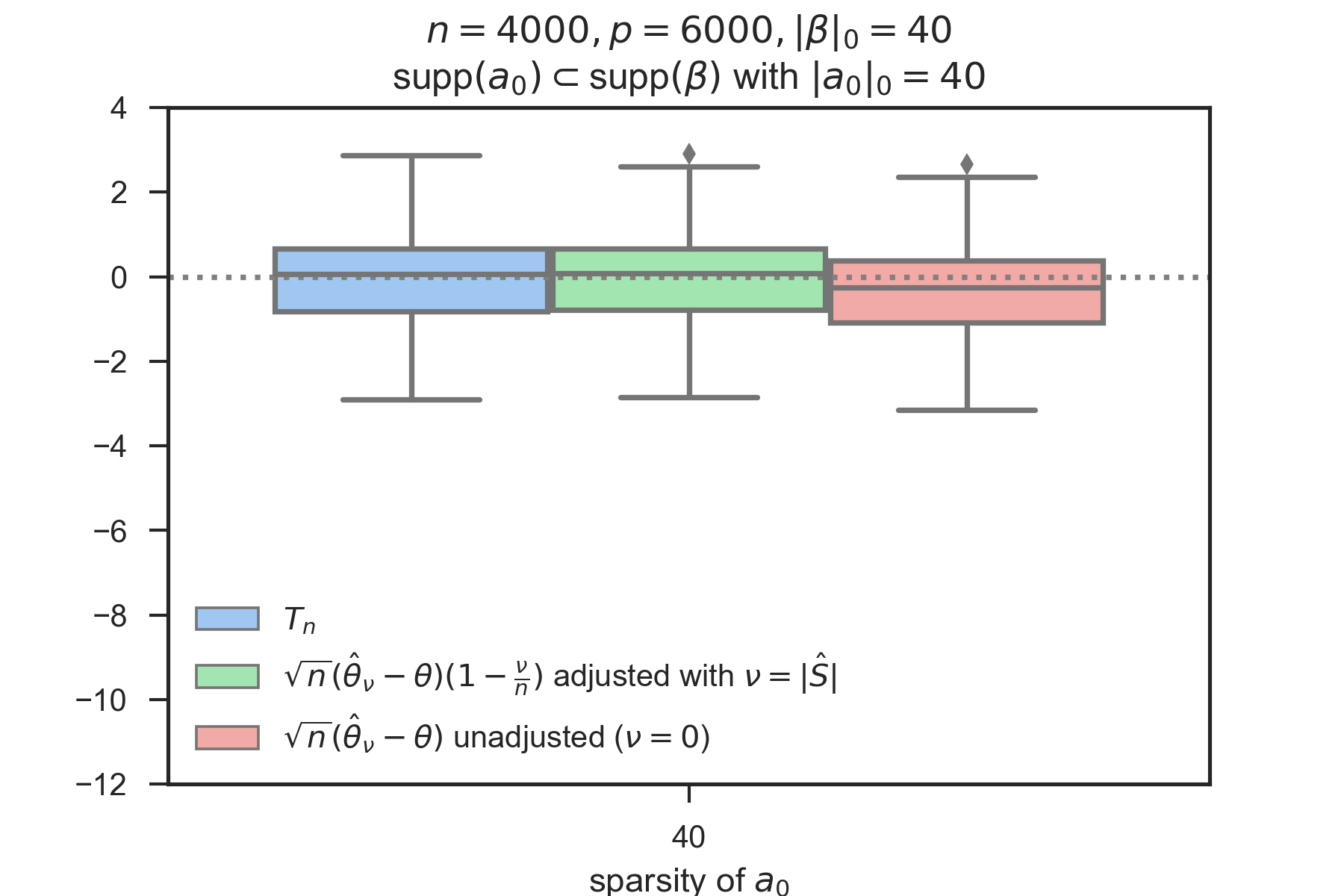}
\end{subfigure}
\vspace{0.3in}
\begin{subfigure}[b]{0.475\textwidth}
        \centering
        \includegraphics[width=1.0\textwidth]{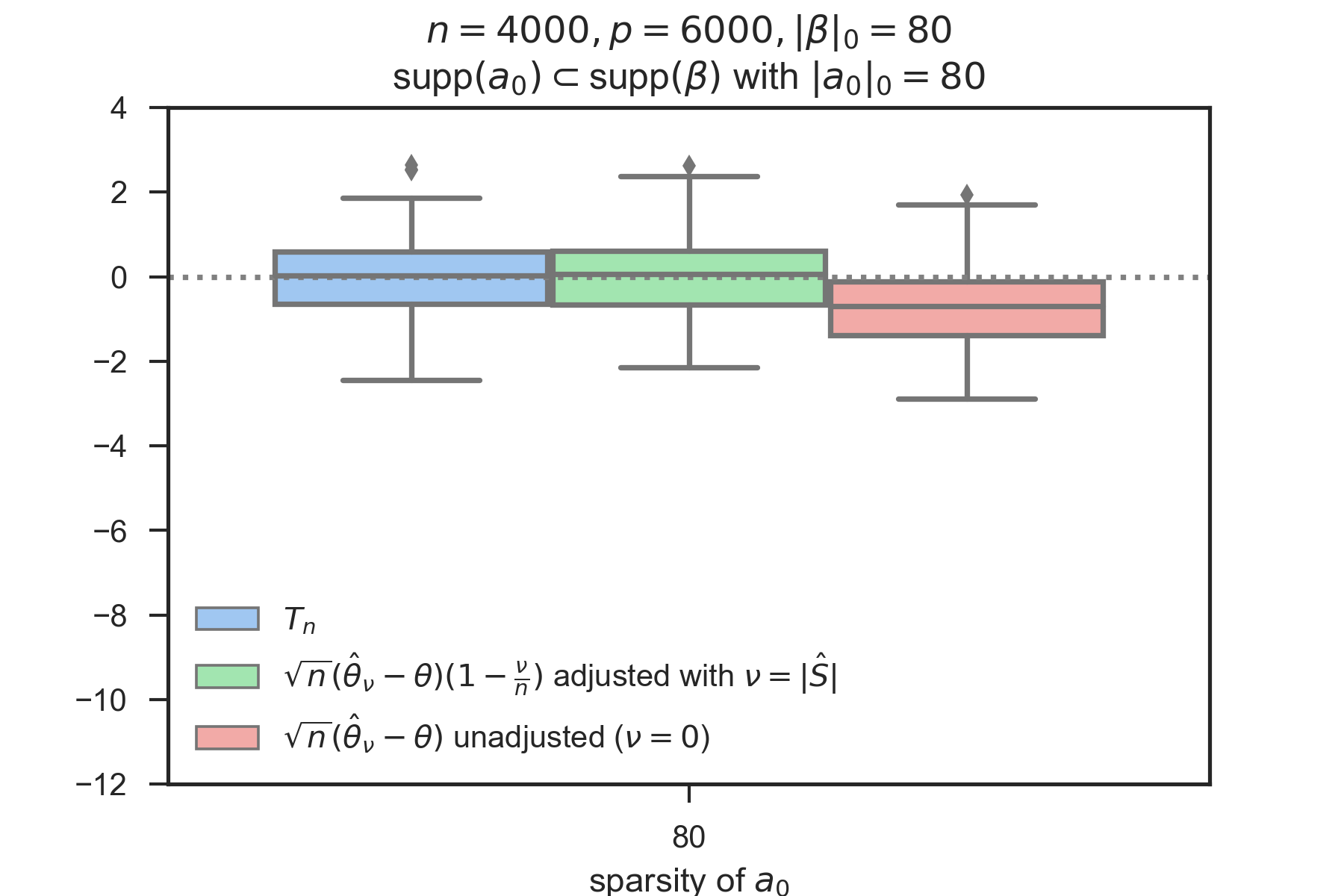}
\end{subfigure}
\begin{subfigure}[b]{0.475\textwidth}
        \centering
        \includegraphics[width=1.0\textwidth]{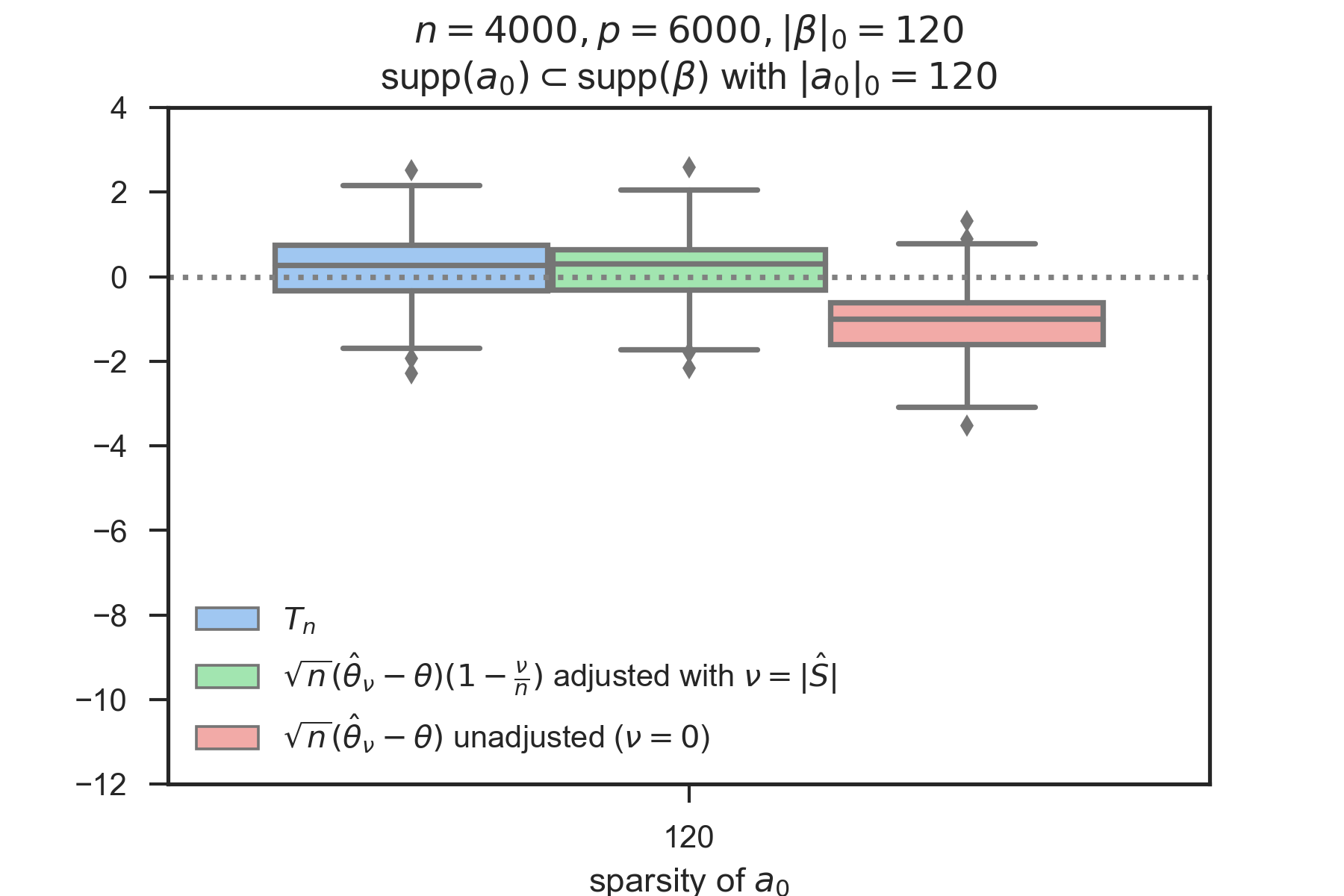}
\end{subfigure}
    \caption{\scriptsize
        Distribution of $\sqrt n(\hat\theta_\nu - \theta)$ in the adjusted ($\nu = |\Shat|$) and unadjusted $\nu = 0$ cases.
        For comparison,
        $T_n$ has the t-distribution with 
        $n$ degrees-of-freedom. Here $\ba_0$ is proportional to $\sgn(\bbeta)$
        normalized with $\|\bSigma^{-1/2}\ba_0\|_2=1$.
        Experiments were replicated 200 times. A two-sided t-test rejects
        that the mean of $\sqrt{n}(\hat\theta_\nu - \theta)$
        is zero in the unadjusted  case $(\nu = 0)$ with p-value $0.0048$ for $s_0=20$,
        p-value $0.00028$ for $s_0=40$, p-value $7\cdot 10^{-22}$ for $s_0=80$
        and p-value $2\cdot 10^{-31}$ for $s_0=120$.
        \label{fig:comparison}
}
\end{figure}

Guided by \Cref{th-1}, one can easily exhibit situations
with correlated $\bSigma$
and $\ba_0$ proportional to $\be_j$ (a canonical basis vector),
such that
the unadjusted estimate leads to spurious inference: 
One just needs to find problem instances
such that \eqref{th-1-2} is large.
As an example, \Cref{fig-e_j} shows boxplots
of the situation with $\ba_0=\be_j/(\bSigma^{-1})_{jj}^{1/2}$
sparsity $s_0 = \|\bbeta\|_0= 120$, $p=6000$, $n=4000$, $\sigma=1$ and 
$\bSigma$ is correlated
of the form $\bSigma^{-1}=\bI_p + 0.07(\sgn( \bbeta ) \be_j^\top + \be_j \sgn( \bbeta )^\top)$. In the un-adjusted case,
the pivotal quantity $n^{1/2}(\hat \theta_{\nu=0}-\theta)$
is biased downward and would produce misleading confidence intervals
with incorrect coverage.
The adjustment $\nu=|\Shat|$ exactly repairs this.

\begin{figure}[ht]
    \includegraphics[width=3.5in]{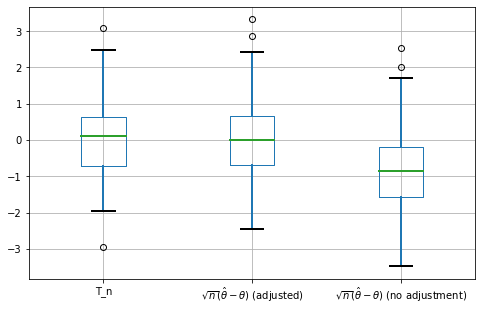}.
    \caption{\scriptsize
        Boxplots of pivotal random variables $\sqrt n(\hat\theta_n -\theta)$
        for $\nu=0$ (unadjusted) and $\nu=|\Shat|$ (adjusted) 
        when
        $\ba_0=\be_j/(\bSigma^{-1})_{jj}^{1/2}$,
$s_0 = \|\bbeta\|_0= 120$, $p=6000$, $n=4000$, $\sigma=1$ and 
$\bSigma^{-1}=\bI_p + 0.07(\sgn( \bbeta ) \be_j^\top + \be_j \sgn( \bbeta )^\top)$.
For comparison, $T_n$ has the t-distribution with $n$ degrees-of-freedom.
    \label{fig-e_j}
    }
\end{figure}

\Cref{th-1} requires sign consistency of the Lasso in \eqref{selection}. 
Sufficient conditions for the sign consistency of the Lasso were given in 
\cite{MeinshausenB06,Tropp06,ZhaoY06,Wainwright09}. 
In particular, \cite{Wainwright09} gave the following sufficient conditions for (\ref{selection})
in the case of linear regression (\ref{LM}) with Gaussian design: 
For certain positive $\gamma$, $\delta$ and $\phi_p\ge 2$, 
\bes
& \big\|\bSigma_{S^c,S}\bSigma_{S,S}^{-1}\sgn(\bbeta_S)\big\|_\infty \le 1 - \gamma,
\cr & \lam = \gamma^{-1}\sigma\sqrt{\phi_p\rho(2/n)\log p},
\cr & \rho(C_{\min}\gamma^2)^{-1}(2s_0/n)\log(p-s_0) 
+(\phi_p\log p)^{-1}\log(p-s_0)  < 1 - \delta,
\ees
with $\rho = \max_{j\in S^c}\big(\bSigma_{j,j} - \bSigma_{j,S}\bSigma_{S,S}^{-1}\bSigma_{S,j} \big)$ and 
$C_{\min} = \min_{\|\bu\|_2=1}\big\|\bSigma_{S,S}^{-1/2}\bu\big\|_2$, and 
\bes
\hbox{$\min_{j\in S}$}|\beta_j| \ge \big(1+n^{-1/2}c_n\big)\lam 
\max_{\|\bu\|_\infty =1}\big\|\bSigma^{-1/2}_{S,S}\bu\big\|_\infty^2
+ 20\big(\sigma^2(C_{\min}n)^{-1}\log s_0\big)^{1/2}, 
\ees
for some $c_n\to \infty$. 

\subsection{
    Unknown $\bSigma$
}

\label{sec:unknown-Sigma}

In the case of unknown {$\bu_0$,} one needs to estimate the ideal score vector
$\bz_0=\bX\bu_0$ as well as the variance level $\|\bz_0\|^2/n$ in \eqref{LDPE}. 
{In view of \eqref{u_0}, {we} consider 
\begin{equation}\label{bQ}
\bz = \bX\bu,\  \bQ = \bI_p - \bu\ba_0^\top\ \hbox{ with $\bu$ satisfying}\ \langle\bu,\ba_0\rangle = 1.
\end{equation}
As $\bQ^2=\bQ$, by algebra and the definitions of $\bu_0$ and $\bz_0$ in \eqref{u_0} and \eqref{LDPE}, 
\bel{eq:LM-gamma}
\bz = - \bX\bQ \bu_0  + \bz_0= \bX\bQ \bgamma  + \bz_0
\eel
with $\bgamma = - \bQ\bu_0$ 
and $\E\big[(\bX\bQ)^\top\bz_0\big] = \bQ^\top\bSigma\bu_0 = {\bf 0}$. Hence,} 
\eqref{eq:LM-gamma} is a linear model
with response vector ${\bz}\in\R^n$,
Gaussian design matrix  $\bX{\bQ}\in\R^{n\times p}$
with $n$ independent rows,
true coefficient vector $\bgamma$,
and Gaussian noise
$\bz_0 \sim N({\bf 0 }, C_0^{-2} \bI_n)$ independent of $\bX{\bQ}$,
where {$C_0 = \|\bSigma^{-1/2}\ba_0\|_2$.} 
Note that since $\bQ$ is rank deficient, 
    the linear model  \eqref{eq:LM-gamma} is unidentifiable:
    For both $\tbgamma=-\bu_0$ and $\bgamma=-\bQ \bu_0$
    we have $\bX\bQ \tbgamma = \bX\bQ \bgamma$ so that both $\tbgamma,\bgamma$
    can be regarded as the true coefficient vector in the model \eqref{eq:LM-gamma}.
    To solve this identifiability issue, we view the parameter
    space of \eqref{eq:LM-gamma} as the image of $\bQ$
    and the true coefficient vector as $\bgamma=-\bQ\bu_0$. 

It is thus natural to estimate {$\bz_0$} in the linear model \eqref{eq:LM-gamma},
as was already suggested previously for $\ba_0=\be_j$
\cite{ZhangSteph14,GeerBR14,javanmard2018debiasing}.
Given an estimator {$\hbgamma$ of $\bgamma$,} 
we define the estimated score vector 
\begin{equation}
    \label{estimated-score-vector-z}
    \hbz = {\bz - \bX\bQ\hbgamma} 
\end{equation}
and the corresponding de-biased estimate
\begin{equation}
    \label{de-biased-estimate-estimated-score-z}
    \hat\theta_{\nu,\hbz}
    =
    \langle \ba_0,\lasso\rangle
    + \frac{\langle \hbz, \by-\bX\lasso\rangle}{{(1-\nu/n)\langle \hbz, \bz\rangle}}.
\end{equation}
This corresponds to \eqref{est-ZZ} with the ideal score vector $\bz_0$
replaced by $\hbz$. 

    The vector $\bu$ in \eqref{bQ} that defines the linear model
    \eqref{eq:LM-gamma}
    should be picked carefully to yield small prediction error
    $\|\bz_0-\hbz\|_2^2/n=\|\bX\bQ(\hbgamma-\bgamma)\|_2^2/n$ 
    in the linear model \eqref{bQ}. 
As $\bz_0=\bX\bu_0$ with a high-dimensional $\bu_0$, 
it would be reasonable to {expect that} a sparsity condition on $\bu_0$ 
would ensure proper convergence of $\hbz$ to $\bz_0$. 
However, this requires a connection between the sparsity of 
$\bgamma = - \bQ\bu_0$ to that of $\bu_0$.
To this end, we pick 
\bel{bu}
\bu = \bfe_{j_0}/(a_0)_{j_0}\ \hbox{with }\ j_0 = \argmax_{j=1,\dots,p} \big|(a_0)_j\big|. 
\eel
For the above choice of $\bu$, 
\bel{bu-properties}
&& \langle\bu,\ba_0\rangle = 1,\quad \|\bQ\bh\|_0\le 1+\|\bh\|_0,\quad \|\bQ\bh\|_1\le 2\|\bh\|_1,\quad \forall\ \bh\in\R^p,  
\eel
so that the sparsity of $\bu_0$ implies that of $\bgamma$.
This leads to the Lasso estimator 
\begin{equation}
\label{gamma-hat}
    {\hbgamma = \bQ \hbb\ 
    \text{ with }}\ 
    \hbb = \argmin_{\bb\in\R^p}\Big\{\|\bz - \bX\bQ\bb\|_2^2/(2n) + \htau A\lamuniv\|\bQ\bb\|_1\Big\} 
\end{equation}
where $\lamuniv = \sqrt{(2/n)\log p}$, $A$ is an upper bound for $\max_{j=1,\dots,p} \|\bX\bQ\bfe_j\|_2/n^{1/2}$ and 
$\htau$ is an estimate of the noise level $C_0^{-1}$ in the regression model \eqref{eq:LM-gamma}. 
We note the delicate difference between \eqref{gamma-hat} and the usual Lasso 
as the estimator and penalty are both restricted to the image of $\bQ$. 
To the best of our knowledge, the regression model \eqref{eq:LM-gamma} in the direction \eqref{bu}, 
which plays a crucial role in our analysis, 
provides a new way of dealing with dense direction $\ba_0$ in de-biasing the Lasso.
We note that the natural choice $\tbu = \ba_0\|\ba_0\|_2^{-2}$ satisfies $\langle \ba_0,\tbu\rangle=1$, but for certain dense $\ba_0$ the corresponding projection matrix
    $\tbQ = \bI_p - \tbu \ba_0^\top$
does not preserve sparsity as in \eqref{bu-properties}.

For the purpose of the asymptotic normality result in \Cref{thm:unknown-sigma} below,
we will consider estimators $\hbgamma$ satisfying 
\begin{align}
& \|\bQ(\hbgamma - \bgamma)\|_1 
    =
    O_\P(C_0^{-1}) 
    \min\big\{\|\bgamma\|_0 \lamuniv, C_0\|\bgamma\|_1\big\},
    \label{eq:hbu-ell_1-rate}
\\ 
  & \sup\big\{\langle \hbz, \bX\bQ\bh\rangle/n:\|\bQ\bh\|_1=1\big\} 
   = O_\P( C_0^{-1}\lamuniv). 
    \label{eq:hbu-ell_infty}
\end{align}
Inequality \eqref{eq:hbu-ell_1-rate} is the usual $\ell_1$ estimation
rate when {$\bgamma$} is sparse or {$\bgamma$} has small $\ell_1$ norm.
%
Condition \eqref{eq:hbu-ell_infty} 
holds automatically for the Lasso estimator \eqref{gamma-hat} when $C_0\htau=O_{\P}(1)$ 
as a consequence of the KKT conditions as explained in the following
proposition.

\begin{restatable}{proposition}{PropDantzigUnknownSigma}
    \label{prop:dantzig}
Let $\bz=\bX\bu$ and $\bQ=\bI_p-\bu\ba_0^\top$ be as in \eqref{bQ} with the $\bu$ in \eqref{bu}. 
    Assume that $\bSigma_{j,j}\le 1\,\forall j$, $\phi_{\min}(\bSigma)$ is bounded away from 0, 
    and $\min\{\|\bQ\bu_0\|_0\log(p)/n,C_0\|\bQ\bu_0\|_1\sqrt{\log(p)/n}\} = o(1)$. 
    Let $\bQ\hbgamma$ be as in \eqref{gamma-hat} with a constant $A > 2$ and $\htau$ satisfying 
    one of the following conditions: 
    \begin{enumerate}
        \item 
            $\htau = \|\bz - \bX\bQ\hbgamma\|_2/n^{1/2}$ is the recursive solution of 
            \eqref{gamma-hat} as scaled Lasso \cite{sun2012scaled}, 
        \item 
            or $\htau$ is any estimator satisfying 
            $1+o_{\P}(1)\le C_0\htau \le O_{\P}(1)$. 
    \end{enumerate}
    Then, the requirements
    \eqref{eq:hbu-ell_1-rate}-\eqref{eq:hbu-ell_infty}
    are satisfied.

\end{restatable}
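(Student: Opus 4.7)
The estimator \eqref{gamma-hat} is a standard Lasso in the linear model \eqref{eq:LM-gamma}, $\bz=\bX\bQ\bgamma+\bz_0$, with $\bgamma=-\bQ\bu_0$ and $\bz_0=\bX\bu_0$. A key preliminary observation is that $\bz_0$ is independent of $\bX\bQ$: since the rows of $\bX$ are jointly Gaussian, this follows from the identity
\[ \bQ^\top\bSigma\bu_0 = \bSigma\bu_0 - \ba_0(\bu^\top\bSigma\bu_0) = \ba_0/C_0^2 - \ba_0/C_0^2 = 0, \]
using $\bSigma\bu_0=\ba_0/C_0^2$ and $\langle\bu,\ba_0\rangle=1$. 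This reduces the analysis to a Gaussian linear regression with noise variance $C_0^{-2}$. I would verify \eqref{eq:hbu-ell_infty} directly from KKT, and \eqref{eq:hbu-ell_1-rate} via the standard Lasso basic inequality together with a restricted eigenvalue (RE) condition.

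\textbf{KKT for \eqref{eq:hbu-ell_infty}.} Because the penalty in \eqref{gamma-hat} is $\|\bQ\bb\|_1$ (not $\|\bb\|_1$) and $\bQ^2=\bQ$, first-order optimality yields $\bQ^\top\bX^\top\hbz/n = \htau A\lamuniv\,\bQ^\top\hat\bv$ for some $\hat\bv\in\partial\|\cdot\|_1(\bQ\hbb)$ with $\|\hat\bv\|_\infty\le 1$. Thus for any $\bh$ with $\|\bQ\bh\|_1=1$,
\[ \langle\hbz,\bX\bQ\bh\rangle/n = \bh^\top\bQ^\top\bX^\top\hbz/n = \htau A\lamuniv\langle\hat\bv,\bQ\bh\rangle \le \htau A\lamuniv, \]
and the conclusion follows from $C_0\htau=O_\P(1)$.

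\textbf{Basic inequality and $\ell_1$ error.} Set $\hbh=\hbgamma-\bgamma\in\ba_0^\perp$ (note $\bQ\bgamma=\bgamma$). Optimality of $\hbb$ combined with $\bz-\bX\bQ\bgamma=\bz_0$ and $\bQ\hbh=\hbh$ yields
\[ \|\bX\bQ\hbh\|_2^2/(2n)+\lam\|\bQ\hbb\|_1 \le \lam\|\bgamma\|_1+\langle\bz_0,\bX\bQ\hbh\rangle/n. \]
Conditional on $\bX\bQ$, the Gaussian maximal inequality together with $A\ge\max_j\|\bX\bQ\be_j\|_2/\sqrt n$ gives $\|(\bX\bQ)^\top\bz_0/n\|_\infty\le (1+o_\P(1))A C_0^{-1}\lamuniv$ with probability $1-o(1)$. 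Combined with $A>2$ and $C_0\htau\ge 1+o_\P(1)$, this yields $\lam\ge 2\|(\bX\bQ)^\top\bz_0/n\|_\infty$ w.h.p., placing $\hbh$ in the standard cone $\|\hbh_{S^c}\|_1\le 3\|\hbh_S\|_1$ for $S=\supp(\bgamma)$. The slow-rate bound $\|\hbh\|_1=O_\P(\|\bgamma\|_1)$ then follows from the basic inequality and triangle inequality, while the sparse-rate bound $\|\hbh\|_1=O_\P(\|\bgamma\|_0\lamuniv/C_0)$ requires an RE condition $\|\bX\bQ\bh\|_2^2/n\ge\kappa^2\|\bh\|_2^2$ for $\bh\in\ba_0^\perp$ in the cone. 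The RE condition follows from Raskutti--Wainwright--Yu-type concentration for Gaussian designs, using that the covariance $\bQ^\top\bSigma\bQ$ restricted to $\ba_0^\perp$ inherits the spectral lower bound from $\phi_{\min}(\bSigma)$. Taking the minimum of the two bounds produces \eqref{eq:hbu-ell_1-rate}.

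\textbf{Case 1 and main obstacle.} For case 1 (scaled Lasso), an additional Sun--Zhang fixed-point argument is required to ensure $C_0\htau\asymp 1$: the solution $\htau=\|\bz-\bX\bQ\hbb\|_2/\sqrt n$ concentrates around $\|\bz_0\|_2/\sqrt n=C_0^{-1}(1+o_\P(1))$ by $\chi_n^2$ concentration, provided the prediction error $\|\bX\bQ\hbh\|_2/\sqrt n$ is of lower order. This lower-order control is ensured by the sparsity/$\ell_1$ hypothesis $\min\{\|\bQ\bu_0\|_0\log(p)/n,\,C_0\|\bQ\bu_0\|_1\sqrt{\log(p)/n}\}=o(1)$, by bootstrapping the Lasso bound just established. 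The main technical obstacle I anticipate is the RE condition on the rank-deficient design $\bX\bQ$; this requires carefully tracking that $\hbh\in\ba_0^\perp$ and leveraging the explicit formula for $\bQ^\top\bSigma\bQ$ to reduce to standard Gaussian design concentration with well-conditioned covariance on the subspace $\ba_0^\perp$.
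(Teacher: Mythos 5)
Your proposal follows the same overall strategy as the paper: the KKT conditions of the $\bQ$-penalized Lasso directly yield \eqref{eq:hbu-ell_infty}; the basic inequality together with control of the noise level $\|(\bX\bQ)^\top\bz_0/n\|_\infty$ gives the cone condition and hence \eqref{eq:hbu-ell_1-rate}; and Case~1 is handled by the Sun--Zhang scaled-Lasso fixed-point analysis. The observation $\bQ^\top\bSigma\bu_0={\bf 0}$ (so $\bz_0\perp\!\!\perp\bX\bQ$) and the reduction to an RE condition on the subspace $\ba_0^\perp$ are both correct and are exactly what the paper implicitly relies on when invoking \cite{sun2012scaled}.

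There is, however, one quantitative slip you should fix. You claim that $A>2$ together with $C_0\htau\ge 1+o_\P(1)$ gives $\lam\ge 2\|(\bX\bQ)^\top\bz_0/n\|_\infty$ and hence the standard factor-$3$ cone. This is too strong. Conditionally on $\bX\bQ$, the entries of $(\bX\bQ)^\top\bz_0/n$ are centered Gaussians with standard deviations $C_0^{-1}\|\bX\bQ\bfe_j\|_2/n$, so the Gaussian maximal inequality yields
\[
\|(\bX\bQ)^\top\bz_0/n\|_\infty\ \le\ (1+o_\P(1))\,C_0^{-1}\,\lamuniv\,\max_{j}\|\bX\bQ\bfe_j\|_2/\sqrt n.
\]
Since $\bQ\bfe_k=\bfe_k-\bfe_{j_0}a_k/a_{j_0}$ with $|a_k/a_{j_0}|\le 1$, one only gets $\E\|\bX\bQ\bfe_k\|_2^2/n=(\bfe_k-\bfe_{j_0}a_k/a_{j_0})^\top\bSigma(\bfe_k-\bfe_{j_0}a_k/a_{j_0})\le 4$, hence $\max_j\|\bX\bQ\bfe_j\|_2/\sqrt n\le 2+o_\P(1)$. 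Combined with $\lam=\htau A\lamuniv\ge (1+o_\P(1))C_0^{-1}A\lamuniv$, this only gives $\|(\bX\bQ)^\top\bz_0/n\|_\infty\le\eta\lam$ with $\eta\approx 2/A$, which for $A\in(2,4)$ is strictly between $1/2$ and $1$. The cone constant is therefore $\tfrac{1+\eta}{1-\eta}=\tfrac{A+2}{A-2}$, not $3$; a factor-$3$ cone would require $A\ge 4$. This does not break the argument --- the RE condition and the Sun--Zhang analysis both accommodate a general cone constant depending on $A$ --- but the statement as written is incorrect for $A\in(2,4)$ and should be replaced by the $\eta$-dependent cone. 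You should also state explicitly the verification that $\max_j\|\bX\bQ\bfe_j\|_2/\sqrt n\le\eta A$ w.h.p. (via the display above using $\bSigma_{jj}\le 1$ and the choice of $j_0$ in \eqref{bu}), since this is where the hypothesis $A>2$ is actually used.
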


\Cref{prop:dantzig} is proved in 
\Cref{sec:proofs-unknown-Sigma}.
The following is our main result for unknown $\bSigma$.

\begin{restatable}{theorem}{thmUnknownSigma}
    \label{thm:unknown-sigma}
Assume that $\bSigma_{jj}\le1$ for all $j\in[p]$ and 
that the spectrum of $\bSigma$ is uniformly 
bounded away from 0 and $\infty$; e.g. $\max(\|\bSigma\|_{op},\|\bSigma^{-1}\|_{op})\le 2$.
Let $\lambda = 1.01\sigma\sqrt{2\log(8 p/s_0)/n}$ for the Lasso \eqref{lasso}
in the linear model \eqref{LM}. 
Let $\eps_n>0$ with $\eps_n=o(1)$ and $\scrB_n$ be the class of 
$(\bbeta,\ba_0)\in \R^{p\times 2}$ {satisfying} 
\bel{sparsity-conditions-consistency}
    \frac{\|\bbeta\|_0\log p}{n}
    + \min
    \Big\{
        \frac{\|\bu_0\|_0\log p}{n},
        \frac{C_0\|\bu_0\|_1\sqrt{\log p}}{\sqrt n}
    \Big\}
    \le \eps_n 
\eel
{and $\ba_0\ne {\bf 0}$,} 
where $\bu_0$ is as in \eqref{u_0}. 
Given $\ba_0\neq {\bf 0}$, let $\bu$ be as in \eqref{bu}, $\bQ = \bI_p-\bu\ba_0^\top$,  
$\bQ\hbgamma$ an estimator of $\bgamma = - \bQ\bu_0$ in the linear model
\eqref{eq:LM-gamma} satisfying \eqref{eq:hbu-ell_1-rate}-\eqref{eq:hbu-ell_infty}, 
{$\hbz$} the estimated score vector in \eqref{estimated-score-vector-z}, 
and $\hat\theta_{\nu,\hbz}$ the de-biased estimate in
\eqref{de-biased-estimate-estimated-score-z}. 
If $\nu=\|\lasso\|_0$, then uniformly for $(\bbeta,\ba_0)\in \scrB_n$ 
\bes
    \sqrt{nF_\theta} (\htheta_{\nu=|\Shat|,\hbz}-\theta) &=& Z_n + O_\P({r_n}) 
\ees
holds, where
$Z_n \to^d N(0,1)$ and
\begin{equation}
\label{eq:r_n-minimum-of-three-sparsity} 
    r_n = r_{n,p}(\bbeta,\ba_0) = 
    \min\Big\{\frac{(\|\bbeta\|_0\wedge  \|\bu_0\|_0)\log(p)}{\sqrt n}, C_0 \|\bu_0\|_1 \sqrt {\log p}\Big\}.
\end{equation}
Consequently, for all $(\bbeta,\ba_0)\in \scrB_n$ satisfying $r_n\to 0$, 
\bes
\sqrt{nF_\theta} (\htheta_{\nu=|\Shat|,\hbz}-\theta)\to^d N(0,1).
\ees
\end{restatable}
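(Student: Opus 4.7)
The strategy is to reduce to \Cref{thm:teaser} by bounding $\sqrt{nF_\theta}(\htheta_{\nu=|\Shat|,\hbz}-\htheta_{\nu=|\Shat|})=O_\P(r_n)$, where $\htheta_{\nu=|\Shat|}$ denotes the ideal-score estimator of \eqref{LDPE-df}. Write $\bhlasso=\lasso-\bbeta$ and decompose $\bX=\bX\bQ+\bz\ba_0^\top$ (valid since $\langle\bu,\ba_0\rangle=1$) to expand $\by-\bX\lasso=\bep-\bX\bQ\bhlasso-\bz\langle\ba_0,\bhlasso\rangle$; plugging into the definition of $\htheta_{\nu,\hbz}$ and simplifying yields
\begin{equation*}
(1-\nu/n)(\htheta_{\nu,\hbz}-\theta) = -\tfrac{\nu}{n}\langle\ba_0,\bhlasso\rangle + \frac{\langle\hbz,\bep\rangle-\langle\hbz,\bX\bQ\bhlasso\rangle}{\langle\hbz,\bz\rangle}.
\end{equation*}
The same identity with $\bz_0$ and $\bQ_0=\bI_p-\bu_0\ba_0^\top$ in place of $\hbz$ and $\bQ$ describes $\htheta_{\nu=|\Shat|}-\theta$; \Cref{thm:teaser} with $\nu=|\Shat|$ gives $\sqrt{nF_\theta}(\htheta_{\nu=|\Shat|}-\theta)=T_n+o_\P(1)$ for $T_n$ a $t_n$-distributed random variable. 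Subtracting cancels the common $-\tfrac{\nu}{n}\langle\ba_0,\bhlasso\rangle$, leaving a noise bracket $\mathrm{I}=\langle\hbz,\bep\rangle/\langle\hbz,\bz\rangle-\langle\bz_0,\bep\rangle/\|\bz_0\|_2^2$ and a bias bracket $\mathrm{II}=\langle\hbz,\bX\bQ\bhlasso\rangle/\langle\hbz,\bz\rangle-\langle\bz_0,\bX\bQ_0\bhlasso\rangle/\|\bz_0\|_2^2$, to be bounded by $O_\P(r_n/\sqrt{nF_\theta})$.

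Algebra using $\bQ\bu=0$ and $\bX\bu_0=\bz_0$ gives the clean representation $\hbz-\bz_0=-\bX\bQ(\hbgamma-\bgamma)$; Gaussian concentration combined with $\bu_0^\top\bSigma\bu=1/C_0^2$ yields $\langle\hbz,\bz\rangle/n,\,\|\bz_0\|_2^2/n\to 1/C_0^2$. Since $\hbz,\bz_0$ are $\bX$-measurable while $\bep\sim N(0,\sigma^2\bI_n)$ is independent of $\bX$, conditioning on $\bX$ gives $\langle\hbz-\bz_0,\bep\rangle\mid\bX\sim N(0,\sigma^2\|\hbz-\bz_0\|_2^2)$, so bracket $\mathrm{I}$ scaled by $\sqrt{nF_\theta}$ is $O_\P(C_0\|\hbz-\bz_0\|_2/\sqrt n)$. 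Applying Lasso prediction theory to the auxiliary model \eqref{eq:LM-gamma}---with noise level $C_0^{-1}$, target $\bgamma=-\bQ\bu_0$, the sparsity transfer $\|\bgamma\|_0\le 1+\|\bu_0\|_0$ and $\|\bgamma\|_1\le 2\|\bu_0\|_1$ from \eqref{bu-properties}, and assumptions \eqref{eq:hbu-ell_1-rate}-\eqref{eq:hbu-ell_infty}---produces $\|\hbz-\bz_0\|_2^2/n=O_\P(C_0^{-2}\min\{\|\bu_0\|_0\log p/n,\,C_0\|\bu_0\|_1\sqrt{\log p/n}\})$, and the resulting contribution to $\mathrm{I}$ is dominated by $r_n$.

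For bracket $\mathrm{II}$, condition \eqref{eq:hbu-ell_infty} applied with $\bh=\bhlasso$ and \eqref{bu-properties} give $|\langle\hbz,\bX\bQ\bhlasso\rangle|/n\le O_\P(C_0^{-1}\lamuniv)\cdot 2\|\bhlasso\|_1$; combined with the standard Lasso $\ell_1$-rate $\|\bhlasso\|_1=O_\P(\sigma s_0\sqrt{\log p/n})$ available under the bounded-spectrum hypothesis, division by $\langle\hbz,\bz\rangle\asymp n/C_0^2$ and scaling by $\sqrt{nF_\theta}=\sqrt n/(\sigma C_0)$ yields $O_\P(\|\bbeta\|_0\log(p)/\sqrt n)$. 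The analogous bound on $\langle\bz_0,\bX\bQ_0\bhlasso\rangle/\|\bz_0\|_2^2$ uses that $\bz_0$ and the columns of $\bX\bQ_0$ are jointly Gaussian with zero cross-covariance (as $\bQ_0^\top\bSigma\bu_0={\bf 0}$), giving $\|\bQ_0^\top\bX^\top\bz_0/n\|_\infty=O_\P(C_0^{-1}\lamuniv)$ and the same scaled rate. The $\|\bu_0\|_0$ and $\|\bu_0\|_1$ alternatives in $r_n$ arise from complementary bounds exploiting the sparsity or $\ell_1$-control of $\bu_0$: the primary-Lasso KKT conditions $|(\bX^\top(\bX\bhlasso-\bep))_j|\le n\lambda$ give $|\bu_0^\top\bX^\top(\bX\bhlasso-\bep)|\le n\lambda\|\bu_0\|_1$, yielding the slow $C_0\|\bu_0\|_1\sqrt{\log p}$ rate after scaling, while a Cauchy--Schwarz bound on $\mathrm{II}$ using the auxiliary-Lasso prediction error produces the $\|\bu_0\|_0$ alternative. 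Taking the minimum yields $r_n$; then $T_n\to^d N(0,1)$ and $(1-|\Shat|/n)^{-1}=1+o_\P(1)$ deliver $\sqrt{nF_\theta}(\htheta_{\nu=|\Shat|,\hbz}-\theta)=Z_n+O_\P(r_n)$ with $Z_n\to^d N(0,1)$.

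The principal technical obstacle is establishing the three-way minimum in $r_n$ uniformly over $\scrB_n$: interpolating between the sparse-$\bu_0$ and the $\ell_1$-bounded-$\bu_0$ regimes (the latter essential for handling dense $\ba_0$ via the novel choice $\bu=\be_{j_0}/(\ba_0)_{j_0}$ in \eqref{bu} and the sparsity-preserving properties \eqref{bu-properties}), aligning the mismatch between the data-measurable projection $\bQ$ and the unknown $\bQ_0$, and keeping the $C_0$-dependent constants uniform as $C_0=\|\bSigma^{-1/2}\ba_0\|_2$ varies across $\ba_0$ in $\scrB_n$.
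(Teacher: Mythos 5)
Your strategy—subtract the $\bz_0$-based $\htheta_{\nu}$ from $\htheta_{\nu,\hbz}$ and invoke \Cref{thm:teaser}—is essentially the paper's argument, but only in one of its two regimes; as written it cannot deliver the $(\|\bbeta\|_0\wedge\|\bu_0\|_0)\log p/\sqrt n$ piece of $r_n$. The obstruction is in your bracket $\mathrm{I}$. After scaling, you obtain $\sqrt{nF_\theta}\,\mathrm{I}=O_\P\big(\sqrt{\min\{\|\bu_0\|_0\log p/n,\,C_0\|\bu_0\|_1\sqrt{\log p/n}\}}\big)$, and this square root is not dominated by $r_n$ when $\|\bbeta\|_0\lll\|\bu_0\|_0$. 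For instance $\|\bbeta\|_0=1$, $\|\bu_0\|_0=\eps_n n/\log p$ with $\eps_n\to 0$ slowly, $p\asymp n^2$: the sparsity condition \eqref{sparsity-conditions-consistency} holds, yet $\sqrt{\min\{\cdots\}}\asymp\sqrt{\eps_n}$ whereas $r_n=\|\bbeta\|_0\log p/\sqrt n\asymp(\log n)/\sqrt n$, and $\sqrt{\eps_n}\ggg(\log n)/\sqrt n$ for any $\eps_n\ggg(\log n)^2/n$. This is not a constant-chasing issue: the entire route of comparing to the ideal-score estimator inherently pays for the estimation error of $\bz_0$, which depends on $\|\bu_0\|_0$ (or $\|\bu_0\|_1$) and not on $\|\bbeta\|_0$.

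The paper avoids this by splitting into two regimes. When $r_n=\|\bbeta\|_0\log p/\sqrt n$ (so $\|\bbeta\|_0\le\|\bu_0\|_0$), it never compares to $\htheta_{\nu}$; instead it takes $\sqrt{n F_\theta}\langle\hbz,\bz\rangle^{-1}\hbz^\top\bep$ as the asymptotic pivot $Z_n$ (normal conditional on $\bX$ with conditional variance $1+o_\P(1)$ by the variance consistency $C_0^2\langle\hbz,\bz\rangle/n\to^\P 1$, $C_0^2\|\hbz\|_2^2/n\to^\P 1$), and bounds the remaining bias $-\nu/n\cdot\langle\ba_0,\bhlasso\rangle-\langle\hbz,\bz\rangle^{-1}\hbz^\top\bX\bQ\bhlasso$ by $O_\P(\|\bbeta\|_0\log p/\sqrt n)$ via \eqref{eq:hbu-ell_infty} and the $\ell_1$ rate for $\bhlasso$. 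In the complementary regime it does perform the subtraction you propose, but crucially keeps $\by-\bX\lasso$ as a single block and bounds $\langle\hbz-\bz_0,\by-\bX\lasso\rangle$ and $\langle\bz_0,\by-\bX\lasso\rangle$ directly through the primary-Lasso KKT conditions, rather than separating $\bep$ from $\bX\bQ\bhlasso$; this yields $O_\P(\min\{\|\bu_0\|_0\log p/\sqrt n,\,C_0\|\bu_0\|_1\sqrt{\log p}\})$ without ever invoking a Gaussian concentration bound on $\langle\hbz-\bz_0,\bep\rangle$. Note also that in the theorem statement $Z_n$ need not be the same random variable across regimes, only that $Z_n\to^d N(0,1)$; the paper silently exploits this freedom, whereas your argument tacitly fixes $Z_n$ to be (a rescaled) $T_n$.

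A secondary issue: your claim that a Cauchy--Schwarz argument supplies the $\|\bu_0\|_0$ alternative for bracket $\mathrm{II}$ is not substantiated, and I do not see how it would produce the $\|\bbeta\|_0\wedge\|\bu_0\|_0$ factor inside $r_n$. The $\|\bbeta\|_0$ component in $r_n$ comes from the bias analysis with $\hbz$ as the reference (regime (i)), not from refining the $\htheta_{\nu,\hbz}-\htheta_{\nu}$ comparison; recognizing that the two pieces of $r_n$ require structurally different arguments is precisely the point your proposal misses.
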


\Cref{thm:unknown-sigma} is proved in
\Cref{sec:proofs-unknown-Sigma}.
The sparsity condition \eqref{sparsity-conditions-consistency}
is mild: it only requires that the squared prediction rate
for $\bbeta$
and $\bgamma$ converge to 0.
Under this condition, \Cref{thm:unknown-sigma} 
shows that estimation of $\theta$ is possible, 
for general directions $\ba_0\neq 0$, 
at the rate $n^{-1/2}(1+ r_n)$
where $r_n$ is given by \eqref{eq:r_n-minimum-of-three-sparsity}.
The rate $n^{-1/2}(1+r_n)$ is optimal as it matches the lower
bound in Proposition 4.2 of \cite{javanmard2018debiasing} 
for the estimation of $\theta = \beta_j$ in the canonical basis
directions $\ba_0=\bfe_j$ stated in \eqref{eq:lower-bound-Prop42-javanmard-montanari}.
Before \Cref{thm:unknown-sigma}, it was unknown whether
the lower bound \eqref{eq:lower-bound-Prop42-javanmard-montanari} 
can be attained (cf. for instance the discussion in Remark 4.3 of \cite{javanmard2018debiasing}).
\Cref{thm:unknown-sigma} closes this gap, extends the upper bound to general direction $\ba_0$, 
 and relaxes the $\ell_1$ bound on $\bSigma$ imposed in \cite{javanmard2018debiasing}.

The recent work \cite{cai2019individualized} proposes an alternative
construction of a score vector for general direction $\ba_0$
based on a quadratic program. This quadratic program is similar to the
construction in \cite{ZhangSteph14,JavanmardM14a}, with a modification to handle general
direction $\ba_0$, see \cite[equation (7), (8) and (10)]{cai2019individualized}.
The upper bounds in \cite[Corollaries 3 and 4]{cai2019individualized}
require $\|\bbeta\|_0\lesssim \sqrt{n}/\log p$ in contrast with
\Cref{thm:unknown-sigma} where $\|\bbeta\|_0 \ggg \sqrt n$ is allowed.

Another recent line of research 
\cite{bradic2018testability,zhu2018significance,zhu2018linear}
consider the construction of confidence intervals for $\ba_0^\top\bbeta$
for general directions $\ba_0$ without sparsity assumption on $\bbeta$.
These works consider the setting where $\bbeta$ is arbitrary but bounded in the sense
that $\|\bSigma^{1/2}\bbeta\|_2^2\le C$ for some constant $C\asymp\sigma^2$
independent of $n,p$.
In this setting, $\|\bbeta\|_0\log(p)/n\to 0$ is violated and consistent estimation of
$\bbeta$ or $\bQ_0\bbeta$ is not possible.
Assuming $\|\bSigma^{1/2}\bbeta\|_2^2\le C$
instead of a sparsity assumption on $\bbeta$ leads to different minimax rates:
The rate in \cite[Corollary 5]{bradic2018testability} does not depend on
$\|\bbeta\|_0$ but depends implicitly on $\|\bSigma^{1/2}\bbeta\|_2$ instead;
hence the rate in \Cref{thm:unknown-sigma} and
\eqref{eq:lower-bound-Prop42-javanmard-montanari}
is not directly comparable to theirs.
On a higher level, this line of research is fundamentally different
than the present work: 
\cite{bradic2018testability,zhu2018significance,zhu2018linear} leverage
the assumption that the nuisance part of the signal, $\bX\bQ_0\bbeta$,
is bounded with componentwise variance of the same order as that of the noise,
without attempting to estimate the nuisance part of the signal.
In contrast, \Cref{thm:unknown-sigma} attemps to estimate the nuisance parameter
and the nuisance part of the signal
$\bX\bQ_0\bbeta$ is allowed to have arbitrarily large componentwise variance.

Next, we 
prove that the de-biased estimator in \Cref{thm:unknown-sigma} for unknown $\bSigma$, 
and the ideal $\htheta_{\nu}$ in \eqref{LDPE-df} for known $\bSigma$ as well, 
would not achieve the same rate without the degrees-of-freedom adjustment. 
Compared with \Cref{th-1}, {\Cref{thm:unknown-sigma-lb} below is somewhat less explicit but} 
the sign consistency of the Lasso is no longer required. 

\begin{restatable}{theorem}{thmUnknownSigmaLowerBound}
    \label{thm:unknown-sigma-lb}
Let $\bSigma$, $\eps_n$, $\scrB_n$, $r_n$, $\hbz$ and $\hat\theta_{\nu,\hbz}$ be as in \Cref{thm:unknown-sigma}. 
Let $s_0$ and $s_\Omega$ be positive integers satisfying $s_0\log(p)/n\le \eps_n$, $s_\Omega\le s_0$ and 
\bel{condition-unbounded-s_0-s_Omega}
n^{-1} s_0 s_{\Omega}^{1/2} \sqrt{\log(8p/s_0)}\ggg 1+ s_\Omega \log(p)/n^{1/2}. 
\eel
If $\nu=0$, which means no degrees-of-freedom adjustment in \eqref{de-biased-estimate-estimated-score-z}, 
then there exist $(\bbeta,\ba_0)\in \scrB_n$ such that $\|\bbeta\|_0=s_0$, 
$\|\bu_0\|_0=\|\bSigma^{-1}\ba_0\|_0=s_\Omega$, and 
$\sqrt{nF_\theta} (\htheta_{\nu,\hbz}-\theta)/(1+r_n)$ is stochastically unbounded. 
Moreover, the above statement also holds when $\hat\theta_{\nu,\hbz}$ is replaced by 
$\htheta_{\nu}$ in \eqref{LDPE-df}. 
\end{restatable}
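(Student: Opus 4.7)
The plan is to reduce both parts of the claim---for the ideal $\htheta_{\nu=0}$ in \eqref{LDPE-df} and for $\htheta_{\nu=0,\hbz}$---to \Cref{th-1} by exhibiting bad instances $(\bbeta,\ba_0)\in\scrB_n$ on which the bias identified in \eqref{th-1-2} dominates $1+r_n$ under \eqref{condition-unbounded-s_0-s_Omega}. The proof of \Cref{th-1} gives the quantitative expansion
\[\sqrt{nF_\theta}(\htheta_{\nu=0}-\theta) = T_n + \sqrt{F_\theta/n}\,s_0\lambda\,\langle (\ba_0)_S,(\bX_S^\top\bX_S/n)^{-1}\sgn(\bbeta_S)\rangle + O_\P(\eta_n),\]
where $T_n=O_\P(1)$ has a $t$-distribution and $\eta_n=o(1)$ under the sparsity hypothesis $s_0\log(p)/n\le\eps_n$, so it suffices to design $(\bbeta,\ba_0)$ so that the middle term is much larger than $1+r_n$.

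For the construction, take $\bSigma=\bI_p$ (which satisfies the spectrum condition), pick $S_\Omega\subset S\subset[p]$ with $|S_\Omega|=s_\Omega$ and $|S|=s_0$, set $\bbeta=\rho_n\sum_{j\in S}\eps_j\be_j$ with signs $\eps_j\in\{\pm 1\}$ and $\rho_n$ above the Lasso beta-min threshold so that \eqref{selection} holds (Wainwright's irrepresentable condition is trivial for identity design), and set $\ba_0=s_\Omega^{-1/2}\sum_{j\in S_\Omega}\eps_j\be_j$. This yields $\bu_0=\ba_0$, $C_0=1$, $\|\bu_0\|_0=s_\Omega$, $\|\bu_0\|_1=\sqrt{s_\Omega}$, and membership $(\bbeta,\ba_0)\in\scrB_n$. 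On the high-probability Wishart event that $(\bX_S^\top\bX_S/n)^{-1}$ concentrates around $\bI_{s_0}$ (valid since $s_0/n\to 0$), the inner product in the bias equals $\sqrt{s_\Omega}(1+o_\P(1))$ modulo a fluctuation of order $s_0/\sqrt n$. Combined with $F_\theta=\sigma^{-2}$ and $\lambda\asymp\sigma\sqrt{\log(8p/s_0)/n}$, the bias is of order $n^{-1}s_0\sqrt{s_\Omega\log(8p/s_0)}$, which by \eqref{condition-unbounded-s_0-s_Omega} is $\ggg 1+s_\Omega\log(p)/\sqrt n$, hence $\ggg 1+r_n$ since $r_n\le s_\Omega\log(p)/\sqrt n$. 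Therefore $\sqrt{nF_\theta}(\htheta_{\nu=0}-\theta)/(1+r_n)$ is stochastically unbounded, which proves the claim for $\htheta_{\nu=0}$ in \eqref{LDPE-df}.

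To extend to $\htheta_{\nu=0,\hbz}$, I would show $\htheta_{\nu=0,\hbz}-\htheta_{\nu=0}=o_\P((1+r_n)/\sqrt{nF_\theta})$ so that the unboundedness transfers. Via an equivalence argument analogous to \eqref{hat-theta-equivalence}, this difference reduces to controlling $\langle\hbz-\bz_0,\by-\bX\lasso\rangle/\|\bz_0\|_2^2$ and the change in the denominator, both handled by \eqref{eq:hbu-ell_1-rate}-\eqref{eq:hbu-ell_infty} combined with standard Lasso prediction and $\ell_1$ bounds. The main obstacle is to ensure this reduction preserves, rather than cancels, the deterministic bias in \eqref{th-1-2}: one must mirror the decomposition underlying the upper bound of \Cref{thm:unknown-sigma}, isolating the common bias contribution $\langle\ba_0,\lasso-\bbeta\rangle$ appearing in both estimators (equivalent to \eqref{th-1-2a} via the Lasso KKT conditions on $S$ under sign consistency), and bounding the score-estimation residuals by $o_\P(1+r_n)$. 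A secondary subtlety is that when $s_\Omega\lesssim\sqrt{n/\log p}$ the Wishart fluctuation can dominate the mean in the construction above; in that regime an alternative construction with a mildly correlated $\bSigma$ (in the spirit of the instance displayed in \Cref{fig-e_j}) would be required so that the mean of $\langle(\ba_0)_S,(\bX_S^\top\bX_S/n)^{-1}\sgn(\bbeta_S)\rangle$ is not drowned by its fluctuations.
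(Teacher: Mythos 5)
Your approach is a genuinely different route from the paper's and, structurally, it has merit, but there are concrete gaps worth flagging.

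The paper's proof of \Cref{thm:unknown-sigma-lb} deliberately \emph{avoids} sign consistency of the Lasso: it uses the decomposition from \Cref{thm:teaser} to isolate the term
$\sqrt{nF_\theta}\bigl(1-(1-|\Shat|/n)^{-1}\bigr)\|\bz_0\|^{-2}\langle\bz_0,\by-\bX\lasso\rangle$
and then lower-bounds $\langle\bz_0,\by-\bX\lasso\rangle$ directly from the Lasso KKT conditions, $\langle \bz_0,\by-\bX\lasso\rangle/(\lambda n) = \langle\bu_0,\pa\|\lasso\|_1\rangle \ge \|\bu_0\|_1 - 2\sum_j |(\bu_0)_j|I\{\lasso_j\beta_j\le 0\}$, and controls the second sum using only an $\ell_2$ error bound for the Lasso together with a mild constraint on $\min_{j\in T}|\beta_j|$. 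Your route instead re-derives the expansion underlying \Cref{th-1} and therefore needs $\sgn(\lasso)=\sgn(\bbeta)$; for $\bSigma=\bI_p$ Wainwright's conditions can indeed be arranged, but this is heavier machinery than needed and should at minimum be verified rather than asserted. The text preceding the theorem explicitly highlights that the point of \Cref{thm:unknown-sigma-lb} is to drop the sign-consistency hypothesis of \Cref{th-1}, so the reduction you propose runs somewhat against the grain of the result.

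Beyond the structural difference, two concrete issues remain.

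First, your ``secondary subtlety'' about the Wishart fluctuation is based on an overestimate. You bound $|\langle(\ba_0)_S,\bE\,\sgn(\bbeta_S)\rangle|$ (with $\bE=(\bX_S^\top\bX_S/n)^{-1}-\bI$) by $\|\bE\|_S\|\sgn(\bbeta_S)\|_2=O_\P(s_0/\sqrt n)$, which is a worst-case operator-norm bound, not the typical magnitude of the bilinear form. For a Gaussian design the fluctuation of $\langle \bu,(\bX_S^\top\bX_S/n)\bv\rangle$ around $\langle\bu,\bv\rangle$ has standard deviation of order $\|\bu\|_2\|\bv\|_2/\sqrt n = O(\sqrt{s_0/n})$ when $\|\bu\|_2=1$ and $\|\bv\|_2=\sqrt{s_0}$, and the same order applies to $(\bX_S^\top\bX_S/n)^{-1}$ after linearization. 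Since $\sqrt{s_\Omega}\ge 1\ggg\sqrt{s_0/n}$ in the entire range covered by \Cref{assumption:main}, the mean always dominates the fluctuation in your $\bSigma=\bI_p$ construction, and the alternative correlated-$\bSigma$ construction you suggest is unnecessary. Moreover, even if the fluctuation were of the larger order, it is independent of $T_n$ and would only reinforce unboundedness, so the concern would still be moot.

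Second, the transfer from $\htheta_{\nu=0}$ to $\htheta_{\nu=0,\hbz}$ is only sketched. The paper handles this cleanly by reusing the identity established in the proof of \Cref{thm:unknown-sigma},
\begin{equation*}
\sqrt{nF_\theta}(1-\nu/n)(\htheta_{\nu,\hbz}-\theta)
- \sqrt{nF_\theta}(1-\nu/n)(\htheta_{\nu}-\theta) = o_\P(1+r_n),
\end{equation*}
which is valid uniformly over $(\bbeta,\ba_0)\in\scrB_n$ and $0\le\nu\le\|\lasso\|_0$. This gives the transfer immediately. Your sketch identifies the right ingredients (\eqref{eq:hbu-ell_1-rate}--\eqref{eq:hbu-ell_infty} and Lasso KKT/prediction bounds) but leaves the control of the score-estimation residuals at the order $o_\P(1+r_n)$ unproven, and the worry about possible cancellation between the estimated-score residual and the deterministic bias is not resolved. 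Citing the already-established $o_\P(1+r_n)$ equivalence from \Cref{thm:unknown-sigma} is the cleanest way to close this.
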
 

\Cref{thm:unknown-sigma-lb} is proved in 
\Cref{sec:proofs-unknown-Sigma}.
As an example, 
if $s_\Omega = \eps_n \sqrt{n}/\log(p)$ and $s_0 \ge \eps_n^{-1} n^{3/4}$
for some $\eps_n\to 0$, then
\begin{itemize}
    \item
        \eqref{condition-unbounded-s_0-s_Omega} holds so that,
without adjustment, $\sqrt{n F_\theta}(\htheta_{\nu=0,\hbz}-\theta)$
is unbounded by \Cref{thm:unknown-sigma-lb}
for some $(\bbeta,\ba_0)\in\calB_n$ with 
$\|\bbeta\|_0=s_0$ and $\|\bu_0\|_0 = s_\Omega$.
\item
    $r_n\to 0$ hence 
$\sqrt{n F_\theta}(\htheta_{\nu=|\Shat|,\hbz}-\theta)\to^d N(0,1)$
        by \Cref{thm:unknown-sigma} and the de-biased estimate
        adjusted with $\nu=\|\lasso\|_0$ is efficient
        for all $(\bbeta,\ba_0)\in\calB_n$
        with $\|\bbeta\|_0\le s_0, \|\bu_0\|_0\le s_\Omega$.
\end{itemize}

\subsection{Unknown $\bSigma$ and canonical basis directions $\ba_0=\be_j$}

For convenience we provide here the notation and corollary of \Cref{thm:unknown-sigma}
in the case of canonical basis vector $\ba_0=\be_j$ for some $j\in\{1,\dots,p\}$.
{We} denote $(\bu_0,\bz_0,\hbz)$ by $(\bu_j,\bz_j,\hbz_j)$ 
{and write the linear model
\eqref{eq:LM-gamma} as} 
\begin{equation}
    \label{eq:LM-gamma-j}
    \bX\be_j = \bX^{(-j)} ~ \bgamma^{(j)} + \bz_j
\end{equation}
where $\bX^{(-j)}\in\R^{n\times (p-1)}$ is the matrix $\bX$ with $j$-th column removed,
$\bgamma^{(j)}\in\R^{p-1}$.
The corresponding vector $\bu_0$ is 
$\bu_j = (\bSigma^{-1})_{jj}^{-1}\bSigma^{-1}\be_j
$ which is related to $\bgamma^{(j)}$ by
$(\bu_j)_j = 1$ and $(\bu_j)_{(-j)} = - \bgamma^{(j)}$.
The ideal score vector $\bz_0$ becomes $\bz_j=(\bSigma^{-1})_{jj}^{-1}\bX\bSigma^{-1}\be_j$
and has iid $N(0,(\bSigma^{-1})_{jj}^{-1})$ entries independent of
$\bX^{(-j)}$. 
For a given estimator $\hbgamma^{(j)}$ of $\bgamma^{(j)}$,
the score vector \eqref{estimated-score-vector-z} is then
$\hbz_j = \bX\be_j -\bX^{(-j)} \hbgamma^{(j)}$ and the de-biased estimate
\eqref{de-biased-estimate-estimated-score-z} reduces to
\begin{equation}
    \label{de-biased-estimate-hbeta-j}
\hat \beta_j^{\text{\tiny (de-biased)}} = 
(\lasso)_j + \frac{\langle \hbz_j, \by-\bX\lasso\rangle}{(1-\nu/n)\langle \hbz_j, \bX\be_j\rangle}.
\end{equation}
which corresponds to the proposal in \cite{ZhangSteph14} modified
with the degrees-of-freedom adjustment $(1-\nu/n)$.
For $\ba_0=\be_j$, the Lasso estimator \eqref{gamma-hat} becomes 
\begin{equation}
    \hbgamma^{(j)} = 
\argmin_{\bgamma\in\R^{p-1}}
\left\{
    \frac{1}{2 n}
    \|\bX\be_j - \bX^{(-j)}\bgamma\|_2^2
    + \hat\tau_j \lambdabar \|\bgamma\|_1
\right\}.
\label{scaled-lasso-gamma-j}
\end{equation}
with recursive solution $\hat\tau_j = \|\bX\be_j - \bX^{(-j)}\bgamma\|_2/n^{1/2}$ in 
the scaled Lasso \cite{sun2012scaled} or any estimate $\hat\tau_j$ satisfying 
$1+o_{\P}(1)\le (\bSigma^{-1})_{j,j}\hat\tau_j^{2}\le O_{\P}(1)$. 
As the choice of $\bu$ in \eqref{bu} for $\ba_0=\bfe_j$ is $\bu=\bfe_j$, 
the proof of \Cref{thm:unknown-sigma} can be modified to allow {$\lambdabar = A\lamuniv$ with} $A>1$,
since in this case 
$\E \|\bX\bQ\ba_0\|_2^2/n$ 
    is bounded from the above by 1.

\begin{corollary}
Assume that $\bSigma_{jj}\le1$ for all $j\in[p]$ and 
that the spectrum of $\bSigma$ is uniformly 
bounded away from 0 and $\infty$; e.g. $\max(\|\bSigma\|_{op},\|\bSigma^{-1}\|_{op})\le 2$.
Let $\lambda = 1.01\sigma\sqrt{2\log(8 p/s_0)/n}$ for the Lasso \eqref{lasso}.
Consider the Scaled Lasso in \eqref{scaled-lasso-gamma-j}
with $\lambdabar = 1.01\sqrt{2\log(p)/n}$,
the corresponding score vector $\hbz_j$ and de-biased estimate 
$\hbeta_j^{\text{\tiny (de-biased)}}$ in \eqref{de-biased-estimate-hbeta-j}
with $\nu=\|\lasso\|_0$. 
Then for any $j$,
\begin{equation}
    \frac{(\|\bbeta\|_0 \vee \|\bSigma^{-1}\be_j\|_0)     \log(p)}{n} \to 0
    \text{ and }
    \frac{(\|\bbeta\|_0 \wedge \|\bSigma^{-1}\be_j\|_0)   \log(p)}{\sqrt n}\to 0
\end{equation}
implies 
$\sqrt{n}(\bSigma^{-1})_{jj}^{-1/2} (\hbeta_j^{\text{\tiny (de-biased)}} - \beta_j) 
\to^d N(0, \sigma^2)$.
\end{corollary}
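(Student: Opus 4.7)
The plan is to recognize the corollary as a direct specialization of Theorem~\ref{thm:unknown-sigma} to $\ba_0 = \be_j$, verify its hypotheses in this case via Proposition~\ref{prop:dantzig}, and then rewrite the conclusion in the scaling stated.

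First I would identify the relevant quantities when $\ba_0 = \be_j$. From \eqref{u_0}, $\bu_0 = (\bSigma^{-1})_{jj}^{-1}\bSigma^{-1}\be_j$, so $\|\bu_0\|_0 = \|\bSigma^{-1}\be_j\|_0$, $C_0 = (\bSigma^{-1})_{jj}^{1/2}$, and $F_\theta = 1/(\sigma^2 (\bSigma^{-1})_{jj})$. The canonical choice \eqref{bu} reduces to $\bu = \be_j$ since $\ba_0$ has a single nonzero coordinate, and the auxiliary linear model \eqref{eq:LM-gamma} becomes \eqref{eq:LM-gamma-j}. Consequently, \eqref{scaled-lasso-gamma-j} is exactly \eqref{gamma-hat} with $A = 1.01$, which is legitimate thanks to the relaxation from $A > 2$ to $A > 1$ noted in the paragraph preceding the corollary. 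Given the spectral bounds on $\bSigma$ and the sparsity assumption $\|\bSigma^{-1}\be_j\|_0 \log(p)/n \to 0$, Proposition~\ref{prop:dantzig}(1) applies to the Scaled Lasso $\hbgamma^{(j)}$ with recursively defined $\hat\tau_j$, yielding the required rate conditions \eqref{eq:hbu-ell_1-rate}--\eqref{eq:hbu-ell_infty}.

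Next I would verify $(\bbeta, \be_j) \in \scrB_n$ and $r_n \to 0$. Setting $\eps_n = (\|\bbeta\|_0 \vee \|\bSigma^{-1}\be_j\|_0) \log(p)/n$, which tends to zero by the first sparsity hypothesis, places $(\bbeta, \be_j)$ in $\scrB_n$ using the first branch of the minimum in \eqref{sparsity-conditions-consistency}. For the remainder $r_n$ in \eqref{eq:r_n-minimum-of-three-sparsity}, the first term of the minimum equals $(\|\bbeta\|_0 \wedge \|\bSigma^{-1}\be_j\|_0) \log(p)/\sqrt n$, which tends to zero by the second sparsity hypothesis of the corollary. Theorem~\ref{thm:unknown-sigma} then delivers
\[
\sqrt{n F_\theta}\bigl(\hbeta_j^{\text{\tiny (de-biased)}} - \beta_j\bigr) \to^d N(0,1),
\]
and substituting $F_\theta = \sigma^{-2}(\bSigma^{-1})_{jj}^{-1}$ rearranges to the stated $\sqrt{n}(\bSigma^{-1})_{jj}^{-1/2}(\hbeta_j^{\text{\tiny (de-biased)}} - \beta_j) \to^d N(0, \sigma^2)$.

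The only nontrivial point, beyond routine specialization, is justifying the use of the smaller constant $A = 1.01$ (rather than $A > 2$) in the Scaled Lasso penalty; this hinges on the fact that, for $\ba_0 = \be_j$, the projection $\bQ = \bI_p - \be_j \be_j^\top$ simply deletes the $j$-th coordinate, so the column norm bound controlling the KKT conditions in the proof of Proposition~\ref{prop:dantzig} sharpens, as indicated in the paragraph preceding the corollary. The remainder of the argument is a direct transcription of Theorem~\ref{thm:unknown-sigma} under the chosen tuning.
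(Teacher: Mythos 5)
Your proposal is correct and follows exactly the route the paper intends: the corollary is a direct specialization of Theorem~\ref{thm:unknown-sigma} to $\ba_0 = \be_j$, with $\bu = \be_j$, $\bQ = \bI_p - \be_j\be_j^\top$, and $\bu_0 = (\bSigma^{-1})_{jj}^{-1}\bSigma^{-1}\be_j$, the requirements \eqref{eq:hbu-ell_1-rate}--\eqref{eq:hbu-ell_infty} checked via Proposition~\ref{prop:dantzig}(1) with the sharpened constant $A>1$ from \eqref{upper-bound-XQ-e_k}, and the conclusion obtained by noting $r_n\to 0$ under the stated sparsity hypotheses and substituting $F_\theta = \sigma^{-2}(\bSigma^{-1})_{jj}^{-1}$. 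The paper leaves this specialization implicit, and your argument supplies precisely the expected details.
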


\begin{remark}
    \label{remark:tuning}
    {
    The tuning parameters of the present
    section are chosen as 
    $\lambda=1.01\sigma\sqrt{2\log(8p/s_0)/n}$ for simplicity of the presentation. As the results of the present section are consequences of \Cref{thm:main}
    in the next section, more general
    tuning parameters
    of the form \eqref{eq:lambda-larger-than-sigma-lambda_0-over-alpha}
    are also allowed and the resulting
    constants in the theorems would 
    then
    depend on certain constants $\eta_2\in (0,1),\eta_3>0$.
    }
\end{remark}

\section{Theoretical results for known $\bSigma$}
\label{sec-3-main-results}

In this section, we prove that the degrees-of-freedom adjusted LDPE 
in (\ref{LDPE-df}) indeed removes the 
bias of the Lasso for the estimation of a general linear functional 
$\theta = \langle \ba_0,\bbeta\rangle$
when $(s_0/n)\log(p/s_0)$ is sufficiently small and a sparse Riesz condition (SRC) \cite{ZhangH08} holds 
on the population covariance matrix $\bSigma$ of the Gaussian design. 

The SRC is closely related to the restricted isometry property (RIP) \cite{CandesT05,CandesT07}. 
While the RIP is specialized for nearly uncorrelated design variables in the context of compressed 
sensing, the SRC is more suitable in analysis of data from observational studies or experiments 
with higher correlation in the design. For example, the SRC allows an upper sparse eigenvalue greater than 2. 
For $p\times p$ positive semi-definite matrices $\bM$, integers $1\le m\le p$ and 
a support set $B\subset\{1,\ldots,p\}$, define a lower sparse eigenvalue as 
\bel{sparse-eigen}
\phi_{\min}(m,B;\bM) = \min_{A\subset[p]:|A\setminus B|=m}\phi_{\min}\Big(\bM_{A,A}\Big)
\eel
and an upper sparse eigenvalue as 
\bel{sparse-eigen+} 
\phi_{\max}(m,B;\bM) = \max_{A\subset[p]: |A\setminus B|=m}\phi_{\max}\Big(\bM_{A,A}\Big), 
\eel
where $\phi_{\min}(\bM)$ and $\phi_{\max}(\bM)$ are respectively 
the smallest and largest eigenvalues of symmetric matrix $\bM$. 
Define similarly the sparse condition number by
\bel{def-condition-number} &&
\phi_{\rm cond}(m;B,\bM) = \max_{A\subset [p] : |A \setminus {B}|\le (1\vee m)}
\big\{\phi_{\max}(\bM_{A,A})/\phi_{\min}(\bM_{A,A})\big\}. 
\eel

Recall that $S$ is the support of $\bbeta$ and $s_0=|S|$.
For a precise statement of the sample size requirement for our main results,
we will assume the following.

\begin{assumption}
    \label{assumption:main}
    Assume that $\bSigma$ is invertible with diagonal elements at most 1, i.e., 
    $\max_{j=1,...,p}\Sigma_{jj}\le 1$.
    Consider positive integers $\{m,n,p,k\}$ and positive constants
    $\constants$ with $\eta_2, \eta_3 \in(0,1)$. 
    Set the tuning parameter of the Lasso by 
    \begin{align}
        \label{eq:lambda-larger-than-sigma-lambda_0-over-alpha}
            \lam = \eta_2^{-1}(1+\eta_3)\sigma\lam_0,
            \qquad
            \text{where}
            \qquad
            \lam_0=\sqrt{(2/n)\log(8p/k)}.
    \end{align}
Define $\{\tau_*,\tau^*\}$ by
$\tau_* = (1-\eps_1-\eps_2)^2$, $\tau^*=(1+\eps_1+\eps_2)^2$
and assume that
\bel{SRC-population-final}
{s_0}+k 
< \frac{(1-\eta_2)^2 2m}{(1+\eta_2)^2 \big\{(\tau^*/\tau_*)\phi_{\rm cond}(m+k;S,\bSigma) - 1 \big\} }
\eel
and $\rho_*\le \phi_{\min}(m+k,S;\bSigma)$ hold.
Assume that $\lambda_0\sqrt{s_*} \le 1$ where $s_*=s_0+m+k$, as well as
\begin{align}
    &2(m+k)+s_0+1 \le (n-1)\wedge (p+1),
    \label{condition-for-full-rank-2m}
    \\
    &\eps_1+\eps_2<1,
    \quad
    \eps_3+\eps_4=\eps_2^2/8,
    \label{conditions-epsilons-1}
    \\
    &s_0+m+k +1 \le \min(p+1,\eps_1^2n/2),
    \quad
    \log \binom{p - s_0}{m+k}\le\eps_3n. 
    \label{conditions-epsilons-2}
\end{align}
\end{assumption}
Typical values of $k, m$ and $\constants$ are given after
\Cref{corollary:well-adjusted-|Shat|} below.
As will become clear in the proofs in
\Cref{sec:proof-probabilistic-lemma},
the integer $k$ above is an upper bound
on the cardinality of the set 
\begin{equation}
    \label{eq:set-B-highly-correlated}
B=\{j\in[p]: |\bep^\top\bx_j|/n \ge \eta_2\lambda\},
\end{equation}
i.e., the set of covariates that correlate highly with the noise.
If $k=1$ then $\lambda = \eta_2^{-1}(1+\eta_3)\sigma\sqrt{(2/n)\log(8p)}$
and the set $B$ is empty with high probability.
The integer $m$ is, with high probability, an upper bound on the cardinality of the set
$\supp(\lasso)\setminus (S\cup B)$. In other words,
the support of $\lasso$ contains at most $m$ variables
that are neither in the true support $S$ nor in the set $B$ of highly correlated
covariates.
These statements are made rigorous in
\Cref{sec:proof-false-positive-1,sec:proof-false-positive-2}. 
Results of the form $|\Shat| = O_\P(s_0)$
have appeared before 
{for the Lasso, see for instances 
\cite[Theorem 1]{ZhangH08}, 
\cite[Eq. (7.9)]{bickel2009simultaneous},
\cite[Corollary 2 (ii)]{zhang2012general} and
\cite[Theorem 3]{belloni2014}. 
Among these existing bounds, the theory derived in the present paper is closest to 
\cite[Corollary 2 (ii)]{zhang2012general} where a bound of the form $|\Shat| = O_\P(s_0)$ 
is derived under a condition on the upper sparse eigenvalue \eqref{sparse-eigen+} 
after a prediction error bound under a weak restricted eigenvalue condition. 
They depart from other existing bounds of the form $|\Shat| = O_\P(s_0)$ 
in several ways.
The bounds in \cite[Theorem 1]{ZhangH08}} 
requires the tuning parameter
to be set as a function of the sparse eigenvalues of $\bX^\top\bX/n$.
The bound from
\cite{bickel2009simultaneous} involves $\phi_{\max}(\bX^\top\bX/n)$
which is unbounded if $p/n\to+\infty$ for Gaussian designs.
The bound \cite[Theorem 3]{belloni2014} tackles tuning parameters
larger than $\sigma\sqrt{2\log(p)/n}$ but does not provide guarantees
for smaller tuning parameters of order $\sigma\sqrt{2\log(8p/k)/n}$.
The theory developed for the present paper in
\Cref{sec:proof-probabilistic-lemma}
improves upon
these aforementioned references:
The theory only requires bounds on sparse condition number
(cf. the SRC condition \eqref{SRC-population-final}),
the tuning parameters need not depend on the sparse eigenvalues,
and small tuning parameters of order $\sigma\sqrt{2\log(8p/k)/n}$
are allowed.
Furthermore, the theory in 
\Cref{sec:proof-probabilistic-lemma}
clearly separates the roles
of $s_0,k$ and $m$: $k$ is an upper bound on the cardinality
of the set \eqref{eq:set-B-highly-correlated} of covariates highly correlated
with the noise, $m$ is an upper bound on $\supp(\lasso)\setminus(S\cup B)$,
and consequently $\|\lasso\|_0\le s_0 + k + m$.

\paragraph{Stochastically bounded $O_\P(\cdot)$ notation}
In the following results, we consider an asymptotic regime with growing $\{s_0, m, k, n, p\}$
such that
\begin{equation}
    \label{asymptotic-regime}
    p/k\to +\infty, \qquad s_*\lambda_0^2 \to 0
\end{equation}
where $s_*=s_0+m+k$.
This means that we consider a sequence of regression problems \eqref{LM}
indexed by $n$ and $\{s_0, m, k, p\}$ are functions of $n$
such that \eqref{asymptotic-regime} holds and \Cref{assumption:main} is satisfied 
for all $n$ with constants $\constants$ independent of
$n$.
For a deterministic sequence $a_n$, we write $W_n=O_\P(a_n)$
if the sequence of random variables $(W_n)$ is such that for any arbitrarily small $\gamma>0$,
there exists constants $K,N$ depending on $\gamma$ and $\constants$
such that for all $n\ge N$, $\P(W_n > K) \le \gamma.  $
We also write $W_n=o_\P(1)$ if $W_n=O_\P(a_n)$ for some $a_n\to 0$.
Under the above \Cref{assumption:main}, our main result is the following.

\begin{theorem}
    \label{thm:main}
    Let \eqref{asymptotic-regime} and \Cref{assumption:main} be fulfilled.
    Let $F_\theta = 1/(\sigma C_0)^2$ be the Fisher information as in (\ref{Fisher}), and 
    $T_n = \sqrt{nF_\theta}\langle \bz_0,\bep\rangle/\|\bz_0\|_2^2$ so that $T_n$ has the $t$-distribution with 
    $n$ degrees of freedom. 
    For any random degrees-of-freedom adjustment $\nu\in[0,n]$
    we have
    \begin{equation*}
        \sqrt{n F_\theta}
        (1-\nu/n)(\htheta_{\nu}-\theta)
        = T_n +  \sqrt{F_\theta / n}
        \left\langle \ba_0, \lasso-\bbeta \right\rangle \left(|\Shat| - \nu\right)
        + O_\P\left(\lambda_0 \sqrt{s_*} \right)
        .
    \end{equation*}
    If the condition number $\phi_{\rm cond}(p;\emptyset,\bSigma)=\|\bSigma\|_{op}\|\bSigma^{-1}\|_{op}$ of the population 
    covariance matrix $\bSigma$ is bounded, then $O_\P\left(\lambda_0 \sqrt{s_*} \right)$ above 
    can be replaced by $O_\P\left(\lambda_0 \sqrt{s_0+k} \right)$ [by 
    $O_\P\left(\lambda_0 \sqrt{s_0} \right)$ when the penalty is chosen with 
    $k\lesssim s_0$ in \eqref{eq:lambda-larger-than-sigma-lambda_0-over-alpha}].
\end{theorem}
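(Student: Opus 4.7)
The plan is to first reduce the theorem, via a direct algebraic manipulation, to a single degrees-of-freedom identity involving the Lasso, and then to prove that identity through a Stein/Gaussian-interpolation argument. First, expand $\htheta_\nu-\theta$ using \eqref{LDPE-df} together with $\by - \bX\lasso = \bep - \bX(\lasso-\bbeta)$, and multiply by $\sqrt{nF_\theta}(1-\nu/n)$. The cross term $\sqrt{nF_\theta}\langle\bz_0,\bep\rangle/\|\bz_0\|_2^2$ is exactly $T_n$. Using the decomposition $\bX = \bX\bQ_0 + \bz_0\ba_0^\top$ (valid because $\ba_0^\top\bQ_0={\bf 0}$) one splits $\langle\bz_0, \bX(\lasso-\bbeta)\rangle = \langle\bz_0, \bX\bQ_0(\lasso-\bbeta)\rangle + \|\bz_0\|_2^2\langle\ba_0,\lasso-\bbeta\rangle$, so that the uncorrected initial-bias term $\sqrt{nF_\theta}\langle\ba_0,\lasso-\bbeta\rangle$ in the expansion cancels exactly against a matching contribution. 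After the dust settles, the theorem is equivalent to the single key identity
\begin{equation*}
\sqrt{nF_\theta}\,\frac{\langle\bz_0,\bX\bQ_0(\lasso-\bbeta)\rangle}{\|\bz_0\|_2^2}
+ \sqrt{F_\theta/n}\,|\Shat|\,\langle\ba_0,\lasso-\bbeta\rangle
= O_\P(\lambda_0\sqrt{s_*}),
\end{equation*}
which we refer to as $(\star)$ below. Note that the roles of both $\nu$ and $(1-\nu/n)$ drop out of this reduction: the entire degrees-of-freedom phenomenon is contained in the Stein-type identity $(\star)$.

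Identity $(\star)$ is the heart of the proof. The crucial structural observation is that $\bz_0=\bX\bu_0$ is \emph{independent} of $\bX\bQ_0$, because $\bu_0^\top\bSigma\bQ_0 = \ba_0^\top/C_0^2 - (\bu_0^\top\bSigma\bu_0)\ba_0^\top = {\bf 0}$, and conditionally on $\bX\bQ_0$ we have $\bz_0\sim N({\bf 0},\bI_n/C_0^2)$. Conditionally on $(\bX\bQ_0,\bep)$, the Lasso depends on $\bz_0$ only through $\by = \bX\bQ_0\bbeta + \theta\,\bz_0 + \bep$ and $\bX = \bX\bQ_0+\bz_0\ba_0^\top$. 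A formal Stein identity therefore gives
\begin{equation*}
E\bigl[\langle\bz_0,\bX\bQ_0(\lasso-\bbeta)\rangle\bigm|\bX\bQ_0,\bep\bigr]
= C_0^{-2}\,E\bigl[\operatorname{tr}\bigl(\bX\bQ_0\,\nabla_{\bz_0}\lasso\bigr)\bigm|\bX\bQ_0,\bep\bigr].
\end{equation*}
Differentiating the KKT conditions on the (high-probability) event where the sign pattern of $\lasso$ is locally constant shows that $\bX\bQ_0\,\nabla_{\bz_0}\lasso = -\theta\,\bP_\Shat + \mathrm{(remainder)}$, where $\bP_\Shat$ is the orthogonal projection onto the column span of $\bX_\Shat$, so its trace equals $-\theta|\Shat|$ up to terms controlled by $\|\bX(\lasso-\bbeta)\|_2$. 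Replacing $\theta=\langle\ba_0,\bbeta\rangle$ by $\langle\ba_0,\lasso\rangle$ at cost $O_\P(\lambda_0\sqrt{s_*})$ reproduces the target term in $(\star)$.

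The main technical obstacle is that this formal computation must be upgraded to a pointwise stochastic bound, which is delicate because (i) the Lasso map is only piecewise smooth in $\bz_0$ so strict Stein differentiation is not globally valid, and (ii) we need an $O_\P(\cdot)$ deviation, not just an in-expectation statement. The plan is to follow the paper's key tool, a Gaussian interpolation path as in \Cref{sec:proof-path}: with $\bz_0'$ an independent copy of $\bz_0$ and $\bz_0(t)=\cos t\,\bz_0 + \sin t\,\bz_0'$ for $t\in[0,\pi/2]$, a Slepian-type identity of the form $\int_0^{\pi/2}\tfrac{d}{dt} H\bigl(\bz_0(t),\bz_0'(t)\bigr)\,dt$ converts $(\star)$ into a bound on a smooth functional of the pair $(\bz_0,\bz_0')$, bypassing the non-differentiability of the Lasso on measure-zero sets and producing a sub-Gaussian tail via Borell--TIS concentration applied along the path. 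Combining this with standard Lasso guarantees $\|\lasso-\bbeta\|_1=O_\P(s_*\lambda_0)$, $\|\bX(\lasso-\bbeta)\|_2^2/n = O_\P(s_*\lambda_0^2)$ and $|\Shat|\le s_*$ — all valid under the sparse Riesz condition \eqref{SRC-population-final} and collected in the probabilistic lemma of \Cref{sec:proof-probabilistic-lemma} — yields the rate $O_\P(\lambda_0\sqrt{s_*})$. Chi-square concentration $\|\bz_0\|_2^2 = (n/C_0^2)(1+O_\P(n^{-1/2}))$ lets us replace the random denominator by its mean, and the sharpened rate under bounded $\phi_{\rm cond}(p;\emptyset,\bSigma)$ follows by invoking the improved Lasso prediction bound $\|\bX(\lasso-\bbeta)\|_2^2/n=O_\P((s_0+k)\lambda_0^2)$ available when the population covariance is globally well conditioned, which replaces $s_*$ by $s_0+k$ throughout the interpolation estimate.
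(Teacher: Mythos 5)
Your algebraic reduction to the identity $(\star)$ is correct and matches the first step of the paper's proof (compare \eqref{new-2} and \eqref{pf-th-1-1}), and the strategy of viewing the remainder through a Stein/Gaussian-interpolation lens is exactly the paper's mechanism. However, there is a genuine error in the formal gradient computation that changes the nature of the result.

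You claim that $\bX\bQ_0\,\nabla_{\bz_0}\lasso = -\theta\,\bP_{\Shat}+\mathrm{(remainder)}$, so the Stein trace equals $-\theta|\Shat|$, and then propose to replace $\theta=\langle\ba_0,\bbeta\rangle$ by $\langle\ba_0,\lasso\rangle$. This is wrong in two ways. First, the correct trace does not carry the coefficient $\theta$; differentiating the KKT conditions while tracking \emph{both} dependencies on $\bz_0$ (through $\by=\bX\bQ_0\bbeta+\theta\bz_0+\bep$ \emph{and} through $\bX=\bX\bQ_0+\bz_0\ba_0^\top$) gives, for the residual,
\begin{equation*}
\frac{\partial}{\partial(\bz_0)_j}\bigl(\by-\bX\lasso\bigr)
= \theta\,\be_j - \be_j\,\ba_0^\top\lasso - \bX\,\frac{\partial\lasso}{\partial(\bz_0)_j}
= -\be_j\,\langle\ba_0,\lasso-\bbeta\rangle - \bX\,\frac{\partial\lasso}{\partial(\bz_0)_j},
\end{equation*}
so $\theta$ cancels against $\ba_0^\top\lasso$ and what survives is $\langle\ba_0,\lasso-\bbeta\rangle$. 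This is precisely what \Cref{lm-gradient-of-Lasso} and \eqref{matrix-formula} say: the leading trace term is $-|\Shat|\,\langle\ba_0,\bh\rangle$ with $\bh=\lasso-\bbeta$, not $-\theta|\Shat|$. Second, even accepting your (incorrect) intermediate $-\theta|\Shat|$, the "replace $\theta$ by $\langle\ba_0,\lasso\rangle$" step would yield $\langle\ba_0,\lasso\rangle|\Shat|$, which is not the target $\langle\ba_0,\lasso-\bbeta\rangle|\Shat|$; the two differ by $\theta|\Shat|$, a quantity with no useful bound. The distinction matters for the whole theorem: $\langle\ba_0,\lasso-\bbeta\rangle$ is the \emph{initial bias} of the Lasso, of order $O_\P(\lambda_0\sqrt{s_*})$, and it is exactly this smallness that drives the phase-transition analysis in \Cref{corollary:unadjusted,thm:unadjusted-ell1-additional-assumption}. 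With $\theta$ in its place, the phenomenology collapses.

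Two smaller points worth noting. First, when you upgrade the formal Stein computation to a rigorous path argument, the quantity $|\Shat(t)|$ appears \emph{along the path}, not just at $t=0$; the paper needs \Cref{lemma:last-term-divergence} to control the oscillation $\int_0^{\pi/2}(\sin t)\,(|\Shat(t)|-|\Shat|)\,dt$, and your plan should flag this. Second, the differentiability a.e.\ along the path is not handed to you by the interpolation itself; it comes from the Lipschitz dependence of regularized least squares on the design (\Cref{lemma:lipschitzness}) together with \Cref{lemma:kkt-strict}, and your plan should lean on these (or an equivalent) rather than simply asserting that the path "bypasses" non-differentiability.
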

The result is proved in \Cref{sec:proof-main-part-(ii)}.
If $\lambda_0\sqrt{s_*}\to 0$ and $k/p\to 0$,
the above result implies that
$\sqrt{n F_\theta} (1-\nu/n)(\htheta_{\nu}-\theta)$ 
is within $o_\P(1)$ of $T_n$ of 
the $t$-distribution with $n$ degrees of freedom
if and only if
\begin{equation}
    \sqrt{F_\theta/n}\left\langle \ba_0, \lasso-\bbeta \right\rangle \left(|\Shat| - \nu\right)=o_\P(1).
    \label{remaining-bias}
\end{equation}
The left hand side of \eqref{remaining-bias} is negligible
either because the modified de-biasing scheme \eqref{LDPE-df}
is correctly adjusted with $\nu=|\Shat|$ (or $\nu\approx|\Shat|$) to account for the degrees of freedom
of the initial estimator $\lasso$,
or because the estimation error of the initial estimator
$\langle \ba_0, \lasso-\bbeta \rangle$ is significantly small.

The choice of degrees-of-freedom adjustment $\nu=|\Shat|$ 
ensures that the quantity \eqref{remaining-bias} is always equal to 0. 
This leads to the following corollary.

\begin{corollary}
    \label{corollary:well-adjusted-|Shat|}
    Let \eqref{asymptotic-regime} and \Cref{assumption:main} be fulfilled.
    With the notation from \Cref{thm:main},
    if $\nu=|\Shat|$ then
    \begin{equation}
        \sqrt{n F_\theta}
        \left(1-|\Shat|/n\right)\left(\htheta_{\nu=|\Shat|}-\theta\right)
        = T_n  + O_\P\left(\lambda_0 \sqrt{s_*}\right)
        .
        \label{eq:thm-main-second}
    \end{equation}
\end{corollary}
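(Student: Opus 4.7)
The plan is to derive the corollary as an immediate specialization of \Cref{thm:main}. The key observation is that \Cref{thm:main} provides the decomposition
$$
\sqrt{n F_\theta}(1-\nu/n)(\htheta_\nu - \theta) = T_n + \sqrt{F_\theta/n}\langle \ba_0,\lasso-\bbeta\rangle\bigl(|\Shat|-\nu\bigr) + O_\P(\lambda_0\sqrt{s_*})
$$
for any random $\nu\in[0,n]$. The whole point of choosing $\nu=|\Shat|$ is that the second term vanishes identically, since $|\Shat|-\nu = 0$ deterministically. Thus what remains to check is only that the choice $\nu=|\Shat|$ is an admissible degrees-of-freedom adjustment in the sense of \Cref{thm:main}.

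To this end, I would verify that $|\Shat|\in[0,n]$ holds with probability tending to one under \Cref{assumption:main}. As remarked after \Cref{assumption:main}, the proofs in \Cref{sec:proof-false-positive-1,sec:proof-false-positive-2} establish that with high probability the number of false positives $|\Shat\setminus(S\cup B)|$ is bounded by $m$, while $|B|\le k$ and $|S|=s_0$, so that $|\Shat|\le s_0+m+k=s_*$. Condition \eqref{condition-for-full-rank-2m} then gives $|\Shat|\le s_*<n-1$ on this high-probability event. On the complementary event, whose probability tends to 0 in the asymptotic regime \eqref{asymptotic-regime}, one can truncate by replacing $\nu=|\Shat|$ with $\nu=|\Shat|\wedge(n-1)$; this modification only affects an event of vanishing probability and is absorbed into the $O_\P(\lambda_0\sqrt{s_*})$ remainder.

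Having verified admissibility, the result follows by plugging $\nu=|\Shat|$ into the statement of \Cref{thm:main}, which makes the middle term zero and leaves exactly the bound in \eqref{eq:thm-main-second}. In the case of bounded condition number for $\bSigma$, the remainder can be sharpened from $O_\P(\lambda_0\sqrt{s_*})$ to $O_\P(\lambda_0\sqrt{s_0+k})$ (or $O_\P(\lambda_0\sqrt{s_0})$ when $k\lesssim s_0$), exactly as in the last sentence of \Cref{thm:main}.

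No genuine obstacle arises in proving the corollary itself; all of the analytic and probabilistic work sits inside \Cref{thm:main}. The only detail worth care is the truncation argument above, which ensures that substituting the random $\nu=|\Shat|$ into an expansion that a priori is stated for $\nu\in[0,n]$ is legitimate in the asymptotic regime \eqref{asymptotic-regime}.
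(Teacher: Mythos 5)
Your proposal is correct and is essentially the paper's own argument: plug $\nu=|\Shat|$ into \Cref{thm:main}, note that the middle term $\sqrt{F_\theta/n}\langle\ba_0,\lasso-\bbeta\rangle(|\Shat|-\nu)$ vanishes identically, and the corollary follows. The truncation step you add is harmless but unnecessary, since on the high-probability event $\Omega_2$ from \Cref{lm-7-probability-of-Omega_1-Omega_2} one already has $|\Shat|\le s_*<n$ (by \eqref{conditions-epsilons-2}), and in any case the unique Lasso solution satisfies $|\Shat|\le\min(n,p)$ almost surely.
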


Hence if $\lambda_0\sqrt{s_*}\to 0$ and $k/p\to0$,
the de-biasing scheme \eqref{LDPE-df} correctly adjusted with $\nu=|\Shat|$
enjoys asymptotic efficiency.
To highlight this fact and give an example of typical
values for $m,k$ and $\constants$ in \Cref{assumption:main},
let us explain how \Cref{corollary:well-adjusted-|Shat|}
implies \eqref{teaser-asymptotic-efficiency} of \Cref{thm:teaser}. 
Set $\eta_2^{-1}=\sqrt{1.01}$, $\eta_3=\sqrt{1.01}-1$ and $k=s_0$, so  that
the tuning parameter \eqref{eq:lambda-larger-than-sigma-lambda_0-over-alpha}
is equal to $\lambda$ defined in \Cref{thm:teaser}.
Set also $\eps_1=\eps_2=1/4$ so that $\tau_*=1/4, \tau^*=9/4$.
Under the assumptions of \Cref{thm:teaser}, the spectrum of $\bSigma$
is bounded away from 0 and $\infty$ (e.g. a subset of $[1/2,2]$) 
and the sparse condition number appearing in \eqref{SRC-population-final} is 
bounded (e.g. at most 4 respectively).
Next, set $m = C s_0$ for some large enough absolute constant $C>0$ chosen so that \eqref{SRC-population-final} holds;
this gives $s_*=s_0+m+k=(C+2)s_0$.
The conditions in \Cref{assumption:main} are satisfied
thanks to $\lambda_0\sqrt{s_*}\to 0$ and $k/p\to 0$.
By \Cref{lm-7-probability-of-Omega_1-Omega_2} we get $|\Shat|=O_\P(s_0)$. 
Then \eqref{teaser-asymptotic-efficiency}
is a direct consequence of \eqref{eq:thm-main-second}.

By \Cref{thm:main}, the unadjusted de-biasing scheme \eqref{LDPE} enjoys asymptotic efficiency 
for all fixed $\ba_0$ and $\bbeta$ with $\|\bbeta\|_0\le s_0$ 
if and only if \eqref{remaining-bias} holds with $\nu=0$,
i.e., if
\begin{equation}
    \sqrt{F_\theta/n}\left\langle \ba_0, \lasso - \bbeta\right\rangle |\Shat| = o_\P(1).
    \label{remaining-quantity-unbiased}
\end{equation}
By the Cauchy-Schwarz inequality,
$|\langle \ba_0, \lasso - \bbeta\rangle| \le C_0 \|\bSigma^{1/2}(\lasso - \bbeta)\|_2$. 
Under \Cref{assumption:main} or other typical conditions on the restricted eigenvalues of $\bSigma$
and the sample size, the population risk 
$\|\bSigma^{1/2}(\lasso - \bbeta)\|_2$
is of order $O_\P(\sigma\lambda_0\sqrt{s_*})$
which grants \eqref{remaining-quantity-unbiased} if $\lambda_0\sqrt{s_*} s_*/\sqrt n\to 0$.
This is the content of the following corollary which is formally proved in \Cref{sec:proof-corollary-unadjusted}.

\begin{corollary}[Unadjusted LDPE]
    \label{corollary:unadjusted}
    Let \eqref{asymptotic-regime} and \Cref{assumption:main} be fulfilled.
    With the notation from \Cref{thm:main},
    if $\nu=0$ then
    \begin{equation}
        \sqrt{n F_\theta}
        (\htheta_{\nu=0}-\theta)
        = T_n 
        + O_\P\left(\lambda_0\sqrt{s_*}\left(1+\frac{s_*}{\sqrt n}\right)\right)
        .
        \label{eq:corollary-unadjusted}
    \end{equation}
\end{corollary}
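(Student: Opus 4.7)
The plan is to specialize \Cref{thm:main} to $\nu=0$ and then show that the residual term
$\sqrt{F_\theta/n}\langle\ba_0,\lasso-\bbeta\rangle\,|\Shat|$
is controlled by standard oracle inequalities for the Lasso under \Cref{assumption:main}. Taking $\nu=0$ in \Cref{thm:main} gives
\begin{equation*}
\sqrt{nF_\theta}(\htheta_{\nu=0}-\theta)=T_n+\sqrt{F_\theta/n}\,\langle\ba_0,\lasso-\bbeta\rangle\,|\Shat|+O_\P(\lambda_0\sqrt{s_*}),
\end{equation*}
so it suffices to bound the middle term by $O_\P(\lambda_0\sqrt{s_*}\cdot s_*/\sqrt n)$.

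For the inner product, I would apply Cauchy--Schwarz in the $\bSigma$-geometry,
\begin{equation*}
|\langle\ba_0,\lasso-\bbeta\rangle|=|\langle\bSigma^{-1/2}\ba_0,\bSigma^{1/2}(\lasso-\bbeta)\rangle|\le C_0\,\|\bSigma^{1/2}(\lasso-\bbeta)\|_2,
\end{equation*}
and then use the identity $\sqrt{F_\theta}\,C_0=1/\sigma$ (which follows from $F_\theta=1/(\sigma C_0)^2$) to reduce the bound to
\begin{equation*}
\sqrt{F_\theta/n}\,|\langle\ba_0,\lasso-\bbeta\rangle|\,|\Shat|\le\frac{\|\bSigma^{1/2}(\lasso-\bbeta)\|_2\,|\Shat|}{\sigma\sqrt n}.
\end{equation*}

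The next step is to invoke the two ingredients already established en route to \Cref{thm:main}. Under \Cref{assumption:main}, the SRC and the choice \eqref{eq:lambda-larger-than-sigma-lambda_0-over-alpha} of $\lambda$ yield the standard Lasso population prediction bound $\|\bSigma^{1/2}(\lasso-\bbeta)\|_2=O_\P(\sigma\lambda_0\sqrt{s_*})$; the needed deterministic inequality together with the high-probability event on which the KKT-based noise condition and compatibility/RE-type control hold is exactly what drives the $O_\P(\lambda_0\sqrt{s_*})$ term in \Cref{thm:main}, so this can be borrowed verbatim. For the support size, the false-positive control sketched in \Cref{sec:proof-probabilistic-lemma} (via the referenced \Cref{lm-7-probability-of-Omega_1-Omega_2}) gives $|\Shat\setminus(S\cup B)|\le m$ on a high-probability event, and since $|S|\le s_0$ and $|B|\le k$ there, we obtain $|\Shat|\le s_*=s_0+m+k$ with $\P\to 1$, i.e., $|\Shat|=O_\P(s_*)$.

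Combining these two bounds, the middle term is $O_\P(\lambda_0\sqrt{s_*}\cdot s_*/\sqrt n)$, which added to the remainder $O_\P(\lambda_0\sqrt{s_*})$ yields \eqref{eq:corollary-unadjusted}. The only place that requires care is confirming that the high-probability events supporting both the prediction bound and the $|\Shat|\le s_*$ bound can be intersected with the event used in \Cref{thm:main} without degrading the $O_\P$ rate; since all of them arise from the same collection of Gaussian concentration statements built in the proof of \Cref{thm:main}, this is routine rather than substantive, which is why I expect no genuinely new obstacle here.
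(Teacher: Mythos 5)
Your proof is correct and follows essentially the same route as the paper: specialize \Cref{thm:main} to $\nu=0$, bound $|\langle\ba_0,\lasso-\bbeta\rangle|$ by Cauchy--Schwarz against $C_0\|\bSigma^{1/2}(\lasso-\bbeta)\|_2$ and the population prediction rate, and bound $|\Shat|\le s_*$. The only minor simplification in the paper is that both of these bounds (the population prediction rate and $|\Shat|\le s_*$) are already packaged together as conditions defining the single event $\Omega_2$ in \eqref{conds-limited}, so the intersection-of-events concern you raise at the end is resolved automatically rather than requiring a separate argument.
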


If $\lambda_0^2 (s_*)^{3}/n \to 0$ then
the right hand side of \eqref{eq:corollary-unadjusted}
converges in probability to $T_n$.
In this asymptotic regime, the degrees-of-freedom adjustment is not necessary
and the unadjusted \eqref{LDPE} enjoys asymptotic efficiency.
Note that although the adjustment
$\nu=|\Shat|$ that leads to the efficiency
of $\htheta_{\nu}$ in \Cref{corollary:well-adjusted-|Shat|}
is not necessary in this particular asymptotic regime, such adjustment does not harm either.
Since the practitioner cannot establish whether the asymptotic regime $\lambda_0^2 (s_*)^{3}/n \to 0$
actually occurs because $s_0$ and $s_*$ are unknown, it is still recommended to use the adjustment $\nu=|\Shat|$ as in \Cref{corollary:well-adjusted-|Shat|}
to ensure efficiency for the whole range of sparsity.

An outcome of \Cref{th-1} is that the unadjusted de-biasing scheme \eqref{LDPE} cannot
be efficient in the regime \eqref{regime-unadjusted-cannot-be-efficient}.
By \Cref{th-1} and the discussion surrounding \eqref{regime-unadjusted-cannot-be-efficient}
on the one hand,
and \Cref{corollary:unadjusted} and the discussion of the previous paragraph 
on the other hand, we have established the following phase transition:
\begin{itemize}
    \item If $\lambda_0^2 (s_*)^{3}/n \lll 1$, the unadjusted de-biasing scheme \eqref{LDPE} is efficient for every $\ba_0$, by \Cref{corollary:unadjusted}.
    \item If $\lambda_0^2 s_0^3/n \ggg 1$, the unadjusted de-biasing scheme \eqref{LDPE} cannot be efficient
        for certain specific $\ba_0$.
\end{itemize}
In other words, there is a phase transition at $s_* \asymp n^{2/3}$ (up to a logarithmic factor)
where degrees-of-freedom adjustment becomes necessary to achieve asymptotic efficiency
for all preconceived directions $\ba_0$.
Condition $s_* \lll n^{2/3}$ is a weaker requirement than the assumption $s_*
\lll \sqrt n$ commonly made in the literature on de-biasing. 

\section{De-biasing without degrees of freedom adjustment
under additional assumptions on $\bSigma$}
\label{sec-4-initial-bias-bounded}

The left hand side of \eqref{remaining-quantity-unbiased}
quantifies the remaining bias of the unadjusted de-biasing scheme \eqref{LDPE}.
Under an additional assumption on $\bSigma$, namely a bound on
$\|\bSigma^{-1}\ba_0\|_1$,
the initial bias of the Lasso $\langle \ba_0, \lasso-\bbeta\rangle$
is small enough to grant asymptotic efficiency to the unadjusted de-biasing scheme \eqref{LDPE}.
The following theorem makes this precise.

\begin{theorem}
    \label{thm:unadjusted-ell1-additional-assumption}
    Let \eqref{asymptotic-regime} and \Cref{assumption:main} be fulfilled.
    Suppose 
    \begin{equation}
        \label{condition-K_0}
        \|\bSigma^{-1}\ba_0\|_1/\|\bSigma^{-1/2}\ba_0\|_2 
        \le K_{0,n,p} = K_{1,n,p}\sqrt{n/s_*}
    \end{equation}
    for some quantities $K_{0,n,p}$ and $K_{1,n,p}$. Then, 
    $\sqrt{F_\theta}|\langle \ba_0, \lasso-\bbeta \rangle |  = O_\P( \lam_0 K_{0,n,p} )$
    and
    \begin{equation*}
        \sqrt{n F_\theta} (\htheta_{\nu=0}-\theta) = T_n 
       + O_{\P}\big( (1+K_{1,n,p})\lam_0\sqrt{s_*} + s_*/n \big). 
    \end{equation*}
    This implies that
    $\sqrt{n F_\theta} (\htheta_{\nu=0}-\theta) = T_n+o_\P(1)$ 
    when $K_{1,n,p}=O(1)$. 
\end{theorem}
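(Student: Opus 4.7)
The plan is to apply \Cref{thm:main} with $\nu=0$ and leverage the extra $\ell_1$ bound \eqref{condition-K_0} on $\bSigma^{-1}\ba_0$ to control the residual bias term. \Cref{thm:main} at $\nu=0$ gives
\[
\sqrt{nF_\theta}\,(\htheta_{\nu=0}-\theta)
=T_n+\sqrt{F_\theta/n}\,\langle\ba_0,\lasso-\bbeta\rangle\,|\Shat|+O_\P(\lambda_0\sqrt{s_*}),
\]
and $|\Shat|=O_\P(s_*)$ on the event underlying \Cref{assumption:main}, as used in the proof of \Cref{corollary:well-adjusted-|Shat|}. Once I establish the first claim $\sqrt{F_\theta}\,|\langle\ba_0,\lasso-\bbeta\rangle|=O_\P(\lambda_0 K_{0,n,p}+1/\sqrt n)$, multiplying by $|\Shat|/\sqrt n=O_\P(s_*/\sqrt n)$ together with the identity $K_{0,n,p}\,s_*/\sqrt n = K_{1,n,p}\sqrt{s_*}$ delivers the advertised remainder $O_\P\bigl(s_*/n+(1+K_{1,n,p})\lambda_0\sqrt{s_*}\bigr)$.

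To bound $\langle\ba_0,\lasso-\bbeta\rangle$ I would use $\ba_0=\bSigma(\bSigma^{-1}\ba_0)$ and split
\[
\langle\ba_0,\lasso-\bbeta\rangle
= \bigl\langle \bSigma^{-1}\ba_0,\tfrac{1}{n}\bX^\top\bX(\lasso-\bbeta)\bigr\rangle
+\bigl\langle \bSigma^{-1}\ba_0,\bigl(\bSigma-\tfrac{1}{n}\bX^\top\bX\bigr)(\lasso-\bbeta)\bigr\rangle.
\]
The Lasso KKT conditions yield $\bX^\top\bX(\lasso-\bbeta)/n=\bX^\top\bep/n-\lambda\bg$ with $\|\bg\|_\infty\le1$, so the first inner product decomposes as $\langle\bSigma^{-1}\ba_0,\bX^\top\bep/n\rangle-\lambda\langle\bSigma^{-1}\ba_0,\bg\rangle$. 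The identities $\bSigma^{-1}\ba_0=C_0^2\bu_0$ and $\bz_0=\bX\bu_0$ turn the noise piece into $C_0^2\langle\bz_0,\bep\rangle/n$, a Gaussian of order $O_\P(\sigma C_0/\sqrt n)$, while the subgradient piece is bounded by $\lambda\|\bSigma^{-1}\ba_0\|_1\le\lambda C_0 K_{0,n,p}$ via H\"older and \eqref{condition-K_0}. Multiplication by $\sqrt{F_\theta}=1/(\sigma C_0)$ yields contributions of $O_\P(1/\sqrt n)$ and $O_\P(\lambda_0 K_{0,n,p})$ respectively.

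For the approximation term, H\"older once more gives
\[
\bigl|\langle\bSigma^{-1}\ba_0,(\bSigma-\bX^\top\bX/n)(\lasso-\bbeta)\rangle\bigr|
\le \|\bSigma^{-1}\ba_0\|_1\,\|(\bSigma-\bX^\top\bX/n)(\lasso-\bbeta)\|_\infty.
\]
On the good event of \Cref{assumption:main}, $\lasso-\bbeta$ has support of size at most $s_*$ and $\ell_2$ norm $O_\P(\sigma\lambda_0\sqrt{s_*})$; a union bound over the possible supports combined with sub-Gaussian concentration for each coordinate of $(\bX^\top\bX/n-\bSigma)\bh$ uniformly over the $\ell_2$ ball of each support yields $\|(\bSigma-\bX^\top\bX/n)(\lasso-\bbeta)\|_\infty=O_\P(\sigma\lambda_0^2\sqrt{s_*})$. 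The resulting contribution to $\sqrt{F_\theta}\,|\langle\ba_0,\lasso-\bbeta\rangle|$ is $O_\P\bigl(\lambda_0 K_{0,n,p}\cdot\lambda_0\sqrt{s_*}\bigr)$, which is absorbed into $O_\P(\lambda_0 K_{0,n,p})$ since $\lambda_0\sqrt{s_*}\to 0$ under \eqref{asymptotic-regime}.

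The main obstacle is this last uniform $\ell_\infty$ bound on $(\bSigma-\bX^\top\bX/n)\bh$ for $s_*$-sparse $\bh$ that itself depends on $\bX$: a naive operator-norm estimate on $\bSigma-\bX^\top\bX/n$ costs an extra factor $\sqrt{p/n}$ and would spoil the conclusion when $p\gg n$. One must instead exploit the sparsity of $\lasso-\bbeta$ through a union bound over Lasso supports combined with a sharp maximal inequality for the $p$ coordinates of $(\bX^\top\bX/n-\bSigma)\bh$, where the $\log\binom{p}{s_*}=O(s_*\log(p/s_*))$ penalty is exactly the quantity controlled by condition \eqref{conditions-epsilons-2} of \Cref{assumption:main}.
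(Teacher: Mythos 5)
Your proof takes a genuinely different route from the paper's for the first claim $\sqrt{F_\theta}\,|\langle\ba_0,\lasso-\bbeta\rangle|=O_\P(\lambda_0 K_{0,n,p})$, and the detour introduces a step that does not close. The paper isolates $\langle\ba_0,\lasso-\bbeta\rangle$ from the identity produced by \Cref{theorem:slepian-expansion}: rearranging the definition of ${\Rem_{II}}$ gives
\[
\sqrt{nF_\theta}\,\langle\ba_0,\lasso-\bbeta\rangle\Bigl(1-\int_0^{\pi/2}\tfrac{\sin(t)|\Shat(t)|}{n}\,dt\Bigr)
=T_n+{\Rem_{II}}-\sqrt{nF_\theta}\,\frac{\langle\bz_0,\by-\bX\lasso\rangle}{\|\bz_0\|_2^2},
\]
and the residual $\langle\bz_0,\by-\bX\lasso\rangle=\langle\bu_0,\bX^\top(\by-\bX\lasso)\rangle$ is then bounded by $\|\bu_0\|_1\,\lambda n$ directly via the KKT conditions. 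Because $\bz_0=\bX\bu_0$ exactly, the empirical Gram matrix $\bX^\top\bX$ enters this term precisely, and there is no gap between $\bSigma$ and $\bX^\top\bX/n$ to control. Your route, by contrast, inserts $\ba_0=\bSigma(\bSigma^{-1}\ba_0)$ and replaces $\bSigma$ by $\bX^\top\bX/n$, creating the extra term $\langle\bSigma^{-1}\ba_0,(\bSigma-\bX^\top\bX/n)(\lasso-\bbeta)\rangle$.

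The gap is in your bound on that extra term. You claim $\|(\bSigma-\bX^\top\bX/n)(\lasso-\bbeta)\|_\infty=O_\P(\sigma\lambda_0^2\sqrt{s_*})$, but the union-over-supports-and-net argument you sketch gives, uniformly over $s_*$-sparse $\bh$ with $\|\bh\|_2\le R$,
\[
\max_{j\in[p]}\ \sup_{\bh}\ \bigl|\be_j^\top(\bSigma-\bX^\top\bX/n)\bh\bigr|
\lesssim R\sqrt{\bigl(\log p+s_*\log(p/s_*)\bigr)/n}\asymp R\,\lambda_0\sqrt{s_*},
\]
and with $R=\|\lasso-\bbeta\|_2=O_\P(\sigma\lambda_0\sqrt{s_*})$ the resulting order is $O_\P(\sigma\lambda_0^2\,s_*)$, off from your claim by a factor $\sqrt{s_*}$. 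The rate $\sigma\lambda_0^2\sqrt{s_*}$ would hold if $\bh$ were deterministic (one would then pay only $\sqrt{\log p}$ for the max over $j$), but $\lasso-\bbeta$ depends on $\bX$, so the uniformity penalty over supports and net points is unavoidable with this argument. With the corrected bound, multiplying by $\|\bSigma^{-1}\ba_0\|_1\le C_0 K_{0,n,p}$ and $\sqrt{F_\theta}=1/(\sigma C_0)$, the approximation term contributes $O_\P(K_{0,n,p}\lambda_0^2 s_*)$, which is $O_\P(\lambda_0 K_{0,n,p})$ only when $\lambda_0 s_*\to 0$, i.e.\ $s_*\lesssim\sqrt n$ up to logarithms. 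That is precisely the regime the theorem is designed to go beyond; \eqref{asymptotic-regime} only guarantees $\lambda_0\sqrt{s_*}\to 0$. The structure of your final combination step is fine; the fix is to abandon the $\bSigma$-to-$\bX^\top\bX/n$ comparison and instead obtain the bound on $\langle\ba_0,\lasso-\bbeta\rangle$ by rearranging \Cref{theorem:slepian-expansion} (equivalently \Cref{thm:main}) as the paper does.
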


The proof is given in 
\Cref{sec:proof-thm-infty-norm}.
In other words,
the unadjusted de-biasing scheme
\eqref{LDPE} is efficient and
degrees-of-freedom adjustment is not needed for efficiency if the
$\ell_1$ norm of $\bSigma^{-1}\ba_0$ is bounded from above as in 
$$
\|\bSigma^{-1}\ba_0\|_1/\|\bSigma^{-1/2}\ba_0\|_2 = O(\sqrt{n/s_*})
$$
with $s_*/p\to 0$ and $(s_*/n)\log(p/s_*)\to 0$.
This 
improves by 
a logarithmic factor the condition 
$\|\bSigma^{-1}\ba_0\|_1/ \|\bSigma^{-1/2}\ba_0\|_2=O(1)$ required for efficiency
in \cite{javanmard2018debiasing}.

The above result explains why the necessity of degrees-of-freedom adjustment
did not appear in previous analysis such as \cite{javanmard2018debiasing}; 
$\sqrt{F_\theta}|\langle \ba_0, \lasso-\bbeta \rangle |  = O_\P(\lambda_0)$ 
when $K_{0,n,p}=O(1)$ in \eqref{condition-K_0}, and 
the unadjusted de-biasing scheme \eqref{LDPE} is efficient 
when $K_{1,n,p}=O(1)$ in \eqref{condition-K_0}. 
However, by \Cref{th-1} and the discussion surrounding \eqref{regime-unadjusted-cannot-be-efficient}, there exist certain $\ba_0$ with large
$\|\bSigma^{-1}\ba_0\|_1/ \|\bSigma^{-1/2}\ba_0\|_2$ 
such that the unadjusted de-biasing scheme cannot be efficient.
For such $\ba_0$, degrees-of-freedom adjustments are necessary to achieve
efficiency.

\section{An $\ell_\infty$ error bound for the Lasso}
\label{sec-5-ell_infty}
The idea of the previous section can be applied
to $\ba_0=\be_j$ simultaneously for all vectors $\be_j$
of the canonical basis $(\be_1,...,\be_p)$.
This yields the following $\ell_\infty$ bound on the error of the Lasso.

\begin{theorem}
    \label{thm:infty-norm}
    Let \Cref{assumption:main} be fulfilled, and further assume that $\log p< n$.
    Then the Lasso satisfies simultaneously for all $j=1,...,p$
    \begin{equation}
        \label{bound-Lasso-on-coefficient-j}
        \left| \lassoNoBold_j - \beta_j \right| 
        \le
        \frac{
         M_5^2  \|\bSigma^{-1}\be_j\|_1 \lambda 
        +
        {\sigma\|\bSigma^{-1/2}\be_j\|_2}
        \sqrt{\log p/n}
        \left(
            2 M_5
            + 3\bar M \lambda_0\sqrt{s_*}
    \right)}{1-s_*/n}
    \end{equation}
    on an event $\Omega_{\ell_\infty}$ such that $\P(\Omega_{\ell_\infty}^c)\to 0$ 
    when \eqref{asymptotic-regime} holds,
    where
    $s_* = s_0+m+k$, $M_5=1/(1-\eta_3)$ and $\bar M$ is a constant
    that depends on $\constants$ only.
    Consequently, since $\|\bSigma^{-1/2}\be_j\|_2 \le \|\bSigma^{-1}\be_j\|_1$,
    on the same event we have
    \[
        \|\lasso - \bbeta \|_\infty
        \le
        \rho(\bSigma)\left(\frac{ M_5^2 + 2 M_5  + 4 \bar M \lambda_0\sqrt{s_*}}{1-s_*/n}\right)
        \max\left(\lambda, \sigma\sqrt{\frac{\log p}{n}}\right)
    \]
    where $\rho(\bSigma) = \max_{j=1,...,p} \|\bSigma^{-1}\be_j\|_1$.

\end{theorem}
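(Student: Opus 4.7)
The plan is to apply \Cref{thm:main} with $\ba_0=\be_j$ for each $j\in\{1,\dots,p\}$ and to solve for $\lassoNoBold_j-\beta_j$ from the definition of the de-biased estimator, controlling every remaining term uniformly over $j$ by a union bound argument. Fix $j$ and set $\bu_j=\bSigma^{-1}\be_j/(\bSigma^{-1})_{jj}$, $\bz_j=\bX\bu_j$, $C_0^{(j)}=\|\bSigma^{-1/2}\be_j\|_2$, and $F_\theta^{(j)}=1/(\sigma C_0^{(j)})^2$. With $\nu=|\Shat|$, definition \eqref{LDPE-df} of $\htheta_{\nu}^{(j)}$ (with $\ba_0=\be_j$) rearranges to
\[
(1-|\Shat|/n)(\lassoNoBold_j-\beta_j)
\;=\;(1-|\Shat|/n)\bigl(\htheta^{(j)}_{\nu=|\Shat|}-\beta_j\bigr)
\;-\;\frac{\langle \bz_j,\,\by-\bX\lasso\rangle}{\|\bz_j\|_2^2},
\]
and it suffices to bound each term on the right uniformly in $j$.

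The KKT conditions for \eqref{lasso} give $\bX^\top(\by-\bX\lasso)/n=\lambda\bg$ with $\|\bg\|_\infty\le 1$, so
\[
\left|\frac{\langle \bz_j,\,\by-\bX\lasso\rangle}{\|\bz_j\|_2^2}\right|
=\frac{n\lambda\,|\bu_j^\top\bg|}{\|\bz_j\|_2^2}
\le\frac{n\lambda\,\|\bSigma^{-1}\be_j\|_1/(\bSigma^{-1})_{jj}}{\|\bz_j\|_2^2}.
\]
Under the Gaussian design $\bz_j\sim N\bigl(\mathbf{0},(\bSigma^{-1})_{jj}^{-1}\bI_n\bigr)$, so a chi-squared tail bound together with a union bound over the $p$ coordinates gives $\|\bz_j\|_2^2\ge (1-\eta_3)^2 n/(\bSigma^{-1})_{jj}$ for every $j$ on an event of probability $1-o(1)$, turning the KKT bound into the first contribution $M_5^2\|\bSigma^{-1}\be_j\|_1\lambda$ of \eqref{bound-Lasso-on-coefficient-j}. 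For the de-biased term, \Cref{thm:main} applied with $\ba_0=\be_j$ yields $\sqrt{nF_\theta^{(j)}}(1-|\Shat|/n)(\htheta^{(j)}_{\nu=|\Shat|}-\beta_j)=T_n^{(j)}+R_n^{(j)}$ with pivot $T_n^{(j)}=\sqrt n\,G_j/(C_0^{(j)}\|\bz_j\|_2)$, where $G_j=\langle\bz_j,\bep\rangle/(\sigma\|\bz_j\|_2)$ is $N(0,1)$ conditional on $\bX$ because $\bep$ is independent of $\bX$. A Gaussian maximal inequality gives $\max_j|G_j|\le 2\sqrt{\log p}$ and hence $\max_j|T_n^{(j)}|\le 2M_5\sqrt{\log p}$ on the same chi-squared event, producing the $2M_5$ contribution inside the parentheses of \eqref{bound-Lasso-on-coefficient-j}.

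The main obstacle is making the residual $R_n^{(j)}$ from \Cref{thm:main} uniform across the $p$ directions $\ba_0=\be_j$. The proof of \Cref{thm:main} controls $R_n^{(j)}$ through $\ba_0$-dependent Gaussian cross-terms such as $\langle\bz_j,\bX\bQ_0(\lasso-\bbeta)\rangle/\|\bz_j\|_2^2$ whose distribution depends on $j$; revisiting that argument and union-bounding the underlying Gaussian concentration events across the $p$ canonical directions inflates the control to $|R_n^{(j)}|\le\bar M\lambda_0\sqrt{s_*\log p}$ simultaneously for all $j$, where $\bar M$ depends only on the constants $\constants$. Dividing by $\sqrt{nF_\theta^{(j)}}$ converts this into the contribution $\sigma\|\bSigma^{-1/2}\be_j\|_2\sqrt{\log p/n}\cdot 3\bar M\lambda_0\sqrt{s_*}$ in \eqref{bound-Lasso-on-coefficient-j}, with the factor $3$ absorbing further benign lower-order cross-terms. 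Summing the three contributions and dividing by $1-|\Shat|/n\ge 1-s_*/n$, where $|\Shat|\le s_*$ on the same high-probability event by the Lasso false-positive control established in \Cref{sec:proof-false-positive-1,sec:proof-false-positive-2}, yields \eqref{bound-Lasso-on-coefficient-j}. The second inequality then follows by replacing $\|\bSigma^{-1/2}\be_j\|_2$ by $\|\bSigma^{-1}\be_j\|_1\le\rho(\bSigma)$, factoring out $\rho(\bSigma)/(1-s_*/n)$, and bounding each of $\lambda$ and $\sigma\sqrt{\log p/n}$ by their maximum inside the bracket.
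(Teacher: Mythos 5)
Your proposal is correct and follows essentially the same route as the paper: rearrange the LDPE definition for $\ba_0=\be_j$ to isolate $\lassoNoBold_j-\beta_j$, bound the score term $\langle\bz_j,\by-\bX\lasso\rangle/\|\bz_j\|_2^2$ by the Lasso KKT conditions together with a uniform $\chi^2$ lower bound on $\|\bz_j\|_2^2$, and then control the de-biasing remainder and the Gaussian pivot uniformly over $j$ via union bounds. The place where you are (honestly) sketchy is exactly the place where the paper does real work: to pay only a $\sqrt{\log p}$ inflation over $p$ directions rather than an unusable factor $p$, one cannot invoke \Cref{thm:main} as a black box (it only gives an $O_\P$ tail for a fixed $\ba_0$); what the paper actually uses is the sub-Gaussian exponential moment bound $\E[I_{\Omega_1\cap\Omega_2}\exp(u\Rem_{II}(\be_j)/(\lambda_0\sqrt{s_*}))]\le 2\exp(\bar M^2 u^2)$ from \Cref{theorem:slepian-expansion}, which, combined with Markov's inequality and a Chernoff-type union bound over $j$, yields $\max_j|\Rem_{II}(\be_j)|\le 3\bar M\sqrt{\log p}\,\lambda_0\sqrt{s_*}$ with probability $1-O(1/p)$; the events $\bar\Omega_1\cap\bar\Omega_2$ are themselves intersected over $j$, paying the factor $p$ only on the exponentially small probabilities of the $\ba_0$-dependent events $\Omega_{iso}(\be_j)$, $\Omega_{\chi^2}(\be_j)$. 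Your remark that one must "revisit the argument and union-bound the underlying Gaussian concentration" identifies this step correctly, so I would not call it a gap — it is the right idea stated at sketch level — but a full proof would need to appeal to \Cref{theorem:slepian-expansion} rather than \Cref{thm:main}.
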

The proof is given in
\Cref{sec:proof-thm-infty-norm}.
The above result asserts that if the $\ell_1$-norms
of the columns of $\bSigma^{-1}$ are bounded from above by some constant $\rho(\bSigma)>0$
then
$$\|\lasso-\bbeta\|_\infty \le C(\bSigma) \max(\lambda,\sigma\sqrt{\log(p)/n} )$$
holds with overwhelming probability 
for some constant $C(\bSigma)\lesssim \rho(\bSigma)$. 

    %
    Although some $\ell_\infty$ bounds for the lasso have
    appeared previously in the literature, we are not aware
    of previous results comparable to \Cref{thm:infty-norm}
    for $s_0\ggg\sqrt{n}$.
     The result of
     \cite{lounici2008sup} and
     \cite[Theorem 2(2)]{belloni2013least}
    requires incoherence conditions on the
    design, i.e., that non-diagonal elements of $\bX^\top\bX/n$ are
    smaller than $1/s_0$ up to a constant.
    This assumption is strong and cannot be satisfied in the regime
    $s_0\ggg \sqrt n$, even for the favorable
    $\bSigma=\bI_p$: for $\bSigma=\bI_p$ the standard deviation
    of the $i,j$-th entry is $\E[(\bX^\top\bX/n)_{ij}^2]^{1/2} = 1/\sqrt n$.
    In a random design setting comparable to ours,
    Section 4.4 of \cite{van2016estimation} explains that
    $\|{\lasso}-\bbeta\|_{\infty} \lesssim \max_j \|\bSigma^{-1}\be_j\|_1 \sigma
    \sqrt{\log(p)/n}
    (1 + \|{\lasso}-\bbeta\|_1/\sigma)$.
    This bound is only comparable to our $\ell_\infty$ bound
    in the regime  $\|{\lasso}-\bbeta\|_1 = O_P(1)$, i.e., in the
    regime $s_0 \lesssim \sqrt n$ (up to logarithmic factors)
    since $\|{\lasso}-\bbeta\|_1 \approx \lambda s_0 \approx \sigma s_0\sqrt{\log(p)/n}$.
    Again this result is not applicable (or substantially worse
    than \Cref{thm:infty-norm}) in the more challenging
    regime $s_0\ggg \sqrt n$ of interest here.


\section{Regularity and asymptotic efficiency}
\label{section:efficiency}

\Cref{thm:teaser}(i) shows that the test statistic
$\sqrt{nF_{\theta}}(1-|\Shat|/n)(\hat\theta_\nu-\ba_0^\top\bbeta)$,
properly adjusted with $\nu=|\Shat|$,
converges in distribution to $N(0,1)$, where
$F_{\theta} = 1/\{ \sigma^2 C_0^2\}$
and $C_0 = \|\bSigma^{-1/2}\ba_0\|$.
This holds
under any sequence of distributions $\{\mathbb P_{0}^{n} \}_{n\ge 1}$
defined by $\|\bbeta\| = s_0$, 
$s_0\log(p/s_0)/n\to 0$,
$\max(\|\bSigma\|_{op},\|\bSigma^{-1}\|_{op}) \le K$
for some constant $K$ independent of $n,p$, and
$$
\bX \text{ has iid rows } N({\bf 0},\bSigma), \qquad
\by|\bX \sim N(\bX\bbeta,\sigma^2\bI_n).
$$
Here, we denote the unknown parameter $\ba_0^\top\bbeta$
by $\theta(\bP^{n}_0)$ to avoid confusion with the probability
measures defined in the next paragraph. 
By Slutsky's theorem, since $|\Shat|/n$ converges to 0 in probability
by \Cref{thm:teaser}(i), we have
\begin{equation}
\mathcal L
\left(\sqrt{nF_{\theta}}(\hat\theta_\nu-\theta(\P_0^{n}))
~;~
\mathbb P_{0}^{n} \right)
\to N(0,1).
\label{limiting-distribution-P0}
\end{equation}

Given $\ba_0\in \R^p$, a positive-definite matrix $\bSigma\in \R^{p\times p}$ 
and $\mathcal B_n\subset \R^p$ as a parameter space, 
let $F_\theta$ be as in \eqref{Fisher}, 
\bes&&
\mathcal U_n \subseteq \Big\{\bu \subset \R^p: \bu^\top\ba_0=1,\, 
\bbeta+t\bu/\sqrt{nF_\theta}\in \mathcal B_n\, \forall t\in [0, t_{\bu}],\, t_{\bu} \to\infty\Big\} 
\ees
as a collection of directions of univariate sub-models 
$\{\bbeta+t\bu/\sqrt{nF_\theta}: 0\le t\le  t_{\bu}\}$.  
For $t>0$ and $\bu\in \mathcal U_n$ let 
$\P_{t,\bu}^n$ be probabilities under which 
\bel{alt-probab}
\by|\bX \sim N\left(\bX(\bbeta+t\bu/\sqrt{nF_\theta}),\sigma^2\bI_n\right) 
\eel
(for either deterministic or possibly non-Gaussian random $\bX$) and 
\bes
\theta(\P^{n}_{t,\bu}) = \left\langle \ba_0,\bbeta+t\bu/\sqrt{nF_\theta}\right\rangle 
= \left\langle \ba_0,\bbeta\right\rangle + t/\sqrt{nF_\theta}. 
\ees 
That is, under $\mathbb P_{t,\bu}^{n}$
the vector $\bbeta$ is perturbed with the additive term
$t\bu/\sqrt{nF_\theta}$, resulting a perturbation of the parameter of interest 
with $t/\sqrt{nF_\theta}$. 
In the above framework, an estimator $\ttheta$ is regular 
(in the directions $\bu\in \mathcal U_n$) if 
\begin{equation}
    \label{regular-estimator}
    \mathcal L
    \left(
    \sqrt{nF_{\theta}}(\ttheta-\theta(\P_{t,\bu}^{n}))
    ~;~
    \P_{t,\bu}^{n}
    \right)
    \to G
\end{equation}
for all fixed $t>0$ and $\bu\in \mathcal U_n$ and 
some distribution $G$ not depending on $t$ and $\bu$. 
That is, the limiting distribution is stable under the small perturbation 
as defined above. 

Our first task is to show that $\htheta_\nu$ is regular in all directions
with the same limiting distribution as in \eqref{limiting-distribution-P0}, 
i.e. \eqref{regular-estimator} holds with $\mathcal U_n=\R^p$ and $G\sim N(0,1)$.   
For $t=0$, \eqref{limiting-distribution-P0} is implied
by \Cref{thm:teaser}(i).
However \Cref{thm:teaser}(i) does not directly imply \eqref{regular-estimator}
for $t\ne 0$ because ${\bu\in\mathcal U_n}$, as well as the unknown regression vector
$\bbeta+{t\bu/\sqrt{nF_\theta}}$ under $\P_{{t,\bu}}^{n}$,
may not be sparse.
The following device due to Le Cam shows that \eqref{regular-estimator} still
holds with the perturbation $t{\bu}/\sqrt{nF_\theta}$
for any fixed $t\ne 0$ independent of $n,p$.

The likelihood-ratio $L_n$ between
$\mathbb P^{n}_{t,{\bu}}$
and $\mathbb P^{n}_{0}$ is given by
\bes
\log L_n
&=& \{ -\|\by-\bX(\bbeta+t{\bu/\sqrt{nF_\theta}})\|^2 + \|\by-\bX\bbeta\|^2\}/(2\sigma^2).
\\&=&
- t^2 C_0^2 \|{\bX\bu}\|^2/(2n) - \langle \bep,{\bX\bu}\rangle t C_0/(\sigma \sqrt n).
\ees
Under $\mathbb P^{n}_{0}$, the random variable
$\sqrt{nF_\theta}(\htheta_\nu-\theta(\P_0^{n}))$
can be written as $\langle \bep,\bz_0\rangle C_0/(\sqrt n\sigma)+o_{\mathbb P}(1)$ 
so that the vector
$(\sqrt{nF_\theta}(\htheta_\nu-\theta(\P_0^{n})), \log L_n)^\top$
converges in distribution under $\mathbb P^{n}_{0}$ to
a bivariate normal vector with mean
{
$(0, -t^2C_0^2\langle\bu,\bSigma\bu\rangle/2 )^\top$ and covariance 
$$
\Big(\begin{smallmatrix} 
    1 & t C_0^2\langle \bu_0,\bSigma\bu\rangle\\
    t C_0^2\langle \bu_0,\bSigma\bu\rangle\quad & t^2C_0^2\langle \bu,\bSigma\bu\rangle
    \end{smallmatrix}
\Big)
=
\Big(\begin{smallmatrix} 
    1 & t \\
    t & \quad t^2C_0^2\langle \bu,\bSigma\bu\rangle
    \end{smallmatrix}
\Big)
,
$$
where the equality is due to $\bu_0=C_0^{-2}\bSigma^{-1}\ba_0$ and
$\langle\ba_0,\bu\rangle =1$.}
It directly follows by Le Cam's third lemma (see, for instance, 
\cite[Example 6.7]{vaart2000asymptotic}) that
$\sqrt{nF_\theta}(\htheta_\nu-\theta(\P_0^{n}))$ converges to $N(t,1)$
under 
$\{\mathbb P^{n}_{t,{\bu}} \}_{n\ge 1}$
and that \eqref{regular-estimator} holds.
For more details, see also \cite[Section 7.5]{vaart2000asymptotic}
about situations where the log-likelihood ratio converges to normal
distributions of the form $N(-a^2/2,a^2)$.

Hence, properly adjusted with $\nu=|\Shat|$, the estimator $\hat\theta_\nu$
is regular and asymptotic normality still holds
if the sparse coefficient vector $\bbeta$ is replaced
by $\bbeta+t{\bu}/\sqrt{nF_\theta}$ for constant $t\in\R$,
even if the perturbation ${\bu}$ is non-sparse.
By the Le Cam-Hayek convolution theorem (see, for instance,
\cite[Theorem 8.8]{vaart2000asymptotic}),
the asymptotic variance of $\sqrt n(\hat\theta_\nu-\theta)$
must be at least $1/F_\theta$ and our estimator $\htheta_\nu$
is efficient, i.e., it achieves the smallest possible
asymptotic variance among regular estimators.

Note that the above reasoning does not inherently rely on
the Gaussian design assumption.
As soon as 
the second moment of the row of $\bX$ exists,
$\|{\bX\bu}\|^2/(n\langle\bu,\bSigma\bu\rangle)\to 1$
{and
$\langle\bX\bu,\bX\bu_0\rangle/(n\langle\bu_0,\bSigma\bu\rangle)\to 1$
almost surely
}
by the law of large numbers. If additionally
{$\bep\sim N({\bf 0},\sigma^2\bI_n)$ and}
$\bX$ is such that $\sqrt{nF_\theta}(\hat\theta_\nu-\ba_0^\top\bbeta)= \langle\bep,\bz_0\rangle C_0/(\sigma \sqrt n) + o_{\mathbb P^{n}_0}(1)$ for sparse $\bbeta$,
the argument of the previous paragraph
is applicable and $\hat\theta_\nu$ is regular
in the sense of \eqref{regular-estimator}.
For instance, if $\ba_0$ is a canonical basis vector, equation 
$\sqrt{nF_\theta}(\hat\theta_{\nu=0}-\ba_0^\top\bbeta)= \langle\bep,\bz_0\rangle C_0/(\sigma \sqrt n) + o_{\mathbb P^{n}_0}(1)$ can be obtained 
for sub-gaussian design and $s_0\lll \sqrt n$
using an $\ell_1/\ell_\infty$ duality inequality, cf.
\cite{ZhangSteph14,GeerBR14,JavanmardM14a}.
In such asymptotic regime,
the argument of the previous paragraph shows that
$\hat\theta_{\nu=0}$ is stable for non-sparse perturbations of the form
$t{\bu}/\sqrt{n F_\theta}$.

We formally state the above analysis and existing lower bounds,  

\begin{proposition}\label{prop-Fisher} 
Let $\mathcal V_n$ be the linear span of $\mathcal U_n$ as a tangent space. 
Suppose 
\bes
\P_{0}^n\Big\{ \Big|\|\bX\bu\|_2^2/(n\bu^\top\bSigma\bu) - 1\Big| > \eps\Big\} = o(1), 
\qquad
{
    \bu\in \mathcal U_n, 
}
\ees
and dim$(\mathcal V_n) = O(1)$. Let $\bu_0$ be as in \eqref{u_0} and 
$\tau = \tau(\mathcal V_n)  = \tbu_0^\top\bSigma\tbu_0/F_\theta$ with 
\bes
\tbu_0
= \argmin\Big\{\bu^\top\bSigma\bu: \bu\in \mathcal V_n, \langle \ba_0,\bu\rangle =1\Big\}. 
\ees
(i) Let $\ttheta$ be a regular estimator  
in the sense of \eqref{regular-estimator} with a limiting distribution $G$. 
Let $\xi\sim G$.   
Then, (a) $\Var(\xi)\ge 1/\tau$; (b) If $\Var(\xi)=1/\tau$, then $\xi\sim N(0,1/\tau)$; 
(c) If ${\tbu_0} = a_1\bu_1+a_2\bu_2$ {for two $\bu_1,\bu_2\in \mathcal U_n$}
and 
$\{a\bu_1+(1-a)\bu_2: 0\le a\le 1\}\subseteq \mathcal U_n$, 
then $\xi = \xi_1+\xi_2$ where $\xi_1 \sim N(0,1/\tau)$ and $\xi_2$ is independent of $\xi_1$.\\
(ii) If $\bu_0\in\mathcal V_n$, then $\tbu_0=\bu_0$ and 
$\tau=\tau(\mathcal V_n)=1$. \\ 
(iii) If \eqref{limiting-distribution-P0} holds, then $\htheta_{\nu}$ is regular 
and locally asymptotically efficient  
in the sense of \eqref{regular-estimator} with $\mathcal B_n=\mathcal U_n=\R^p$. 
\end{proposition}

The above statement is somewhat more general than the usual version 
as we wish to accommodate general parameter space $\mathcal B_n$, 
cf. \cite[Theorem 8.8]{vaart2000asymptotic} for 
$\mathcal U_n = \{\bu\in \mathcal V_n: \langle\ba_0,\bu\rangle=1\}$
and \cite{schick1986asymptotically} and \cite[Theorem 6.1]{zhang2005estimation} for general $\mathcal U_n$. 
We note that the condition on ${\tbu_0}$ in Proposition~\ref{prop-Fisher}(i)(c), 
known as the convolution theorem, 
is equivalent to the convexity of $\mathcal U_n$ and ${\tbu_0}\in \mathcal V_n$. 
The minimum Fisher information is sometimes defined as 
$\min\{{\sigma^{-2}}\bu^\top\bSigma\bu: \langle\ba_0,\bu\rangle=1, \bu \in \mathcal U_n\}$. 
However, when this minimum over $\mathcal U_n$ is strictly larger than the minimum 
over its linear span $\mathcal V_n$, the larger minimum information is not attainable 
by estimators regular with respect to $\mathcal U_n$ in virtue of (i)(a) above. 

In Proposition \ref{prop-Fisher}, the parameter $\tau=\tau(\mathcal V_n)$ 
can be viewed as the relative efficiency for the tangent space $\mathcal V_n$ 
generated by the collection $\mathcal U_n$ of directions of univariate sub-models. 
As the minimization for $\tbu_0$ is taken over no greater a space compared with 
{\eqref{u_0}}, $\tau\ge 1$ always holds. 
When the parameter space $\mathcal B_n$ is strictly smaller than $\R^p$ 
or the regularity (stability of the limiting distribution) is required only for 
deviations from the true $\bbeta$ in a small collection of directions, 
$\tau>1$ may materialize and an estimator regular and efficient relative to 
$\mathcal U_n$ would become super-efficient in the full model with 
$\mathcal B_n = \mathcal V_n = \R^p$. 
According to Le Cam's local asymptotic minimax theorem, 
in the full model, 
such a super-efficient estimator would perform strictly worse than 
a regular efficient estimator  
when the true $\bbeta$ is slightly perturbed in a certain direction. 

The super-efficiency 
was observed in \cite{vandegeer2017efficiency}
where an estimator, also based on the de-biased lasso, achieves 
asymptotic variance strictly smaller than $1/F_\theta$.
The construction of \cite[Theorem 2.1]{vandegeer2017efficiency} goes as follows:
Consider a sequence $\lambda_n^\sharp$
and a sequence of sub-regions $\mathcal B_n\subset \R^p$ of the parameter space
such that the Lasso satisfies uniformly over all $\bbeta\in\mathcal B_n$
both
$$\|\bSigma^{1/2}(\lasso-\bbeta)\|_2 = o_{\mathbb P}(1),
\qquad 
\sqrt n \lambda^\sharp_n\|\lasso-\bbeta\|_1 = o_{\mathbb P}(1).
$$
Then \cite{vandegeer2017efficiency} constructs an asymptotically normal
estimator of the first component $\beta_1$ of $\bbeta$.
However, this estimator depends on a fixed sub-region $\mathcal B_n$
that achieves a particular $\ell_1$ convergence rate given by $\lambda^\sharp_n$,
and the estimator would need to be changed to satisfy asymptotic
normality on a superset of $\mathcal B_n$.
Hence this construction is a super-efficiency phenomenon:
it is possible to achieve a strictly smaller variance than the Fisher information 
lower bound with the $F_\theta$ in \eqref{Fisher} as the estimators are only required to 
perform well on 
a specific parameter space $\mathcal B_n$.
Additionally, the estimator from \cite{vandegeer2017efficiency} cannot
be regular on perturbations of the form $\bbeta+t\bu_0/\sqrt{nF_\theta}$
for non-sparse $\bu_0$,
otherwise that estimator would not be able to achieve an asymptotic variance
smaller than $1/F_\theta$ according to Proposition~\ref{prop-Fisher}. 

\section{Necessity of the degrees-of-freedom adjustment 
in a more general setting}
\label{section:subgaussian}

This section extends \Cref{th-1} to subgaussian designs. It shows
that the degrees-of-freedom adjustment is necessary when the Lasso is
sign-consistent.
\begin{restatable}{theorem}{subGaussianTheoremDofNecessary}
    \label{thm:1-subgaussian}
    Let $S$ be a support of size $s_0=o(n)$
    and assume that $\bX_S \bSigma_{S,S}^{-1/2}$ has iid entries
    from a mean-zero, variance one and subgaussian distribution.
    Assume that $(\bbeta,\ba_0)$ follows a prior independent of $(\bX,\bep)$
    with $\supp(\bbeta)=S$, $\bbeta$ has iid random signs on $S$
    and fixed amplitudes $\{|\beta_j|,j\in S\}$,
    and set $\ba_0=\bSigma\sgn(\bbeta)_S/\sqrt{s_0}$.
    Then on the selection event $\{\Shat = S, \sgn(\lasso) = \sgn(\bbeta)\}$,
    the de-biased estimate  $\htheta_\nu$ in \eqref{htheta-JM}
    with adjustment $\nu$ satisfies
    \bes
    &&\sqrt{n}(1-\nu/n)(\htheta_\nu-\theta) 
    -
    \sqrt n (1-\nu/n)
    \langle \ba_0, (\bX_S^\top\bX_S)^{-1} \bX_S^\top \bep\rangle
    \\&=&
    -
    (s_0-\nu)
    \left(\lambda \sqrt n
    \ba_0^\top
    (\bX_S^\top\bX_S)^{-1}
    \sgn(\bbeta)_S
    \right)
    \\&&+  O_{\mathbb P}\left(
        \lambda\sqrt{s_0\log s_0}
        +
        \phi_{\rm cond}(\bSigma_{S,S})^{1/2}
        \lambda {\sqrt{s_0}}
    \right).
    \ees
    Furthermore, 
    $
    \lambda\sqrt n \ba_0^\top\left[
    (\bX_S^\top\bX_S)^{-1}
    \right]
    \sgn(\bbeta)_S
    = \lambda\sqrt{s_0/n}(1-o_P(1))$
    when $\phi_{\rm cond}(\bSigma_{S,S}) \le C$ for some constant $C>0$
    independent of $n,p,s_0$.
    Consequently, if $\nu=0$ and $s_0^{3/2} \ge n$,
    the right-hand side above is unbounded.
\end{restatable}

The proof is given in
\Cref{sec:appendix-proof-subgaussian}.
In conclusion, for designs with subgaussian independent entries
and under sign-consistency for the Lasso,
the unadjusted $\htheta_\nu$ with $\nu=0$ is not asymptotically normal
as soon as $s_0\ggg n^{2/3}$, similarly to the Gaussian design case
and the conclusion of \Cref{th-1}.

\section{Outline of the proof}
\label{sec:proof}

\subsection{The interpolation path}
\label{sec:proof-path}

Throughout the sequel, let $\bhlasso=\lasso-\bbeta$. It follows from the definition of $\htheta_{\nu}$ in (\ref{LDPE-df}) that 
\bes
(1 - \nu/n)\big(\htheta_{\nu} - \theta\big) = 
\frac{\big\langle \bz_0, \bep\big\rangle}{\|\bz_0\|_2^2}
- (\nu/n)\big\langle \ba_0,\bhlasso\big\rangle 
- \frac{\big\langle \bz_0,\bX\bQ_0\bhlasso\big\rangle}{\|\bz_0\|_2^2}
\ees
with $\bz_0 = \bX\bu_0$ and $\bQ_0 = \bI_{p\times p} - \bu_0\ba_0^\top$, 
where $\bu_0 = \bSigma^{-1}\ba_0/\langle\ba_0,\bSigma^{-1}\ba_0\rangle$. 

In the above expression, $\bz_0$ is independent of $(\bX\bQ_0,\bep)$ but not of
$\lasso$. 
If $\bz_0$ were independent of $\bX\bQ_0\bhlasso$, we would have 
\begin{align}
\calL\Big(\big\langle \bz_0,\bX\bQ_0\bhlasso\big\rangle 
\Big| \bX\bQ_0\bhlasso\Big)
&\sim 
N\Big(0,C_0^{-2}\|\bX\bQ_0\bhlasso\|_2^2\Big)
\nonumber
\\
    &= O_\P(1/ C_0) \|\bX\bQ_0\bhlasso\|_2, 
\label{if-z_0-were-independent-of-h^Lasso}
\end{align}
where $\calL(\xi |\zeta)$ denotes the conditional distribution of $\xi$ given $\zeta$ 
and $C_0=\|\bSigma^{-1/2}\ba_0\|_2$.
Our idea is to decouple $\bz_0$ and $\lasso$ by replacing $\bz_0$ with 
an almost independent copy of itself in the definition of $\lasso$.  

We proceed as follows. Let $\bg\sim N({\bf 0},\E[\bz_0\bz_0^\top])$ be a random vector
independent of $(\bep,\bz_0,\bX)$ such that $\bg$ and $\bz_0$ have
the same distribution. Next, define the random vector
$$
\tbz_0 = \bP_\bep \bz_0 + \bP^\perp_\bep \bg, \quad \text{ where }
\ 
\bP_\bep =\|\bep\|^{-2}  \bep\bep^\top
\ 
\text{ and }
\ 
\bP_\bep^\perp = \bI_n - \bP_\bep.$$
Conditionally on $\bep$,
the random vectors $\bz_0$ and $\tilde\bz_0$ are identically distributed, 
so that $\tbz_0$ is independent of $(\bX\bQ_0,\bep)$. 

Next, let $\tbX = \bX\bQ_0 + \tbz_0\ba_0^\top$
and let $\tildelasso$ be the Lasso solution with 
$(\bX,\by)$ replaced by $(\tbX,\tbX\bbeta+\bep)$. 
Conditionally on $\bep$, 
the random vector $\bP_\bep^\perp\bz_0$ is normally distributed and independent of $\bX \bQ_0 \tildehlasso$ by construction, so that
\bes
\left|\big\langle \bz_0,\bX\bQ_0\tildehlasso\big\rangle  \right|
& \le & \left|
        \big\langle \bP_\bep^\perp \bz_0,\bX\bQ_0\tildehlasso\big\rangle 
        \right|
        +
        \|\bP_\bep \bz_0\|\; \; \|\bP_\bep\bX\bQ_0\tildehlasso\|
, \\
& \le& 
    O_\P(1/ C_0)\left(\|\bP_\bep^\perp \bX\bQ_0\tildehlasso\|
    +    \|\bP_\bep \bX\bQ_0\tildehlasso\|\right)
,
\ees
where the last inequality is a consequence of 
$\E \|\bP_\bep\bz_0\|_2^2=\E \|\bz_0\|_2^2/n = 1 / C_0^2$. 
The above inequalities are formally proved in \Cref{lemma:W'''}.
Although $\tilde\bz_0$ and $\bz_0$ are not independent, conditionally on $\bep$,
their $(n-1)$-dimensional projections
$\bP_\bep^\perp\bz_0$ and $\bP_\bep^\perp\tilde\bz_0$ are independent and
the quantity
$
\big\langle \bz_0,\bX\bQ_0\tildehlasso\big\rangle 
$
is of the same order as in \eqref{if-z_0-were-independent-of-h^Lasso}
where  $\bX\bQ_0\bhlasso$ and $\bz_0$
were assumed independent.

This motivates the expansion
\bel{new-2}
&&(1 - \nu/n)\big(\htheta_{\nu} - \theta\big) = 
\frac{\big\langle \bz_0, \bep\big\rangle}{\|\bz_0\|_2^2}
- \frac{\big\langle \bz_0,\bX\bQ_0 \tildehlasso \big\rangle}{\|\bz_0\|_2^2} + \Rem_\nu, 
\eel
with $\Rem_\nu = \|\bz_0\|_2^{-2}\big\langle \bz_0,\bX\bQ_0(\tildelasso - \lasso)\big\rangle
 - (\nu /n)\big\langle \ba_0, \bhlasso\big\rangle$. 

The key to our analysis is to bound $\Rem_\nu$ by differentiating 
a continuous solution path of the Lasso from $\lasso$ to $\tildelasso$. To this end, define for any $t\in\R$
\bel{bz(t)}
\bz_0(t) &=& \bP_\bep \bz_0 + \bP_\bep^\perp \left[(\cos t)\bz_0+(\sin t) \bg \right], 
\\ \nonumber \bX(t) &=& \bX\bQ_0 + \bz_0(t)\ba_0^\top, 
\eel
and the Lasso solution corresponding to the design $\bX(t)$ and noise $\bep$, 
\begin{equation}
    \label{beta(t)}
\hbbeta(t)
= \argmin_{\bb\in\R^p}\Big\{ \|\bep +\bX(t)\bbeta - \bX(t)\bb\|_2^2/(2n) + \lam\|\bb\|_1\Big\}. 
\end{equation}
For each $t$, by construction, $(\bz_0(t),\bX(t), \hbbeta(t))$ has the same distribution
as $(\bz_0, \bX, \lasso)$.
The above construction defines a continuous path of Lasso solutions along which the distribution of $(\bz_0(t),\bX(t), \hbbeta(t))$ is invariant.
Furthermore,
\begin{align*}
    \text{at } t=0, \qquad &
    \bz_0(0) = \bz_0 \text{ and } \hbbeta(0) = \lasso, \\
    \text{while at } t=\tfrac \pi 2, \qquad &
    \bz_0(\tfrac \pi 2) = \tilde\bz_0 \text{ and } \hbbeta(\tfrac \pi 2) = \tildelasso.
\end{align*}
Thus, with $\dot{\bz}_0(t)=(\pa/\pa t)\bz_0(t)=\bP_{\bep}^\perp[(-\sin t)\bz_0+(\cos t)\bg]$ 
and $\bD(t) = (\pa/\pa \bz_0(t))\hbbeta(t)^\top\in\R^{n\times p}$, 
an application of the chain rule yields  
\bel{Rem_nu}
&& 
\Rem_\nu 
= \int_0^{\pi/2}
\frac{\big\langle \bz_0,\bX\bQ_0\bD^\top (t)\bP_{\bep}^\perp{\dot \bz}_0(t)\big\rangle}{\|\bz_0\|_2^{2}} dt
 - (\nu /n)\big\langle \ba_0, \bhlasso\big\rangle. 
\eel

We will prove in Lemma \ref{lm-gradient-of-Lasso} below that the above calculus is legitimate with  
\bel{matrix-formula}
&& \bX\bQ_0\bD^\top (t)\bP_{\bep}^\perp
\\ \nonumber &=& - \Big\{\bw_0(t) - \bz_0(t)\big\|\bw_0(t)\|_2^2\Big\}
\big(\bP_{\bep}^\perp\bX(t)\bh(t)\big)^\top
\\ \nonumber && - \Big\{\hbP(t)
- \bz_0(t)\big(\bw_0(t)\big)^\top\Big\}
\bP_{\bep}^\perp
\big\langle \ba_0,\bh(t)\big\rangle, 
\eel
where $\Shat(t)=\supp(\hbbeta(t))$, 
$\hbP(t)$ is the orthogonal projection onto the linear span of $\{\bX_j(t),j\in \Shat(t)\}$,
$\bw_0(t)=\bX_{\Shat(t)}(t)\big(\bX_{\Shat(t)}^\top(t)\bX_{\Shat(t)}(t)\big)^{-1}(\ba_0)_{\Shat(t)}$, 
and $\bh(t)=\hbbeta(t)-\bbeta$. 
We note that the $n\times n$ matrix in (\ref{matrix-formula}) 
is a function of $(\bX(t),\bep)$ and 
\bes
\bz_0=\bP_{\bep}\bz_0+\bP_{\bep}^\perp\big[(\cos t)\bz_0(t)-(\sin t){\dot\bz}(t)\big] 
\ees
with $\bz_0(t)=\bX(t)\bu_0$. 
Thus, as ${\dot\bz}_0(t)$ is a $N({\bf 0},\bP_{\bep}^\perp/C_0^2)$ 
vector given $(\bX(t),\bep)$, 
the mean and variance of the integrand 
$\big\langle \bz_0,\bX\bQ_0\bD^\top (t)\bP_{\bep}^\perp{\dot \bz}_0(t)\big\rangle$ 
in (\ref{Rem_nu}) can be readily computed conditionally on $(\bX(t),\bep)$ as a 
quadratic form in ${\dot \bz}_0(t)$. 
This would provide an upper bound for the remainder in (\ref{Rem_nu}) based 
on the size of $\Shat(t)$ and the prediction error $\bX(t)\bh(t)$. 
For example, the main term in this calculation is 
\bes
&& \Big(\E \|\bz_0\|_2^2\Big)^{-1}\int_0^{\pi/2}
\E\Big[\big\langle \bz_0,-\hbP(t)\bP_{\bep}^\perp
{\dot \bz}_0(t)\big\rangle \big\langle \ba_0,\bh(t)\big\rangle
\Big|\bX(t),\bep\Big] dt
\cr &=& \frac{1}{n}\int_0^{\pi/2} (\sin t)\Big\{\big|\Shat(t)\big| 
- \trace\Big(\bP_{\bep}\hbP(t)\bP_{\bep}\Big)\Big\}\big\langle \ba_0,\bh(t)\big\rangle dt, 
\ees
which has approximately the same mean as $(\nu /n)\big\langle \ba_0, \bhlasso\big\rangle$ 
when $\nu =\big|\Shat(0)\big|=\big|\Shat\big|$.  

\begin{remark}
    \label{remark:comparison-leave-one-out}
    {
    For a fixed $j$-th column
    the leave-one-out technique explained in \cite[Section 6.1]{javanmard2018debiasing} studies the modified estimate
    \begin{equation}
    \hbtheta^{(j)} = \argmin_{\bb\in\R^p:b_j=\beta_j}\|\bX\bb-\by\|_2^2/(2n) + g(\bb)
    \label{hbtheta-j}
    \end{equation}
    with the constraint $b_j=\beta_j$, so that the design matrix
    in the quadratic term is replaced by $\bX_{-j}$.
    The study of this perturbed $\hbtheta^{(j)}$
    allows \cite{javanmard2018debiasing} to prove efficiency
    under the condition $\max_{j=1,...,p} \|\Sigma^{-1}e_j\|_1 \le \rho$.
    This differs from our construction in at least three major ways:
    \begin{enumerate}
    \item
    The $\hbtheta^{(j)}$ of \cite{javanmard2018debiasing} does not have the same distribution as $\lasso$,
    while with our construction $\tildelasso$ as well as $\hbbeta(t)$ for each $t\in[0,\pi/2]$ all
    have the same distribution as the Lasso $\hbbeta$ itself;
    \item 
    In our construction the decomposition $\bX=\bX\bQ_0 + \bz_0\ba_0^\top$
    has two independent terms {$\bX\bQ_0$} and $\bz_0\ba_0^\top$, while in
    the construction \eqref{hbtheta-j} above, $\bX=\bX_{-j} + {\bX}\be_j$
    but $\bX_{-j}$ is not independent of the $j$-th column $\bX\be_j$;
    \item
    Our construction allows for general direction $\ba_0$, while the analogue
    of \eqref{hbtheta-j} with constraint $\ba_0^\top\bb$ for dense $\ba_0$,
    namely $ \hbtheta{}^{(0)} =
    \argmin_{\bb\in\R^p:\ba_0^\top(\bbeta-\bb)=0}\|\bX\bb-\by\|_2^2/(2n) +
    g(\bb)$, leads to an estimator that is \emph{not} a Lasso estimator, and
    its analysis would not be straightforward.
    \end{enumerate}
    }
\end{remark}

\subsection{The Lasso prediction error and model size}
Our next task is to show that with high probability,
simultaneously for all $t$ along the path,
the Lasso solutions $\hbbeta(t)$ enjoy guarantees in terms of prediction error
and model size similar to the bounds available for a single Lasso problem.
Define the event $\Omega_1$ by
\begin{equation}
    \label{def-Omega_1}
    \Omega_1 = \left\{0<\inf_{t,t'\ge 0} \phi_{\min} 
    \left(
        \frac 1 n \left(\bX(t)^\top\bX(t)\right)_{\Shat(t')\cup\Shat(t),\Shat(t')\cup\Shat(t)}
    \right). 
\right\}
\end{equation}
Define also $\bh^{(noiseless)}(t) = \bbeta^{(noiseless)}(t)-\bbeta$ where 
$\bbeta^{(noiseless)}(t)$ is the Lasso solution for design matrix $\bX(t)$ in
the absence of noise, that is,
\bel{noiseless}
\bbeta^{(noiseless)}(t)= \argmin_{\bb\in\R^p}\left\{\|\bX(t)(\bbeta - \bb)\|_2^2/(2n) + \lam \|\bb\|_1\right\}. 
\eel
Consider the following conditions: For a certain $s_*\in [s_0\vee 1,n]$ 
and positive $\lam_0$, 
\begin{equation}
    \begin{split}
 & \|\bX(t)\bh(t)\|_2 \le M_1\sqrt{ns_*}\sigma\lam_0, 
\\ & \|\bX(t)\bh^{(noiseless)}(t)\|_2 \le M_1\sqrt{ns_*}\sigma\lam_0,
\\ & \|\bSigma^{1/2}\bh(t)\|_2\le M_2 \sqrt{s_*}\sigma\lam_0,
\\ & |\Shat(t)| \le s_* \le M_{3}(s_0+k), 
\\ & \Big\|\Big(\bSigma_{\Shat(t),\Shat(t)}^{-1/2}\bX_{\Shat(t)}^\top(t)\bX_{\Shat(t)}(t)
\bSigma_{\Shat(t),\Shat(t)}^{-1/2}/n\Big)^{-1}\Big\|_{op}\le M_4,
\\ & (\|\bep\|_2/\sigma) \vee (C_0\|\bz_0(t)\|_2) \vee (n/(C_0\|\bz_0(t)\|_2)) \le M_5 \sqrt{n},
 \label{conds-limited}
\end{split}
\end{equation}
where $M_1, M_2, M_{3}, M_4,  M_5  > 0$ are constants to be specified.
Define the event $\Omega_2$ by
\begin{equation}
    \Omega_2(t)
    = \big\{\ \hbox{(\ref{conds-limited}) holds for $t$}\ \big\}
    \ \hbox{ and }\ 
    \Omega_2=\cap_{t\ge 0}\Omega_2(t). 
    \label{def-Omega_2}
\end{equation}
For a single and fixed value of $t$, the fact that the Lasso
enjoys the inequalities \eqref{conds-limited} under conditions on the design $\bSigma$
can be obtained using known techniques.
For instance,
the first and third inequalities in \eqref{conds-limited} describe
the prediction rate of the Lasso with respect to the empirical covariance matrix
and the population covariance matrix when the tuning parameter
of the Lasso is proportional to $\sigma\lambda_0$.
For the purpose of the present paper, however, we require the above inequalities
to hold with high probability simultaneously for all $t$.
The following lemma shows that this is the case: 
$\Omega_1\cap\Omega_2$ has overwhelming probability under
\Cref{assumption:main}.

\begin{lemma}
    \label{lm-7-probability-of-Omega_1-Omega_2}
    Let the setting and conditions of \Cref{assumption:main} be fulfilled.  
    Set $M_1= (1+\eta_2) \eta_2^{-1}(1+\eta_3)/\sqrt{\rho_*\tau_*}$, 
    $M_2=M_1/\sqrt{\tau_*}$, 
    	\bes
	M_{3} = 1 + \frac{(\tau^*/\tau_*)\phi_{\rm cond}(p;\emptyset,\bSigma) - 1}
	{2(1-\eta_2)^2/(1+\eta_2)^2},
	\ees
    $M_4=1/\tau_*$, $M_5 = 1/(1-\eta_3)$.
    Then the events $\Omega_1,\Omega_2$ defined in
    \eqref{def-Omega_1} and \eqref{def-Omega_2} satisfy 
\begin{equation}
    \begin{split}
    \label{upper-bound-proba-Omega_1_cap_Omega_2}
    1-\P(\Omega_1\cap\Omega_2) \le
    &\quad 2 e^{-n\eps_4}
    + 2e^{-(\eta_3-\sqrt{2/n})_{+}^2n/2} 
    \\
    & + e^{-n\eta_3^2/2} 
    +
    4(2\pi L_k^2+4)^{-1/2} + (L_k+(L_k^2+2)^{-1/2})^{-2}
    .
    \end{split}
\end{equation}
where $L_k=\sqrt{2\log(p/k)}$. 
\end{lemma}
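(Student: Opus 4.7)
The plan decomposes the complement of $\Omega_1\cap\Omega_2$ into five sub-events, one per term in \eqref{upper-bound-proba-Omega_1_cap_Omega_2}, and exploits the low-dimensional structure of the interpolation to lift every pointwise-in-$t$ bound to a simultaneous bound. The key geometric observation is that $\bz_0(t)=\bP_\bep\bz_0+(\cos t)\bP_\bep^\perp\bz_0+(\sin t)\bP_\bep^\perp\bg$ traces a circle in a fixed $3$-dimensional affine subspace, so the column spans of all $\bX(t)$ sit inside a fixed $(p+2)$-dimensional subspace of $\R^n$. Combined with the distributional identity $(\bz_0(t),\bX(t),\hbbeta(t))\stackrel{d}{=}(\bz_0,\bX,\lasso)$ for each $t$, this lets one run the standard Lasso analysis on a single data vector $(\bX\bQ_0,\bz_0,\bg,\bep)$ and then pick up uniformity over $t$ essentially for free.

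The three norm/noise conditions in the last line of \eqref{conds-limited} are handled by standard $\chi_n^2$-concentration: the bound $e^{-n\eta_3^2/2}$ controls $\|\bep\|_2/\sigma\le M_5\sqrt n$, while $2e^{-(\eta_3-\sqrt{2/n})_+^2 n/2}$ controls $\|\bz_0\|_2$ and $\|\bg\|_2$ via one-sided chi-square tails, the $\sqrt{2/n}$ correction absorbing the mean shift. The upper bound on $\|\bz_0(t)\|_2$ then follows from the triangle inequality, while the lower bound $C_0\|\bz_0(t)\|_2\ge \sqrt n/M_5$ uses the Pythagorean split $\|\bz_0(t)\|_2^2=\|\bP_\bep\bz_0\|_2^2+\|(\cos t)\bP_\bep^\perp\bz_0+(\sin t)\bP_\bep^\perp\bg\|_2^2$ together with the identical-in-distribution lower tail applied to the $2$-dimensional Gaussian rotation inside the parenthesis.

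For the sparse eigenvalue ingredient of $\Omega_1$ and of the $M_4$-line in \eqref{conds-limited}, one propagates the population sparse Riesz condition \eqref{SRC-population-final} to the empirical Gram matrix through Gaussian concentration of the singular values of $\bSigma_{A,A}^{-1/2}\bX_A^\top\bX_A\bSigma_{A,A}^{-1/2}/n$ over all sparse supports $A$; the union-bound factor $\log\binom{p-s_0}{m+k}\le \eps_3 n$ from \eqref{conditions-epsilons-2} is tuned so that $2e^{-n\eps_4}$ absorbs both upper and lower sparse eigenvalue failures in the style of \cite{ZhangH08}. The two remaining terms $4(2\pi L_k^2+4)^{-1/2}$ and $(L_k+(L_k^2+2)^{-1/2})^{-2}$ come from a Markov/Chebyshev argument on the cardinality of the ``bad coordinate set'' $B(t)=\{j:|\bX_j(t)^\top\bep|/n\ge \eta_2\lambda\}$: conditionally on $\bep$, each entry $\bX_j(t)^\top\bep/n$ is centered Gaussian with variance at most $\|\bep\|_2^2/n^2$, so Mills' ratio at level $L_k=\sqrt{2\log(p/k)}$ bounds $\E|B(t)|$ and $\Var|B(t)|$ and forces $|B(t)|\le k$ with the stated probability. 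With $|B(t)|\le k$ in hand, the usual Bickel--Ritov--Tsybakov Lasso analysis combined with the sparse Riesz condition produces the prediction and model-size bounds in \eqref{conds-limited}, the explicit constants $M_1,M_2,M_3$ being obtained algebraically from $\constants$ through the standard cone-inequality calculus.

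The main obstacle is the uniformity in $t\ge 0$. Here the low-dimensional embedding pays off: given the fixed data vector $(\bX\bQ_0,\bz_0,\bg,\bep)$, each of the quantities we need to control---sparse eigenvalues of $\bX(t)^\top\bX(t)/n$, correlations $\bX(t)^\top\bep/n$, and $\|\bz_0(t)\|_2$---is a deterministic smooth function of $(\cos t,\sin t)$. It therefore suffices to construct a high-probability event, measurable with respect to $(\bX\bQ_0,\bz_0,\bg,\bep)$, on which each of these quantities is simultaneously bounded over $t$; the chi-square and Mills-ratio estimates of the previous paragraphs are set up precisely so that the resulting event implies the $t$-dependent property for every $t$. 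A final union bound over the five classes of sub-events then delivers \eqref{upper-bound-proba-Omega_1_cap_Omega_2}.
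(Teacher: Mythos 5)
Your overall blueprint is the right one and matches the paper's strategy: decompose the bad event into five pieces matching the five terms, exploit the low-dimensional structure of the interpolation to replace $\sup_t$ by a single Gaussian event, then assemble by union bound. Two of your steps, however, contain genuine gaps that keep the argument from closing.

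First, and most importantly, you treat the ``bad coordinate set'' $B(t)=\{j:|\bX_j(t)^\top\bep|/n\ge \eta_2\lam\}$ as a $t$-dependent object and propose to bound $|B(t)|$ by Markov/Chebyshev for each $t$. As stated, this only controls $|B(t)|$ at a fixed $t$, and you would still face the problem of passing the bound to $\sup_t|B(t)|$; a union bound over the continuum does not work. The observation that rescues this (and that the paper relies on) is the identity $\bX(t)^\top\bep=\bX^\top\bep$ for all $t$: indeed $\bz_0(t)^\top\bep=\bz_0^\top\bP_\bep\bep=\bz_0^\top\bep$ because $\bP_\bep^\perp\bep={\bf 0}$, hence $B(t)$ is actually constant in $t$. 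Without this observation, your Markov argument does not yield uniformity. Moreover, even for a single $t$, a plain Markov bound on $|B(t)|$ gives a looser probability than the claimed $4(2\pi L_k^2+4)^{-1/2}+(L_k+(L_k^2+2)^{-1/2})^{-2}$; the paper instead applies Markov to the soft-thresholded functional $\sum_j(|\bx_j^\top\bep|-\|\bep\|_2L_k)_+^2$ and to $\|\bX_S^\top\bep\|_2$, which produces these exact constants and, crucially, also provides inputs for the deterministic false-positive lemma with explicit $\eta_1$.

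Second, your treatment of $\sup_t\|\bz_0(t)\|_2$ and $\inf_t\|\bz_0(t)\|_2$ is not tight enough to obtain the constant $M_5=1/(1-\eta_3)$. A triangle-inequality upper bound of the form $\|\bP_\bep\bz_0\|_2+\|\bP_\bep^\perp\bz_0\|_2+\|\bP_\bep^\perp\bg\|_2$ is of order $3\sqrt n$, far larger than $M_5\sqrt n$; and for the lower bound, invoking the fact that $(\cos t)\bP_\bep^\perp\bz_0+(\sin t)\bP_\bep^\perp\bg$ is, for each fixed $t$, equidistributed with $\bP_\bep^\perp\bz_0$ does not give a bound uniform in $t$. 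The clean step is $\min_t\|(\cos t)\bz_0+(\sin t)\bg\|_2=\sigma_{\min}(\bz_0,\bg)$ and $\max_t\|(\cos t)\bz_0+(\sin t)\bg\|_2=\sigma_{\max}(\bz_0,\bg)$, because $(\cos t,\sin t)^\top$ ranges over the unit circle; the Davidson--Szarek singular-value concentration for the $n\times 2$ Gaussian matrix then yields the $2e^{-(\eta_3-\sqrt{2/n})_+^2 n/2}$ term in one shot. Your ``identical-in-distribution lower tail applied to the $2$-dimensional Gaussian rotation'' gestures at this but never states the $\sigma_{\min}/\sigma_{\max}$ reduction, which is the step that actually lifts the per-$t$ chi-square bound to a uniform one. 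Apart from these two gaps, your sparse-eigenvalue argument (low-dimensional embedding plus union over supports with $\log\binom{p-s_0}{m+k}\le\eps_3 n$) and your overall union bound are consistent with the paper; the final deterministic passage from the good event to the $M_1,M_2,M_3$ bounds is carried out in the paper via a dedicated false-positive lemma rather than the cone-inequality route you mention, but either can be made to work.
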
 
\Cref{lm-7-probability-of-Omega_1-Omega_2} is proved in
\Cref{sec:proof-probabilistic-lemma}.
Equipped with the result that the events $\Omega_1$ and $\Omega_2$ have overwhelming probability, we are now ready to bound
$\Rem_\nu$ in \eqref{new-2}.

\subsection{An intermediate result}
Before proving the main result (\Cref{thm:main}) in the next
subsections, we now
prove the following intermediate result.

\begin{theorem}
    \label{theorem:slepian-expansion}
    There exists a constant $\bar M>0$ that depends on $M_1,M_2,M_4,M_5$ only such that the following holds.
    Let $F_\theta = 1/(\sigma C_0)^2$ be the Fisher information as in (\ref{Fisher}), and 
    $T_n = \sqrt{nF_\theta}\langle \bz_0,\bep\rangle/\|\bz_0\|_2^2$ so that $T_n$ has the $t$-distribution with 
    $n$ degrees of freedom. 
    Let $\Omega_1$ and $\Omega_2$ be the events defined in \eqref{def-Omega_1} and \eqref{def-Omega_2}.
    Define random variables ${\Rem_I}$ and ${\Rem_{II}}$ by
    \begin{align*}
        {\Rem_I} &= \sqrt{n F_\theta}
        (\htheta_{\nu=0}-\theta)
        - T_n
        - \sqrt{F_\theta / n } \int_0^{\pi/2}(\sin t)\left(
        |\Shat(t)|\langle \ba_0,\bh(t) \rangle 
    \right)
    dt,
        \\
        {\Rem_{II}} &=
        \sqrt{n F_\theta}
        (\htheta_{\nu=0}-\theta)
        - T_n
        -  \sqrt{F_\theta / n} \left\langle \ba_0,\bhlasso \right\rangle \int_0^{\pi/2}(\sin t)\left(|\Shat(t)|\right) dt
        .
    \end{align*}
    Then
    for any $u\in\R$ such that $|u|\le \sqrt n / \bar M$,
    \bes
    \max\bigg\{
    \E\left[I_{\Omega_1\cap\Omega_2}\exp\left(\frac{u{\Rem_I}}{\lambda_0\sqrt{s_*}}\right)\right], 
    \E\left[I_{\Omega_1\cap\Omega_2}\exp\left(\frac{u{\Rem_{II}}}{\lambda_0\sqrt{s_*}}\right)\right]\bigg\} 
    \le 2 \exp\left(\bar M^2 u^2\right).
    \ees
\end{theorem}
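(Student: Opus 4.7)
}

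The plan is to exploit the interpolation path from \Cref{sec:proof-path} and carry out a conditional Gaussian analysis at each $t$. Starting from \eqref{new-2} with $\nu=0$, we write
\begin{equation*}
\sqrt{nF_\theta}(\htheta_{\nu=0}-\theta) - T_n \;=\; -\,\sqrt{nF_\theta}\,\frac{\langle \bz_0,\bX\bQ_0\tildehlasso\rangle}{\|\bz_0\|_2^2} \;+\; \sqrt{nF_\theta}\int_0^{\pi/2}\frac{\langle \bz_0,\bA(t)\dot\bz_0(t)\rangle}{\|\bz_0\|_2^2}\,dt,
\end{equation*}
with $\bA(t)=\bX\bQ_0\bD^\top(t)\bP_\bep^\perp$ given by \eqref{matrix-formula}. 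The key structural observation is that, conditionally on $(\bX(t),\bep)$, the vector $\dot\bz_0(t)\sim N(\mathbf 0,C_0^{-2}\bP_\bep^\perp)$ is independent of $(\bX(t),\bep)$, while the identity $\bz_0=\bP_\bep\bz_0(t)+(\cos t)\bP_\bep^\perp\bz_0(t)-(\sin t)\dot\bz_0(t)$ makes the integrand an affine-plus-quadratic polynomial in $\dot\bz_0(t)$ whose coefficients are $(\bX(t),\bep)$-measurable.

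From this, the conditional mean of $\langle \bz_0,\bA(t)\dot\bz_0(t)\rangle$ equals $-(\sin t)C_0^{-2}\trace(\bA(t))$; substituting the explicit form of $\bA(t)$ yields a dominant contribution $\sqrt{F_\theta/n}(\sin t)|\Shat(t)|\langle\ba_0,\bh(t)\rangle$ (because $\|\bz_0\|_2^2=(n/C_0^2)(1+o(1))$ on $\Omega_2$), plus residual terms involving $\langle a_1(t),b_1(t)\rangle$, $\bep^\top\hbP(t)\bep/\|\bep\|_2^2$, and $\bw_0(t)^\top\bP_\bep^\perp\bz_0(t)$. Each residual is shown, deterministically on $\Omega_1\cap\Omega_2$, to contribute $O(\lambda_0\sqrt{s_*})$ to the integral, using the prediction bound $\|\bX(t)\bh(t)\|_2\le M_1\sqrt{ns_*}\sigma\lambda_0$, the bound $|\Shat(t)|\le s_*$, $\|\bw_0(t)\|_2^2\le M_4(\ba_0)_{\Shat(t)}^\top\bSigma_{\Shat(t),\Shat(t)}^{-1}(\ba_0)_{\Shat(t)}/n$, and $C_0\|\bz_0(t)\|_2\le M_5\sqrt n$. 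This identifies $\Rem_I$ with the integrated fluctuations (integrand minus conditional mean) plus the residual bias terms plus the $\tildehlasso$ term.

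The exponential moment is then bounded piece by piece: for each fixed $t$, the integrand minus its conditional mean is a centered Gaussian chaos of order $\le 2$ in $\dot\bz_0(t)$ whose Hilbert-Schmidt and operator-norm coefficients are controlled on $\Omega_1\cap\Omega_2$; a Hanson-Wright-type bound conditional on $(\bX(t),\bep)$ then yields a sub-Gaussian/sub-exponential estimate uniform in $t$, the scale $\lambda_0\sqrt{s_*}$ arising from $\|\bX(t)\bh(t)\|_2/\sqrt n$ and $|\langle\ba_0,\bh(t)\rangle|\le C_0\|\bSigma^{1/2}\bh(t)\|_2$. For the $\tildehlasso$ term we use the analogous conditioning: given $(\tbX,\bep)$, the orthogonal piece $\bP_\bep^\perp\bz_0\sim N(\mathbf 0,C_0^{-2}\bP_\bep^\perp)$ is independent of $\tildehlasso$, and $\|\bX\bQ_0\tildehlasso\|_2$ is controlled on a distributional copy of $\Omega_2$ applied at $t=\pi/2$. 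Jensen's inequality pushes the exponential moments through the integral $\int_0^{\pi/2}(\cdot)\,dt/(\pi/2)$, so uniform sub-Gaussian constants for each $t$ give the claimed bound $2\exp(\bar M^2u^2)$ in the range $|u|\le\sqrt n/\bar M$; the range restriction is dictated by the sub-exponential (not sub-Gaussian) tails of the quadratic part. Finally, $\Rem_{II}$ differs from $\Rem_I$ by $\sqrt{F_\theta/n}\int_0^{\pi/2}(\sin t)|\Shat(t)|\langle\ba_0,\hbbeta(t)-\lasso\rangle\,dt$, which is $O(\lambda_0\sqrt{s_*})$ on $\Omega_1\cap\Omega_2$ by the population prediction bound $\|\bSigma^{1/2}(\hbbeta(t)-\lasso)\|_2\le 2M_2\sqrt{s_*}\sigma\lambda_0$ and $|\Shat(t)|\le s_*$, so the same bound transfers.

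The main obstacle is Step~3: turning the pointwise-in-$t$ conditional Hanson-Wright bound into an exponential moment bound for the full integral over the path. The difficulty is that the conditioning $\sigma$-field $\sigma(\bX(t),\bep)$ varies with $t$, so we cannot use a single conditional Gaussian argument; a uniform-in-$t$ control of the quadratic form coefficients on $\Omega_1\cap\Omega_2$ together with Jensen's inequality for the convex map $x\mapsto\exp(ux)$ applied to the integrand is what makes the path integral tractable and produces the constant $\bar M$ depending only on $M_1,M_2,M_4,M_5$.
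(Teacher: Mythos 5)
Your overall strategy is the same as the paper's: start from the expansion \eqref{new-2}, differentiate along the interpolation path, view the integrand as an affine-plus-quadratic polynomial in $\dot\bz_0(t)$ conditionally on $(\bX(t),\bep)$, control the fluctuation part by Gaussian/Hanson--Wright-type conditional bounds, and push exponential moments through the integral by Jensen. That part of the plan is sound and matches the paper's decomposition into $W,W',W''',W''$.

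However, there is a genuine gap in your handling of $\Rem_{II}$. You claim that
\[
\Rem_{II}-\Rem_{I} \;=\; \sqrt{F_\theta/n}\int_0^{\pi/2}(\sin t)\,|\Shat(t)|\,\langle\ba_0,\hbbeta(t)-\lasso\rangle\,dt
\]
is $O(\lambda_0\sqrt{s_*})$ \emph{deterministically} on $\Omega_1\cap\Omega_2$, using $|\Shat(t)|\le s_*$ and $|\langle\ba_0,\hbbeta(t)-\lasso\rangle|\le C_0\|\bSigma^{1/2}(\hbbeta(t)-\lasso)\|_2\le 2C_0M_2\sqrt{s_*}\sigma\lambda_0$. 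Plugging these in gives a bound of order $\lambda_0\,s_*^{3/2}/\sqrt n = \lambda_0\sqrt{s_*}\cdot(s_*/\sqrt n)$, which is \emph{not} $O(\lambda_0\sqrt{s_*})$ in the regime $s_*\ggg\sqrt n$ that is precisely the point of this paper. The paper's Lemma \ref{lemma:W''} gains the missing factor of $\sqrt n$ by a genuinely probabilistic argument: write $\hbbeta(t)-\hbbeta(0)=\int_0^t\bD^\top(x)\dot\bz_0(x)\,dx$ and observe that, conditionally on $(\bep,\bX(x))$, $\langle\ba_0,\bD^\top(x)\dot\bz_0(x)\rangle$ is a centered Gaussian with standard deviation $\|\bP_\bep^\perp\bD(x)\ba_0\|_2\le M''\sigma\lambda_0\sqrt{s_*}/\sqrt n$ (note the extra $1/\sqrt n$ compared to the crude $|\langle\ba_0,\bh(t)\rangle|$ bound). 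After multiplying by $s_*/\sqrt n$ and integrating, this yields a sub-Gaussian variable at scale $\sigma\lambda_0\sqrt{s_*}\cdot(s_*/n)$, and the harmless factor $(s_*/n)\le 1$ (not $s_*/\sqrt n$) is what allows the MGF to close at scale $\lambda_0\sqrt{s_*}$. Without this refinement your argument only proves the theorem for $s_*\lesssim\sqrt n$, which is the regime where de-biasing was already well understood; the theorem as stated requires it.
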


We now gather some notation and lemmas to prove 
\Cref{theorem:slepian-expansion} .
Recall that the degrees-of-freedom adjusted LDPE is 
\bes
\htheta_{\nu} = \left\langle \ba_0, \lasso \right\rangle + \frac{\big\langle \bz_0,\by - \bX\lasso\big\rangle}{(1-\nu /n)\|\bz_0\|_2^2}, 
\ees
with $\bz_0 = \bX\bu_0$, where $\bu_0 = \bSigma^{-1}\ba_0/\langle\ba_0,\bSigma^{-1}\ba_0\rangle$ 
is the direction of the least favorable one-dimensional sub-model for the estimation of $\langle\ba_0,\bbeta\rangle$. 
Recall that the Fisher information for the estimation of $\langle\ba_0,\bbeta\rangle$ is 
$F_\theta = \sigma^{-2}/\langle\ba_0,\bSigma^{-1}\ba_0\rangle$, and that $\E \|\bz_0\|_2^2/n = \sigma^2F_\theta = 1/C_0^2$. 
We note that the estimation of $\theta = \langle \ba_0,\bbeta\rangle$ is scale equi-variant under the transformation 
\bel{scale-equi-variant}
\big\{\ba_0,\theta,\htheta_{\nu},\bu_0,\bz_0,F_\theta\big\} 
\to \big\{c\ba_0,c\theta,c\htheta_{\nu},\bu_0/c,\bz_0/c,F_\theta/c^2\big\}. 
\eel
Thus, without loss of generality, we may take the scale $\langle\ba_0,\bSigma^{-1}\ba_0\rangle = 1$ in which 
\bel{standard}
\quad & \bu_0 = \bSigma^{-1}\ba_0, \quad \ \bz_0 = \bX\bu_0\sim N({\bf 0},\bI_n),\quad\ 
F_\theta = \sigma^{-2},\quad\ C_0=1.
\eel
Furthermore, for any subset $A\subset\{1,...,p\}$ we have
\bel{upper-bound-submatrix-ba_0}
\big\|\bSigma_{A,A}^{-1/2}(\ba_0)_{A}\big\|_2^2
&=& \big\|\bSigma_{A,A}^{-1/2}(\bSigma^{1/2})_{A,*}\bSigma^{-1/2}\ba_0\big\|_2^2
\cr &\le& C_0^2\phi_{\max}\Big(\bSigma_{A,A}^{-1/2}(\bSigma^{1/2})_{A,*}(\bSigma^{1/2})_{*,A}\bSigma_{A,A}^{-1/2}\Big)
\cr &\le& C_0^2\phi_{\max}\Big(\bSigma_{A,A}^{-1/2}\bSigma_{A,A}\bSigma_{A,A}^{-1/2}\Big)
\cr &=& C_0^2.
\eel
Let ${\dot f}(t) = (\pa/\pa t)f(t)$ for 
all functions of $t$. By construction of the interpolation path \eqref{bz(t)}, we have
\begin{equation}
    \label{zdot}
{\dot\bz}_0(t) = \bP_\bep^\perp \left[ (-\sin t)\bz_0 + (\cos t) \bg \right], 
\end{equation}
so that $\langle \bep , \dot\bz_0(t) \rangle = 0$ holds for every $t$.
Conditionally on $\bep$, the random vector $(\bX(t),\dot\bz_0(t))$
is jointly normal and $\dot\bz_0(t)$ is independent of $\bX(t)$, so that
the conditional distribution of $\dot\bz_0(t)$ given $(\bX(t),\bep)$ is
\begin{equation}
\calL \left( \dot\bz_0(t) \big| \bX(t), \bep\right)
= N\left({\bf 0}, (1/C_0)^2 \bP_\bep^\perp\right).
\label{conditional-distribution-of-dot-bz_0}
\end{equation}

Here is an outline of the proof of \Cref{theorem:slepian-expansion}.
\begin{enumerate}
    \item Starting from the expansion \eqref{new-2}, 
the key to our analysis is to bound 
the remainder in \eqref{new-2}
by differentiating 
the continuous solution path
\eqref{bz(t)}-\eqref{beta(t)}
from $\lasso$ to $\tildelasso$.
    \item \Cref{lemma:lipschitzness} shows that the function $t\to\hbbeta(t)$
        is Lipschitz in $t$, hence differentiable almost everywhere along the path.
    \item Next, \Cref{lm-gradient-of-Lasso} computes the gradient
        of $t\to\hbbeta(t)$ along the path.
        To compute the gradient, we make use of Lemma~\ref{lemma:kkt-strict}
        which shows that the KKT conditions of the Lasso hold strictly almost everywhere.
    \item Finally, we write
        $\langle \bz_0,\bX\bQ_0(\tildelasso - \lasso)\rangle$
        as an integral from $0$ to $\pi/2$ of the derivative of the function
        $t\to\langle \bz_0,\bX\bQ_0  \hbbeta(t) \rangle$
        and the Lemmas \ref{lemma:W}, \ref{lemma:W'} and \ref{lemma:W''} bound from 
        above this derivative on the event $\Omega_1\cap\Omega_2$,
        thanks to the conditional distribution 
        \eqref{conditional-distribution-of-dot-bz_0} of $\dot\bz_0(t)$ given
        $(\bX(t),\bep)$.
\end{enumerate}

\begin{restatable}[Lipschitzness of regularized least-squares with respect to
    the design]{lemma}{lipschitzLemma}
    \label{lemma:lipschitzness}
    Let $\bep\in\R^n$ and $\bbeta\in\R^p$. Let $\bX$ and $\tilde\bX$ 
    be two design matrices of size $n\times p$ in a compact convex set $\tilde K$.
    Let $h$ be a norm in $\R^p$.
    Let $\hbbeta$ and $\tbbeta$ be the minimizers
    \bes
        \hbbeta
        = \argmin_{\bb\in\R^p}\left\{ L(\bX,\bb) + h(\bb)
        \right\},
        \qquad
        \tbbeta
        = \argmin_{\bb\in\R^p}\left\{ L(\tilde\bX,\bb) + h(\bb)
        \right\}
    \ees
    where $L(\bM,\bb) = \|\bep+\bM\bbeta - \bM\bb\|_2^2/(2n)$ for all $\bM\in\R^{n\times p}$ 
    and $\bb\in\R^p$.
    Then
    $$ 
    \|\bX(\hbbeta-\tbbeta)\|^2
    +
    \|\tilde\bX(\hbbeta-\tbbeta)\|^2
    \le
    C(\tilde K,h,\bep,\bbeta)
    \|\bX-\tilde\bX\|_{op}
    \|\hbbeta - \tilde\bbeta\|_2,
    $$
    where
    $C(\tilde K,h,\bep,\bbeta)$ is a quantity that depends on $\tilde K,h,\bep,\bbeta$ only.
\end{restatable}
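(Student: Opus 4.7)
The plan is to use the first-order optimality conditions for both minimizers together with monotonicity of the subdifferential of the convex function $h$. Let $L(\bM,\bb) = \|\bep + \bM\bbeta - \bM\bb\|_2^2/(2n)$, so that the KKT conditions for $\hbbeta$ and $\tbbeta$ give
\[
\bX^\top\bigl(\bep + \bX(\bbeta-\hbbeta)\bigr)/n \in \partial h(\hbbeta),
\qquad
\tilde\bX^\top\bigl(\bep + \tilde\bX(\bbeta-\tbbeta)\bigr)/n \in \partial h(\tbbeta).
\]
Since $h$ is convex, its subdifferential is monotone: subtracting the two inclusions and pairing with $\Delta := \hbbeta-\tbbeta$ yields
\[
\bigl\langle \bX^\top\bX(\bbeta-\hbbeta) - \tilde\bX^\top\tilde\bX(\bbeta-\tbbeta) + (\bX-\tilde\bX)^\top\bep,\;\Delta\bigr\rangle \ge 0.
\]

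The next step is to isolate $\|\bX\Delta\|_2^2$ (respectively $\|\tilde\bX\Delta\|_2^2$) by writing $\bbeta-\hbbeta = (\bbeta-\tbbeta)-\Delta$ (respectively $\bbeta-\tbbeta = (\bbeta-\hbbeta)+\Delta$). Rearranging and then adding the two resulting inequalities gives
\[
\|\bX\Delta\|_2^2 + \|\tilde\bX\Delta\|_2^2
\le \bigl\langle (\bX^\top\bX - \tilde\bX^\top\tilde\bX)(2\bbeta - \hbbeta - \tbbeta),\;\Delta\bigr\rangle
+ 2\bigl\langle (\bX-\tilde\bX)^\top\bep,\;\Delta\bigr\rangle.
\]
I then apply Cauchy--Schwarz and the identity $\bX^\top\bX - \tilde\bX^\top\tilde\bX = (\bX-\tilde\bX)^\top\bX + \tilde\bX^\top(\bX-\tilde\bX)$, which yields $\|\bX^\top\bX - \tilde\bX^\top\tilde\bX\|_S \le \|\bX-\tilde\bX\|_S(\|\bX\|_S + \|\tilde\bX\|_S)$. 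Compactness of $\tilde K$ bounds $\|\bX\|_S$ and $\|\tilde\bX\|_S$ by a constant depending only on $\tilde K$, and obviously $\|(\bX-\tilde\bX)^\top\bep\|_2\le\|\bX-\tilde\bX\|_S\|\bep\|_2$.

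The remaining task is to control $\|2\bbeta-\hbbeta-\tbbeta\|_2$ by a constant depending only on $\tilde K, h, \bep, \bbeta$. This follows from the defining minimization: for either design $\bM \in \tilde K$, comparing with the feasible point $\bbeta$ gives $h(\hbbeta) \le h(\bbeta) + L(\bM,\bbeta) = h(\bbeta) + \|\bep\|_2^2/(2n)$, and the same holds for $\tbbeta$. Since any norm on $\R^p$ is equivalent to $\|\cdot\|_2$, there is a constant $c_h$ with $\|\bb\|_2 \le c_h\, h(\bb)$, hence $\|\hbbeta\|_2$ and $\|\tbbeta\|_2$ are bounded in terms of $h,\bep,\bbeta$. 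Combining all of these bounds into the displayed inequality gives the claim with
\[
C(\tilde K,h,\bep,\bbeta) \;=\; 2\sup_{\bM\in\tilde K}\|\bM\|_S\,\bigl(2\|\bbeta\|_2 + 2 c_h(h(\bbeta)+\|\bep\|_2^2/(2n))\bigr) + 2\|\bep\|_2.
\]

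There is no real obstacle here beyond keeping the algebra straight; the only point requiring care is the uniform bound on $\|\hbbeta\|_2,\|\tbbeta\|_2$, which is why the dependence on $h,\bep,\bbeta$ (not just $\tilde K$) appears in the constant.
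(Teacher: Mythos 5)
Your proof is correct and essentially the same as the paper's. The paper derives the key display via two applications of strong convexity of the regularized objective $L(\bM,\cdot)+h$ (one for each design) and then sums, while you use the KKT subdifferential inclusions plus monotonicity of $\partial h$; these are two phrasings of the same convexity argument and land on the identical intermediate inequality $\|\bX\Delta\|_2^2 + \|\tilde\bX\Delta\|_2^2 \le \langle(\bX^\top\bX - \tilde\bX^\top\tilde\bX)(2\bbeta-\hbbeta-\tbbeta),\Delta\rangle + 2\langle(\bX-\tilde\bX)^\top\bep,\Delta\rangle$, after which the boundedness of $\|\hbbeta\|_2,\|\tbbeta\|_2$ by comparison with $\bbeta$, the decomposition of $\bX^\top\bX-\tilde\bX^\top\tilde\bX$, and Cauchy--Schwarz are exactly as in the paper.
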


\begin{lemma}
    \label{lemma:kkt-strict}
Consider a random design matrix $\bX\in\mathbb R^{n\times p}$
and independent random noise $\bep$ such that both $\bX$
and $\bep$
admit a density with respect to the Lebesgue measure.
Then with probability one, the KKT conditions of the Lasso hold strictly, that is,
$\mathbb P(\forall j\in\Shat, \quad |\bx_{j}^\top(\by - \bX \lasso) | < 1 ) = 1.$
\end{lemma}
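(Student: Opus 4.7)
The plan is to reduce the almost-sure strict KKT condition to a finite union of Lebesgue-null events, exploiting the joint density of $(\bX,\bep)$ with respect to Lebesgue measure on $\R^{np}\times\R^n$. First I would stratify over the possible Lasso supports and sign patterns: for each pair $(S,s)$ with $S\subseteq[p]$, $|S|\le n$, and $s\in\{-1,+1\}^{|S|}$, consider the event $E_{S,s}=\{\Shat=S,\;\sgn(\lasso)_S=s\}$. On $E_{S,s}$, almost surely $\bX_S$ has full column rank (since $\bX$ has a Lebesgue density and $|S|\le n$, linear dependence of $|S|$ columns is an algebraic constraint of Lebesgue measure zero), so the KKT stationarity together with $(\lasso)_{S^c}=\mathbf{0}$ pins down $\lasso$ as an explicit function of $(\bX_S,\by,s)$: on $S$ it equals $(\bX_S^\top\bX_S)^{-1}(\bX_S^\top\by-n\lambda s)$. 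Substituting $\by=\bX\bbeta+\bep$ shows that each residual correlation $r_j(\bX,\bep)=\bx_j^\top(\by-\bX\lasso)$ is an explicit affine function of $\bep$ for fixed $\bX$, with linear part $\bx_j^\top(\bI_n-\bP_S)$ where $\bP_S=\bX_S(\bX_S^\top\bX_S)^{-1}\bX_S^\top$.

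Next I would argue that for every quadruple $(S,s,j,\varepsilon)$ with $j\in[p]\setminus S$ and $\varepsilon\in\{-1,+1\}$, the set
\[
A_{S,s,j,\varepsilon}=\bigl\{(\bX,\bep)\in E_{S,s}:\;\bx_j^\top(\by-\bX\lasso)=\varepsilon\,n\lambda\bigr\}
\]
has probability zero. Indeed, $(\bI_n-\bP_S)\bx_j=\mathbf{0}$ only when $\bx_j$ lies in the column span of $\bX_S$; this imposes a proper algebraic constraint on the $np$ entries of $\bX$ and hence has Lebesgue measure zero in $\R^{np}$. Off that null set, $\{\bep:r_j(\bX,\bep)=\varepsilon n\lambda\}$ is an affine hyperplane in $\R^n$, hence $\bep$-null. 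Independence of $\bX$ and $\bep$, together with their Lebesgue densities, then yields $\P(A_{S,s,j,\varepsilon})=0$ by Fubini.

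Finally, a union bound over the finitely many quadruples concludes: strict KKT fails precisely on $\bigcup_{S,s,j,\varepsilon}A_{S,s,j,\varepsilon}$, a finite union of null sets. The main technical obstacle I anticipate is handling the degenerate boundary $|S|=n$, where $\bI_n-\bP_S$ vanishes identically and the affine-hyperplane argument for $\bep$ fails; this is easily dispatched by observing that $\{|\Shat|=n\}$ already has Lebesgue measure zero in $(\bX,\bep)$, since it forces $\by$ to lie exactly in the column span of $\bX_S$, a proper affine subspace of $\R^n$. A secondary subtlety is the precise reading of the lemma statement---strict inequality is only meaningful for $j\notin\Shat$ (the active coordinates are pinned to equality by stationarity)---so the relevant quadruples are those with $j\in S^c$, as in the argument above.
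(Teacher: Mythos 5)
Your approach is self-contained and genuinely different from the paper's, which simply conditions on $\bX$ and invokes \cite[Proposition 4.1]{bellec_zhang2018second_order_stein} (a result saying that, for a fixed design in general position and noise admitting a density, the inactive KKT inequalities hold strictly almost surely). Your stratification over support/sign patterns $(S,s)$, the closed-form solution $\lasso_S = (\bX_S^\top\bX_S)^{-1}(\bX_S^\top\by - n\lambda s)$ on each stratum, and the reduction to a finite union of Lebesgue-null slices via Fubini is the explicit, first-principles version of that cited result; it gains transparency at the cost of length. You also correctly observe that the lemma as printed has typos: the quantifier must be over $j\notin\Shat$ (stationarity pins active coordinates to equality), and the threshold should be $n\lambda$ rather than $1$.

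However, your dispatch of the boundary case $|S|=n$ is wrong. You claim $\{|\Shat|=n\}$ has measure zero because it ``forces $\by$ to lie exactly in the column span of $\bX_S$, a proper affine subspace of $\R^n$.'' When $|S|=n$ and $\bX_S$ is (a.s.) invertible, the column span of $\bX_S$ is all of $\R^n$, not a proper subspace, so no constraint on $\by$ is imposed; $\{|\Shat|=n\}$ can indeed carry positive probability when $\lambda$ is small relative to the signal. The case is nonetheless fixable inside your framework, but it requires a different slicing. On the event $E_{S,s}$ with $|S|=n$ and $\bX_S$ invertible, $\bI_n - \bP_S$ vanishes and the residual simplifies to
\begin{equation*}
\by - \bX\lasso = n\lambda\,(\bX_S^\top)^{-1}s,
\end{equation*}
which depends only on $\bX_S$ and $s$. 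Thus for $j\notin S$ the equality $|\bx_j^\top(\by - \bX\lasso)| = n\lambda$ becomes $|\bx_j^\top(\bX_S^\top)^{-1}s| = 1$, a codimension-one algebraic constraint on $\bx_j$ given $\bX_S$, hence $\bx_j$-null by Fubini using the joint density of $\bX$. In other words, when $|S|=n$ you should switch from ``fix $\bX$, integrate over $\bep$'' (which fails because the linear-in-$\bep$ coefficient is zero) to ``fix $\bX_S$ and $\bep$, integrate over $\bx_j$.'' With this repair the argument goes through.
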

\begin{proof}
    Since the distribution of $\bX$ is continuous,
    the assumption of \cite[Proposition 4.1]{bellec_zhang2018second_order_stein}
    is satisfied almost surely with respect to $\bX$
    and the result follows by conditionaning on $\bX$.
\end{proof}

\begin{restatable}{lemma}{gradientLassoLemma}
    \label{lm-gradient-of-Lasso} 
    Let $\bh(t) = \hbbeta(t) - \bbeta$.
    In the event $\Omega_1$ defined by \eqref{def-Omega_1}, 
\begin{equation}
\tildelasso - \lasso
= \int_0^{\pi/2} \bD^\top (t){\dot \bz}_0(t)dt
\label{lasso-X(t)}
\end{equation}
almost surely, where $\bD(t)$ is an $n\times p$ matrix given 
by $\bD_{\Shat^c(t)}(t) =0$ and 
\bes
&& \bD_{\Shat(t)}^\top(t) 
\cr &=&  \Big(\bX^\top(t)\bX(t)\Big)_{\Shat(t),\Shat(t)}^{-1}
\Big((\ba_0)_{\Shat(t)}\big(\bep - \bX(t)\bh(t)\big)^\top
- \bX_{\Shat(t)}^\top(t)\big\langle \ba_0, \bh(t)\big\rangle\Big).
\ees
\end{restatable}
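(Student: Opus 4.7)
}
The plan is to view $t\mapsto \hbbeta(t)$ as an absolutely continuous curve on $[0,\pi/2]$, identify its derivative almost everywhere via implicit differentiation of the KKT equations, and then integrate. First I would use that $\bX(t)=\bX\bQ_0+\bz_0(t)\ba_0^\top$ is $C^\infty$ in $t$ and takes values in a compact set on $[0,\pi/2]$. Applying Lemma \ref{lemma:lipschitzness} to the pairs $(\bX(t),\bX(t'))$ in this compact set yields that $t\mapsto \hbbeta(t)$ is locally Lipschitz, hence a.e.\ differentiable and equal to the integral of its derivative, so that $\tildelasso - \lasso = \hbbeta(\pi/2)-\hbbeta(0) = \int_0^{\pi/2}\dot{\hbbeta}(t)\,dt$.

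Next I would argue that for Lebesgue-a.e.\ $t$ the active set $\Shat(t)$ and sign pattern $\sgn(\hbbeta_{\Shat(t)}(t))$ are locally constant. For each fixed $t$, Lemma \ref{lemma:kkt-strict} applied to the design $\bX(t)$ (which has a joint density with $\bep$) gives that the KKT conditions of the Lasso \eqref{beta(t)} hold strictly almost surely. By Fubini, with probability one, strict KKT holds at Lebesgue-a.e.\ $t\in[0,\pi/2]$; combined with the continuity of $\hbbeta(t)$, this means that at almost every $t$ there is a neighborhood on which $\Shat(\cdot)$ and $\sgn(\hbbeta(\cdot))$ are constant.

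At such a differentiability point $t$, the stationarity condition reads
\[
\bX_{\Shat(t)}(t)^\top\bigl(\bep-\bX(t)\bh(t)\bigr) = n\lam\,\sgn(\hbbeta_{\Shat(t)}(t)),
\qquad
\hbbeta_{\Shat(t)^c}(t)=\mathbf{0}.
\]
Differentiating in $t$, using $\dot{\bX}(t)=\dot{\bz}_0(t)\ba_0^\top$ and $\dot{\bh}_{\Shat(t)^c}(t)=\mathbf{0}$, the right-hand side is locally constant so its derivative vanishes, and one obtains
\[
\bX_{\Shat(t)}^\top(t)\bX_{\Shat(t)}(t)\,\dot{\bh}_{\Shat(t)}(t)
=(\ba_0)_{\Shat(t)}\bigl(\bep-\bX(t)\bh(t)\bigr)^\top\dot{\bz}_0(t)
-\bX_{\Shat(t)}^\top(t)\dot{\bz}_0(t)\langle\ba_0,\bh(t)\rangle.
\]
On $\Omega_1$ the Gram matrix $(\bX(t)^\top\bX(t))_{\Shat(t),\Shat(t)}$ is invertible for every $t$, so inverting yields $\dot{\hbbeta}_{\Shat(t)}(t)=\bD_{\Shat(t)}^\top(t)\dot{\bz}_0(t)$ with $\bD$ as in the statement, while $\dot{\hbbeta}_{\Shat(t)^c}(t)=\mathbf{0}=\bD_{\Shat(t)^c}^\top(t)\dot{\bz}_0(t)$. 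Integrating over $[0,\pi/2]$ gives \eqref{lasso-X(t)}.

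The main subtlety is the justification that the active set and signs stay constant in a neighborhood of almost every $t$, since naively $\Shat(t)$ can jump at transition times. The key point is that the transition set is a Lebesgue-null subset of $[0,\pi/2]$ (by the Fubini argument from Lemma \ref{lemma:kkt-strict}) and $\hbbeta(\cdot)$ is absolutely continuous by Lemma \ref{lemma:lipschitzness}, so these transition times do not contribute to the integral and the fundamental theorem of calculus applies.
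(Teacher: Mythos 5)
Your proposal is correct and follows essentially the same route as the paper's proof: Lipschitzness via Lemma \ref{lemma:lipschitzness} for a.e.\ differentiability and the fundamental theorem of calculus, a Fubini argument combined with Lemma \ref{lemma:kkt-strict} to get strict KKT (hence local constancy of $\Shat(t)$ and the sign pattern) at Lebesgue-a.e.\ $t$, implicit differentiation of the stationarity condition, and inversion of the Gram matrix on $\Omega_1$. The only cosmetic difference is that the paper packages the Fubini step via an auxiliary uniform random variable $\Theta$ on $[0,\pi/2]$, but the resulting conclusion is identical.
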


It follows from \eqref{bz(t)} and \eqref{zdot} that conditionally on $\bep$,
the random vector ${\dot\bz}_0(t)$ is independent of $(\bX(t),\bh(t),\bD(t),I_{\Omega_2(t)})$
and the conditional distribution of $\dot\bz_0(t)$ given $(\bep,\bX(t))$ is
given by \eqref{conditional-distribution-of-dot-bz_0}.
Furthermore, by \eqref{zdot} we always have
$\langle \dot\bz_0(t), \bep \rangle = 0$ so that
$(\bep-\bX(t)\bh(t))^\top \dot\bz_0(t) = - (\bX(t)\bh(t))^\top \dot\bz_0(t)$
which simplifies the expression
$\bD_{\Shat(t)}^\top(t)\dot\bz_0(t)$.
Furthermore on $\Omega_2(t)$ defined in \eqref{def-Omega_2}, by the Cauchy-Schwarz inequality, 
\begin{equation}
    \begin{split}
        | \langle \ba_0, \bh(t) \rangle | &\le C_0 \|\bSigma^{1/2}\bh(t)\||_2 \le C_0 M_1 \sigma \lambda_0\sqrt{s_*},
        \\
        \|\bX\bQ_0\bh(t)\|_2/\sqrt n &\le (M_1 +  M_5 M_2)\sigma\lambda_0\sqrt{s_*},
        \\
         \|\bw_0(t)\|_2^2
        &\le (M_4/n) \|\bSigma_{ \Shat(t),\Shat(t) }^{-1/2} (\ba_0)_{\Shat(t)}\|_2^2 \le (M_4/n) C_0^2
        \label{properties-on-Omega_2(t)}
    \end{split}
\end{equation}
with $\bw_0(t)=\bX_{\Shat(t)}(t)\big(\bX_{\Shat(t)}^\top(t)\bX_{\Shat(t)}(t)\big)^{-1}(\ba_0)_{\Shat(t)}$, 
thanks to (\ref{conds-limited}) and \eqref{upper-bound-submatrix-ba_0}.
We will use these properties several times in the following
lemmas in order to bound $\Rem_\nu$ in \eqref{new-2}.

\begin{restatable}{lemma}{lemmaW}
    \label{lemma:W}
    The quantity
    \begin{equation}
    W =
    C_0 \sqrt n \left(\frac{\big\langle\bz_0,\bX\bQ_0(\tildelasso - \lasso) \big\rangle}{C_0^2 \|\bz_0\|_2^2}
    -
    \frac{\big\langle\bz_0,\bX\bQ_0(\tildelasso - \lasso) \big\rangle}{n}
    \right)
        \label{def-W}
    \end{equation}
    satisfies for any $u\in\R$
    \begin{equation}
        \E\left[I_{\Omega_1\cap\Omega_2}
        \exp\left(\frac{u W}{\sigma\lambda_0 \sqrt{s_*} }\right)\right] 
        \le \exp(C|u| + C u^2 )
        \label{eq:upper-bound-W}
    \end{equation}
    for some constant $C=C(M_1,M_2,M_5)>0$ that depends on $M_1, M_2, M_5$ only.
\end{restatable}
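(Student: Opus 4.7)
The plan is to factor $W$ into a bounded ratio (on $\Omega_2$), a sub-exponential chi-squared deviation, and an inner-product term that I control via the interpolation-path representation. By the scale invariance \eqref{scale-equi-variant}, I normalize $C_0=1$ throughout so that $\bz_0\sim N(0,\bI_n)$ and $\|\bz_0\|_2^2\sim\chi^2_n$. The starting identity is
\[
W = \frac{\langle\bz_0,\bX\bQ_0(\tildelasso-\lasso)\rangle}{n}\cdot\frac{n-\|\bz_0\|_2^2}{\sqrt n}\cdot\frac{n}{\|\bz_0\|_2^2}.
\]
On $\Omega_2$ the last factor lies in $[1/M_5^2,M_5^2]$, and the middle factor obeys Bernstein's MGF bound $\E\exp(\lambda(n-\|\bz_0\|_2^2)/\sqrt n)\le\exp(c\lambda^2)$ for $|\lambda|\le\sqrt n/c$, so it only remains to obtain a sub-Gaussian/sub-exponential MGF estimate for the first factor and combine the two by Cauchy-Schwarz.

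To control the first factor, I invoke the path representation of \Cref{lm-gradient-of-Lasso},
\[
\langle\bz_0,\bX\bQ_0(\tildelasso-\lasso)\rangle = \int_0^{\pi/2}\langle\bz_0,\bX\bQ_0\bD^\top(t)\dot\bz_0(t)\rangle\,dt,
\]
and substitute the formula \eqref{matrix-formula}. Using $\bP_\bep^\perp\dot\bz_0(t)=\dot\bz_0(t)$, the integrand decomposes as
\[
\langle\bz_0,A_1(t)\rangle\langle\bX(t)\bh(t),\dot\bz_0(t)\rangle - \langle\ba_0,\bh(t)\rangle\langle\bz_0,A_2(t)\dot\bz_0(t)\rangle,
\]
with $A_1(t)=-\bw_0(t)+\|\bw_0(t)\|_2^2\bz_0(t)$ and $A_2(t)=\hbP(t)-\bz_0(t)\bw_0(t)^\top$ both $(\bX(t),\bep)$-measurable. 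On $\Omega_2$, combining \eqref{properties-on-Omega_2(t)} with $\|\bw_0(t)\|_2\le\sqrt{M_4/n}$ gives $\|A_1(t)\|_2\lesssim 1/\sqrt n$, $\|A_2(t)\|_{\mathrm{op}}\lesssim 1$, $\|\bX(t)\bh(t)\|_2\lesssim\sigma\sqrt n\lambda_0\sqrt{s_*}$ and $|\langle\ba_0,\bh(t)\rangle|\lesssim\sigma\lambda_0\sqrt{s_*}$, with constants depending only on $M_1,M_2,M_4,M_5$.

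Next I carry out the conditional Gaussian analysis. Conditional on $(\bX(t),\bep)$, one has $\dot\bz_0(t)\sim N(0,\bP_\bep^\perp)$ independently of $(\bX(t),\bep)$, and by construction of the path \eqref{bz(t)} the decomposition $\bz_0 = v(t) - \sin(t)\dot\bz_0(t)$ holds with $v(t) := \bP_\bep\bz_0(t)+\cos(t)\bP_\bep^\perp\bz_0(t)$ being $(\bX(t),\bep)$-measurable and of norm at most $\|\bz_0(t)\|_2\le M_5\sqrt n$ on $\Omega_2$. Substituting this into the integrand turns it into an affine-plus-quadratic functional of the conditionally Gaussian vector $\dot\bz_0(t)$; Hanson-Wright controls the centered quadratic part and standard Gaussian tails control the linear part, which together yield a conditional MGF bound of the form $\exp(C|u|+Cu^2/n)$ for the rescaled integrand $u\langle\bz_0,\bX\bQ_0\bD^\top(t)\dot\bz_0(t)\rangle/(n\sigma\lambda_0\sqrt{s_*})$ valid for $|u|\le\sqrt n/C$. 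Jensen's inequality applied to the probability measure $\tfrac{2}{\pi}dt$ on $[0,\pi/2]$ transfers the bound to the full integral, and Cauchy-Schwarz combined with the Bernstein MGF estimate above produces the claimed $\exp(\bar M^2 u^2 + \bar M|u|)$ bound; absorbing the linear term into the quadratic at the cost of enlarging $\bar M$ gives the form stated in the lemma.

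The main obstacle is the identification and control of the mean-trace contribution $\sin(t)\langle\ba_0,\bh(t)\rangle\,\trace(\bP_\bep^\perp A_2(t))$ arising from the quadratic form. This trace has magnitude $\lesssim|\Shat(t)|\le s_*$, which at first glance threatens the scaling, but Hanson-Wright ensures it only produces a linear-in-$u$ drift in the conditional MGF rather than a variance term scaling with $|\Shat(t)|^2$. Managing this cancellation, together with the intertwined dependence among $\bz_0$, $\bz_0(t)$ and $\dot\bz_0(t)$ through the path construction, is where the most care is required.
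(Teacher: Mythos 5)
Your factorization of $W$ and the observations about the last factor lying in $[1/M_5^2,M_5^2]$ on $\Omega_2$ are fine, but the argument has two genuine gaps. First, the ``combine by Cauchy--Schwarz'' step is not valid for a \emph{product} of two dependent random variables inside the exponent: Cauchy--Schwarz lets you split $\E e^{u(A+B)}$, not $\E e^{uAB}$, and nothing in your plan actually produces the MGF of the product from the separate MGF bounds for the middle and first factors. Second, even setting that aside, the Bernstein bound for the $\chi^2_n$ deviation $\tfrac{n-\|\bz_0\|_2^2}{\sqrt n}$ holds only for $|u|\lesssim\sqrt n$, whereas the lemma asserts the bound for \emph{all} $u\in\R$; your intermediate steps cannot deliver the claimed statement, and you acknowledge the restriction $|u|\le\sqrt n/C$ in the body but then quote the full-range bound in the conclusion.

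The paper avoids both problems by observing that, on $\Omega_1\cap\Omega_2$, one has the \emph{pointwise} estimate
\[
|W|\;\le\;\big|\sqrt n-\|\bz_0\|_2\big|\,\frac{\sqrt n+\|\bz_0\|_2}{\sqrt n\,\|\bz_0\|_2}\,\max_{\bh\in\{\bhlasso,\tildehlasso\}}\|\bX\bQ_0\bh\|_2
\;\le\;\big|\sqrt n-\|\bz_0\|_2\big|\,M'\sigma\lambda_0\sqrt{s_*}
\]
with a deterministic constant $M'=2M_5^2(M_1+M_5 M_2)$, using \eqref{properties-on-Omega_2(t)} and the conditions in \eqref{conds-limited}. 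The map $\bz_0\mapsto|\sqrt n-\|\bz_0\|_2|$ is 1-Lipschitz with mean at most $1$, so the Gaussian concentration theorem gives the claimed sub-Gaussian MGF bound for \emph{every} $u\in\R$ in one line. This is strictly stronger than what Bernstein for $\chi^2$ gives. Your detour through the interpolation path and Hanson--Wright is unnecessary here: the first factor $\langle\bz_0,\bX\bQ_0(\tildelasso-\lasso)\rangle/n$ is already bounded pointwise on $\Omega_2$ by Cauchy--Schwarz, so no conditional MGF analysis of the integrand is needed. That machinery belongs to Lemma~\ref{lemma:W'}, not Lemma~\ref{lemma:W}; importing it here both overcomplicates the proof and leaves you to deal with the nonzero conditional mean of order $|\Shat(t)|\langle\ba_0,\bh(t)\rangle/n$ that you flag at the end, a cancellation which the pointwise bound makes entirely moot.
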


\begin{restatable}{lemma}{lemmaWprime}
    \label{lemma:W'}
    The quantity
    \begin{equation}
        \label{def-W'}
        W' =
    \frac{C_0\big\langle\bz_0,\bX\bQ_0(\tildelasso - \lasso) \big\rangle}{\sqrt n}
-
\int_0^{\pi/2} (\sin t)\frac{|\Shat(t)|\big\langle\ba_0,\bh(t)\big\rangle}{C_0\sqrt n} dt 
    \end{equation}
    satisfies
    \begin{equation}
    \E\left[
        I_{\Omega_1\cap\Omega_2} \exp \left( \frac{u W'}{\sigma\lambda_0\sqrt{s_*}} \right)
    \right] 
    \le 
    \exp\left( |u|C'/\sqrt n + \frac{ u^2 C'}{1 - |u|C'/\sqrt n}\right)
    \label{eq:upper-bound-W'}
    \end{equation}
    for any $u\in\R$ such that $|u| < \sqrt n / C'$,
    for some constant $C'=C'(M_1,M_2,M_4, M_5 )>0$ that depends on $M_1, M_2, M_4, M_5$ only.
\end{restatable}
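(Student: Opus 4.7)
The plan is to combine the gradient formula from Lemma \ref{lm-gradient-of-Lasso} with the Gaussian conditional structure \eqref{conditional-distribution-of-dot-bz_0} of $\dot\bz_0(t)$ given $(\bX(t),\bep)$. Pairing the identity \eqref{lasso-X(t)} with $\bX\bQ_0^\top\bz_0$ and using \eqref{matrix-formula} together with $\bP_\bep^\perp\dot\bz_0(t)=\dot\bz_0(t)$, the integrand $\langle\bz_0,\bX\bQ_0\bD^\top(t)\dot\bz_0(t)\rangle$ becomes a polynomial of degree at most $2$ in $\dot\bz_0(t)$ with $(\bX(t),\bep)$-measurable coefficients. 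The key algebraic identity $\bz_0=[\bP_\bep+(\cos t)\bP_\bep^\perp]\bz_0(t)-(\sin t)\dot\bz_0(t)$, verified directly from the definitions in \eqref{bz(t)} and \eqref{zdot}, separates this polynomial into a linear-in-$\dot\bz_0(t)$ piece (with conditional mean zero) and a quadratic piece $-(\sin t)\dot\bz_0(t)^\top\bX\bQ_0\bD^\top(t)\dot\bz_0(t)$. This yields a decomposition $(C_0/\sqrt n)\bigl[\langle\bz_0,\bX\bQ_0\bD^\top(t)\dot\bz_0(t)\rangle-(\sin t)C_0^{-2}|\Shat(t)|\langle\ba_0,\bh(t)\rangle\bigr]=G_t^L+G_t^Q+G_t^R$, in which $G_t^L$ and $G_t^Q$ are a centred linear and a centred quadratic Gaussian piece respectively, and $G_t^R$ is a deterministic residual.

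On $\Omega_1\cap\Omega_2$, I would bound $G_t^R$ using \eqref{properties-on-Omega_2(t)}. By \eqref{conditional-distribution-of-dot-bz_0} the conditional mean of the quadratic piece equals $-(\sin t)C_0^{-2}\trace(\bX\bQ_0\bD^\top(t)\bP_\bep^\perp)$; computing this trace from \eqref{matrix-formula} and using $\trace(\hbP(t)\bP_\bep^\perp)=|\Shat(t)|-\|\hbP(t)\bep\|_2^2/\|\bep\|_2^2$ isolates $(\sin t)C_0^{-2}|\Shat(t)|\langle\ba_0,\bh(t)\rangle$ plus four error terms pairing $\bw_0(t)$ (or $\|\bw_0(t)\|_2^2$) with $\bP_\bep^\perp\bX(t)\bh(t)$, $\bP_\bep^\perp\bz_0(t)$, or $\langle\ba_0,\bh(t)\rangle$, together with a $\|\hbP(t)\bep\|_2^2/\|\bep\|_2^2\le1$ contribution. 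Applying Cauchy--Schwarz to each using $\|\bw_0(t)\|_2\le C_0\sqrt{M_4/n}$, $\|\bX(t)\bh(t)\|_2\le M_1\sigma\lambda_0\sqrt{ns_*}$, $\|\bz_0(t)\|_2\le M_5\sqrt n/C_0$, and $|\langle\ba_0,\bh(t)\rangle|\le C_0 M_1\sigma\lambda_0\sqrt{s_*}$, and then restoring the outer $C_0/\sqrt n$ factor, gives $|G_t^R|\le\bar C_1\sigma\lambda_0\sqrt{s_*}/\sqrt n$ uniformly in $t\in[0,\pi/2]$. This residual accounts for the $|u|C'/\sqrt n$ term in the exponent after normalisation by $\sigma\lambda_0\sqrt{s_*}$.

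For the Gaussian piece I would write $\dot\bz_0(t)=C_0^{-1}\bP_\bep^\perp Y$ with $Y\sim N({\bf 0},\bI_n)$, so that $G_t^L+G_t^Q=\alpha_t\bv_t^\top Y+\beta_t(Y^\top\tilde M_t Y-\trace(\tilde M_t))$ for explicit $(\bX(t),\bep)$-measurable $(\alpha_t,\beta_t,\bv_t,\tilde M_t)$ with $\tilde M_t=\bP_\bep^\perp\bX\bQ_0\bD^\top(t)\bP_\bep^\perp$. The standard Gaussian MGF identity $\E\exp(\bv^\top Y+xY^\top MY)=\det(\bI-2xM)^{-1/2}\exp(\tfrac12\bv^\top(\bI-2xM)^{-1}\bv)$, combined with the inequality $-\tfrac12\log\det(\bI-2xM)-x\trace(M)\le x^2\|M\|_F^2/(1-2|x|\|M\|_S)$ for $2|x|\|M\|_S<1$, yields a conditional MGF bound for $(G_t^L+G_t^Q)/(\sigma\lambda_0\sqrt{s_*})$ of the form $\exp\bigl([C''u^2]/(1-|u|C''/\sqrt n)\bigr)$. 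The decisive estimates, obtained once more from the structure of \eqref{matrix-formula} combined with \eqref{properties-on-Omega_2(t)}, are $\|\bX\bQ_0\bD^\top(t)\|_S=O(\sigma\lambda_0\sqrt{s_*})$, $\|\bv_t\|_2\le\|\bX\bQ_0\bD^\top(t)\|_S\|\bz_0(t)\|_2=O(C_0^{-1}\sigma\lambda_0\sqrt{ns_*})$, and $\|\tilde M_t\|_F^2\le s_*\|\tilde M_t\|_S^2$ via $\rank(\tilde M_t)\le|\Shat(t)|\le s_*$; after the $C_0/(\sqrt n\,\sigma\lambda_0\sqrt{s_*})$ rescaling the operator norm becomes $O(1/\sqrt n)$ while the Frobenius norm squared and $\|\bv_t\|_2^2$ both remain $O(1)$, producing the characteristic denominator $1-|u|C'/\sqrt n$ rather than an unbounded exponent. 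Finally, Jensen's inequality $\exp\bigl(\int_0^{\pi/2}f_t\,dt\bigr)\le\tfrac{2}{\pi}\int_0^{\pi/2}\exp\bigl(\tfrac{\pi}{2}f_t\bigr)\,dt$ exchanges the integral in $t$ with the exponential (at the cost of an absolute constant absorbed into $C'$), and combining with the deterministic $G_t^R$ bound yields \eqref{eq:upper-bound-W'}. The main technical obstacle is the delicate bookkeeping of the simultaneous $C_0$, $\sqrt n$, and $\sigma\lambda_0\sqrt{s_*}$ factors needed to confirm both the $O(1/\sqrt n)$ scaled operator norm and the $O(1)$ scaled Frobenius/linear-form norms.
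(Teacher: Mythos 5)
Your proof is correct, and it follows the same fundamental strategy as the paper — writing the integrand as a degree-$2$ polynomial in the conditionally Gaussian vector $\dot\bz_0(t)$, invoking the Gaussian MGF bound for linear-plus-quadratic forms, and exchanging $\exp$ and $\int_0^{\pi/2}$ by Jensen — but the decomposition you use differs in a way worth noting. The paper splits $W'$ into $W_1'+W_2'$ by writing $\bX\bQ_0\bD^\top(t)\bP_\bep^\perp=\bA(t)-\hbP(t)\bP_\bep^\perp\langle\ba_0,\bh(t)\rangle$, where $\bA(t)$ in \eqref{eq:def-A(t)} collects the three rank-one pieces of \eqref{matrix-formula}. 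For $W_1'$ (the $\bA(t)$ piece), the quadratic structure in $\dot\bz_0(t)$ is deliberately discarded: Cauchy--Schwarz plus $\|\bz_0\|_2\le M_5\sqrt n$ reduces it to the Lipschitz functional $\dot\bz_0(t)\mapsto\|\bA(t)\dot\bz_0(t)\|_2$, which is then handled by Gaussian concentration and a bound on $\|\bA(t)\|_F$. Only $W_2'$, carrying the projector $\hbP(t)$, is expanded as a genuine quadratic form, and it is precisely $\trace(\bP_\bep^\perp\hbP(t)\bP_\bep^\perp)$ that cancels $|\Shat(t)|$ up to the $\trace(\bP_\bep\hbP(t)\bP_\bep)\in[0,1]$ remainder. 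By contrast, you apply the quadratic-form MGF to the entire $\bX\bQ_0\bD^\top(t)\bP_\bep^\perp$ and compute the full trace, seeing the $|\Shat(t)|\langle\ba_0,\bh(t)\rangle$ target emerge as the dominant piece of $\trace(\hbP(t)\bP_\bep^\perp)$ plus rank-one corrections that you bound via \eqref{properties-on-Omega_2(t)}. Your route is more unified (one MGF application instead of splitting into two flavors of argument) at the cost of having to carry the entire matrix through the trace computation; the paper's split isolates the rank-$|\Shat(t)|$ projector and keeps the bookkeeping for the low-rank remainder $\bA(t)$ cheap. Both yield the same rates, with your residual term $G_t^R$ producing the $|u|C'/\sqrt n$ correction and the quadratic/linear pieces producing the $u^2 C'/(1-|u|C'/\sqrt n)$ term. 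Two small points to settle: the quadratic-form MGF identity requires a symmetric matrix, so you should replace $\tilde M_t$ by its symmetric part (harmless, since $\|\mathrm{sym}(M)\|_S\le\|M\|_S$ and $\|\mathrm{sym}(M)\|_F\le\|M\|_F$); and the rank bound $\rank(\tilde M_t)\le|\Shat(t)|$ used for $\|\tilde M_t\|_F^2\le s_*\|\tilde M_t\|_S^2$ should be justified from $\bD_{\Shat^c(t)}(t)={\bf 0}$ in Lemma~\ref{lm-gradient-of-Lasso}, which gives $\rank(\bD(t))\le|\Shat(t)|$.
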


\begin{restatable}{lemma}{lemmaWprimePrime}
    \label{lemma:W''}
    The quantity
    \begin{equation}
        \label{def-W''}
        W'' =
        \frac{1}{C_0\sqrt n}\int_0^{\pi/2} (\sin t)|\Shat(t)|\big\langle\ba_0,\bh(t)\big\rangle dt
        -
        \frac{\big\langle\ba_0,\bhlasso\big\rangle}{C_0\sqrt n}
        \int_0^{\pi/2} (\sin t)|\Shat(t)| dt
        \end{equation}
    satisfies for all $u\in\ R$
    \begin{equation}
        \E\left[\exp\left(\frac{u W''}{\sigma\lambda_0 \sqrt{s_*} } \right)\right]
        \le 2 \exp(C'' u^2)
        \label{eq:upper-bound-W''}
    \end{equation}
    for some constant $C''=C''(M_1,M_2,M_4, M_5 )>0$ that depends on $M_1, M_2, M_4, M_5$ only.
\end{restatable}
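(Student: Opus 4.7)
The plan is to rewrite $W''$ as a single integral along the interpolation path by combining the chain-rule formula from Lemma~\ref{lm-gradient-of-Lasso} with Fubini's theorem, and then to exploit the conditional Gaussianity of $\dot\bz_0(s)$ given $(\bX(s),\bep)$. The identity $\langle \ba_0,\bh(t)\rangle-\langle \ba_0,\bhlasso\rangle = \int_0^t \ba_0^\top\bD(s)^\top\dot\bz_0(s)\,ds$ and Fubini give
\[
 W'' = \frac{1}{C_0\sqrt n}\int_0^{\pi/2} K(s)\,\ba_0^\top\bD(s)^\top\dot\bz_0(s)\,ds,\qquad K(s):=\int_s^{\pi/2}(\sin t)|\Shat(t)|\,dt,
\]
while the explicit form of $\bD(s)$ combined with $\langle \bep,\dot\bz_0(s)\rangle=0$ (which follows from \eqref{zdot}) yields
$\ba_0^\top\bD(s)^\top\dot\bz_0(s) = -\|\bw_0(s)\|_2^2\,\langle \bX(s)\bh(s),\dot\bz_0(s)\rangle - \bw_0(s)^\top\dot\bz_0(s)\,\langle \ba_0,\bh(s)\rangle.$

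On $\Omega_1\cap\Omega_2$ the prefactors are controlled by \eqref{properties-on-Omega_2(t)}: $K(s)\le s_*$, $\|\bw_0(s)\|_2^2\le M_4 C_0^2/n$, $\|\bX(s)\bh(s)\|_2\le M_1\sigma\sqrt{n s_*}\,\lambda_0$, and $|\langle \ba_0,\bh(s)\rangle|\le C_0 M_1\sigma\lambda_0\sqrt{s_*}$. Conditioning on $(\bX(s),\bep)$, the law of $\dot\bz_0(s)$ is $N({\bf 0},C_0^{-2}\bP_\bep^\perp)$ by \eqref{conditional-distribution-of-dot-bz_0}, so for each $s$ the integrand is a centered Gaussian linear form in $\dot\bz_0(s)$ with conditional variance bounded, using the estimates above, by a constant multiple of $\sigma^2\lambda_0^2 s_*/n$; the crucial cancellation is that the $1/n$ factor in $\|\bw_0(s)\|_2^2$ exactly compensates the Gaussian scale $C_0^{-1}\sqrt n$ of $\|\dot\bz_0(s)\|_2$, leaving a clean $O(\sigma\lambda_0\sqrt{s_*})$ scale after bounding $K(s)\le s_*$ and integrating. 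A standard conditional Gaussian MGF bound then delivers $\E[\exp(u W''/(\sigma\lambda_0\sqrt{s_*}))\,I_{\Omega_1\cap\Omega_2}]\le \exp(C'' u^2)$ for some $C''$ depending on $M_1,M_2,M_4,M_5$ only.

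To drop the indicator and account for the factor $2$ in the statement, I would use a crude deterministic bound on $|W''|$ of polynomial size (from Lasso optimality $\|\bh(t)\|_1\le 2\|\bbeta\|_1+\|\bep\|_2^2/(n\lambda)$ and $|\Shat(t)|\le p$) together with the exponential tail $\P((\Omega_1\cap\Omega_2)^c)\le e^{-cn}$ from Lemma~\ref{lm-7-probability-of-Omega_1-Omega_2}; in the relevant range $|u|\le \sqrt n/\bar M$ of Theorem~\ref{theorem:slepian-expansion}, the complement's contribution to the MGF is at most a universal constant and is absorbed by the factor $2$. The main obstacle is the joint Gaussian bookkeeping inside the integral: the ingredients $(\bw_0(s),\bh(s),\bX(s),\Shat(s))$ all depend on the past $(\bz_0(r))_{r\le s}$ of the path, so different values of $s$ couple nontrivially, and one must carefully argue that the two terms of the integrand combine into a sub-Gaussian whole in a compatible way, mirroring the stochastic-integration argument already used for $W'$ in Lemma~\ref{lemma:W'}.
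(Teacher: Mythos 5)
Your core argument coincides with the paper's proof up to a cosmetic Fubini interchange. The paper first replaces $|\Shat(t)|$ by $s_*$ on $\Omega_2$, bounds the inner integral $\int_0^t|\langle\ba_0,\bD^\top(x)\dot\bz_0(x)\rangle|\,dx$ by its value at $t=\pi/2$, then applies Jensen over the uniform measure on $[0,\pi/2]$, Fubini, and the conditional law $\dot\bz_0(x)\,|\,(\bX(x),\bep)\sim N({\bf 0},C_0^{-2}\bP_\bep^\perp)$. You instead interchange the integrals to produce the kernel $K(s)=\int_s^{\pi/2}(\sin t)|\Shat(t)|\,dt$ and then bound $K(s)\le s_*$. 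The adaptedness concern you flag at the end — that $K(s)$ depends on $\Shat(t)$ for $t\ge s$ and so is not $\sigma(\bX(s),\bep)$-measurable — is real but is resolved precisely by that pointwise bound: once $K(s)$ is replaced by the deterministic constant $s_*$, what remains inside the integral, namely $\ba_0^\top\bD(s)^\top\dot\bz_0(s)$, is a genuine centered Gaussian linear form conditionally on $(\bX(s),\bep)$, and Jensen plus Fubini plus the law of total expectation then gives the moment-generating-function bound without any further coupling of different $s$. Your explicit two-term decomposition of $\ba_0^\top\bD(s)^\top\dot\bz_0(s)$ is correct and reproduces the paper's bound $\|\bP_\bep^\perp\bD(x)\ba_0\|_2\le(M_4/n)\|\bX(x)\bh(x)\|_2+(M_4/n)^{1/2}\|\bSigma^{1/2}\bh(x)\|_2$.

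The one genuine gap is the final paragraph on removing $I_{\Omega_1\cap\Omega_2}$. You assert $\P\big((\Omega_1\cap\Omega_2)^c\big)\le e^{-cn}$, but \Cref{lm-7-probability-of-Omega_1-Omega_2} only yields a bound whose last two terms, $4(2\pi L_k^2+4)^{-1/2}$ and $(L_k+(L_k^2+2)^{-1/2})^{-2}$ with $L_k=\sqrt{2\log(p/k)}$, decay only like a power of $1/\sqrt{\log(p/k)}$; they are not exponentially small in $n$. A crude deterministic bound $|W''|\lesssim \text{poly}(n,p)$ combined with such a slowly decaying tail cannot control $\E[e^{u W''/(\sigma\lambda_0\sqrt{s_*})}I_{(\Omega_1\cap\Omega_2)^c}]$ for $|u|$ up to $\sqrt n/\bar M$. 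In fact the paper's own proof only establishes the estimate with the indicator inserted, and it is exactly this version that is used in the proof of \Cref{theorem:slepian-expansion}; the absence of $I_{\Omega_1\cap\Omega_2}$ in \eqref{eq:upper-bound-W''} should be read as a slip in the statement rather than a stronger claim you need to prove. Proving the indicator version is sufficient, and your main argument already does that.
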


\begin{restatable}{lemma}{lemmaWprimePrimePrime}
    \label{lemma:W'''}
    The quantity
    \begin{equation}
        W''' = - \frac{\sqrt n C_0 \langle \bz_0, \bX\bQ_0\tildehlasso\rangle}{C_0^2\|\bz_0\|_2^2}
        \label{def-W'''}
    \end{equation}
    satisfies for all $u\in\R$
    \begin{equation}
        \E\left[I_{\Omega_1\cap\Omega_2} 
        \exp\left(\frac{u W'''}{\sigma\lambda_0\sqrt{s_*}}\right) \right]
        \le 2 \exp(C''' u^2)
        \label{eq:upper-bound-W'''}
    \end{equation}
    for some constant $C'''=C'''(M_1, M_2, M_5)$ that depends on $M_1, M_2, M_5$ only.
\end{restatable}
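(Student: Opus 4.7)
The plan is to exploit the Gaussian interpolation construction $\tbz_0 = \bP_\bep\bz_0 + \bP_\bep^\perp\bg$ to decouple $\bz_0$ from $\tildehlasso$. Since $\bg$ is an independent copy of $\bz_0$ and $\bP_\bep\bz_0 = \bP_\bep\tbz_0$, the orthogonal component satisfies $\bP_\bep^\perp\bz_0 \mid \calF \sim N({\bf 0}, C_0^{-2}\bP_\bep^\perp)$, where $\calF = \sigma(\bep,\bX\bQ_0,\tbz_0)$, while $\tildehlasso$ is $\calF$-measurable (as a deterministic function of $\tbX = \bX\bQ_0+\tbz_0\ba_0^\top$ and $\bep$). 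I would begin by splitting
$$
\langle\bz_0,\bX\bQ_0\tildehlasso\rangle
=\underbrace{\langle\bP_\bep^\perp\bz_0,\bX\bQ_0\tildehlasso\rangle}_{=:\,A}
+\underbrace{\langle\bP_\bep\bz_0,\bP_\bep\bX\bQ_0\tildehlasso\rangle}_{=:\,B},
$$
and handle $A$ via conditional Gaussianity, while $B$ is handled via $|B|\le\|\bP_\bep\bz_0\|_2\|\bX\bQ_0\tildehlasso\|_2$ together with the marginal law $C_0\|\bP_\bep\bz_0\|_2 = C_0|\bep^\top\bz_0|/\|\bep\|_2 \sim |N(0,1)|$, which follows from $\bz_0 \mid \bep \sim N({\bf 0},C_0^{-2}\bI_n)$.

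Next, I would invoke $\Omega_2$ to produce explicit bounds on the relevant norms. On $\Omega_2(0)$, applied pointwise, $\|\bz_0\|_2 \ge \sqrt n/(C_0 M_5)$, so $I_{\Omega_2}|W'''| \le (C_0 M_5^2/\sqrt n)(|A|+|B|)$. The event $\Omega_2(\pi/2)$---which, unlike $\Omega_2$ in full, is $\calF$-measurable since $\bX(\pi/2)=\tbX$, $\bh(\pi/2)=\tildehlasso$ and $\bz_0(\pi/2)=\tbz_0$ are all $\calF$-measurable---together with the identity $\bX\bQ_0\tildehlasso = \tbX\tildehlasso - \tbz_0(\ba_0^\top\tildehlasso)$ and the bounds \eqref{conds-limited}--\eqref{properties-on-Omega_2(t)} at $t=\pi/2$ yields $\|\bX\bQ_0\tildehlasso\|_2 \le (M_1+M_5M_2)\sqrt{ns_*}\,\sigma\lambda_0$.

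Finally, I would bound the MGF of the $A$ contribution by conditioning on $\calF$ and applying the Gaussian MGF bound $\E[\exp(t|A|)\mid\calF] \le 2\exp(t^2C_0^{-2}\|\bX\bQ_0\tildehlasso\|_2^2/2)$, then integrating over $\Omega_2(\pi/2)$ where the norm is controlled. The MGF of the $B$ contribution is bounded analogously using the sub-Gaussian MGF bound $\E[\exp(s\,C_0\|\bP_\bep\bz_0\|_2)] \le 2\exp(s^2/2)$ for the scalar $|N(0,1)|$ together with the $\Omega_2(\pi/2)$ bound on $\|\bX\bQ_0\tildehlasso\|_2$. Combining the two via $\exp(uW''')\le\exp(|u|(|A|+|B|)C_0 M_5^2/\sqrt n)$ and the Cauchy--Schwarz inequality applied to the expectation of a product yields the claim with $C'''$ a constant depending only on $M_1,M_2,M_5$. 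The main bookkeeping obstacle will be keeping the two uses of $\Omega_2$ separate: $\Omega_2(0)$ is applied pointwise before any conditioning to lower-bound $\|\bz_0\|_2$, whereas $\Omega_2(\pi/2)$ is used as the $\calF$-measurable event under which the conditional Gaussianity argument for $A$ goes through cleanly.
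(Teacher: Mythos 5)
Your argument is correct and follows essentially the same route as the paper's proof: decompose $\langle\bz_0,\bX\bQ_0\tildehlasso\rangle$ into the $\bP_\bep$ and $\bP_\bep^\perp$ components, control the $\bP_\bep$ part through the scalar $C_0\|\bP_\bep\bz_0\|_2\sim|N(0,1)|$ and the $\bP_\bep^\perp$ part by conditional Gaussianity given $(\bep,\bX(\pi/2))$, using $\Omega_2(0)$ to lower-bound $\|\bz_0\|_2$ and $\Omega_2(\pi/2)$ to bound $\|\bX\bQ_0\tildehlasso\|_2$. The only superficial difference is that you combine the two MGF bounds via Cauchy--Schwarz whereas the paper uses the convexity inequality $e^{a+b}\le\tfrac12 e^{2a}+\tfrac12 e^{2b}$; both yield $C'''=C'''(M_1,M_2,M_5)$ and are interchangeable here.
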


We are now ready to combine the above lemmas to prove \Cref{theorem:slepian-expansion}.
\begin{proof}[Proof of \Cref{theorem:slepian-expansion}]
    The random variables ${\Rem_I}$ and ${\Rem_{II}}$ in \Cref{theorem:slepian-expansion}
    satisfy
    $$\sigma{\Rem_I} = W''' + W + W', 
    \qquad \sigma{\Rem_{II}} = \sigma{\Rem_I} + W'' = W''' + W + W' + W''.$$
    where 
    $W,W',W''$ and $W'''$ are defined in \eqref{def-W}, \eqref{def-W'},
    \eqref{def-W''} and \eqref{def-W'''}.
    By \Cref{lemma:W,lemma:W',lemma:W'',lemma:W'''},
    there exists a constant $\bar M > 0$ that depends only on $M_1,M_2, M_4, M_5$
    such that for all $u\in\R$ with $|u| < \sqrt n/\bar M$,
    \begin{equation}
        \max_{V\in\{W,W',W'',W'''\} }\E\left[I_{\Omega_1\cap\Omega_2}\exp\left(\frac{u V}{\sigma\lambda_0\sqrt{s_*}}\right)\right]
    \le 2 \exp\left(\bar M^2 u^2\right)
    \label{previous-display-Theorem-6.2-proof}
    \end{equation}
    because one can always increase $\bar M$ so that
    the right hand side of the previous display
    is larger than the right hand side of
    \eqref{eq:upper-bound-W},
    \eqref{eq:upper-bound-W'}
    \eqref{eq:upper-bound-W''}
    and
    \eqref{eq:upper-bound-W'''}.
    By Jensen's inequality,
    \begin{align*}
        \E\left[
        I_{\Omega_1\cap\Omega_2}
        \exp(\frac{u {\Rem_I}}{\lambda_0\sqrt{s_*}})
        \right]
        \le
        \frac 1 3
        \E\left[
        I_{\Omega_1\cap\Omega_2}
        \left(
            {e}^{\frac{3u W'''}{\sigma\lambda_0\sqrt{s_*}}}
        +
            {e}^{\frac{3u W}{\sigma\lambda_0\sqrt{s_*}}}
        +
            {e}^{\frac{3u W'}{\sigma\lambda_0\sqrt{s_*}}}
        \right)
        \right].
    \end{align*}
    The right hand side is bounded from above thanks to \eqref{previous-display-Theorem-6.2-proof}.
    We apply the same technique to obtain the desired bound on ${\Rem_{II}}$, using \Cref{lemma:W''}
    for $W''$.
\end{proof}

\subsection{Proof of \Cref{thm:main}}
\label{sec:proof-main-part-(ii)}
From \Cref{theorem:slepian-expansion},
in order to complete prove \Cref{thm:main} we will need the following additional lemma.
\begin{restatable}{lemma}{lemmaLastTermDivergence}
    \label{lemma:last-term-divergence}
    The upper bound
    \begin{align*}
        &\E\left[
    I_{\Omega_1\cap\Omega_2} \left(\int_0^{\pi/2}(\sin t)(|\Shat(t)| - |\Shat(0)|)dt \right)^2\right]
    &\le
    n \left( \lambda_0^2s_* C'''' + 6( 3 + 2M_1^2 \lambda_0^2 s_*)\right)
    \end{align*}
    holds,
    where $C'''' = 3( M_5 M_1 + M_2  M_5 M_4)^2$.
\end{restatable}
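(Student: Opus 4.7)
The plan is to reduce the quantity to a pointwise variance bound via Cauchy--Schwarz, exploit the distributional stationarity of the interpolation path, and then invoke a Gaussian second-moment (second-order Stein) identity for $|\Shat(0)|$.

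First, since $\int_0^{\pi/2}\sin t\,dt=1$, Jensen's inequality (Cauchy--Schwarz with the probability measure $\sin t\,dt$ on $[0,\pi/2]$) gives
\begin{equation*}
    \Bigl(\int_0^{\pi/2}\sin t\,(|\Shat(t)|-|\Shat(0)|)\,dt\Bigr)^{\!2}
    \;\le\;\int_0^{\pi/2}\sin t\,(|\Shat(t)|-|\Shat(0)|)^2\,dt.
\end{equation*}
Taking $\E[I_{\Omega_1\cap\Omega_2}\,\cdot\,]$ and applying Fubini reduces the task to proving, uniformly in $t\in[0,\pi/2]$, the pointwise estimate $\E\bigl[I_{\Omega_1\cap\Omega_2}(|\Shat(t)|-|\Shat(0)|)^{2}\bigr]\le n\bigl(\lambda_0^{2}s_{*}C''''+6(3+2M_1^{2}\lambda_0^{2}s_{*})\bigr)$.

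Second, I would use the distributional invariance of the path: by the construction in \eqref{bz(t)}--\eqref{beta(t)} and rotational invariance of the conditional Gaussian law of $\bz_0$ given $\bep$, the triple $(\bz_0(t),\bX(t),\hbbeta(t))$ has the same joint law as $(\bz_0,\bX,\lasso)$ for every $t$; in particular $|\Shat(t)|\stackrel{d}{=}|\Shat(0)|$. Writing $\mu$ for their common mean, the elementary inequality $(a-b)^2\le 2(a-\mu)^2+2(b-\mu)^2$, together with $I_{\Omega_1\cap\Omega_2}\le I_{\Omega_2(t)}$ and the distributional equality on the $t$-marginal, yields $\E[I_{\Omega_1\cap\Omega_2}(|\Shat(t)|-|\Shat(0)|)^2]\le 4\,\E[I_{\Omega_2(0)}(|\Shat(0)|-\mu)^2]$.

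The third and crucial step is to bound $\E[I_{\Omega_2(0)}(|\Shat(0)|-\mu)^2]$ by $O(n(1+\lambda_0^{2}s_{*}))$. The trivial estimate $(|\Shat(0)|-\mu)^2\le 4s_{*}^{2}$ on $\Omega_2(0)$ is only of order $s_*^2$ and can greatly exceed $n$ in the regime of interest. Instead, I would apply the second-order Stein identity of \cite{bellec_zhang2018second_order_stein}, conditionally on $(\bep,\bX\bQ_0)$, to the Gaussian vector $\bz_0\sim N({\bf 0},\bI_n/C_0^2)$. The strict-KKT property of Lemma~\ref{lemma:kkt-strict} identifies $|\Shat(0)|$ almost surely with the smooth surrogate $\trace(\hbP(0))$, so the Stein identity converts the truncated variance into an expectation of a quadratic form involving the Jacobian $\bD(0)$ of Lemma~\ref{lm-gradient-of-Lasso} and its explicit expression \eqref{matrix-formula}. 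The $\Omega_2(0)$-bounds in \eqref{properties-on-Omega_2(t)}---namely $\|\bX\bQ_0\bh(0)\|_2\le\sqrt n(M_1+M_5M_2)\sigma\lambda_0\sqrt{s_*}$, $\|\bw_0(0)\|_2^2\le M_4C_0^2/n$, and $|\langle\ba_0,\bh(0)\rangle|\le C_0M_1\sigma\lambda_0\sqrt{s_*}$---then yield, after substitution, the ``prediction'' contribution $n\lambda_0^{2}s_{*}\cdot 3(M_5M_1+M_2M_5M_4)^{2}=n\lambda_0^{2}s_{*}C''''$ coming from the rank-one $\bw_0\otimes(\bX\bQ_0\bh)$-type term in \eqref{matrix-formula}, and the baseline contribution $6n(3+2M_1^{2}\lambda_0^{2}s_{*})$ coming from the trace/projection part. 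The main obstacle is precisely this third step: $|\Shat(0)|$ is integer-valued and not weakly differentiable, so the Stein identity cannot be applied verbatim; Lemma~\ref{lemma:kkt-strict} is essential for replacing $|\Shat(0)|$ by $\trace(\hbP(0))$ almost surely, and the Bellec--Zhang quadratic identity must then be carefully applied to isolate the two contributions matching the right-hand side of the lemma.
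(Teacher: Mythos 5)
Your first two steps are correct as stated, but they commit you to bounding the marginal variance $\E\bigl[I_{\Omega_2(0)}(|\Shat(0)|-\mu)^2\bigr]$ with $\mu = \E|\Shat(0)|$, and your third step does not actually deliver this. Applying the second-order Stein identity of \cite{bellec_zhang2018second_order_stein} conditionally on $(\bep,\bX\bQ_0)$ (with $\bz_0$ as the Gaussian source) can only control the \emph{conditional} variance $\Var(|\Shat(0)|\mid \bep,\bX\bQ_0)$; the remaining piece $\Var\bigl(\E[|\Shat(0)|\mid\bep,\bX\bQ_0]\bigr)$ is untouched. This ``between-conditioning-set'' variance is exactly what is hard to control: on $\Omega_2$ we only know $|\Shat(0)|\le s_*$, and in the regime of interest $s_*\ggg\sqrt{n}$, so the trivial bound is $\gtrsim s_*^2\ggg n$. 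Your reduction via $(a-b)^2\le 2(a-\mu)^2+2(b-\mu)^2$ has discarded the strong positive correlation between $|\Shat(t)|$ and $|\Shat(0)|$ along the path --- they share the same $(\bep,\bX\bQ_0)$ and differ only through the rank-one perturbation $\bz_0(t)\ba_0^\top$ --- and with it the only leverage that makes the lemma provable.

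The paper's proof never touches $\Var(|\Shat(0)|)$. It centers $\sigma^2|\Shat(t)|$ by $\bep^\top\bX(t)\bh(t)$ rather than by the mean, and then uses two distinct facts: (a) \Cref{lemma:divergence} gives $\E\bigl[(\bep^\top\bX(t)\bh(t)-\sigma^2|\Shat(t)|)^2\mid\bX(t)\bigr]\le\sigma^2\E[\|\bX(t)\bh(t)\|_2^2\mid\bX(t)]+\sigma^4 n$, which already yields the $O(n(1+\lambda_0^2 s_*))$ baseline; and (b) $\bep^\top\bX(t)=\bep^\top\bX(0)$ along the path, so $\bep^\top\bX(t)\bh(t)-\bep^\top\bX(0)\bh(0)=\bep^\top\bX(0)\bigl(\bh(t)-\bh(0)\bigr)=\int_0^t\bep^\top\bX(x)\bD^\top(x)\dot\bz_0(x)\,dx$, a within-path quantity that vanishes at $t=0$ and is controlled pointwise via \Cref{lm-gradient-of-Lasso} and \eqref{properties-on-Omega_2(t)}. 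The decisive step you are missing is (b): it replaces a marginal variance bound (which you cannot establish) by a path-derivative bound that only sees the $\bz_0$-direction of variation. If you want to salvage your approach you would have to additionally prove $\Var\bigl(\E[|\Shat(0)|\mid\bep,\bX\bQ_0]\bigr)=O(n)$, which is a separate concentration result that the assumptions of the paper do not supply and that the path construction is precisely designed to avoid.
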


\begin{proof}[Proof of \Cref{thm:main}]
    Thanks to the scale equivariance \eqref{scale-equi-variant},
    we take the scale $C_0=\|\bSigma^{-1/2}\ba_0\|_2=1$ without loss of generality, so that \eqref{standard} holds.
Let ${\Rem_{II}}$ be defined in \Cref{theorem:slepian-expansion}. 
Then for any degrees-of-freedom adjustment $\nu$ we have
\begin{align*}
    &\sqrt{F_\theta n}(1-\nu/n)(\htheta_\nu-\theta)
    -
    T_n
    +
    \sqrt{F_\theta/n}\langle \ba_0, \bhlasso \rangle (\nu - |\Shat|)  \\
    &= \sqrt{F_\theta/n}\langle \ba_0, \bhlasso \rangle
    \int_0^{\pi/2}(\sin t)(|\Shat(t)|-|\Shat|)dt
    +{\Rem_{II}}.
\end{align*}
Denote by $\Rem_{final}$ the above quantity. Then
\begin{equation*}
\E\left[
I_{\Omega_1\cap\Omega_2}
\left|\frac{\Rem_{final}}{\lambda_0\sqrt{s_*}}\right|^2
\right]
\le\begin{cases}
    2 M_2^2 \E\left[I_{\Omega_1\cap\Omega_2}
        \left(\int_0^{\pi/2}  (\sin t)(|\Shat(t)|-|\Shat|)  n^{-1/2}  dt\right)^2
\right] \\
+ 2 \E\left[I_{\Omega_1\cap\Omega_2}{\left\{ \Rem_{II}/(\lambda\sqrt{s_*}) \right\}  }^2\right].
\end{cases}
\end{equation*}
By \Cref{theorem:slepian-expansion},
$\E\left[I_{\Omega_1\cap\Omega_2}{\Rem_{II}}^2\right]$ is bounded 
by a constant that depends on $M_1,M_2,M_4, M_5$ only.
By
\Cref{lemma:divergence}
and the assumption $\lambda_0\sqrt{s_*}\le 1$ in \Cref{assumption:main},
the same holds for the first term.
Observe that since $\P(\Omega_1\cap\Omega_2)\to 1$, any random variable $Y$
such that $\E[I_{\Omega_1\cap\Omega_2}Y^2] \le C \lambda_0^2 s_*$ for some constant $C$
satisfies $Y=O_\P(\sqrt{s_*} \lambda_0)$ by Markov's inequality.
This shows that $\Rem_{final} = O_\P(\lambda_0\sqrt{s_*})$ and the proof is complete.
\end{proof}

\subsection{Proof of \Cref{corollary:unadjusted}}
\label{sec:proof-corollary-unadjusted}
On $\Omega_2$ we have $|\Shat|\le {s_*}$ 
and $|\langle \ba_0,\bhlasso\rangle | \le M_2 \sigma \lambda_0 \sqrt{s_*}$
so the claim of \Cref{corollary:unadjusted} follows from the same
argument as the previous subsection.

\section*{Funding}
P.C.B. was partially supported supported by the NSF Grants DMS-1811976
    and DMS-1945428.

C-H.Z. was
partially supported by the NSF Grants DMS-1513378,
  IIS-1407939, DMS-1721495, IIS-1741390 and CCF-1934924.


\bibliographystyle{amsalpha}
\bibliography{dof-slepian}

\newcommand{\etalchar}[1]{$^{#1}$}
\providecommand{\bysame}{\leavevmode\hbox to3em{\hrulefill}\thinspace}
\providecommand{\MR}{\relax\ifhmode\unskip\space\fi MR }
\providecommand{\MRhref}[2]{%
  \href{http://www.ams.org/mathscinet-getitem?mr=#1}{#2}
}
\providecommand{\href}[2]{#2}
\begin{thebibliography}{VdGBRD14}

\bibitem[BC13]{belloni2013least}
Alexandre Belloni and Victor Chernozhukov, \emph{Least squares after model
  selection in high-dimensional sparse models}, Bernoulli \textbf{19} (2013),
  no.~2, 521--547.

\bibitem[BCH14]{BelloniCH14}
Alexandre Belloni, Victor Chernozhukov, and Christian Hansen, \emph{Inference
  on treatment effects after selection among high-dimensional controls}, The
  Review of Economic Studies \textbf{81} (2014), no.~2, 608--650.

\bibitem[BCW14]{belloni2014}
Alexandre Belloni, Victor Chernozhukov, and Lie Wang, \emph{Pivotal estimation
  via square-root lasso in nonparametric regression}, Ann. Statist. \textbf{42}
  (2014), no.~2, 757--788.

\bibitem[Bel18]{bellec2014affine}
Pierre~C. Bellec, \emph{Optimal bounds for aggregation of affine estimators},
  Ann. Statist. \textbf{46} (2018), no.~1, 30--59.

\bibitem[BFZ18]{bradic2018testability}
Jelena Bradic, Jianqing Fan, and Yinchu Zhu, \emph{Testability of
  high-dimensional linear models with non-sparse structures}, arXiv preprint
  arXiv:1802.09117 (2018).

\bibitem[BKB{\etalchar{+}}93]{BickelKRW98}
Peter~J Bickel, Chris~AJ Klaassen, Peter~J Bickel, Y~Ritov, J~Klaassen, Jon~A
  Wellner, and YA'Acov Ritov, \emph{Efficient and adaptive estimation for
  semiparametric models}, Johns Hopkins University Press Baltimore, 1993.

\bibitem[BLM13]{boucheron2013concentration}
St{\'e}phane Boucheron, G{\'a}bor Lugosi, and Pascal Massart,
  \emph{Concentration inequalities: A nonasymptotic theory of independence},
  Oxford University Press, 2013.

\bibitem[BLT18]{bellec2016slope}
Pierre~C. Bellec, Guillaume Lecu\'e, and Alexandre~B. Tsybakov, \emph{Slope
  meets lasso: Improved oracle bounds and optimality}, Ann. Statist.
  \textbf{46} (2018), no.~6B, 3603--3642.

\bibitem[BRT09]{bickel2009simultaneous}
Peter~J. Bickel, Ya’acov Ritov, and Alexandre~B. Tsybakov, \emph{Simultaneous
  analysis of lasso and dantzig selector}, Ann. Statist. \textbf{37} (2009),
  no.~4, 1705--1732.

\bibitem[BT17]{bellec2016bounds}
Pierre~C Bellec and Alexandre~B Tsybakov, \emph{Bounds on the prediction error
  of penalized least squares estimators with convex penalty}, Modern Problems
  of Stochastic Analysis and Statistics, Selected Contributions In Honor of
  Valentin Konakov (Vladimir Panov, ed.), Springer, 2017.

\bibitem[B{\"u}h13]{Buhlmann13}
Peter B{\"u}hlmann, \emph{Statistical significance in high-dimensional linear
  models}, Bernoulli \textbf{19} (2013), no.~4, 1212--1242.

\bibitem[BZ21]{bellec_zhang2018second_order_stein}
Pierre~C Bellec and Cun-Hui Zhang, \emph{Second order stein: Sure for sure and
  other applications in high-dimensional inference}, Ann. Stat., to appear
  (2021).

\bibitem[CCG19]{cai2019individualized}
Tianxi Cai, Tony Cai, and Zijian Guo, \emph{Individualized treatment selection:
  An optimal hypothesis testing approach in high-dimensional models}, arXiv
  preprint arXiv:1904.12891 (2019).

\bibitem[CG{\etalchar{+}}17]{cai2017confidence}
T~Tony Cai, Zijian Guo, et~al., \emph{Confidence intervals for high-dimensional
  linear regression: Minimax rates and adaptivity}, The Annals of statistics
  \textbf{45} (2017), no.~2, 615--646.

\bibitem[CT05]{CandesT05}
Emmanuel~J Candes and Terence Tao, \emph{Decoding by linear programming}, IEEE
  transactions on information theory \textbf{51} (2005), no.~12, 4203--4215.

\bibitem[CT07]{CandesT07}
Emmanuel Candes and Terence Tao, \emph{The dantzig selector: statistical
  estimation when p is much larger than n}, The Annals of Statistics (2007),
  2313--2351.

\bibitem[DS01]{DavidsonS01}
Kenneth~R Davidson and Stanislaw~J Szarek, \emph{Local operator theory, random
  matrices and banach spaces}, Handbook of the geometry of Banach spaces
  \textbf{1} (2001), no.~317-366, 131.

\bibitem[HKZ12]{hsu2012tail}
Daniel Hsu, Sham Kakade, and Tong Zhang, \emph{A tail inequality for quadratic
  forms of subgaussian random vectors}, Electron. Commun. Probab. \textbf{17}
  (2012), no. 52, 1--6.

\bibitem[JM14a]{JavanmardM14a}
Adel Javanmard and Andrea Montanari, \emph{Confidence intervals and hypothesis
  testing for high-dimensional regression}, The Journal of Machine Learning
  Research \textbf{15} (2014), no.~1, 2869--2909.

\bibitem[JM14b]{JavanmardM14b}
\bysame, \emph{Hypothesis testing in high-dimensional regression under the
  gaussian random design model: Asymptotic theory}, IEEE Transactions on
  Information Theory \textbf{60} (2014), no.~10, 6522--6554.

\bibitem[JM18]{javanmard2018debiasing}
\bysame, \emph{Debiasing the lasso: Optimal sample size for gaussian designs},
  The Annals of Statistics \textbf{46} (2018), no.~6A, 2593--2622.

\bibitem[LM00]{laurent2000adaptive}
B.~Laurent and P.~Massart, \emph{Adaptive estimation of a quadratic functional
  by model selection}, Ann. Statist. \textbf{28} (2000), no.~5, 1302--1338.

\bibitem[Lou08]{lounici2008sup}
Karim Lounici, \emph{Sup-norm convergence rate and sign concentration property
  of lasso and dantzig estimators}, Electronic Journal of statistics \textbf{2}
  (2008), 90--102.

\bibitem[MB06]{MeinshausenB06}
Nicolai Meinshausen and Peter B{\"u}hlmann, \emph{High-dimensional graphs and
  variable selection with the lasso}, The annals of statistics \textbf{34}
  (2006), no.~3, 1436--1462.

\bibitem[PP08]{Petersen2008matrix_cookbook}
K.~B. Petersen and M.~S. Pedersen, \emph{The matrix cookbook}, October 2008,
  Version 20081110.

\bibitem[Sch86]{schick1986asymptotically}
Anton Schick, \emph{On asymptotically efficient estimation in semiparametric
  models}, The Annals of Statistics \textbf{14} (1986), no.~3, 1139--1151.

\bibitem[SZ12]{sun2012scaled}
Tingni Sun and Cun-Hui Zhang, \emph{Scaled sparse linear regression},
  Biometrika \textbf{99} (2012), no.~4, 879--898.

\bibitem[SZ13]{sun2013sparse}
\bysame, \emph{Sparse matrix inversion with scaled lasso}, The Journal of
  Machine Learning Research \textbf{14} (2013), no.~1, 3385--3418.

\bibitem[Tro06]{Tropp06}
Joel~A Tropp, \emph{Just relax: Convex programming methods for identifying
  sparse signals in noise}, IEEE transactions on information theory \textbf{52}
  (2006), no.~3, 1030--1051.

\bibitem[TT12]{tibshirani2012}
Ryan~J. Tibshirani and Jonathan Taylor, \emph{Degrees of freedom in lasso
  problems}, Ann. Statist. \textbf{40} (2012), no.~2, 1198--1232.

\bibitem[vdG16]{van2016estimation}
Sara van~de Geer, \emph{Estimation and testing under sparsity: {\'E}cole
  d'{\'e}t{\'e} de probabilit{\'e}s de saint-flour xlv--2015}, Lecture Notes in
  Mathematics \textbf{2159} (2016).

\bibitem[vdG17]{vandegeer2017efficiency}
\bysame, \emph{On the efficiency of the de-biased lasso}, arXiv preprint
  arXiv:1708.07986 (2017).

\bibitem[VdGBRD14]{GeerBR14}
Sara Van~de Geer, Peter B{\"u}hlmann, Ya’acov Ritov, and Ruben Dezeure,
  \emph{On asymptotically optimal confidence regions and tests for
  high-dimensional models}, The Annals of Statistics \textbf{42} (2014), no.~3,
  1166--1202.

\bibitem[VdV00]{vaart2000asymptotic}
Aad~W Van~der Vaart, \emph{Asymptotic statistics}, vol.~3, Cambridge university
  press, 2000.

\bibitem[Ver18]{vershynin2018high}
Roman Vershynin, \emph{High-dimensional probability: An introduction with
  applications in data science}, vol.~47, Cambridge University Press, 2018.

\bibitem[Wai09]{Wainwright09}
Martin~J Wainwright, \emph{Sharp thresholds for high-dimensional and noisy
  sparsity recovery using l1-constrained quadratic programming (lasso)}, IEEE
  transactions on information theory \textbf{55} (2009), no.~5, 2183--2202.

\bibitem[ZB18a]{zhu2018linear}
Yinchu Zhu and Jelena Bradic, \emph{Linear hypothesis testing in dense
  high-dimensional linear models}, Journal of the American Statistical
  Association \textbf{113} (2018), no.~524, 1583--1600.

\bibitem[ZB{\etalchar{+}}18b]{zhu2018significance}
Yinchu Zhu, Jelena Bradic, et~al., \emph{Significance testing in non-sparse
  high-dimensional linear models}, Electronic Journal of Statistics \textbf{12}
  (2018), no.~2, 3312--3364.

\bibitem[ZH08]{ZhangH08}
Cun-Hui Zhang and Jian Huang, \emph{The sparsity and bias of the lasso
  selection in high-dimensional linear regression}, Ann. Statist. \textbf{36}
  (2008), no.~4, 1567--1594.

\bibitem[Zha05]{zhang2005estimation}
Cun-Hui Zhang, \emph{Estimation of sums of random variables: examples and
  information bounds}, The Annals of Statistics \textbf{33} (2005), no.~5,
  2022--2041.

\bibitem[Zha10]{Zhang10-mc+}
\bysame, \emph{Nearly unbiased variable selection under minimax concave
  penalty}, The Annals of statistics (2010), 894--942.

\bibitem[Zha11]{ZhangOber11}
\bysame, \emph{Statistical inference for high-dimensional data}, Mathematisches
  Forschungsinstitut Oberwolfach: Very High Dimensional Semiparametric Models,
  Report (2011), no.~48, 28--31.

\bibitem[ZHT07]{zou2007}
Hui Zou, Trevor Hastie, and Robert Tibshirani, \emph{On the ``degrees of
  freedom'' of the lasso}, Ann. Statist. \textbf{35} (2007), no.~5, 2173--2192.

\bibitem[ZY06]{ZhaoY06}
Peng Zhao and Bin Yu, \emph{On model selection consistency of {L}asso}, J.
  Mach. Learn. Res. \textbf{7} (2006), 2541--2563. \MR{2274449}

\bibitem[ZZ12]{zhang2012general}
Cun-Hui Zhang and Tong Zhang, \emph{A general theory of concave regularization
  for high-dimensional sparse estimation problems}, Statistical Science
  \textbf{27} (2012), no.~4, 576--593.

\bibitem[ZZ14]{ZhangSteph14}
Cun-Hui Zhang and Stephanie~S Zhang, \emph{Confidence intervals for low
  dimensional parameters in high dimensional linear models}, Journal of the
  Royal Statistical Society: Series B (Statistical Methodology) \textbf{76}
  (2014), no.~1, 217--242.

\end{thebibliography}

\newpage

\section*{Supplement}

\appendix

\section{Bounds for the false positive and proof of \Cref{lm-7-probability-of-Omega_1-Omega_2}}
\label{sec:proof-probabilistic-lemma}

We require first a few lemmas.
The following \Cref{lemma:full-rank-2m} shows that with probability one,
$\bX_A(t)$ is full-rank for all $t\ge 0$ and all sets $A$ of small enough cardinality.
\Cref{lm-5-new-iso,lemma:chi2-(t)}
provide uniform bounds for sparse eigenvalues of the random matrix family 
$\{\bX^\top(t)\bX(t)/n,\ t\ge0\}$ and some closely related quantity. 
\Cref{proposition:false-positive-probability-bound}
provides some tail-probability bound for the noise uniformly over all $t\ge0$,
as well as a bound on the number of false positives in $\supp(\hbbeta(t))$.
\Cref{lm-7-probability-of-Omega_1-Omega_2} will be finally proved in
\Cref{sec:finally-proof-of-lemma-probability-bounds}. 

\begin{restatable}{lemma}{lemmaFullRank}
    \label{lemma:full-rank-2m}
    (i) Almost surely, $\bz_0(t)\ne 0\ \forall t$, 
    that is, $\P(\inf_{t\ge 0}\|\bz_0(t)\|_2 > 0) = 1$.

    (ii)
    If  \eqref{condition-for-full-rank-2m} holds
     and $\rank(\bSigma_{A,A}) = |A|$ for all sets $A$ such that
     $|A\setminus S|\le 2(m+k)$, then for all such sets $A$,
$\E\big\{\P\big[\inf_{t\ge 0}\,\rank(\bX_A(t)) = |A| \,\big|\bep \big]\big\} = 1$.
\end{restatable}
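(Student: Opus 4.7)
The plan is to treat the two claims separately. For part (i), I would decompose $\bz_0(t) = \bP_\bep \bz_0 + \bP_\bep^\perp[(\cos t)\bz_0 + (\sin t)\bg]$ into orthogonal summands; the first summand $\bP_\bep \bz_0 = (\bep^\top \bz_0/\|\bep\|_2^2)\bep$ does not depend on $t$, and $\bep^\top \bz_0 \ne 0$ almost surely (since $\bep \ne {\bf 0}$ a.s.\ and $\bz_0$ is Gaussian independent of $\bep$), so $\|\bz_0(t)\|_2^2 \ge \|\bP_\bep \bz_0\|_2^2 > 0$ simultaneously for every $t$.

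For part (ii), I would first write $\bX = \bY + \bz_0 \ba_0^\top$ with $\bY = \bX\bQ_0$, so that $\bX_A(t) = \bY_A + \bz_0(t)(\ba_0)_A^\top$. Under the scale normalization $\ba_0^\top\bSigma^{-1}\ba_0 = 1$, direct computations give $\bQ_0^\top\bSigma\bQ_0 = \bSigma - \ba_0\ba_0^\top$ and $\bQ_0^\top\bSigma\bu_0 = {\bf 0}$, so $\bY$ has iid rows $\sim N({\bf 0}, \bSigma - \ba_0\ba_0^\top)$ and is independent of $\bz_0$; the vector $\bg$ is independent of everything else by construction. The key reduction is that it suffices to show $\bz_0(t) \notin \mathrm{range}(\bY_A)$ for every $t \ge 0$ almost surely: any $\bv$ with $\bX_A(t)\bv = {\bf 0}$ satisfies $\bY_A\bv = -\langle(\ba_0)_A,\bv\rangle\bz_0(t)$, and $\bz_0(t) \notin \mathrm{range}(\bY_A)$ then forces both $\langle(\ba_0)_A,\bv\rangle = 0$ and $\bY_A\bv = {\bf 0}$. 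A standard Gaussian computation gives $\ker\bY_A = \ker(\bSigma_{A,A} - (\ba_0)_A(\ba_0)_A^\top)$ almost surely, which is either trivial (and gives $\bv = {\bf 0}$ immediately) or one-dimensional and spanned by $\bSigma_{A,A}^{-1}(\ba_0)_A$; in the latter equality case, every nonzero element of the kernel has nonzero inner product with $(\ba_0)_A$, so again $\bv = {\bf 0}$.

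The main step is then to prove $\bz_0(t) \notin \mathrm{range}(\bY_A)$ uniformly in $t \ge 0$, almost surely. Let $\pi$ denote the orthogonal projection onto $V = \mathrm{range}(\bY_A)^\perp$; by \eqref{condition-for-full-rank-2m}, $|A| \le 2(m+k)+s_0 \le n-2$, so $\dim V \ge 2$. One computes
\[
\pi(\bz_0(t)) = \bc + (\cos t)\bd_1 + (\sin t)\bd_2,
\]
where $\bc = \alpha\pi(\bep)$ with $\alpha = \bep^\top\bz_0/\|\bep\|_2^2$, $\bd_1 = \pi(\bP_\bep^\perp\bz_0)$, and $\bd_2 = \pi(\bP_\bep^\perp\bg)$. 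Conditional on $(\bY_A, \bep, \bd_1, \bd_2)$, the scalar $\alpha$ retains a continuous Gaussian density and is independent of $(\bd_1, \bd_2)$, and almost surely $\bd_1, \bd_2$ are linearly independent in $V$ (they are iid non-degenerate Gaussians on $V$ once one notes that $\bY_A^\top\bep \ne {\bf 0}$ a.s., hence $\bep \notin V$). The curve $t \mapsto -(\cos t)\bd_1 - (\sin t)\bd_2$ traces an ellipse $E$ inside the $2$-plane $\mathrm{span}(\bd_1, \bd_2) \subset V$, and $E$ meets the line $\R\pi(\bep)$ in at most two points. Consequently, the set of $\alpha$ for which $\bc \in -E$ is finite with conditional probability zero, and integrating over $(\bY_A, \bep, \bd_1, \bd_2)$ completes the argument.

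The main obstacle is that a direct Fubini argument only proves that for each fixed $t$, $\bX_A(t)$ is full rank almost surely, which excludes a Lebesgue null set of bad $t$ rather than an empty set. The resolution exploits the fact that, once $(\bY_A, \bep)$ is conditioned on, the entire curve $\{\bz_0(t)\}_{t\ge 0}$ lies in a deterministic $2$-dimensional affine plane through $\bP_\bep\bz_0$; this reduces the global event ``$\exists t\ge 0: \bz_0(t) \in \mathrm{range}(\bY_A)$'' to the intersection of a single line with a single ellipse inside the low-dimensional complement $V$, which has probability zero thanks to the continuous distribution of the single scalar $\alpha$.
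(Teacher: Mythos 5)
Your proof of both parts is correct but follows a genuinely different route from the paper's, and the comparison is worth recording. For part (i) you observe that the orthogonal component $\bP_\bep\bz_0=(\bep^\top\bz_0/\|\bep\|_2^2)\bep$ is $t$-free and nonzero almost surely, giving the uniform lower bound $\|\bz_0(t)\|_2^2\ge\|\bP_\bep\bz_0\|_2^2>0$ directly. The paper instead uses the distributional identity $\min_t\|\bz_0(t)\|_2=\sigma_{\min}(\bz_0,\bg)$ and the almost-sure full rank of an $n\times 2$ Gaussian matrix; your observation is shorter and sharper, though the paper's manipulation is reused in the quantitative version (\Cref{lemma:chi2-(t)}). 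For part (ii) the paper constructs the standard Gaussian matrix $\bW=\bM_\bep(\bX\bSigma^{-1/2}\tbQ_0^\top,\bz_0,\bg)\in\R^{(n-1)\times(p+1)}$, writes $\bP_\bep^\perp\bX(t)\bu=\bM_\bep^\top\bW\bv(t)$, and kills all $t$ and $\bu\in\scrU_A$ at once by the almost-sure injectivity of $\bW$ restricted to the low-dimensional subspace where the $\bv(t)$ live. You instead decouple $\bX_A(t)=\bY_A+\bz_0(t)(\ba_0)_A^\top$ with $\bY=\bX\bQ_0$ independent of $\bz_0$, reduce full rank to ``$\bz_0(t)\notin\mathrm{range}(\bY_A)$ for all $t$'' via the kernel analysis of the rank-one downdate $\bR=\bSigma_{A,A}-(\ba_0)_A(\ba_0)_A^\top$, and then exploit the two-dimensional geometry of the curve $t\mapsto\pi(\bz_0(t))$ inside $V=\mathrm{range}(\bY_A)^\perp$: an ellipse meets a line in at most two points, so the continuous conditional density of the single scalar $\alpha=\bep^\top\bz_0/\|\bep\|_2^2$ settles the uniform-in-$t$ claim. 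Both arguments identify the same underlying fact---the curve $\bz_0(\cdot)$ lives in a fixed two-dimensional affine plane given $(\bY_A,\bep)$---but the paper trades this geometry for a one-shot algebraic dimension count, whereas you make the plane-meets-ellipse structure explicit, which is more elementary and arguably more transparent about \emph{why} the uniform-in-$t$ claim is not harder than the pointwise-in-$t$ claim.

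One small gap to flag: your line ``they are iid non-degenerate Gaussians on $V$ once one notes that $\bY_A^\top\bep\ne{\bf 0}$ a.s.'' tacitly requires $\bR\ne 0$. When $|A|=1$ and $\Sigma_{jj}=(\ba_0)_j^2$ (allowed under the hypotheses, after the normalization $\ba_0^\top\bSigma^{-1}\ba_0=1$), one has $\bR=0$, $\bY_A\equiv{\bf 0}$, $V=\R^n$, $\bep\in V$, and $\bd_1,\bd_2$ are degenerate on $V$. The argument still closes because in that case $\mathrm{range}(\bY_A)=\{{\bf 0}\}$, the condition $\bz_0(t)\notin\mathrm{range}(\bY_A)$ reduces to $\bz_0(t)\ne{\bf 0}$, which is part (i), and the kernel step then gives $\bv={\bf 0}$ since $(\ba_0)_A^\top\bSigma_{A,A}^{-1}(\ba_0)_A=1\ne 0$; alternatively, $\pi(\bep)=\bep$ is orthogonal to $\mathrm{span}(\bd_1,\bd_2)\subset\mathrm{range}(\bP_\bep^\perp)$, so the line misses the ellipse entirely. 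Either patch is one line, but the ``non-degenerate on $V$'' wording as written does not literally cover this corner case.
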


\begin{restatable}{lemma}{lemmaChiSquare}
    \label{lemma:chi2-(t)}
    Let $\Omega_{\chi^2}(\ba_0)$ be the event
    $$\Omega_{\chi^2}(\ba_0) = \left\{
        \max_{0\le t\le \pi/2} \Big|C_0\|\bz_0(t)\|_2 - \sqrt{n} \Big| \le \eta_3\sqrt{n} \right\}.$$
        Then, $1 - \P(\Omega_{\chi^2}(\ba_0)) \le 2e^{-(\eta_3-\sqrt{2/n})_{+}^2n/2}$. 
\end{restatable}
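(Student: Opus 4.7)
The plan is to derive the bound from Gaussian concentration (Borell--TIS) applied to the two $1$-Lipschitz functions $f_+(\bz_0,\bg) := \max_{t\in[0,\pi/2]} C_0\|\bz_0(t)\|_2$ and $f_-(\bz_0,\bg) := -\min_{t\in[0,\pi/2]} C_0\|\bz_0(t)\|_2$ of the standard Gaussian vector $(C_0\bz_0, C_0\bg)\in\R^{2n}$, which is standard because $\bz_0=\bX\bu_0\sim N({\bf 0},C_0^{-2}\bI_n)$ and $\bg\sim N({\bf 0},C_0^{-2}\bI_n)$ is independent of $(\bep,\bz_0,\bX)$ by construction. The $1$-Lipschitz property follows from the orthogonal decomposition $C_0\bz_0(t) = C_0\bP_\bep\bz_0 + \bP_\bep^\perp[(\cos t)C_0\bz_0 + (\sin t)C_0\bg]$ and Cauchy--Schwarz on the unit vector $(\cos t,\sin t)$, which yield
\[
\|C_0\bz_0(t)-C_0\bz_0'(t)\|_2^2 \le \|C_0(\bz_0-\bz_0')\|_2^2 + \|C_0(\bg-\bg')\|_2^2
\]
uniformly in $t\in[0,\pi/2]$. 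Borell--TIS then gives $\P(f_\pm > \E f_\pm + u)\le e^{-u^2/2}$ for every $u\ge 0$, and a union bound combined with the matching mean bounds $\E f_+\le \sqrt n + \sqrt 2$ and $\E(-f_-)\ge \sqrt n - \sqrt 2$ produces the claim, since $(\eta_3-\sqrt{2/n})_+\sqrt n = (\eta_3\sqrt n-\sqrt 2)_+$.

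For $\E f_+$, I would apply Sudakov--Fernique to the centered Gaussian process $Y_{\bu,t}:=\langle\bu,C_0\bz_0(t)\rangle$ indexed by $\|\bu\|_2\le 1$ and $t\in[0,\pi/2]$, whose supremum equals $f_+$. A direct expansion yields
\[
\E(Y_{\bu,t}-Y_{\bu',t'})^2 = \|\bu-\bu'\|_2^2 + 2\langle\bP_\bep^\perp\bu,\bP_\bep^\perp\bu'\rangle(1-\cos(t-t')),
\]
which is bounded above by $\|\bu-\bu'\|_2^2+2(1-\cos(t-t'))$ since $\langle\bP_\bep^\perp\bu,\bP_\bep^\perp\bu'\rangle\le\|\bu\|_2\,\|\bu'\|_2\le 1$. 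The latter is the squared-increment of the decoupled Gaussian process $\tilde Y_{\bu,t}:=\langle\bu,\bG\rangle+H_1\cos t+H_2\sin t$ with $\bG\sim N({\bf 0},\bI_n)$ and $H_1,H_2\sim N(0,1)$ iid, so Sudakov--Fernique gives $\E f_+\le \E\|\bG\|_2+\E\sqrt{H_1^2+H_2^2}\le\sqrt n+\sqrt{\pi/2}$, which is below $\sqrt n+\sqrt 2$.

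For $\E(-f_-)$, I would decompose $C_0^2\|\bz_0(t)\|_2^2 = \xi^2 + \|(\cos t)\bv+(\sin t)\bw\|_2^2$ with $\xi:=\langle C_0\bz_0,\bep\rangle/\|\bep\|_2\sim N(0,1)$ and $\bv:=C_0\bP_\bep^\perp\bz_0$, $\bw:=C_0\bP_\bep^\perp\bg$, noting that $(\bv|\bw)$ is effectively an $(n-1)\times 2$ standard Gaussian matrix on $\bP_\bep^\perp\R^n$ and independent of $\xi$. Since $\min_{t\in\R}\|(\cos t)\bv+(\sin t)\bw\|_2 = \sigma_{\min}(\bv|\bw)$, the Davidson--Szarek/Gordon inequality $\E\sigma_{\min}(\bv|\bw)\ge \sqrt{n-1}-\sqrt 2$ yields the preliminary estimate $\E(-f_-)\ge\sqrt{n-1}-\sqrt 2$; the residual gap $\sqrt n - \sqrt{n-1}=O(1/\sqrt n)$ is closed by the independent $\xi^2$ contribution via the convexity-based inequality $\sqrt{\xi^2+s^2}\ge s+\xi^2/(2s)-\xi^4/(8s^3)$ (valid for $s>0$) together with $\sqrt{n-1}+1/(2\sqrt{n-1})\ge\sqrt n$ (seen by squaring). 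Plugging these mean bounds into Borell--TIS with $u = \eta_3\sqrt n - \sqrt 2$ and union-bounding the two tail events yields the claim. The main obstacle is precisely the careful constant-tracking in the lower bound on $\E(-f_-)$, where Davidson--Szarek alone falls short of $\sqrt n-\sqrt 2$ by $O(1/\sqrt n)$ and the $\xi^2$ correction is essential; the rest of the argument is a routine application of Gaussian-process comparison and Lipschitz concentration.
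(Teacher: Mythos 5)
Your overall strategy (Borell--TIS for the two Lipschitz functionals $f_\pm$, plus separate mean bounds) is a genuinely different route from the paper's, and the Lipschitz claim, the Borell--TIS step, and the Sudakov--Fernique upper bound $\E f_+\le\sqrt n+\sqrt{\pi/2}$ all check out. However, there is a real gap in the lower-bound direction. Because $\bP_\bep$ has rank one, your decomposition $C_0^2\|\bz_0(t)\|_2^2=\xi^2+\|(\cos t)\bv+(\sin t)\bw\|_2^2$ places $(\bv|\bw)$ in an $(n-1)$-dimensional space, so Davidson--Szarek yields only $\E\sigma_{\min}\ge\sqrt{n-1}-\sqrt 2$. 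Your proposed correction via $\sqrt{\xi^2+s^2}\ge s+\xi^2/(2s)-\xi^4/(8s^3)$ gives, after integrating out $\xi\sim N(0,1)$,
\[
\E\sqrt{\xi^2+\sigma_{\min}^2}-\E\sigma_{\min}\ \ge\ \tfrac12\,\E\sigma_{\min}^{-1}-\tfrac38\,\E\sigma_{\min}^{-3},
\]
and the claim needs this to exceed $\sqrt n-\sqrt{n-1}=\frac{1}{2\sqrt{n-1}}-\frac{1}{8(n-1)^{3/2}}+O(n^{-5/2})$. The main term $\frac12\E\sigma_{\min}^{-1}\ge\frac{1}{2\sqrt{n-1}}$ only just reaches the target, and the negative correction $\frac38\E\sigma_{\min}^{-3}\approx\frac{3}{8(n-1)^{3/2}}$ overshoots the $\frac{1}{8(n-1)^{3/2}}$ of slack available. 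Salvaging this requires showing that $\E\sigma_{\min}$ is bounded away from $\sqrt{n-1}$ by an explicit constant (so that Jensen's $\E\sigma_{\min}^{-1}\ge 1/\E\sigma_{\min}$ produces extra $O(1/n)$ room), together with an explicit upper bound on $\E\sigma_{\min}^{-3}$; none of this is done, and the estimate is delicate enough that the exact constant $\sqrt n-\sqrt 2$ may not emerge.

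The paper avoids the $\sqrt n$-versus-$\sqrt{n-1}$ issue entirely by a distributional swap already set up in its Lemma on full rank: with $\bar t$ the minimizer of $t\mapsto\|\bP_\bep^\perp\bz_0(t)\|_2$, independence of $\bP_\bep(\bz_0,\bg)$ from $\bP_\bep^\perp(\bz_0,\bg)$ (hence from $\bar t$) lets one replace $\bP_\bep\bz_0$ by $\bP_\bep[(\cos\bar t)\bz_0+(\sin\bar t)\bg]$ without changing the distribution, so that $\min_t\|\bz_0(t)\|_2\stackrel d=\|(\cos\bar t)\bz_0+(\sin\bar t)\bg\|_2\ge\sigma_{\min}(\bz_0\,|\,\bg)$ where $(\bz_0\,|\,\bg)$ is the \emph{full} $n\times 2$ standard Gaussian matrix. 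Davidson--Szarek then applies with no dimension loss, giving $\P(\min_t\|\bz_0(t)\|_2\le\sqrt n-\sqrt 2-u)\le e^{-u^2/2}$ directly (and symmetrically for the max via $\sigma_{\max}$), from which the stated probability bound follows by a union bound with $u=\eta_3\sqrt n-\sqrt 2$. If you wish to keep your concentration-based framework, you should replace the $(n-1)$-dimensional Davidson--Szarek step with this same distributional swap to get $\E(-f_-)\ge\sqrt n-\sqrt 2$ cleanly, or simply observe that the swap already gives the full tail bound and renders Borell--TIS unnecessary.
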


\begin{restatable}{lemma}{lemmaIso}
    \label{lm-5-new-iso}
    Let $\{m,n,p\}$ be positive integers and 
    positive reals $\constants$ such that
\eqref{conditions-epsilons-1}
and \eqref{conditions-epsilons-2}
hold.
Let $\{\tau_*,\tau^*\}$ be defined \Cref{assumption:main}.
Then the event $\Omega_{iso}(\ba_0)$
defined by
\begin{align}
    &\Omega_{iso}(\ba_0) = \Big\{\tau_*  \le \big\|\bX(t)\bu\big\|_2^2/n \le \tau^*,\ \forall \bu\in \scrU(m+k,S;\bSigma),\ t>0\Big\} 
\label{def-Omega_iso}
\\
&\text{ satisfies }\quad
\P(\Omega_{iso}(\ba_0)) \ge 1 - 3e^{-n\eps_4} 
\nonumber
\end{align}
where $\scrU(m+k,S;\bSigma) = \{\bu: \|\bSigma^{1/2}\bu\|_2 =1, |\supp(\bu)\setminus S| \le m+k\}$. 
\end{restatable}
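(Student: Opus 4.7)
The plan is to first establish marginal-in-$t$ sparse eigenvalue concentration and then upgrade to uniformity in $t$ via a net-plus-Lipschitz argument. By the scale equivariance \eqref{scale-equi-variant} one may normalize to $C_0=1$ as in \eqref{standard}. The cross-covariance $\bQ_0^\top\bSigma\bu_0 = \bSigma\bu_0 - \ba_0(\bu_0^\top\bSigma\bu_0)$ vanishes by the definition of $\bu_0$, so the jointly Gaussian pair $(\bX\bQ_0,\bz_0)$ is independent, and by construction $\bg\sim N(\mathbf 0,\bI_n)$ and $\bep$ are independent of $\bX$. Conditioning on $\bep$, the rotation in the $2$-plane spanned by $\bP_\bep^\perp\bz_0$ and $\bP_\bep^\perp\bg$ preserves the product Gaussian law, so for every fixed $t$ the vector $\bz_0(t)$ is still $N(\mathbf 0,\bI_n)$ and independent of $\bX\bQ_0$. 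Hence $\bX(t)$ is equal in distribution to $\bX$ for each fixed $t\ge 0$, and in particular $\bX(t)\bu$ has iid $N(0,\|\bSigma^{1/2}\bu\|_2^2)=N(0,1)$ entries for every $\bu\in\scrU(m+k,S;\bSigma)$.

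For a fixed $t$, a standard Gordon / $\eps$-net argument then yields, with probability at least $1-e^{-n\eps_4}$,
\[
1-\eps_1 \ \le\ \|\bX(t)\bu\|_2/\sqrt n \ \le\ 1+\eps_1 \quad\text{uniformly in}\ \bu\in\scrU(m+k,S;\bSigma).
\]
The dimension condition $s_*+1\le\eps_1^2 n/2$ controls the net radius inside each sparsity pattern $A\supset S$ with $|A\setminus S|=m+k$; the entropy bound $\log\binom{p-s_0}{m+k}\le\eps_3 n$ absorbs the union bound across the $\binom{p-s_0}{m+k}$ such patterns; and the remaining $\chi^2$ concentration exponent is covered by the budget $\eps_3+\eps_4=\eps_2^2/8$.

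To upgrade to uniformity in $t$, discretize $[0,\pi/2]$ by a $\delta$-net $\{t_j\}_{j=0}^N$. Between adjacent net points,
\[
\|\bX(t)\bu-\bX(t_j)\bu\|_2 = |\langle\ba_0,\bu\rangle|\,\|\bz_0(t)-\bz_0(t_j)\|_2 \ \le\ |t-t_j|\big(\|\bP_\bep^\perp\bz_0\|_2+\|\bP_\bep^\perp\bg\|_2\big),
\]
using $|\cos t-\cos s|\le|t-s|$, $|\sin t-\sin s|\le|t-s|$, and $|\langle\ba_0,\bu\rangle|\le C_0\|\bSigma^{1/2}\bu\|_2=1$ (as in \eqref{upper-bound-submatrix-ba_0}). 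Conditional on $\bep$, both $\|\bP_\bep^\perp\bz_0\|_2^2$ and $\|\bP_\bep^\perp\bg\|_2^2$ are $\chi^2_{n-1}$, so each is at most $\sqrt n\,(1+o(1))$ with probability exceeding $1-e^{-n\eps_4}$; choosing $\delta\asymp\eps_2/\sqrt n$ and thus $N=O(\sqrt n/\eps_2)$ makes the Lipschitz perturbation at most $\eps_2$ in the scale $\|\cdot\|_2/\sqrt n$. A union bound of the fixed-$t$ event over the $N$ net points (at a cost of only $\log N=O(\log n)$ in the exponent), followed by squaring, then yields
\[
\tau_*=(1-\eps_1-\eps_2)^2 \ \le\ \|\bX(t)\bu\|_2^2/n \ \le\ (1+\eps_1+\eps_2)^2=\tau^* \quad\forall\, t\in[0,\pi/2],\ \bu\in\scrU(m+k,S;\bSigma),
\]
with total failure probability at most $3e^{-n\eps_4}$. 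Extension from $[0,\pi/2]$ to all $t\ge 0$ is immediate by periodicity of $(\cos t,\sin t)$ and the sign symmetry $\bg\stackrel{d}{=}-\bg$.

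The main obstacle is the calibration in the final step: the net spacing $\delta$ must be small enough that the Lipschitz error fits inside the $\eps_2$ slack, while the net cardinality $N$ must not inflate the union bound past the $e^{-n\eps_4}$ budget. The constraint $\eps_3+\eps_4=\eps_2^2/8$ in \eqref{conditions-epsilons-1} is precisely what reserves enough Gaussian-concentration room to make both trade-offs feasible under the sample-size conditions \eqref{conditions-epsilons-2}.
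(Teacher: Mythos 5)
Your overall strategy (marginal-in-$t$ concentration, then a net in $t$ plus Lipschitz continuity) is a genuinely different route from the paper's, and it has a gap in the probability bookkeeping that prevents it from reaching the stated bound $1-3e^{-n\eps_4}$. The difficulty is that your $t$-net has cardinality $N\asymp \sqrt n/\eps_2$, so the union bound over net points multiplies the failure probability by $N$. But the budget $\eps_3+\eps_4 = \eps_2^2/8$ in \eqref{conditions-epsilons-1} is exactly saturated: the Davidson--Szarek / Gaussian-concentration tails cost $e^{-n\eps_2^2/8}$, and the entropy $\log\binom{p-s_0}{m+k}\le \eps_3 n$ over sparsity patterns consumes the rest, leaving $e^{-n\eps_4}$ with no slack to pay the extra $\log N = \Theta(\log n)$ for the $t$-net. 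The resulting bound would be $\sqrt n\, e^{-n\eps_4}$, not $3e^{-n\eps_4}$; you could of course recover a bound $e^{-n\eps_4'}$ for any $\eps_4' < \eps_4$ at large $n$, but that is a different statement, and the constants in \Cref{assumption:main} are tuned to the paper's argument, not yours. (There is also a typo in your argument: $\|\bP_\bep^\perp\bz_0\|_2^2$ being $\chi^2_{n-1}$ means it is $\approx n$, not $\approx\sqrt n$; you want $\|\bP_\bep^\perp\bz_0\|_2\lesssim\sqrt n$, so your Lipschitz constant is correct but should be stated for the norm, not the squared norm.) Controlling these norms adds two more events whose failure probabilities must also fit in the same budget, tightening the constant problem further.

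The paper's proof avoids any $t$-net by a reparametrization. Writing $\bP_\bep^\perp\bX(t)\bu = \bM_\bep^\top\bW\bv(t)$ with $\bW\in\R^{(n-1)\times(p+1)}$ a fixed standard Gaussian matrix independent of $t$ (the rows come from $(\bX\bSigma^{-1/2}\tbQ_0^\top,\bz_0,\bg)$) and $\bv(t) = ((\tbQ_0\bSigma^{1/2}\bu)^\top,\ (\cos t)\langle\ba_0,\bu\rangle,\ (\sin t)\langle\ba_0,\bu\rangle)^\top$, one observes that for each sparsity pattern $A$ the whole family $\{\bv(t):\ t\ge 0,\ \bu\in\scrU_A\}$ consists of unit vectors living in a \emph{fixed} subspace of $\R^{p+1}$ of dimension $|A|+1 = s_*+1$. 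A single application of Davidson--Szarek's singular-value concentration to the $(n-1)\times(s_*+1)$ Gaussian submatrix of $\bW$ then bounds $\|\bW\bv\|_2$ uniformly over all unit $\bv$ in that subspace --- hence uniformly over all $t$ and all $\bu\in\scrU_A$ --- at a cost of $2e^{-n\eps_2^2/8}$, with no $t$-net and no additional events. The $\bP_\bep$-part is handled separately by noting $\bep^\top\bX(t)=\bep^\top\bX$ for all $t$ and applying the Gaussian concentration theorem to the 1-Lipschitz functional $\sup_{t,\bu}\|\bP_\bep\bX(t)\bu\|_2$. This gives exactly three bad events of probability $e^{-n\eps_2^2/8}$ each per sparsity pattern, hence $3e^{-n\eps_4}$ after the union bound over patterns. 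If you adopt this reparametrization, your proof would coincide with the paper's; otherwise you need to build explicit slack into the $\eps$-budget to pay for the $t$-net.
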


\subsection{Deterministic bounds on the false positives}
\label{sec:proof-false-positive-1}
In this subsection, the argument is fully deterministic.
Recall the definition of the sparse condition number in \eqref{def-condition-number}.
Consider the following condition: 
for $\bar S\subset \{1,\ldots, p\}$, $\bSigmabar$ a $p\times p$ 
positive semi-definite matrix, $0\le \eta_1<\infty$ and $\eta_2\in [0,1)$ and integer $m\le p-|\bar S|$,
\bel{SRC-tilde-S}
|\bar S| < \frac{ 2(1-\eta_2)^2m}{(1+\eta_1)^2\big\{\phi_{\rm cond}(m;\bar S,\bSigmabar)- 1\big\}} 
.
\eel

\begin{restatable}{proposition}{propositionFPbetabar}
    \label{proposition:eta_1-eta_2-bar-beta}
     Let $\eta_1>0$, $\eta_2\in(0,1)$, $\mu_0>0$ be constants.
    Assume that for some subset $\bar S\subset [p]$
    and vector $\bar\bbeta$ we have
    \begin{align}
      &  \bar S \supseteq \supp(\bar\bbeta)\,\cup\{j\in[p]: |\bar\bx_j^\top(\by-\bar\bX\bbetabar)/n|  \ge \eta_2\mu_0\}, 
    \label{def-S-bar} \\
    & \|\bar\bX_{\bar S}^\top(\by - \bar\bX\bbetabar)\|_2/n \le \eta_1 \mu_0|\bar S|^{1/2}.
    \label{condition-eta_1}
    \end{align}
    If condition \eqref{SRC-tilde-S} holds for $\bar S$, $\bSigmabar=\bar\bX^\top\bar\bX/n$
    and some $m$, then for any tuning parameter $\lambda\ge \mu_0$,
    the Lasso estimator ${\lasso}$ with response $\by$ and design $\bar\bX$ satisfies
    \begin{equation}
        \label{conclusion-support-lasso}
        |\supp({\lasso})\setminus \bar S | \le\frac{\{\phi_{\rm cond}(m;\bar S,\bSigmabar)-1\}| \bar S |}{2(1-\eta_2)^2/(1+\eta_1)^2}
        <
        m.
    \end{equation}
\end{restatable}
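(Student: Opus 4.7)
The plan is to bound $|T|$, where $T = \supp(\lasso)\setminus \bar S$, by (a) using the KKT conditions to derive sharp lower and upper bounds on the $T$- and $\bar S$-blocks of $\bSigmabar\bar\bh$ with $\bar\bh = \lasso - \bbetabar$, (b) converting these into a bound $|T| \lesssim (\phi_{\rm cond}-1)|\bar S|$ via the sparse condition number, and (c) closing the circular dependence on $m$ by a restriction/contradiction argument.

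For step (a), I write the KKT conditions as $\bSigmabar\bar\bh = \bnu - \lambda\bsigma$, where $\bnu = \bar\bX^\top(\by - \bar\bX\bbetabar)/n$ and $\bsigma\in\partial\|\lasso\|_1$ satisfies $\bsigma_T = \sgn(\lasso_T)$ and $\|\bsigma_{\bar S}\|_\infty\le 1$. Since $T\subset\bar S^c$, the definition \eqref{def-S-bar} gives $\|\bnu_T\|_\infty < \eta_2\mu_0$, while \eqref{condition-eta_1} gives $\|\bnu_{\bar S}\|_2 \le \eta_1\mu_0\sqrt{|\bar S|}$. Applying the triangle inequality and $\mu_0 \le \lambda$, I obtain
\[
    \|(\bSigmabar\bar\bh)_T\|_2 \ge \lambda\sqrt{|T|}-\|\bnu_T\|_2 \ge (1-\eta_2)\lambda\sqrt{|T|}, \qquad
    \|(\bSigmabar\bar\bh)_{\bar S}\|_2 \le (1+\eta_1)\lambda\sqrt{|\bar S|}.
\]

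For step (b), let $A = \bar S \cup T$ (so that $\supp(\bar\bh)\subseteq A$) and $\bv = \bSigmabar_A\bar\bh_A$. The two bounds above become $\|\bv_T\|_2^2 \ge (1-\eta_2)^2\lambda^2|T|$ and $\|\bv_{\bar S}\|_2^2\le (1+\eta_1)^2\lambda^2|\bar S|$. The crux of the proof is to establish
\[
    \|\bv_T\|_2^2 \;\le\; \tfrac12\bigl(\phi_{\rm cond}(\bSigmabar_A) - 1\bigr)\|\bv_{\bar S}\|_2^2,
\]
from which $|T| \le (\phi_{\rm cond}(\bSigmabar_A)-1)|\bar S|(1+\eta_1)^2/(2(1-\eta_2)^2)$ follows by substitution. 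I expect this target inequality—rather than a cruder $\|\bv_T\|_2^2 \le \phi_{\rm cond}(\bSigmabar_A)\|\bv_{\bar S}\|_2^2$ obtainable by a naive Rayleigh-quotient manipulation combined with the basic inequality for the Lasso—to be the main technical obstacle; it should exploit the KKT-induced structural constraint $\bv_T = \bnu_T - \lambda\sgn(\bar\bh_T)$ (which tightly couples the magnitudes and signs of $\bv_T$ to those of $\bar\bh_T$) alongside the spectral bounds encoded in $\phi_{\rm cond}(\bSigmabar_A)$.

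Finally, for step (c): since $\phi_{\rm cond}(m;\bar S,\bSigmabar)$ in \eqref{def-condition-number} is a maximum over $A'$ with $|A'\setminus\bar S|\le m$, the inequality $\phi_{\rm cond}(\bSigmabar_A)\le \phi_{\rm cond}(m;\bar S,\bSigmabar)$ only holds if $|T|\le m$. I resolve this chicken-and-egg issue by contradiction: were $|T|>m$, I would fix any $T_0\subset T$ with $|T_0|=m$; the lower bound from step (a) applies verbatim with $T_0$ in place of $T$ (it uses only $|\bsigma_j|=1$ on $T_0$ and $\|\bnu_{T_0}\|_\infty<\eta_2\mu_0$), and step (b) applied to $A_0=\bar S\cup T_0$ (which satisfies $|A_0\setminus\bar S|=m$) would yield $m\le (\phi_{\rm cond}(m;\bar S,\bSigmabar)-1)|\bar S|(1+\eta_1)^2/(2(1-\eta_2)^2)$, contradicting \eqref{SRC-tilde-S}. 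The strict inequality $|T|<m$ in the stated conclusion is then an immediate consequence of \eqref{SRC-tilde-S}.
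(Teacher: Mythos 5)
Your step (a) is correct, and the target inequality you isolate in step (b) is exactly the right one: it is equivalent (after switching to the scale-free vector $\bu = (\lasso-\bbetabar)/\lambda$) to what the paper proves in Step~2 of Lemma~\ref{lem-false-positive}, dropping the nonnegative $\|\bu_B\|_1$ term. However, you do not actually prove it; you only state it and remark that it ``should exploit the KKT-induced structural constraint.'' That constraint is indeed the crux: for $j\in T$ one has $\bar\bh_j(\bSigmabar\bar\bh)_j\le -(1-\eta_2)\lambda|\bar\bh_j|\le 0$, and combining this with the polarization identity
\[
\bv^\top\bSigmabar_{A,A}^{-1}\bv + \bv_{(T)}^\top\bSigmabar_{A,A}^{-1}\bv_{(T)} - \bv_{(\bar S)}^\top\bSigmabar_{A,A}^{-1}\bv_{(\bar S)} = 2\,\bv^\top\bSigmabar_{A,A}^{-1}\bv_{(T)} = 2\,\bar\bh_T^\top(\bSigmabar\bar\bh)_T \le 0
\]
together with $\phi_{\min}(\bSigmabar_{A,A})\le\cdot\le\phi_{\max}(\bSigmabar_{A,A})$ delivers your target bound. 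Without this computation, step (b) is a gap.

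The more serious problem is step (c). Your contradiction argument applies step (b) to $A_0=\bar S\cup T_0$ with $T_0\subsetneq T$, but the polarization identity above uses $\bSigmabar_{A,A}^{-1}\bv = \bar\bh_A$, which holds \emph{only} because $\supp(\bar\bh)\subseteq A$. When $|T|>m$ and $T_0\subsetneq T$, the error vector $\bar\bh$ has nonzero entries outside $A_0$, so $\bv_0=(\bSigmabar\bar\bh)_{A_0}\ne\bSigmabar_{A_0,A_0}\bar\bh_{A_0}$ and the algebra collapses; the cross-block $\bSigmabar_{T_0,\,T\setminus T_0}$ appears and is not controlled by $\phi_{\rm cond}(m;\bar S,\bSigmabar)$. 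This is precisely the difficulty that the paper's Step~1 of Lemma~\ref{lem-false-positive} is designed to circumvent: rather than truncating the support of the given error vector, one constructs (via a homotopy in an auxiliary penalized problem with fixed support $\bar S\cup B_{\bu^*}$) a \emph{different} vector $\bu'\in\scrU_0$ and a set $A'$ with $|A'\setminus\bar S|=k$ and, crucially, $\supp(\bu')\subseteq A'$. The algebraic bound is then applied to $(\bu',A')$, not to a restriction of the original $\bar\bh$. Without this or an equivalent device, the ``chicken-and-egg'' on $m$ is not resolved and the proof does not close.
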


Since the argument in \Cref{proposition:eta_1-eta_2-bar-beta} is purely deterministic,
we may later apply this proposition to random $\eta_1,\eta_2,\mu_0,\bar S$.
In this case the conclusion \eqref{conclusion-support-lasso} holds
on the intersection of the events \eqref{SRC-tilde-S}, \eqref{def-S-bar} and \eqref{condition-eta_1}. 
The main ingredient to prove the above proposition is the following.

\begin{restatable}[Deterministic Lemma]{lemma}{lemmaFPdeterministic}
    \label{lem-false-positive} Suppose the SRC holds with $\bSigmabar$ in (\ref{SRC-tilde-S}) 
    replaced by $\bSigma$ and $\bar S$ replaced by $S$. Then, 
    \bes
    \frac{\|\bu_{{S}^c}\|_1\phi_{\max}(\bSigma_{{S},{S}})}{1-\eta_2}
    + \big|\supp(\bu)\setminus {S}\big|  
    \le \frac{\{\phi_{\rm cond}(m;{S},\bSigma)-1\}|{S}|}{2(1-\eta_2)^2/(1+\eta_1)^2} <m. 
    \ees
    for all $\bu\in \scrU_0({S},\bSigma;\eta_1,\eta_2)$ where $\scrU_0({S},\bSigma;\eta_1,\eta_2)$ is given by 
    \bel{srcU_0-norm-2}
    && \scrU_0({\bar S}, \bSigmabar, \eta_1,\eta_2) \\
    &=&\Big\{\bu: 
    \big| u_j (\bSigmabar\bu)_j + |u_j|\big| \le \eta_2 |u_j|\ \forall j\not\in \bar S,\;\; 
    \big\|(\bSigmabar\bu)_{\bar S}\big\|_2 
    \le (1+\eta_1)|{\bar S}|^{1/2}\Big\}. 
    \nonumber
    \eel
\end{restatable}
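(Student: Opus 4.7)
The plan is to combine the componentwise KKT-like information encoded in the definition of $\scrU_0$ with sparse eigenvalue bounds on $\bSigma$, producing a quadratic inequality that forces both $\ell := |\supp(\bu)\setminus S|$ and $\|\bu_{S^c}\|_1$ to be small relative to $|S|$. Throughout I write $J = \supp(\bu)\setminus S$ and $A = S \cup J$. The first step is to unpack the defining inequality of $\scrU_0$: for $j \in J$, dividing the defining constraint by $|u_j|>0$ gives $(\bSigma\bu)_j \sgn(u_j) \in [-(1+\eta_2), -(1-\eta_2)]$, which yields the two componentwise consequences $\|(\bSigma\bu)_J\|_2^2 \ge (1-\eta_2)^2 \ell$ and $\bu_J^\top(\bSigma\bu)_J \le -(1-\eta_2)\|\bu_J\|_1$.

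Next, via the identity $\bu^\top \bSigma \bu = \bu_S^\top(\bSigma\bu)_S + \bu_J^\top(\bSigma\bu)_J$, Cauchy--Schwarz, and the second $\scrU_0$ constraint $\|(\bSigma\bu)_S\|_2 \le (1+\eta_1)\sqrt{|S|}$, I would deduce $\bu^\top\bSigma\bu + (1-\eta_2)\|\bu_J\|_1 \le (1+\eta_1)\sqrt{|S|}\,\|\bu_S\|_2$. Combining this with the sparse eigenvalue estimate $\|\bu_S\|_2^2 \le \bu^\top\bSigma\bu/\phi_{\min}(\bSigma_{A,A})$ (valid since $\bu$ is supported on $A$) and the AM--GM inequality $2C\tau \le C^2 + \tau^2$ applied with $\tau = \sqrt{\bu^\top\bSigma\bu}$ then yields the key quadratic estimate
$$\bu^\top\bSigma\bu + 2(1-\eta_2)\|\bu_J\|_1 \le \frac{(1+\eta_1)^2 |S|}{\phi_{\min}(\bSigma_{A,A})}.$$

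The third step pits the sparse upper-eigenvalue bound $\|(\bSigma\bu)_A\|_2^2 \le \phi_{\max}(\bSigma_{A,A})\bu^\top\bSigma\bu$ against the orthogonal decomposition $\|(\bSigma\bu)_A\|_2^2 = \|(\bSigma\bu)_S\|_2^2 + \|(\bSigma\bu)_J\|_2^2$ together with the improved lower bound $\|(\bSigma\bu)_S\|_2^2 \ge \phi_{\min}(\bSigma_{A,A})\bu^\top\bSigma\bu + 2\phi_{\min}(\bSigma_{A,A})(1-\eta_2)\|\bu_J\|_1$, derived from $(\bu_S^\top(\bSigma\bu)_S)^2 \le \|\bu_S\|_2^2 \|(\bSigma\bu)_S\|_2^2$, the inequality $\bu_S^\top(\bSigma\bu)_S \ge \bu^\top\bSigma\bu+(1-\eta_2)\|\bu_J\|_1$ from the previous step, and $(1+x)^2 \ge 1+2x$. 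These combine to $(\phi_{\max}(\bSigma_{A,A}) - \phi_{\min}(\bSigma_{A,A}))\bu^\top\bSigma\bu \ge 2\phi_{\min}(\bSigma_{A,A})(1-\eta_2)\|\bu_J\|_1 + (1-\eta_2)^2\ell$; substituting the previous display into the left-hand side and rearranging produces the target inequality up to constants, with $\phi_{\max}(\bSigma_{A,A})/\phi_{\min}(\bSigma_{A,A})$ in place of $\phi_{\rm cond}(m;S,\bSigma)$ and with $\phi_{\max}(\bSigma_{A,A})$ in place of $\phi_{\max}(\bSigma_{S,S})$.

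To finish, I would use $\phi_{\max}(\bSigma_{A,A}) \ge \phi_{\max}(\bSigma_{S,S})$ (since $S \subseteq A$) to weaken the $\|\bu_J\|_1$ term and, provided $\ell \le m$, $\phi_{\max}(\bSigma_{A,A})/\phi_{\min}(\bSigma_{A,A}) \le \phi_{\rm cond}(m;S,\bSigma)$ to bound the condition-number factor; the SRC hypothesis then bounds the right-hand side strictly below $m$, and a short contradiction argument (restricting to a size-$m$ subset of $J$ whenever $\ell > m$) promotes the conditional bound to the unconditional strict inequality ``$<m$''. The hardest step will be matching the stated constants exactly---in particular obtaining $\ell$ (rather than $\ell/2$) on the left and replacing $\phi_{\max}(\bSigma_{A,A})$ by the typically smaller $\phi_{\max}(\bSigma_{S,S})$. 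I expect this requires sharpening the Cauchy--Schwarz step via the Schur-complement decomposition $\bu^\top\bSigma\bu = \tbu_S^\top \bSigma_{S,S}\tbu_S + \bu_J^\top(\bSigma_{J,J} - \bSigma_{J,S}\bSigma_{S,S}^{-1}\bSigma_{S,J})\bu_J$ with $\tbu_S = \bu_S + \bSigma_{S,S}^{-1}\bSigma_{S,J}\bu_J$, which replaces the sparse eigenvalue of the full block $\bSigma_{A,A}$ by that of $\bSigma_{S,S}$ in the upper bound of the second step and tightens the final constants to the stated form.
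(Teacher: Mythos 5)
Your second and third paragraphs are a coherent route to a weaker version of the deterministic estimate in Step 2 of the paper's proof (the paper works directly with the expansion of $\bv^\top\bSigma_{A,A}^{-1}\bv$ for $\bv=(\bSigma\bu)_A$ rather than a Cauchy--Schwarz/AM--GM chain, which is why the paper gets $\ell$ and $\phi_{\max}(\bSigma_{S,S})$ where you get $\ell/2$ and $\phi_{\min}(\bSigma_{A,A})$). You correctly anticipate that the constants need sharpening. That is a real but repairable issue.

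The serious gap is the bootstrap step. Every inequality you use ($\phi_{\max}(\bSigma_{A,A})/\phi_{\min}(\bSigma_{A,A}) \le \phi_{\rm cond}(m;S,\bSigma)$, and the identity that lets you rewrite $\bv^\top\bSigma_{A,A}^{-1}\bv_{(B)}$ as $\bu_B^\top(\bSigma\bu)_B$) requires both $|A\setminus S|\le m$ and $\supp(\bu)\subseteq A$. You propose to handle $\ell=|\supp(\bu)\setminus S|>m$ by ``restricting to a size-$m$ subset of $J$,'' but this does not work: if you take $A' = S\cup J'$ for a proper subset $J'\subsetneq J$, then $\supp(\bu)\not\subseteq A'$, so $(\bSigma\bu)_{A'}\ne \bSigma_{A',A'}\bu_{A'}$ and the whole chain collapses; and if instead you truncate $\bu$ to $A'$, the truncated vector need not satisfy the $\scrU_0$ constraints, since $(\bSigma\bu_{A'})_j$ for $j\in J'$ differs from $(\bSigma\bu)_j$ by the contribution of the discarded coordinates $J\setminus J'$. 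This circularity --- needing $\ell\le m$ to prove $\ell<m$ --- is precisely what the paper's Step~1 addresses. It constructs a continuous path $\bu(\lambda)\in\scrU_0$ (via an auxiliary Lasso-type minimization with a varying penalty) from the worst-case $\bu^*$ down to a vector supported on $S$, and uses the intermediate value theorem to exhibit, for every $k\in[0,k^*]$, an honest member of $\scrU_0$ together with a set $A$ of cardinality exactly $|S|+k$ containing its support. Taking $k=m$ and applying the deterministic Step~2 estimate then yields $m<m$, forcing $k^*<m$. Without some such homotopy argument the sparse-eigenvalue bounds are never legitimately applicable, and the proof does not close.
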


Lemma \ref{lem-false-positive} improves upon Lemma 1 of 
\cite{Zhang10-mc+}
in the special case of 
the Lasso by including the term $\|\bu_{{S}^c}\|_1\phi_{\max}(\bSigma_{{S},{S}})/(1-\eta_2)$ 
on the left-hand side and allowing general $\eta_1>0$ not dependent on $\bSigma$.  
The proof there, which covers concave penalties as well as the Lasso, 
is modified to keep the two additional items as follows. 

\subsection{Tail-probability bounds for the false positives}
\label{sec:proof-false-positive-2}

Note that on the event $\Omega_{iso}(\ba_0)$ defined in \eqref{def-Omega_iso},
the empirical condition number does not expand by more than $\tau^*/\tau_*$,
i.e., for all $t\ge0$,
\begin{equation}
    \label{condition-number-X(t)}
    \phi_{\rm cond}(m+k; S, \bX(t)^\top\bX(t) /n) 
    \le (\tau^*/\tau_*) 
    \phi_{\rm cond}(m+k; S, \bSigma).
\end{equation}
We can now give a bound on the false positives of $\hbbeta(t)$ uniformly over all $t\ge0$ and with high probability.

\begin{restatable}{proposition}{propositionFPprobability}
    \label{proposition:false-positive-probability-bound}
    Let $\lambda,\bep,\bz_0(t), \bX(t), \hbbeta(t)$ be as in \Cref{sec:proof-path}.
    Let $m,k > 0$ and assume that \eqref{SRC-population-final} holds.

    (i) Let $\eta_2\in (0,1)$ and define for some $L_k>0$ the random variable
    \begin{equation}
        \label{def-mu_0}
        \mu_0 = \eta_2^{-1} (\|\bep\|_2/n) (L_k + (L_k^2 + 2)^{-1/2}).
    \end{equation}
    Consider the two events
    \bel{event-noise-1}
    &\Omega_{noise}^{(1)} = &\Big\{
            \sum_{j=1}^p \left(|\bx_j^\top\bep| - \|\bep\|_2L_k\right)_+^2
            < \frac{k \|\bep\|_2^2}{L_k^2 + 2}
        \Big\},
        \\
        &\Omega_{noise}^{(2)} = &\Big\{
            \|\bX_S^\top \bep\|_2 < \|\bep\|_2 |S|^{1/2}(L_k+(L_k^2+2)^{-1/2})
        \Big\}
        .
        \label{event-noise-2}
    \eel
    On the intersection of the four events $\{\mu_0 \le \lambda \}$,
    $\Omega_{iso}(\ba_0)$, \eqref{event-noise-1} and \eqref{event-noise-2},
    the set $\tilde S = S \cup \{j\in [p]: |\bx_j^\top\bep|/n \ge \eta_2 \mu_0 \}$ satisfies
\begin{equation}
    |\tilde S \setminus S| < k
    \qquad\text{and}\qquad
    \max_{t\ge 0}\big|\supp(\hbbeta(t))\setminus \tilde S\big| < m.
    \label{conclusion-support-lasso-purpose-present-paper}
\end{equation}

(ii) If $L_k = \sqrt{2\log(p/k)}$ then $\Omega_{noise}^{(1)}\cap\Omega_{noise}^{(2)}$
has probability at least
\begin{equation}
    1-4(2\pi L_k^2+4)^{-1/2} - (L_k+(L_k^2+2)^{-1/2})^{-2}.
    \label{bound-probability-event-supprt-lasso}
\end{equation}
\end{restatable}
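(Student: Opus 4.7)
Part (i) is deterministic on the intersection of the four events, driven by the identity $\bX(t)^\top \bep = \bX^\top \bep$ for all $t$. From $\bz_0(t) = \bP_\bep \bz_0 + \bP_\bep^\perp[(\cos t)\bz_0 + (\sin t)\bg]$ and $\bP_\bep^\perp \bep = 0$ we obtain $\bz_0(t)^\top \bep = \bz_0^\top \bep$; since $\bX(t) = \bX\bQ_0 + \bz_0(t)\ba_0^\top$ shares the $\bX\bQ_0$ part with $\bX$, this yields $\bX(t)^\top \bep = \bX^\top \bep$, and hence $\bX_j(t)^\top\bep = \bx_j^\top\bep$ for each $j$. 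Because the residual of $\bbeta$ in the interpolated Lasso problem is $\bep$, this invariance implies $\supp(\bbeta)\cup\{j: |\bX_j(t)^\top\bep|/n \ge \eta_2\mu_0\} \subseteq \tilde S$ for every $t\ge 0$, which is the inclusion \eqref{def-S-bar}.

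Next I bound $|\tilde S \setminus S| < k$: any $j\in\tilde S\setminus S$ satisfies $|\bx_j^\top\bep| \ge n\eta_2\mu_0 = \|\bep\|_2(L_k + (L_k^2+2)^{-1/2})$, so $(|\bx_j^\top\bep| - L_k\|\bep\|_2)_+^2 \ge \|\bep\|_2^2/(L_k^2+2)$, and summing over any $k$ such indices contradicts $\Omega_{noise}^{(1)}$. I then apply Proposition~\ref{proposition:eta_1-eta_2-bar-beta} with $(\bar{\bX},\bar\bbeta,\bar S,\bar\bSigma) = (\bX(t),\bbeta,\tilde S,\bX(t)^\top\bX(t)/n)$ and tuning parameter $\lambda\ge \mu_0$. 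The noise bound \eqref{condition-eta_1}, with an appropriate $\eta_1$, is verified by splitting $\|\bX_{\tilde S}^\top\bep\|_2^2 = \|\bX_S^\top\bep\|_2^2 + \|\bX_{\tilde S\setminus S}^\top\bep\|_2^2$: the first summand is bounded via $\Omega_{noise}^{(2)}$, while for the second I expand $|\bx_j^\top\bep|^2 = L_k^2\|\bep\|_2^2 + 2L_k\|\bep\|_2(|\bx_j^\top\bep|-L_k\|\bep\|_2) + (|\bx_j^\top\bep| - L_k\|\bep\|_2)^2$, sum over $\tilde S\setminus S$, and apply Cauchy--Schwarz together with $\Omega_{noise}^{(1)}$ and $|\tilde S\setminus S|<k$. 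The SRC condition \eqref{SRC-tilde-S} for $(m,\tilde S,\bX(t)^\top\bX(t)/n)$ follows from $|\tilde S|\le s_0+k$, the implication $|A\setminus\tilde S|\le m \Rightarrow |A\setminus S|\le m+k$, the empirical-population condition-number bound \eqref{condition-number-X(t)} valid on $\Omega_{iso}(\ba_0)$, and the standing hypothesis \eqref{SRC-population-final}. The conclusion of Proposition~\ref{proposition:eta_1-eta_2-bar-beta} then gives $|\supp(\hbbeta(t))\setminus\tilde S|<m$ uniformly in $t\ge0$, completing (i).

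For part (ii), both tail estimates come from Markov's inequality after conditioning on $\bep$. Conditionally on $\bep$, $\bX_S^\top\bep \sim N(0, \|\bep\|_2^2\bSigma_{S,S})$ with $\E[\|\bX_S^\top\bep\|_2^2\mid\bep] = \|\bep\|_2^2\trace(\bSigma_{S,S}) \le |S|\|\bep\|_2^2$ thanks to $\Sigma_{jj}\le 1$, so Markov gives $\P((\Omega_{noise}^{(2)})^c) \le (L_k + (L_k^2+2)^{-1/2})^{-2}$. For $\Omega_{noise}^{(1)}$, each $Z_j = \bx_j^\top\bep/\|\bep\|_2$ is, given $\bep$, distributed as $N(0,\Sigma_{jj})$ with $\Sigma_{jj}\le 1$, hence stochastically dominated by a standard normal in absolute value; integration by parts plus Mills' ratio provide an explicit upper bound on $\E[(|Z_j|-L_k)_+^2]$ of order $\phi(L_k)/L_k^3$. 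Inserting $\phi(L_k) = (2\pi)^{-1/2}k/p$ at $L_k = \sqrt{2\log(p/k)}$ and comparing with the target threshold $k/(L_k^2+2)$ yields $\P((\Omega_{noise}^{(1)})^c) \le 4(2\pi L_k^2 + 4)^{-1/2}$, and a union bound produces \eqref{bound-probability-event-supprt-lasso}.

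The main obstacle is the bookkeeping in part (i): one must track constants carefully so that the effective $\eta_1$ produced by the decomposition of $\|\bX_{\tilde S}^\top\bep\|_2$ is compatible with the assumption \eqref{SRC-population-final} (stated in terms of $\eta_2$), and one must handle the uniformity in $t$ of the SRC for the random empirical covariance $\bX(t)^\top\bX(t)/n$. The latter difficulty is absorbed into the single high-probability event $\Omega_{iso}(\ba_0)$ via the deterministic implication \eqref{condition-number-X(t)}, reducing the uniform-in-$t$ requirement to a population-level condition.
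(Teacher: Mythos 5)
Your proof follows essentially the same route as the paper. For part~(i), your outline is correct and matches the paper's argument: the invariance $\bX(t)^\top\bep=\bX^\top\bep$ along the path, the bound $|\tilde S\setminus S|<k$ from $\Omega_{noise}^{(1)}$, the decomposition $\|\bX_{\tilde S}^\top\bep\|_2^2 = \|\bX_S^\top\bep\|_2^2 + \|\bX_{\tilde S\setminus S}^\top\bep\|_2^2$ with $\Omega_{noise}^{(2)}$ for the first term and Cauchy--Schwarz plus $\Omega_{noise}^{(1)}$ for the second, and the appeal to Proposition~\ref{proposition:eta_1-eta_2-bar-beta} with the empirical condition number controlled by $\Omega_{iso}(\ba_0)$ via \eqref{condition-number-X(t)}. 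The ``bookkeeping'' you flag as the main obstacle is handled in the paper by setting $\eta_1 = \|\bX_{\tilde S}^\top\bep\|_2/(n\mu_0|\tilde S|^{1/2})$, so that your inequality $\|\bX_{\tilde S}^\top\bep\|_2^2 < (n\eta_2\mu_0)^2(|S|+k)$ reads $(\eta_1/\eta_2)^2|\tilde S| < |S|+k$; combined with $|\tilde S|<|S|+k$ this gives $|\tilde S|(1+\eta_1)^2/(1+\eta_2)^2<|S|+k$, which is precisely the form that meshes with the $(1+\eta_2)^2$ in \eqref{SRC-population-final}. You had all the ingredients; the paper simply carries the $\max\{(\eta_1/\eta_2)^2,1\}$ case split through.

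For part~(ii), your handling of $\Omega_{noise}^{(2)}$ is identical to the paper's. For $\Omega_{noise}^{(1)}$ the strategy (condition on $\bep$, stochastic domination by the standard normal under $\Sigma_{jj}\le 1$, integration by parts and Mills' ratio, then Markov) is the same underlying idea, but the estimate you quote, ``of order $\phi(L_k)/L_k^3$,'' is too coarse to yield the stated constant. The crude bound $\E[(|Z|-L)_+^2]\le 4\phi(L)/L^3$ gives, after Markov, $\P\big((\Omega_{noise}^{(1)})^c\big)\le 4(L_k^2+2)/(\sqrt{2\pi}L_k^3)$, and a direct comparison shows $(L_k^2+2)^2(2\pi L_k^2+4)>2\pi L_k^6$ for all $L_k>0$, so this is strictly worse than $4(2\pi L_k^2+4)^{-1/2}$. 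To recover the claimed constant one must use the exact identity $\E[(|Z|-L)_+^2]=2(1+L^2)\bar\Phi(L)-2L\phi(L)$ together with a sufficiently sharp Mills-ratio upper bound, which is precisely what \cite[Lemma~B.1(ii)]{bellec_zhang2018second_order_stein} encapsulates and what the paper cites. As written, your part~(ii) does not produce \eqref{bound-probability-event-supprt-lasso}; it only produces a bound of the same asymptotic order, so the ``yields $\P((\Omega_{noise}^{(1)})^c)\le 4(2\pi L_k^2+4)^{-1/2}$'' step needs to be replaced by the sharper computation or the explicit citation.
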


The probability in \eqref{bound-probability-event-supprt-lasso}
decreases logarithmically in $p/k$. Although for simplicity we do not try
to improve this probability, let us mention some known techniques that can be applied to improve it.
A first approach uses
$L_k=\sqrt{(1+\alpha)2\log(p/k)}$ with $\alpha>0$ 
as in the proof of \Cref{proposition:false-positive-probability-bound}(ii)
in which case the right hand side decreases polynomially in $p/k$.
Another approach is to use probability bounds in \cite[Proposition 10]{sun2013sparse}
which requires the upper sparse eigenvalue of $\bSigma$ to be bounded.
Finally, for prediction and estimation bounds, the argument of \cite[Theorem 4.2]{bellec2016slope}
can be used to derive exponential probability bounds from
bounds on the median (i.e., with probability $1/2$).

\subsection{Proof of \Cref{lm-7-probability-of-Omega_1-Omega_2}}
\label{sec:finally-proof-of-lemma-probability-bounds}
Note that for any $\eta_2\in(0,1)$, $\mu_0$ in \eqref{def-mu_0} with $L_k=\sqrt{2\log(p/k)}$
satisfies
\begin{equation}
    \label{upper-bound-mu_0}
\eta_2\mu_0 =  (\|\bep\|_2/n) (L_k + (L_k^2 + 2)^{-1/2})
\le (\|\bep\|_2 / n) \sqrt{2\log(8p/k)}.
\end{equation}
Hence if $\lambda = (1+\eta_3)\sigma\eta_2 \sqrt{(2/n)\log(8p/k)}$ as in
\eqref{eq:lambda-larger-than-sigma-lambda_0-over-alpha}
then $\lambda\ge \mu_0$ holds on the event 
\begin{equation}
    \label{event-noise-3}
    \Omega_{noise}^{(3)} = 
    \left\{ 
        \|\bep\|_2 < (1+\eta_3)\sigma\sqrt n 
    \right\}
    \text{ for which }
    \P(\Omega_{noise}^{(3)}) \ge 1 - e^{-n\eta_3^2/2}.
\end{equation}
Here, the probability bound is a classical deviation bound for $\chi^2$
random variables with $n$ degrees of freedom.
We are now ready to prove \Cref{lm-7-probability-of-Omega_1-Omega_2}. 

\begin{proof}[Proof of Lemma \ref{lm-7-probability-of-Omega_1-Omega_2}]
    Define the event $\Omega(\ba_0)$ by
    \bes
        &&\Omega(\ba_0)=\Omega_{noise}^{(1)}
    \cap\Omega_{noise}^{(2)}
    \cap\Omega_{noise}^{(3)}
    \cap\Omega_{iso}(\ba_0)
    \cap\Omega_{\chi^2}(\ba_0),
    \ees
    where 
    $\Omega_{noise}^{(1)},
    \Omega_{noise}^{(2)},
    \Omega_{noise}^{(3)},
    \Omega_{iso}(\ba_0)$ and 
    $\Omega_{\chi^2}(\ba_0)$
    are defined in \eqref{event-noise-1}, \eqref{event-noise-2},
    \eqref{event-noise-3}, \eqref{def-Omega_iso}
    and \Cref{lemma:chi2-(t)}.
    By \eqref{event-noise-3}, \Cref{proposition:false-positive-probability-bound}(ii),
    \Cref{lemma:chi2-(t),lm-5-new-iso} and the union bound,
    $1-\P(\Omega(\ba_0))$ is bounded from above by \eqref{upper-bound-proba-Omega_1_cap_Omega_2}.

In the rest of the proof, we prove $\Omega(\ba_0)\subset\Omega_2$ for the given 
$\{M_1,\ldots,M_8\}$ by checking the conditions in (\ref{conds-limited}), 
and prove that $\P\{\Omega(\ba_0)\setminus \Omega_1\} = 0$. 
Assume $\Omega(\ba_0)$ happens hereafter. 

On $\Omega_{noise}^{(3)}$ we have $\lambda \ge \mu_0$ where $\mu_0$ is defined in \eqref{def-mu_0}.
Let $\Shat(t)=\supp(\hbbeta(t))$.
The conditions of \Cref{proposition:false-positive-probability-bound}(i) are satisfied
hence for all $t>0$, 
\bes
\big|\Shat(t)\setminus S\big| \le m+k. 
\ees
This gives $\big|\Shat(t)|\le s_*$ in (\ref{conds-limited}). The specified $M_{3}$ is allowed 
as $\phi_{\rm cond}(m+k;S,\bSigma)\le \phi_{\rm cond}(p;\emptyset,\bSigma)$ in 
\eqref{SRC-population-final}. 
Consequently, thanks to $\Omega_{iso}(\ba_0)$ in \eqref{def-Omega_iso} 
we have $M_4=1/\tau_*$ in (\ref{conds-limited}).
We note that $M_5=1/(1-\eta_3)$ 
thanks to  the event 
$\Omega_{noise}^{(3)}\cap \Omega_{\chi^2}(\ba_0)$ in \eqref{event-noise-3} and \Cref{lemma:chi2-(t)}.
As $s_0+2(m+k)+1\le (n-1)\wedge ({p+1})$, for any $t,t'\ge0$ the set $B=\Shat(t)\cup\Shat(t')$
satisfies $|B\setminus S | \le 2{(m+k)}$ so
$\P\{\Omega(\ba_0)\setminus \Omega_1\}=0$ by \Cref{lemma:full-rank-2m}. 
It remains to give $M_1$ and $M_2$ in (\ref{conds-limited}). 

Let $A = \Shat(t)\cup S$ and note that $|A|\le |S|+m+k$. The KKT conditions imply
\bes
\Big(\bX_A^\top(t)\bX_A(t)/n\Big)\bh_A(t) = \bX_A^\top(t)\bep/n - \lam\pa\|\hbbeta_A(t)\|_1. 
\ees
Multiplying both sides by $\bh_A(t)=\hbbeta_A(t)-\bbeta_A$, we find that 
\bes
 \|\bX(t)\bh(t)\|_2^2/n 
&=& \big\langle \bX_A(t)\bh_A(t),\bep\big\rangle\big/n - \lam\big\langle \bh_S(t),\pa\|\hbbeta_S(t)\|_1\big\rangle
- \lambda \|\hbbeta_{\Shat\setminus S}\|_1
\cr &\le& \|\bh(t)\|_2\|{\bX_A^\top}(t)\bep\|_2/n + \lam\|\bh_S\|_2\sqrt{|S|}. 
\ees
By \eqref{def-mu_0},
\eqref{event-noise-1},
\eqref{event-noise-2},
\eqref{conclusion-support-lasso-purpose-present-paper},
$\|\bX_{{A}}^\top(t)\bep\|_2/n \le\eta_2\mu_0(|S|+k+m)^{1/2}$
as in the proof of \Cref{proposition:false-positive-probability-bound}(i) 
[cf. \eqref{pf-dim-bd-2} there with $\tilde S$ replaced by $A$].
Thus, as $\mu_0\le\lam$ in the event $\Omega_{noise}^{(3)}$ in \eqref{event-noise-3}, 
\bes
\|\bX(t)\bh(t)\|_2^2/n \le \|\bh(t)\|_2\eta_2\lam(|S|+m+k)^{1/2}+\lam\|\bh_S\|_2\sqrt{|S|}.
\ees
On $\Omega_{iso}(\ba_0)$ in \eqref{def-Omega_iso} we have
$\|\bh(t)\|_2 \le \|\bX(t)\bh(t)\|_2/(\sqrt{n \tau_*\rho_*})$.
This gives $M_1= (1+\eta_2)\lambda/(\sigma\lambda_0 \sqrt{\tau_*\rho_*})$ in \eqref{conds-limited}.
Thanks to $\Omega_{iso}(\ba_0)$ we get
$M_2 = M_1/\sqrt{\tau_*}$.
The same argument applies verbatim to $\bbeta^{(noiseless)}(t)$ and $\bh^{(noiseless)}(t)$
which provides the second line in \eqref{conds-limited}.
The proof is complete. 
\end{proof}

\section{Proof of \Cref{thm:unadjusted-ell1-additional-assumption} and \Cref{thm:infty-norm}}
\label{sec:proof-thm-infty-norm}

\begin{proof}[Proof of \Cref{thm:unadjusted-ell1-additional-assumption}] 
We assume without loss of generality $\|\bSigma^{-1/2}\ba_0\|_2=1$, so that $C_0=1$, $F_\theta = 1/\sigma^2$ and 
$\|\bu_0\|_1=\|\bSigma^{-1}\ba_0\|_1\le \min(K_{0,n,p},K_{1,n,p}\sqrt{n/s_*})$. 
    By the definition of $\htheta_{\nu}$ and \Cref{theorem:slepian-expansion} 
    with $\nu=0$ we have 
    \bes
        &&\sqrt{nF_\theta} 
        \left\langle \ba_0, \lasso - \bbeta \right\rangle
        \Big(1-\int_0^{\pi/2}\frac{\sin(t)|\Shat(t)|}{n}dt\Big)
     \\   &=& T_n + {\Rem_{II}}
        - \sqrt{n F_\theta} \langle\bz_0, \by - \bX\lasso\rangle \|\bz_0\|_2^{-2}
        .
    \ees
    where $T_n = n^{1/2}\bz_0^\top \bep /(\sigma C_0 \|\bz_0\|_2^2)$ 
    and ${\Rem_{II}} = O_{\P}(\lam_0\sqrt{s_*}/\sigma)$. 
    Let $Z(\ba_0)=\bz_0^\top \bep/(\sigma \|\bz_0\|_2)\sim N(0,1)$.
    By the KKT conditions of the Lasso,
    $|\langle\bz_0, \by - \bX\lasso\rangle| \le \|\bu_0\|_1\lam n.$
    Furthermore, on $\Omega_1\cap\Omega_2$,
    inequality $|\Shat(t)| \le {s_*}$ holds all $t\ge 0$ as well
    as $1/\|\bz_0\|_2 \le  M_5 /\sqrt n$.
    Hence, on $\Omega_1\cap\Omega_2$ we have proved
    that
    \bel{Rem-9-upper-bound-l_infty} 
        && \left| \left\langle \ba_0, \lasso - \bbeta
        \right\rangle \right| 
        \cr &\le& 
 \frac{ M_5^2  \|\bSigma^{-1}\ba_0\|_1 \lambda}{1- {s_*}/n}
        +
        \frac{\sigma (M_5|Z(a_0)| + |{\Rem_{II}}|)}{(1- {s_*}/n)\sqrt{n}}
\\ \nonumber &\le& 
        \frac{M_5^2\lam \min(K_{0,n,p},K_{1,n,p}\sqrt{n/s_*})}{1- {s_*}/n}
        +
        \frac{O_{\P}(\sigma)}{(1- {s_*}/n)\sqrt n}, 
    \eel
    where under \Cref{assumption:main}, 
    $s_*/n\le \epsilon_1^2/2$ thanks to \eqref{conditions-epsilons-2} 
    so that $1/(1-{s_*}/n)=O(1)$.  
    Thus, $\sqrt{F_\theta}\langle \ba_0, \lasso - \bbeta\rangle = O_{\P}(\lam_0 K_{0,n,p} + n^{-1/2})$ 
    and 
    \bes
    && \Big|\sqrt{n F_\theta}(\htheta_{\nu=0}-\theta)  - T_n\Big|
    \cr &=&\bigg| {\Rem_{II}} + \sqrt{F_\theta / n} 
    \left\langle \ba_0,\bhlasso \right\rangle \int_0^{\pi/2}(\sin t)\left|\Shat(t)\right| dt\bigg|
    \cr &=&
    O_{\P}\Big( \lam_0\sqrt{s_*} + n^{-1/2}\big(\lambda_0 K_{1,n,p}\sqrt{n/s_*} 
        + n^{-1/2}\big) s_* \Big), 
    \ees
    which is of the order $O_{\P}\big((1+K_{1,n,p})\lam_0\sqrt{s_*} + s_*/n\big)=o_{\P}(1)$.   
\end{proof}


\begin{proof}[Proof of  \Cref{thm:infty-norm}]
    For each $j=1,...,p$, we define
    an interpolation path as in \eqref{bz(t)}
    for $\ba_0=\be_j$ (so that we have $p$ different interpolation paths),
    and define the events $\Omega_1(\be_j)$ and $\Omega_2(\be_j)$
    to be the events \eqref{def-Omega_1} and \eqref{def-Omega_2} when $\ba_0=\be_j$.
    Similarly, define the events $\Omega_{iso}(\be_j)$
    and $\Omega_{\chi^2}(\be_j)$ as in \Cref{lm-5-new-iso}
    and \Cref{lemma:chi2-(t)} with $\ba_0=\be_j$.
    Note that the events $\Omega_{noise}^{(i)}, i=1,2,3$
    from \Cref{proposition:false-positive-probability-bound} and \eqref{event-noise-3} 
    do not depend on $\ba_0$. 
    Define 
    $$\bar\Omega_1 = \cap_{j=1}^p \Omega_1(\be_j),
    \qquad
    \bar\Omega_2 = \cap_{j=1}^p \Omega_2(\be_j)
    $$
    as well as
    $$\bar\Omega
    =
        \Omega_{noise}^{(1)}
    \cap\Omega_{noise}^{(2)}
    \cap\Omega_{noise}^{(3)}
    \cap\left(\cap_{j=1}^p\Omega_{iso}(\be_j) \right)
    \cap\left(\cap_{j=1}^p\Omega_{\chi^2}(\be_j) \right)
    .
    $$
    We established in the proof of \Cref{lm-7-probability-of-Omega_1-Omega_2} that for each $j=1,...,p$,
    $$
    \Big(
        \Omega_{noise}^{(1)}
    \cap\Omega_{noise}^{(2)}
    \cap\Omega_{noise}^{(3)}
    \cap\Omega_{iso}(\be_j)
    \cap\Omega_{\chi^2}(\be_j)
    \Big)
    \subset
    \Omega_2(\be_j),$$
    which implies the inclusion
    $\bar\Omega \subset \bar\Omega_2 \subset \Omega_2(\be_j)$.
    We also established in \Cref{lm-7-probability-of-Omega_1-Omega_2} that
    $$\P\left(
    \left(
    \Omega_{noise}^{(1)}
    \cap\Omega_{noise}^{(2)}
    \cap\Omega_{noise}^{(3)}
    \cap\Omega_{iso}(\be_j)
    \cap\Omega_{\chi^2}(\be_j)
    \right)
    \setminus \Omega_1(\be_j)\right) = 0,$$
    hence $\P(\bar\Omega \setminus \bar\Omega_1(\be_j))
    = 0$ and $\P(\bar\Omega \setminus \bar\Omega_1)=0$.
    Finally,
    $\P(\bar\Omega_1^c \cup \bar\Omega_2^c) \le \P(\bar\Omega^c)$
    and we bound the probability of $\bar\Omega^c$ with the union bound over $j=1,...,p$
    to obtain
    \begin{equation*}
        \begin{split}
        1-\P(\bar\Omega_1\cap\bar\Omega_2) \le
        &\quad p\Big(3 e^{-n\eps_4}
        + 2e^{-(\eta_3-\sqrt{2/n})_{+}^2n/2}\Big)
        \\
        &+ e^{-n\eta_3^2/2} 
        +
        4(2\pi L_k^2+4)^{-1/2} + (L_k+(L_k^2+2)^{-1/2})^{-2}
        .
        \end{split}
    \end{equation*}
    Indeed, since each $\Omega_{noise}^{(i)}$ is independent of $\ba_0$, the factor $p$
    from the bound is only paid for $\Omega_{iso}(\be_j)$ and $\Omega_{\chi^2}(\be_j)$.

    For each $j=1,...,p$, define the quantity ${\Rem_{II}}(\be_j)$ as the quantity
    ${\Rem_{II}}$ from \Cref{theorem:slepian-expansion} when $\ba_0=\be_j$. 
    Thanks to \eqref{Rem-9-upper-bound-l_infty} applied to $\ba_0=\be_j$,
    on $\bar\Omega_1\cap\bar\Omega_2$ we have
    simultaneously for all $j=1,...,p$,
    \[
        \left| \hbeta^{(lasso)}_j - \beta_j \right| 
        \le
        \frac{ M_5^2  \|\bSigma^{-1}\be_j\|_1 \lambda}{1- {s_*}/n}
        +
        \frac{\sigma\|\bSigma^{-1/2}\be_j\|_2}{(1- {s_*}/n)\sqrt n}\left(
        M_5|Z(\bfe_j)| + |{\Rem_{II}}(\be_j)|\right)
    \]
    It remains to bound  $|Z(\be_j)|$ and $|{\Rem_{II}}(\be_j)|$ uniformly over all $j=1,...,p$.
    For any $\be_j$, $Z(\be_j)\sim N(0,1)$ so
    \begin{equation}
        \P(\max_{j=1,...,p}|Z(\be_j)| > 2\sqrt{\log p}) \le 1/p.
        \label{max-of-gaussians-event}
    \end{equation}
    To bound $\max_{j=1,...,p}|{\Rem_{II}}(\be_j)|$,
    by \Cref{theorem:slepian-expansion}
    we have
    for any $u\in(0,\sqrt n /\bar M)$ and by Markov's inequality,
    \begin{align}
        \label{max-of-Rem_j-event}
        &\P\left(
    \bar\Omega_1\cap\bar\Omega_2 \cap
    \left\{
        \max_{j=1,...,p} \frac{|{\Rem_{II}}(\be_j)|}{\lambda_0\sqrt{s_*}} >  3 \bar M \sqrt{\log p}
    \right\}
    \right)
    \\
    &\le
    e^{-u 3 \bar M \sqrt{\log p }}
    \sum_{j=1}^p
    \E\left[
    I_{\bar\Omega_1\cap\bar\Omega_2}
    \exp\left({ \frac{u|{\Rem_{II}}(\be_j)|}{\lambda_0\sqrt{s_*}}}\right)
    \right]
    \nonumber
    \\
    &\le 
    e^{-u 3 \bar M \sqrt{\log p }}
    4p
    \,
    \exp\left(\bar M^2 u^2 \right).
    \nonumber
    \end{align}
    For $u=\sqrt{\log p}/\bar M$, the right hand side of the previous display
    equals $4/p$.
    The union bound of $(\bar\Omega_1\cap\bar\Omega_2)^c$,
    \eqref{max-of-gaussians-event}, \eqref{max-of-Rem_j-event} shows that \eqref{bound-Lasso-on-coefficient-j}
    holds on an event of probability at least
    $1-5/p -\P((\bar\Omega_1\cap\bar\Omega_2)^c)$. 
\end{proof}

\section{Proofs of the results for unknown $\bSigma$}
\label{sec:proofs-unknown-Sigma}

The results of \Cref{sec:unknown-Sigma}
are restated before their proofs for convenience.

\PropDantzigUnknownSigma*
\begin{proof}[Proof of \Cref{prop:dantzig}] 
The KKT conditions for the Lasso estimator 
    \bes
    \hbgamma = \bQ \hbb
        \quad\text{ where }\quad\hbb 
    = \argmin_{\bb\in\R^p}\Big\{\|\bz - \bX\bQ\bb\|_2^2/(2n) + \lam\|\bQ\bb\|_1\Big\} 
    \ees
can be written as 
\bes
\big\langle \bX\bQ\bh,\bz - \bX\bQ\hbgamma \big\rangle\big/n 
\in \lam \big\langle \pa\|\bQ\hbgamma\|_1, \bQ\bh\big\rangle\,\qquad \forall \bh\in \R^p,
\ees
where $\pa \| \bv\|_1= \{\tbv\in\R^p : \|\tbv\|_\infty = 1, \langle \tbv,\bv\rangle = {\|\bv\|_1}\}$
is the subdifferential of the $\ell_1$ norm at $\bv$.
With $\bh=\hbgamma - \bgamma$ 
this implies the basic inequality 
\bes
&& \|\bX\bQ(\hbgamma-\bgamma)\|_2^2/n + (1-\eta)\lam \|(\bQ\hbgamma-\bQ\bgamma)_{T^c}\|_1
    \cr &\le & (1+\eta)\lam \|(\bQ\hbgamma-\bQ\bgamma)_{T}\|_1 + 2\lam\|(\bQ\bgamma)_{T^c}\|_1, 
\ees
when $\|(\bX\bQ)^\top\bz_0/n\|_\infty\le \eta\lam$ for some fixed $\eta\in (0,1)$. 
As the the analysis in \cite{sun2012scaled} completely relies on the above basic inequality, 
\eqref{eq:hbu-ell_1-rate} 
follows from the same analysis, provided that 
$\max_{1\le j\le p} \|\bX\bQ\bfe_j\|_2/n^{1/2} \le \eta A$
with high probability. This condition holds as $A>2$ and 
\begin{equation}
\label{upper-bound-XQ-e_k}
\forall k\in[p],\quad
\E \|\bX\bQ\bfe_{k}\|_2^2/n 
= \big(\bfe_{k} - \bfe_{j_0}a_{k}/a_{j_0}\big)^\top\bSigma \big(\bfe_{k} - \bfe_{j_0}a_{k}/a_{j_0}\big) \le 4. 
\end{equation}
Moreover,
the KKT conditions at the realized $\lam = \htau A\lamuniv$ automatically 
provide \eqref{eq:hbu-ell_infty} since $\htau= O_\P(C_0^{-1})$. 
\end{proof}

\thmUnknownSigma*
\begin{proof}[Proof of \Cref{thm:unknown-sigma}]
    We prove separately the two cases
    \begin{enumerate}
    \item
        $r_n = \|\bbeta\|_0 \log(p) /\sqrt n$, 
        \item 
            $r_n = \min\big\{\|\bu_0\|_0 \log(p)/\sqrt n , ~ C_0\|\bu_0\|_1 \sqrt{\log p} \big\}$. 
    \end{enumerate}
    Let $\bhlasso=\lasso-\bbeta$.  As preliminaries for the proof,
    we note that the following standard bounds for the Lasso $\lasso$ 
    hold:
    \bel{standard-bounds-unknown-Sigma}
        \big\|\lasso\big\|_0
        &=& O_\P(\|\bbeta\|_0),
        \label{eq:sparsity-bound-hbbeta}
        \\
        \sqrt{F_\theta}
        |\langle \ba_0,\bhlasso\rangle|
                            & = &
                            O_\P(\sqrt{\|\bbeta\|_0\log(p)/n}),
        \label{eq:prediction-population-hbbeta-a_0}
        \\
        {\|\bhlasso\|_1} &=& O_\P( \sigma \|\bbeta\|_0 \sqrt{\log(p)/n}),
        \label{eq:ell_1-hbbeta}
        \\
        \|\bX^\top(\by-\bX{\lasso})\|_\infty &\le& n \lambda  =O(\sigma\sqrt{\log(p)/n}).  
        \label{eq:KKT-hbbeta}
    \eel
    Inequality \eqref{eq:sparsity-bound-hbbeta} is the sparsity bound for ${\lasso}$ (cf.
    \Cref{proposition:false-positive-probability-bound}),
    \eqref{eq:prediction-population-hbbeta-a_0}
    follows from the population prediction bound
    $\|\bSigma^{1/2}\bhlasso\|_2^2
    =O_\P(\sigma^2 \|\bbeta\|_0 \log(p)/n)$
    for ${\lasso}$,
    \eqref{eq:ell_1-hbbeta} is the $\ell_1$ rate bound for
    ${\lasso}$ and
    \eqref{eq:KKT-hbbeta} {follows} from the KKT conditions for ${\lasso}$.
    Since $F_\theta = (C_0\sigma)^{-2}$ in \eqref{Fisher} we also have
    by the law of large numbers 
    that
    \begin{equation*}
        \|\bz_0\|_2 / \sqrt{ \sigma^2 n F_\theta} = C_0\|\bz_0\|_2/\sqrt n \to^\P 1,
    \end{equation*}
    Moreover, as $\bX\bQ$ and $\bz_0$ are independent,
        $\|\bz_0\|_2^{-1}\|(\bX\bQ)^\top \bz_0\|_\infty = O_\P(\sqrt{\log p})$
     thanks to \eqref{upper-bound-XQ-e_k}, so that
    \eqref{eq:hbu-ell_infty} and \eqref{eq:hbu-ell_1-rate} yield
    \bel{eq:consistency-hbz-X-a0}
    &&  \|\bz_0\|_2^{-2}
        |\langle\hbz,\bz\rangle- \|\bz_0\|_2^2|
          \cr&=&
    \|\bz_0\|_2^{-2}
    |\langle\hbz,\bz - \bz_0\rangle + \langle\bz_0, \hbz-\bz_0\rangle|
          \cr&\le&
          O_\P(\lamuniv) C_0 \|\bQ\bgamma\|_1
        +
        O_\P(\lamuniv) C_0 \|\bQ(\bgamma-\hbgamma)\|_1
          \cr&=&
          O_\P(\lamuniv) C_0 \|\bu_0\|_1
          \\&=&
          O_\P(1) \min\big\{
              \sqrt{\|\bu_0\|_0\log(p)/n},~ C_0\|\bu_0\|_1\sqrt{\log(p)/n}
          \big\}
        = o_\P(1) 
        \nonumber
    \eel
    thanks to $C_0\|\bu_0\|_1\le O(1)\|\bu_0\|_0^{1/2}$ and
    \eqref{sparsity-conditions-consistency} for the last line.
    Similarly, by \eqref{eq:hbu-ell_infty},
the triangle inequality 
$\|\bQ\hbgamma\|_1 \le \|\bQ\bgamma\|_1 + \|\bQ(\hbgamma-\bgamma)\|_1$,
\eqref{eq:hbu-ell_1-rate} and \eqref{bu-properties},
    \bes
     \|\bz_0\|_2^{-2}
        |\langle\hbz,\bz\rangle- \|\hbz\|_2^2|
          &=&
    \|\bz_0\|_2^{-2}
    |\langle\hbz,\bz - \hbz\rangle|
          \cr&\le&
          O_\P( \lamuniv ) C_0 \|\bQ\hbgamma\|_1
           \cr&=&
           O_\P( \lamuniv ) C_0 \|\bu_0\|_1
           = o_\P(1).
    \ees
    The three previous displays imply that
    both 
    \begin{equation}
        C_0^2 \langle \hbz,\bz\rangle/n = 1+o_\P(1),\qquad
        C_0^2 \|\hbz\|_2^2/n = 1+o_\P(1),
        \label{eq:variance-consistency-hbz-bz}
    \end{equation}
    hold i.e., $\langle \hbz,\bz\rangle/n$ and $\|\hbz\|_2^2/n$
    are consistent estimate of the noise variance $C_0^{-2}$. 

{\it Proof under regime (i).} 
    We have the decomposition
    \bes
    &&\sqrt{F_\theta n}\big\{
        (1-\nu/n)(\htheta_{\nu,\hbz}-\theta)
        - \langle\hbz,{\bz}\rangle^{-1}\hbz^\top \bep 
    \big\}
  \\&=&\sqrt{F_\theta n}
  \big\{
      (1- \nu/n) \langle \ba_0,\bhlasso\rangle 
      + \langle\hbz,{\bz}\rangle^{-1}
      \big(
          \hbz^\top(\by-\bX{\lasso})
          - \hbz^\top \bep
      \big)
  \big\}
    \\&=& 
     \sqrt{F_\theta n}\big\{
        -(\nu/n) \langle \ba_0, \bhlasso\rangle
    - 
    \langle\hbz,{\bz}\rangle^{-1}
    ~ \hbz^\top \bX{\bQ} \bhlasso\rangle
    \big\}.
    \ees
    We now show that the last line is {$O_\P(r_n)$}.
    The first term is {$O_\P(r_n)$} due to
    \eqref{eq:sparsity-bound-hbbeta} 
    and
    \eqref{eq:prediction-population-hbbeta-a_0}.
    The second term is {$O_\P(r_n)$} thanks to
    \eqref{eq:hbu-ell_infty}, \eqref{bu-properties}, \eqref{eq:ell_1-hbbeta}
    and \eqref{eq:consistency-hbz-X-a0}.
    On the first line,
    $\sqrt{F_\theta n}
    \langle\hbz,\bz\rangle^{-1}
    \hbz^\top \bep$ is normal conditionally
    on $\bX$ with  
    conditional variance $
    C_0^{-2} n \|\hbz\|_2^2|\langle \hbz,\bz\rangle|^{-2}
    = 1+o_{\P}(1)$ 
    thanks to  \eqref{eq:variance-consistency-hbz-bz}.

{\it Proof under regime (ii).} 
    In the following, 
    $\htheta_\nu$ is the estimate \eqref{LDPE-df} when $\bSigma$ is known
    with the ideal score vector $\bz_0$. We have the decomposition
    \bes
      &&\sqrt{nF_\theta}(1-\nu/n)(\htheta_{\nu,\hbz}-\theta)
      - \sqrt{nF_\theta}(1-\nu/n)(\htheta_{\nu}-\theta)
    \\&=&
    \sqrt{nF_\theta}
      \Big\langle
          \frac{\hbz}{\langle \hbz, {\bz}\rangle } - \frac{\bz_0}{\|\bz_0\|_2^2},
          \by-\bX{\lasso}
      \Big\rangle
    \\&=&
    \sqrt{nF_\theta}
    \Big(
        \frac{1}{\langle \hbz, {\bz}\rangle 
        }
        - \frac{1}{\|\bz_0\|_2^{2}}
    \Big) 
    \Big\langle \bz_0, \by-\bX{\lasso} \Big\rangle
      +
      \frac{\sqrt{n F_\theta}}{\langle \hbz, {\bz}\rangle}
    \Big\langle \hbz-\bz_0, \by-\bX{\lasso}\Big\rangle.
    \ees
    Since $\sqrt{nF_\theta}(1-\nu/n)(\htheta_{\nu}-\theta)\to^d N(0,1)$
    when $\nu=|\Shat|$
    by \Cref{thm:teaser}, it is sufficient to prove that the two terms
    in the last line are {$O_\P(r_n)$}.
    For the first term, first note that
    $
    |\langle \bz_0, \by-\bX{\lasso} \rangle|
    \le \|\bu_0\|_1 \lambda n$
    and we use
    \eqref{eq:consistency-hbz-X-a0}
    to bound $|\langle \hbz, \bz\rangle/\|\bz_0\|_2^2-1|$. 
    This and \eqref{eq:variance-consistency-hbz-bz} provide 
    that the first term in the above decomposition is 
    $O_\P(C_0^2\|\bu_0\|_1^2 \log(p)/\sqrt n)$ with
    \begin{equation*}
        \frac{C_0^2\|\bu_0\|_1^2 \log p}{\sqrt n}
        =
        \begin{cases}
            O(\|\bu_0\|_0 \log(p)/\sqrt n) = O(r_n) 
            & \text{ if } r_n = \|\bu_0\|_0\log(p)/\sqrt n,
            \\
            O(r_n \eps_n) = o(r_n) 
            &\text{ if } r_n= C_0\|\bu_0\|_1 \sqrt{\log p}
        \end{cases}
    \end{equation*}
    where 
    we used $C_0\|\bu_0\|_1 = O(1) \|\bu_0\|_0{}^{1/2}$ for the first line
    and the upper bound 
    \eqref{sparsity-conditions-consistency} for the second line.
    For the second term, 
     we have 
    by \eqref{eq:KKT-hbbeta}, \eqref{eq:hbu-ell_1-rate} and \eqref{bu-properties},     
    \bes
    |\langle \hbz-\bz_0, \by-\bX{\lasso}\rangle/n|
    &\le& \lam \|\bQ(\hbgamma-\bgamma)\|_1
    \cr &{\le}&     O_\P(\lam C_0^{-1}) 
    \min\big\{\|\bu_0\|_0 \lamuniv, C_0\|\bu_0\|_1\big\}. 
    \ees
    Hence by \eqref{eq:variance-consistency-hbz-bz}
    the second term is also {$O_\P(r_n)$}.
\end{proof}

\thmUnknownSigmaLowerBound*
\begin{proof}[Proof of \Cref{thm:unknown-sigma-lb}]
We have established in the proof of \Cref{thm:unknown-sigma} that for any 
$(\bbeta,\ba_0)\in \scrB_n$ and $0\le \nu\le \|\lasso\|_0$,
$$
\sqrt{nF_\theta}(1-\nu/n)(\htheta_{\nu,{\hbz}}-\theta)
- \sqrt{nF_\theta}(1-\nu/n)(\htheta_{\nu}-\theta) = {o_\P(1+r_n)}
$$
so that
$\sqrt{nF_\theta}(1-\nu/n)(\htheta_{\nu,\bz}-\theta){/(1+r_n)}$ is unbounded 
if and only if $\sqrt{nF_\theta}(1-\nu/n)(\htheta_{\nu}-\theta){/(1+r_n)}$ is unbounded. 
{The proof then follows from the following proposition.}

\propositionUnbounded*
\begin{proof}[Proof of \Cref{prop:unbounded}]
For equality \eqref{second-term-lower-bound}, by simple algebra we find
\bes
    && \sqrt{nF_\theta}(\htheta_{\nu}-\theta)
  \cr&=&
\sqrt{nF_\theta}(\htheta_{\nu=|\Shat|}-\theta)
+ \sqrt{nF_\theta}\bigl((1-\nu/n)^{-1}-(1-|\Shat|/n)^{-1}\bigr) \|\bz_0\|^{-2} \langle \bz_0,\by-\bX{\lasso}\rangle
  \cr&=&
  (1-|\Shat|/n)^{-1}T_n + o_\P(1) + n^{-1}(\nu-|\Shat|) \Lambda_\nu
  \cr&=&
  T_n + o_\P(1) +  n^{-1}(\nu-|\Shat|) \Lambda_\nu
\ees
using $|\Shat|=o_\P(n)$ for the last equality.
We now derive lower bounds on $\Lambda_\nu$ 
given $s_\Omega\le s_0\lll n/\log(p/s_0)$ and $\ba_0$. 
Since $\|\bSigma^{-1/2}\ba_0\|_2=1$, we have $F_\theta = 1/\sigma^2$ and $\bu_0=\bSigma^{-1}\ba_0$. 
Choose $\bbeta$ with $\|\bbeta\|_0=s_0$ 
and $\langle \bu_0,\sgn(\bbeta)\rangle = \|\bu_0\|_1$. 
By the KKT condition for $\lasso$,
\bel{pf-lower-bd-1}
\langle \bz_0,\by -\bX{\lasso}\rangle/(\lam n)
&=& \langle \bu_0, \pa \|\lasso\|_1\rangle
\cr &\ge& \|\bu_0\|_1 - 2\sum_{j=1}^p |(\bu_0)_j|
I\big\{\hbeta^{\text{\tiny (lasso)}}_j\beta_j\le 0\big\}. 
\eel
As $\|\bbeta\|_0\log(p/s_0) \lll n$, there exists constant $C_1$ such that 
\bes
\P\Big\{ \|{\lasso}-\bbeta\|_2^2 \ge C_1\|\bbeta\|_0\log(p/s_0)\big/\big(n\phi_{\min}(\bSigma)\big)\Big\} 
=\eps_n\to 0. 
\ees
Choose the nonzero $\beta_j$ satisfying 
$\min_{j: \beta_j\neq 0} |\beta_j|\pb{^2} \ge C_1\|\bbeta\|_0\log(p\pb{/s_0})\big/\big(n\phi_{\min}(\bSigma)\big)$, 
\bes
\P\Big\{\hbeta^{\text{\tiny (lasso)}}_j\beta_j \le 0 \hbox{ for some }\beta_j>0\Big\} 
\le \P\Big\{\|{\lasso}-\bbeta\|_2 \ge \min_{j: \beta_j\neq 0} |\beta_j| \Big\} \le \pb{\eps_n}. 
\ees
This inequality and \eqref{pf-lower-bd-1} yield that for $0\le \nu < n$, 
\bes
\P\Big\{|\Lambda_\nu|\ge \sqrt{nF_\theta}\|\bz_0\|^{-2}\|\bu_0\|_1n\lam,\quad \|{\lasso}\|_0\ge s_0\Big\} \ge 1 - \eps_n\to 1. 
\ees
This yields \eqref{eq:Lambda_nu-approaching-one-conclusion} 
as $F_\theta=1/\sigma^2$ and $\|\bz_0\|_2^2\sim \chi^2_n$. 
To maximize $\|\bSigma^{-1}\ba_0\|_1$ we take $\bv_0$ satisfying $\bv_0 \in \{-1,0,1\}^p$ 
and $\|\bv_0\|_0=s_\Omega$ 
and set $\ba_0 = \bSigma \bv_0/\|\bSigma^{1/2}\bv_0\|_2$, so that 
$\|\bSigma^{-1/2}\ba_0\|_2=1$ and $\|\bSigma^{-1}\ba_0\|_1=s_\Omega/\|\bSigma^{1/2}\bv_0\|_{2}
\ge s_\Omega^{1/2}/\|\bSigma\|_{op}^{1/2}$. 
\end{proof} 
\end{proof} 

\section{Proofs of Lemmas}
\label{appendix:1}
We give here the proofs of previously used lemmas. The lemmas are restated before their proofs for convenience.

\lipschitzLemma*

\begin{proof}[Proof of \Cref{lemma:lipschitzness}]
    As the objective function at the minimizer
is smaller than the objective function at $\bbeta$, 
$\max(h(\tbbeta),h(\hbbeta)) \le \|\bep\|^2/2n + h(\bbeta)$. 
It follows that $\|\tbbeta\|_2\vee\|\hbbeta\|_2\le C(h,\bep,\bbeta,n)$. 
The strong convexity of the loss $\bb\to L(\bX,\bb)$ (resp. $\bb\to L(\tilde\bX,\bb)$) 
with respect to the metric $\bb\to\|\bX\bb\|_2$
(resp. $\bb\to \|\tilde\bX\bb\|_2$) yield that
\begin{align*}
\|\bX(\hbbeta-\tbbeta)\|_2^2
&\le
\|\bX\tbh-\bep\|_2^2 - \|\bX \bh-\bep\|_2^2+ 2n(h(\tbbeta) - h(\hbbeta)),
\\
\|\tilde\bX(\hbbeta-\tbbeta)\|_2^2
&\le
\|\tilde\bX\bh-\bep\|_2^2 - \|\tilde\bX\tbh-\bep\|_2^2+ 2n(h(\hbbeta) - h(\tbbeta))
\end{align*}
where $\bh=\hbbeta-\bbeta$ and $\tbh = \tbbeta-\bbeta$.
Summing the above two inequalities, we find that 
\bes
&&
\|\bX(\hbbeta-\tbbeta)\|_2^2 + \|\tilde\bX(\hbbeta-\tbbeta)\|_2^2
\\
&\le&
\|\bX\tbh-\bep\|_2^2 - \|\bX\bh-\bep\|_2^2
+
\|\tilde\bX\bh-\bep\|_2^2 - \|\tilde\bX\tbh - \bep\|_2^2
\\
&=&
(\tbh - \bh)^\top\left[\bX^\top\bX - \tilde\bX^\top\tilde\bX\right](\bh + \tbh)
+ 2\bep^\top(\bX-\tilde\bX)(\bh - \tbh)
.
\ees
Since 
$\bX^\top\bX - \tilde\bX^\top \tilde\bX = (\bX-\tilde\bX)^\top\bX + \tilde \bX^\top(\bX - \tilde \bX)$,
the conclusion follows as $\|\bep\|_2,\|\bX\|_{op},\|\tilde\bX\|_{op},\|\bh + \tbh\|_2$ 
are all bounded. 
\end{proof}

\gradientLassoLemma*

\begin{proof}[Proof of \Cref{lm-gradient-of-Lasso}] 
Let $\mu'>0$ the value of the infimum in $\Omega_1$
and $R=\max_{t\ge 0}\|\bX(t)\|_{op}$.
By \Cref{lemma:lipschitzness} with the compact set $\tilde
K=\{\bM\in\R^{n\times p}: \|\bM\|_{op}\le R\}$ we get
$$
    \mu'\|\hbbeta(t)-\hbbeta(t')\|_{2}^2
    \le
    C(R,\bep,\bbeta)
    \|\bX(t)-\bX(t')\|_{op}
    \|\hbbeta(t) - \bbeta(t')\|_2.
    $$
for some constant
$C(R,\bep,\bbeta)<+\infty$ depending only on $(R,\bep,\bbeta)$ only.
Since $
\|\bX(t)-\bX(t')\|_{op}
=\|\ba_0\|_2 \|\bz_0(t)-\bz_0(t')\|_2
$ and $t\to\bz_0(t)$ is a Lipschitz function, we conclude that
the function $t\to\hbbeta(t)$
is Lipschitz continuous with finite (random) Lipschitz norm over $0\le t\le \pi/2$.
Hence on $\Omega_1$, the map
$t\to\hbbeta(t)$ is differentiable Lebesgue almost everywhere in $[0,\pi/2]$

For each $t$ let $\Omega_0(t)$ be the event that the KKT conditions hold strictly,  
\bel{pf-lm-1-1}
\big\langle \bX_j(t), \bep - \bX(t)\bh(t)\big\rangle\big/n \begin{cases} = \lam\sgn(\hbeta_j(t)), 
& \hbeta_j(t)\neq 0 \cr \in (-\lam,\lam), & \hbeta_j(t)=0. \end{cases}
\eel
Let $\Theta$ be uniformly distributed on $[0,\pi/2]$ independently of $(\bep,\bX,\tilde\bX)$
and let $\Omega_0$ be the event that the KKT conditions hold strictly
for the lasso solution $\hbbeta(\Theta)$, i.e., the lasso solution
\eqref{lasso-X(t)} with random $t=\Theta$.
By \Cref{lemma:kkt-strict} we have $\mathbb P(\Omega_0)=1$
since the joint distribution of
$(\bX(\Theta),\bX(\Theta)\bbeta+\bep)$ admits a density with respect to
the Lebesgue measure.
In the event $\Omega_0$, by the Fubini Theorem, (\ref{pf-lm-1-1}) holds in a random set $J\subset [0,\pi/2]$ 
such that $[0,\pi/2]\setminus J$ has zero Lebesgue measure. 
If the KKT conditions hold striclty at $t_0$, it must also hold
strictly on a neighborhood of $t_0$ by continuity of $t\to\langle \bX_j(t),\bep - \bX(t)\bh(t)\rangle$,
hence $J$ is an open set.
Moreover, for each $t_0\in J$, $\sgn(\hbeta_j(t))$ is unchanged in some open interval 
containing $t_0$, so that $\sgn(\hbeta_j(t))$ has zero derivative in $J$. 
Consequently, for any $t\in J$, (\ref{pf-lm-1-1}) yields 
${\dot\bh}_{\Shat^c(t)}(t) = \bf 0$ and
$\big({\dot \bX}^\top(t)\big(\bep - \bX(t)\bh(t)\big) 
- \bX^\top(t){\dot\bX}(t)\bh(t) - \bX^\top(t)\bX(t){\dot\bh}(t)\big)_{\Shat(t)} = {\bf 0}$
for $t\in J$.
As $\bX(t) = \bX\bQ_0 + \bz_0(t)\ba_0^\top$, we have 
${\dot \bX}(t) = {\dot\bz}_0(t)\ba_0^\top$, so that 
\bes
&& \big(\bX^\top(t)\bX(t)\big)_{\Shat(t),\Shat(t)}{\dot\bh}_{\Shat(t)} (t)   
\cr &=&\big(\ba_0\big\langle {\dot\bz}_0(t), \bep - \bX(t)\bh(t)\big\rangle
- \bX^\top(t){\dot\bz}_0(t)\big\langle \ba_0, \bh(t)\big\rangle \big)_{\Shat(t)}
\cr &=&\big((\ba_0)_{\Shat(t)}\big(\bep - \bX(t)\bh(t)\big)^\top
- \big\langle \ba_0, \bh(t)\big\rangle \bX_{\Shat(t)}^\top(t)\big){\dot\bz}_0(t)
\cr &=&\big(\bX^\top(t)\bX(t)\big)_{\Shat(t),\Shat(t)}\bD_{\Shat(t)}^\top(t){\dot\bz}_0(t),\quad t\in J.
\ees
Thus, ${\dot\bh}(t) = \bD^\top(t){\dot\bz}_0(t)$ almost everywhere in $t$ in $\Omega_0\cap\Omega_1$. 
The conclusion follows from the Lipschitz continuity of $t\to \hbbeta(t)$ in the event $\Omega_1$. 
\end{proof}

\section{Bounds on $W, W', W'', W''$}
\label{appendix:2}

\lemmaW*

\begin{proof}[Proof of \Cref{lemma:W}]
    Thanks to the scale equivariance \eqref{scale-equi-variant},
    we take the scale $C_0=\|\bSigma^{-1/2}\ba_0\|_2=1$ without loss of generality, so that \eqref{standard} holds.
    Write $\lasso - \tildelasso = \bhlasso - \tildehlasso$ so that
    by the Cauchy-Schwarz inequality
    \bes
    |W|
    &\le&
    2 \max_{\bh}\sqrt n\left|\left(\frac 1 n - \frac{1}{\|\bz_0\|_2^2}\right) 
    \langle \bz_0, \bX\bQ_0\bh \rangle \right|
    \\ &\le&
    \left|\|\bz_0\|_2 - \sqrt n\right|(\|\bz_0\|_2 + \sqrt n)n^{-1/2}\|\bz_0\|_2^{-1}
    \max_{\bh}\|\bX\bQ_0\bh\|_2
    \ees
    where the maxima are taken over $\bh\in\{\bhlasso,\tildehlasso\}$.
    Thanks to \eqref{properties-on-Omega_2(t)} for $t=0$ and $t=\pi/2$,
    we bound the right hand side on $\Omega_1\cap\Omega_2$ to obtain
    $|W| \le \left|\|\bz_0\|_2 - \sqrt n\right| M'\sigma\lambda_0\sqrt{s_*}$
    where $M'=2 M_5^2 (M_1+ M_5 M_2)$.
    The function $\bz_0\to|\sqrt n - \|\bz_0\|_2|$ is 1-Lipschitz
    with expectation at most 1,
    hence
    \begin{align*}
    \E\left[I_{\Omega_1\cap\Omega_2}
    \exp\left(\frac{u W}{\sigma\lambda_0 \sqrt{s_*} }\right)\right] 
    \le \E \exp\left(|u| M' |\sqrt n - \|\bz_0\|_2 | \right) 
    \le  e^{|u|M' + u^2 (M')^2 / 2}
    \end{align*}
    by the Gaussian concentration theorem \cite[Theorem 5.5]{boucheron2013concentration}.
\end{proof}

\lemmaWprime*

\begin{proof}[Proof of \Cref{lemma:W'}]
    Thanks to the scale equivariance \eqref{scale-equi-variant},
    we take the scale $C_0=\|\bSigma^{-1/2}\ba_0\|_2=1$ without loss of generality, so that \eqref{standard} holds.
Since $\dot\bz_0(t) = \bP_\bep^\perp \dot\bz_0(t)$,  we write
$
\bA(t) = \bX\bQ_0\bD^\top(t) \bP_\bep^\perp
+ \hat\bP(t) \bP_\bep^\perp \big\langle \ba_0, \bh(t)\big\rangle  
$ and notice that $W'=W_1'+W_2'$ where
\begin{equation}
    \begin{split}
        &W_1 ' = \frac {1}{\sqrt n}
\int_0^{\pi/2} \Big\langle 
    \bz_0,
    \bA(t)
    {\dot\bz}_0(t)
\Big\rangle dt 
 , \\
 &W_2' = \frac{1}{\sqrt n}\int_0^{\pi/2}
\big\langle \ba_0, \bh(t)\big\rangle
\left[
-
    \Big\langle
    \bz_0,
    \hat\bP(t) \bP_\bep^\perp   
    {\dot\bz}_0(t)
\Big\rangle 
-
(\sin t)|\Shat(t)| 
\right]dt.
\label{two-integrals-A(t)}
\end{split}
\end{equation}
We now bound from above the two above integrals separately,
starting with $W_1'$.
By the Cauchy-Schwarz's inequality, on $\Omega_2$,
$$
|W_1| 
\le \int_0^{\pi/2} n^{-1/2}\|\bz_0\|_2 \|\bA(t)\dot\bz_0(t)\|_2
\le M_5 \|\bA(t)\dot\bz_0(t)\|_2
.
$$
Next, we bound $\|\bA(t)\dot\bz_0(t)\|_2$ conditionally on $(\bX(t),\bep)$.
As $(\bX\bQ_0)_{\Shat(t)} = \bX_{\Shat(t)}(t) - \bz_0(t)(\ba_0)_{\Shat(t)}^\top$, 
and $\bep^\top \dot\bz_0(t)=0$ by construction of the path $\bz_0(t)$,
the definition of $\bD(t)$ in Lemma \ref{lm-gradient-of-Lasso} gives
\bel{eq:def-A(t)}
\bA(t)
&=& - \Big\{\bw_0(t) - \bz_0(t)\big\|\bw_0(t)\|_2^2\Big\}
\big( \bX(t)\bh(t)\big)^\top \bP_\bep^\perp
\cr && + \bz_0(t)\big(\bw_0(t)\big)^\top\bP_{\bep}^\perp
\big\langle \ba_0,\bh(t)\big\rangle, 
\eel
as in (\ref{matrix-formula}).
Since each of the three terms in the right hand side of \eqref{eq:def-A(t)} is rank 1, 
their Frobenius norm equals their operator norm and on $\Omega_2(t)$ we have
\bes
    I_{\Omega_2(t)}\|\bA(t)\|_F  & \le &
I_{\Omega_2(t)}\left[\|\bw_0(t)\|_2 + \|\bz_0(t)\|_2\big\|\bw_0(t)\|_2^2 \right]\|\bX(t)\bh(t)\|_2 \\
&& + I_{\Omega_2(t)}\|\bz_0(t)\|_2\|\bw_0(t)\|_2 \big| \big\langle \ba_0, \bh(t) \big\rangle \big|
\\
& \le & 
\tilde M \sigma\lambda_0\sqrt{s_*}
\ees
by (\ref{properties-on-Omega_2(t)}),
where $\tilde M = ( M_4^{1/2} M_1   +  M_5  M_4 M_1 + M_5  M_4 M_2 )$.
Conditionally on $(\bX(t),\bep)$,
the vector $\dot\bz_0(t)$ is normal $N({\bf 0},\bP_\bep^\perp)$
and the function $\dot\bz_0(t)\to\|\bA(t)\dot\bz_0(t)\|_2$
is Lipschitz with Lipschitz constant at most $\|\bA(t)\|_{op} \le \|\bA(t)\|_F$,
and the conditional expectation satisfies
$\E[\|\bA(t)\dot\bz_0(t)\|_2 \; | \bX(t),\bep]\le \|\bA(t)\|_F$.
Hence by the Gaussian concentration theorem 
(e.g. \cite[Theorem 5.5]{boucheron2013concentration}),
for any $u\in\R$,
\begin{align*}
    I_{\Omega_2(t)}\E\left[\exp\left(\frac{u \|\bA(t)\dot\bz_0(t)\|_2}{\sigma\lambda_0\sqrt{s_*}}\right)\; \Big| \bX(t), \bep\right] 
&\le
I_{\Omega_2(t)}
\exp\left(
    \frac{|u| \|\bA(t)\|_F}{\sigma\lambda_0\sqrt{s_*}}
    + \frac{u^2 \|\bA(t)\|_F^2}{2\sigma^2\lambda_0^2{s_*}}
\right)
\\
&\le
\exp\left(
    |u| \tilde M
    + u^2 \tilde M^2/2
\right).
\end{align*}
By Jensen's inequality
with respect to the Lebesgue measure over $[0,\pi/2]$, the Fubini theorem 
and the fact that $I_{\Omega_1\cap\Omega_2} \le I_{\Omega_2(t)}$, we have
\begin{align*}
   & \E \left[I_{\Omega_1\cap\Omega_2} \exp \left(
            \frac{u W_1'}{
                \sigma\lambda_0 \sqrt{s_*} }
\right)
\right] \\
&\le
\frac{2}{\pi}\int_0^{\pi/2}
\E\left\{I_{\Omega_2(t)}\E\left[\exp\left(\frac{u \|\bA(t)\dot\bz_0(t)\|_2}{\sigma\lambda_0\sqrt{s_*}}\right)\;
 \bigg|\bX(t),\bep \right]\right\}
dt,
\\  &\le 
\exp\left(
    |u| (\pi/2) \tilde M  M_5
    + u^2 (\pi/2)^2 \tilde M^2 M_5^2 /2
\right).
\end{align*}

We now bound the second integral in \eqref{two-integrals-A(t)}.
We decompose $\bz_0$ as 
\begin{equation*}
    \bz_0 = \bP_\bep \bz_0(t) + \bP_\bep^\perp [(\cos t) \bz_0(t) - (\sin t){\dot\bz}_0(t)]
\end{equation*}
where for the first term we use that
$\bP_\bep\bz_0(t)=\bP_\bep\bz_0$ is the same for every $t$.
For any $t\ge0$, the integrand of $W_2'$ in \eqref{two-integrals-A(t)}
can be written as a polynomial of degree 2 in $\dot\bz_0(t)$ as follows
$$
W_2' = \frac{2}{\pi}\int_0^{\pi/2}
\Big(
\langle \dot\bz_0(t),\bQ(t) \dot\bz_0(t)\rangle - \trace\,\bQ(t)
+ \langle \bv(t), \dot\bz_0(t)\rangle 
+ \mu(t)
\Big)
dt
$$
where
\begin{align*}
    \bQ(t) & = n^{-1/2} (\pi/2) \langle \ba_0,\bh(t)\rangle (\sin t) \bP_\bep^\perp \hat\bP(t)\bP_\bep^\perp
    ,\\
    \bv(t) & = n^{-1/2} (\pi/2)\langle \ba_0,\bh(t)\rangle
    \bP_\bep^\perp \hat\bP(t)[ - \bP_\bep\bz_0(t) - (\cos t)\bP_\bep^\perp \bz_0(t)]
    , \\
    \mu(t) &= n^{-1/2} (\pi/2) \langle \ba_0,\bh(t)\rangle (\sin(t)) [\trace(\bP_\bep^\perp\hat\bP(t)\bP_\bep^\perp) - |\Shat(t)|].
\end{align*}
Conditionally on $(\bX(t),\bep)$, the coefficients $\bQ(t),\bv(t)$ and $\mu(t)$ are fixed
and $\dot\bz_0(t)$ is normal $N({\bf 0},\bP_\bep^\perp)$.
Furthermore, the value of the integrand is unchanged if $\dot\bz_0(t)$
is replaced by $\ba(t) = \dot\bz_0(t) + Z\frac{\bep}{\|\bep\|_2}$ which
has $N({\bf 0},\bI_n)$ distribution if $Z\sim N(0,1)$ is independent of $(\bX(t),\bp,\dot\bz_0(t))$.

By Jensen's inequality over the Lebesgue measure on $[0,\pi/2]$, the Fubini Theorem
and conditioning on $(\bep,\bX(t))$,
the expectation $\E[I_{\Omega_1\cap\Omega_2} e^{u W_2'}]$
is bounded from above by
$$
\frac 2 \pi \int_0^{\pi/2}
\E\left\{ I_{\Omega_2(t)} \E\left[
        e^{u
        \big(
\langle \dot\bz_0(t),\bQ(t) \dot\bz_0(t)\rangle - \trace\,\bQ(t)
+ \langle \bv(t), \dot\bz_0(t)\rangle 
+ \mu(t)
    \big)}
    \Big| \bX(t),\bep\right]
\right\}
dt
.
$$
If $\bQ,\bv,\mu$ are deterministic with the same dimension as above
and $\ba\sim N({\bf 0},\bI_n)$ is standard normal then
for all $u\in \R$ with $|u|<1/(2\|\bQ\|_{op})$,
$$
\E e^{u(\ba^\top \bQ \ba - \trace(\bQ) + \ba^\top\bv + \mu)}
\le
\exp\left(u\mu + \frac{u^2(\|\bQ\|_F^2 + \|\bv\|^2/2)}{1-2\|\bQ\|_{op} |u|}\right).
$$
This upper bound is proved by diagonalizing $\bQ$ and using
the rotational invariance of the normal distribution, cf., for instance
the proofs in
\cite[Lemma 2.4]{hsu2012tail} or
\cite[Proposition 8.1]{bellec2014affine}.
For $\bv=0$ see also \cite[Lemma 1]{laurent2000adaptive}.
By applying this bound conditionally on $(\bep,\bX(t))$, we get
$$
\E\left[I_{\Omega_1\cap\Omega_2} e^{u W_2'}\right]
\le
\frac 2 \pi \int_0^{\pi/2}
\E\left\{
    I_{\Omega_2(t)}
\exp\left(
u\mu(t)
+\frac{u^2(\|\bQ(t)\|_F^2 + \|\bv(t)\|_2^2/2)))}{1-2\|\bQ(t)\|_{op} |u|}
\right)
\right\}
dt
$$
for any $u\in\R$ such that $|u|\le 1/(2\sup_{t\ge 0}\|\bQ(t)\|_{op})$.
The quantity $\mu(t)$ as well as the norms of $\bQ(t)$ and $\bv(t)$
can be readily bounded on $\Omega_2(t)$ thanks to \eqref{properties-on-Omega_2(t)}.
For $\mu(t)$,
since $|\Shat(t)| = \trace(\hat\bP(t))$ and $\bP_\bep$ is a rank-1 orthogonal projection 
$$
|\Shat(t)| - \trace(\bP_\bep^\perp \hat\bP(t)\bP_\bep^\perp) 
= \trace(\bP_\bep\hat\bP(t) \bP_\bep) \in [0,1],
$$
and hence
$|\mu(t)|\le 3n^{-1/2}(\pi/2) M_2 \lambda_0 \sqrt{s_*}$.
For $\bQ(t)$, by properties of the operator norm
and the fact that the operator norm of projectors is a most 1,
\begin{align*}
\|\bQ(t)\|_{op}
&\le n^{-1/2} (\pi/2) M_2 \sigma\lambda_0\sqrt{s_*}, \\
\|\bQ(t)\|_F
&\le 
n^{-1/2}(\pi/2)M_2 \sigma\lambda_0\sqrt{s_*} |\Shat(t)|^{1/2}
\le
(\pi/2) M_2 \sigma\lambda_0\sqrt{s_*}
\end{align*}
where we used that $\|\hat\bP(t)\|_F^2 = |\Shat(t)|\le n$.
Finally, for $\bv(t)$,
$$
\|\bv(t)\|_2 \le
n^{-1/2} (\pi/2) M_2 \sigma\lambda_0 \sqrt{s_*}\|\bz_0(t)\|_2
\le
(\pi/2) M_2 \sigma\lambda_0\sqrt{s_*}  M_5.
$$
We have established that
$$
\E\left[
I_{\Omega_1\cap\Omega_2} \exp\left(\frac{u W_2'}{\sigma\lambda_0\sqrt{s_*} }\right)
\right]
\le
\exp\left(
|u| \frac{3\pi M_2}{2\sqrt n}
+ \frac{u^2 (\pi^2M_2^2/4 + \pi^2M_2^2 M_5^2 / 8 )}{1- |u|(\pi/2) M_2 / \sqrt n}
\right)
$$
for any $u$ such that $|u| \le 2\sqrt{n} / (\pi M_2)$.
To complete the proof, we combine the bound on $W_1'$ and the bound on $W_2'$ using that for all $v\in\R$,
by Jensen's inequalty,
$$\E[I_{\Omega_1\cap\Omega_2}
    e^{v(W_1' + W_2')}
]
\le
(1/2)
\E[I_{\Omega_1\cap\Omega_2} e^{2v W_1'}] 
+ (1/2)
\E[I_{\Omega_1\cap\Omega_2} e^{2v W_2'}] 
.
$$
\end{proof}

\lemmaWprimePrime*

\begin{proof}[Proof of \Cref{lemma:W''}]
    Thanks to the scale equivariance \eqref{scale-equi-variant},
    we take the scale $C_0=\|\bSigma^{-1/2}\ba_0\|_2=1$ without loss of generality, so that \eqref{standard} holds.
    By simple algebra and the condition $\Shat(t)\le s_*$ in \eqref{conds-limited} on $\Omega_2$,
    $$W'' \le \frac{s_*}{\sqrt n} \int_0^{\pi/2}|\langle \ba_0, \hbbeta(t) - \hbbeta(0)\rangle | dt
    \le
    \frac{s_*}{\sqrt n} \int_0^{\pi/2} \int_0^t|\langle \ba_0, \bD^\top(x) \dot\bz_0(x)\rangle | dx dt
    .$$
    The integrand is non-negative so
    the function $t \to \int_0^t|\langle \ba_0, \bD^\top(x) \dot\bz_0(x)\rangle | dx$ defined on $[0,\pi/2]$
    is maximized at $t=\pi/2$.
    By Jensen's inequality, Fubini's theorem and the law of total expectation,
$\E[I_{\Omega_1\cap\Omega_2}
    e^{u W''}
    ]$ is bounded from above by
    $$\frac{2}{\pi}
    \int_0^{\pi/2}
    \E
    \left\{
    I_{\Omega_2(x)}
        \E 
        \left[
        \exp\left(\frac{u (\pi/2)^2 s_*}{\sqrt n} 
        \Big|\left\langle \ba_0, \bD^\top(x)\dot\bz_0(x) \right\rangle\Big|\right)
        \Big| \bep, \bX(x)
        \right]
    \right\}
    dx
    .
    $$
Conditionally on $(\bep,\bX(x))$, the random variable $\langle \ba_0, \bD^\top(x)\dot\bz_0(x) \rangle$
is normal with variance $\|\bP_\bep^\perp \bD(x) \ba_0\|_2^2$.
It follows from (\ref{conds-limited}) and the definition of $\bD(x)$ in Lemma \ref{lm-gradient-of-Lasso} that in the event $\Omega_2(x)$,
\bes
    \big\|\bP_\bep^\perp \bD(x)\ba_0\big\|_2
    &\le& (M_4/n)\|\bX(x)\bh(x)\|_2 + (M_4/n)^{1/2}\|\bSigma^{1/2}\bh(x)\|_2
    \\ &\le& M''\sigma \lambda_0\sqrt{s_*}/\sqrt{n} 
\ees
for some constant $M''$ that depends on $M_1, M_2, M_4, M_5$ only.
If $Z$ is centered normal, then $\E[e^{|vZ|}]\le \E[e^{vZ}] + \E[e^{-vZ}]=2 \exp(v^2\E[Z^2]/2)$. Combining the two previous displays, we have proved that
$$
    \E[I_{\Omega_1\cap\Omega_2}
    e^{u W''}
    ]
    \le 2
    \exp \left[
        {u^2 (\pi/2)^4(M'')^2 \sigma^2 \lambda_0^2 s_*^3/(2n^2)}
    \right]
    .
$$
Using that $s_*^2 \le n^2$ completes the proof with the scale change 
$u\to u/(\sigma\lam_0\sqrt{s_*})$.
\end{proof}

\lemmaWprimePrimePrime*

\begin{proof}[Proof of \Cref{lemma:W'''}]
    Thanks to the scale equivariance \eqref{scale-equi-variant},
    we take the scale $C_0=\|\bSigma^{-1/2}\ba_0\|_2=1$ without loss of generality, so that \eqref{standard} holds.
    Note that $Z = \langle \bep, \bz_0 \rangle /\|\bep\|_2$
    is standard normal since $\bz_0=\bX \bu_0$ is independent of $\bep$
    and $\bep/\|\bep\|_2$ is uniformly distributed on the sphere.
    On $\Omega_2$ we have 
    $|\langle \bP_\bep\bz_0, \bX\bQ_0\tildehlasso\rangle|
    \le
    \|\bX\bQ_0\tildehlasso\|_2 |Z|$.
    Hence on $\Omega_2$,
    \begin{align*}
        & \sqrt n \|\bz_0\|_2^{-2} |\langle \bz_0, \bX\bQ_0\tildehlasso\rangle | \\
        &\le M_5^2 n^{-1/2}
    \left(
    |\langle \bP_\bep\bz_0, \bX\bQ_0\tildehlasso\rangle\rangle|
    +
    |\langle \bP_\bep^\perp\bz_0, \bX\bQ_0\tildehlasso\rangle\rangle|
    \right),\\
    &\le M_5^2 n^{-1/2}
    \left(
    |Z| \|\bX\bQ_0\tildehlasso\|_2
    +
    |\langle \bP_\bep^\perp\bz_0, \bX\bQ_0\tildehlasso\rangle\rangle|
    \right).
    \end{align*}
    On $\Omega_2(\pi/2)$, quantity
    $\|\bX\bQ_0\tildehlasso\|_2$ is bounded from above thanks to \eqref{properties-on-Omega_2(t)} for $t=\pi/2$.
    For any $u\in\R$, by Jensen's inequality and $I_{\Omega_1\cap\Omega_2}\le I_{\Omega_2(\pi/2)}$,
    \bes
         2\E\left[I_{\Omega_1\cap\Omega_2} 
        \exp\left(\frac{u W'''}{\sigma\lambda_0\sqrt{s_*}}\right) \right]
        &\le&
            \E\Big[\exp\Big\{ 2|u| M_5^2 (M_1+M_2 M_5 )|Z|\Big\}\Big] 
          \cr  && \quad + 
            \E\left[
                I_{\Omega_2(\pi/2)}\exp\left\{2 |u|  M_5^2  \frac{|\langle \bP_\bep^\perp \bz_0, \bX\bQ_0\tildehlasso\rangle|}{\sqrt n \sigma \lambda_0 \sqrt{s_*}}\right\}
            \right]
            .
        \ees
    Since $|Z|$ is the absolute value of a standard normal,
    we use $\E[e^{|vZ|}]\le 2 e^{v^2/2}$ for the first line of the right hand side
    with $v=2u M_5^2 (M_1+M_2 M_5 )$.
    For the second line, since $\bP_\bep^\perp\bz_0$ and $\bX\bQ_0\tildehlasso$
    are independent and the conditional distribution of $\bP_\bep^\perp\bz_0$
    given $(\bep,\bX(\pi/2))$ is $N({\bf 0},(1/C_0^2)\bP_\bep^\perp)$,
    for any $v\in\R$ we have
    $$\E\left[\exp(|v\langle \bP_\bep^\perp \bz_0, \bX\bQ_0\tildehlasso\rangle|)
    \Big| \bep,\bX(\pi/2) \right]
    \le
    2 \exp(v^2 \|\bP_\bep^\perp \bX\bQ_0\tildehlasso\|_2^2/2).
    $$
    On $\Omega_2(\pi/2)$, the squared norm in the right hand side
    is bounded from above thanks to \eqref{properties-on-Omega_2(t)} for $t=\pi/2$.
    Combining the above bounds completes the proof.
\end{proof}

\lemmaLastTermDivergence*

\begin{proof}[Proof of \Cref{lemma:last-term-divergence}]
    Using $(a+b+c)^3 \le 3a^2 + 3b^2+3c^3$,
    $$\left(\int_0^{\pi/2}\sigma^2(|\Shat(t)| - |\Shat(0)|) (\sin t)dt\right)^2
    \le
    \begin{cases}
        &3 \left(\int_0^{\pi/2}(\sin t)(\bep^\top\bX(0)(\bh(0) - \bh(t))dt\right)^2
        \\
        + &3 \left(\int_0^{\pi/2}(\sin t)(\bep^\top\bX(t)\bh(t) - \sigma^2|\Shat(t)|) dt\right)^2 \\
        + &3 \left(\bep^\top\bX(0)\bh(0) - \sigma^2|\Shat(0)|\right)^2.
    \end{cases}
    $$
    where we used that $\bep^\top\bX(t)=\bep^\top\bX(0)$ by construction of the path $\bz_0(t)$ in \eqref{bz(t)}.
    Next, write 
    $\bep^\top\bX(0)(\bh(t) - \bh(0))
    = \int_0^t \bep^\top \bX(0)\bD(x)^\top\dot\bz_0(x) dx
    = \int_0^t \bep^\top \bX(x)\bD(x)^\top\dot\bz_0(x) dx.
    $
    The function $f(x)=|\bep^\top \bX(x)\bD(x)^\top\dot\bz_0(x)|$ is non-negative
    thus
    $\int_0^{\pi/2}(\sin t)(\int_0^t f(x)dx)dt\le \int_0^{\pi/2}f(x)dx$.
    By Jensen's inequality applied to each of the three terms above,
    the previous display is bounded from above on $\Omega_1\cap\Omega_2$ by
    \begin{equation}
        \begin{split}
          &3 I_{\Omega_1\cap\Omega_2} \int_0^{\pi/2} \frac{2}{\pi} \left[\left(\frac{\pi}{2}\bep^\top\bX(x)\bD^\top\dot\bz_0(x)\right)^2\right]dx
        \\
        + &3 I_{\Omega_1\cap\Omega_2} \Big( \int_0^{\pi/2} W(t) (\sin t) dt + W(0) \Big).
        \label{three-lines}
        \end{split}
    \end{equation}
    where $W(t) =(\bep^\top\bX(t)\bh(t) - \sigma^2|\Shat(t)| )^2$ for all $t\ge 0$.
    To bound the expectation of the first line, 
    we use the Fubini Theorem and the fact that for any $x\in[0,\pi/2]$,
    $I_{\Omega_1\cap\Omega_2}\le I_{\Omega_2(x)}$ and
    \begin{align*}
    \E\left[I_{\Omega_1\cap\Omega_2}\left(\frac{\pi}{2}\bep^\top\bX(x)\bD^\top\dot\bz_0(x)\right)^2\right]
    &\le
    \E\left[I_{\Omega_2(x)} \E\left[\left(\frac{\pi}{2}\bep^\top\bX(x)\bD^\top\dot\bz_0(x)\right)^2 \Big| \bX(x),\bep\right]  \right], \\
    & = 
    \E\left[
        I_{\Omega_2(x)} \left(\frac \pi 2\right)^2 
        \|\bP_\bep^\perp \bD(x) \bX(x)^\top \bep\|_2^2
    \right].
    \end{align*}
    By \Cref{lm-gradient-of-Lasso} that computes $\bD(x)$ 
    and the inequalities in \eqref{properties-on-Omega_2(t)}, on $\Omega_2(x)$ we have
    \begin{align*}
        &\quad \|\bP_\bep^\perp \bD(x) \bX(x)^\top \bep\|_2\\
        &\le \|\hat\bP(x) \bep\|_2 |\langle \ba_0,\bh(x)\rangle |
        + \|\bP_\bep^\perp \bX(x)\bh(x)\|_2 \|\bep\|_2  \|\bw_0(x)\|_2 \\
        &\le \sigma^2 \lambda_0\sqrt{s_*}( M_5 M_1 + M_2  M_5 M_4^{1/2})\sqrt n.
    \end{align*} 

    We now bound the expectation of the second line in \eqref{three-lines}.
    For any $t$,
    let $\Omega_3(t) = \{ \|\bX(t)\bh^{(noiseless)}(t)\|_2 \le \sqrt n M_1 \sigma \lambda_0 \sqrt{s_*} \}$
    where $\bh^{(noiseless)}(t)$ is the error vector of the noiseless Lasso for $\bX(t)$ defined in \eqref{noiseless}
    and notice that $\Omega_1\cap\Omega_2\subset\Omega_2(t)\subset\Omega_3(t)$.
    Consider a random variable
    $\Theta$ independent of all other random variables, valued in $[0,\pi/2]$ 
    with distribution $\P(\Theta = 0) = 1/2$ 
    and $\P(\Theta \in (a,b)) = \int_a^b(\sin t)dt/2$ for any $0<a<b\le \pi/2$.
    In other words, $\Theta$ is the mixture of a dirac at 0 and a continuous distribution with density $t\to \sin t$ on $[0,\pi/2]$.
    Since $\Omega_1\cap\Omega_2\subset\Omega_3(t)$ for all $t\ge0$, 
    the expectation of the second second \eqref{three-lines} is bounded from above by
    $$
    3\E\Big[\int_0^{\pi/2} I_{\Omega_3(t)} W(t) (\sin t) dt + I_{\Omega_3(0)} W(0)\Big]
     =
     6\E\left\{I_{\Omega_3(\Theta)} \E\left[ W(\Theta) \big|\bX(\Theta) \right] \right\}
    .
    $$
    where we used the law of total expectation and the fact that $I_{\Omega_3(\Theta)}$ is a measurable function of $\bX(\Theta)$.
    Recall that $W(\cdot)$ is defined after \eqref{three-lines}.
    The random design matrix $\bX(\Theta)$ has iid $N({\bf 0},\bSigma)$ rows
    and admits a density with respect to the Lebesgue measure on $\R^{n\times p}$.
    Furthermore, $\bX(\Theta)$ is independent of $\bep$ so by 
    \Cref{lemma:divergence} below the previous display is 
    bounded from above by
    \begin{equation*}
        6 n\sigma^4 + 6\sigma^2 \E[I_{\Omega_3(\Theta)}\|\bX(\Theta)\bh(\Theta)\|_2^2].
    \end{equation*}
    The function $\bep\to \|\bX(\Theta)\bh(\Theta)\|_2$ is 1-Lipschitz (see, e.g., \cite{bellec2016bounds})
    hence on $\Omega_3(\Theta)$ we have
    $$\|\bX(\Theta)\bh(\Theta)\|_2 \le \|\bep\|_2 + \|\bX(\Theta)\bh^{noiseless}(\Theta)\|_2
    \le \|\bep\|_2 + M_1 \sqrt n \sigma \lambda_0\sqrt{s_*}.
    $$
    Using $(a+b)^2\le 2a^2 + 2b^2$, this shows that 
    $\E[I_{\Omega_3(\Theta)}\|\bX(\Theta)\bh(\Theta)\|_2^2] \le 2n\sigma^2 + 2 M_1^2 \sigma^2 n \lambda_0^2 s_*$
    and the proof is complete.
\end{proof}

\begin{lemma}[Section 4 of \cite{bellec_zhang2018second_order_stein}]
    \label{lemma:divergence}
    Let $\bar\bX$ be a random design matrix 
    that admits a density with respect to the Lebesgue measure on $\R^{n\times p}$.
    Consider the lasso estimator $\check\bbeta$ with design $\bar\bX$ and response vector
    $\bar\by = \bar\bX\bbeta + \bep$ where $\bep\sim N({\bf 0},\sigma^2\bI_n)$ is independent of $\bar\bX$.
    Let $\check S=\supp(\check\bbeta)$. 
    Then with probability one with respect to the probability distribution of $\bar\bX$ we have
    \begin{align*}
    \E\left[
        (\bep^\top\bar\bX(\check\bbeta - \bbeta) - \sigma^2|\check S|)^2
    \Big| \bar\bX
    \right]
    \le
    \sigma^2
    \E
    \left[
        \|\bar\bX({\check\bbeta}-\bbeta)\|_2^2 
        \Big|\bar\bX
    \right]
    +\sigma^4
    n.
    \end{align*}
\end{lemma}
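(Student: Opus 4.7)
The plan is to prove this as a direct application of a second-order Stein identity. Fix $\bar\bX$ and work conditionally on it; then $\bep \sim N(\bf 0,\sigma^2\bI_n)$ drives the randomness, and the map $\bff(\bep) := \bar\bX(\check\bbeta(\bep) - \bbeta)$ is a Lipschitz function of $\bep$ by standard stability properties of the Lasso (so it is weakly differentiable). The target $|\check S|$ will arise as a divergence via Stein's identity, and the right-hand side will emerge from applying Gaussian integration by parts a second time.

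First, I would record two facts about $\bff$. By the results of Tibshirani--Taylor (degrees of freedom of the Lasso) and \Cref{lemma:kkt-strict} applied to $\bar\bX$, which has a density, the KKT conditions hold strictly for Lebesgue-a.e.\ $\bep$; on that full-measure set the active set $\check S$ is locally constant and the Jacobian $J(\bep) := \partial \bff/\partial \bep$ equals the orthogonal projection $\bP(\bep) = \bar\bX_{\check S}(\bar\bX_{\check S}^\top\bar\bX_{\check S})^{-1}\bar\bX_{\check S}^\top$ onto the column span of $\bar\bX_{\check S}$. In particular $\mathrm{div}\,\bff(\bep) = \mathrm{tr}\,\bP(\bep) = |\check S|$, and $\mathrm{tr}(J(\bep)^2) = \mathrm{tr}(\bP(\bep)) = |\check S| \le n$ because $\bP$ is idempotent.

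Next, I would prove the second-order Stein identity
\begin{equation*}
    \E\big[(\bep^\top \bff(\bep) - \sigma^2\,\mathrm{div}\,\bff(\bep))^2 \,\big|\, \bar\bX\big]
    = \sigma^2\,\E\big[\|\bff(\bep)\|_2^2 \,\big|\, \bar\bX\big]
    + \sigma^4\,\E\big[\mathrm{tr}(J(\bep)^2) \,\big|\, \bar\bX\big].
\end{equation*}
Rescaling to $\bz = \bep/\sigma \sim N(\bf 0,\bI_n)$, this is the identity $\E[(\bz^\top \bg - \mathrm{div}\,\bg)^2] = \E[\|\bg\|_2^2] + \E[\mathrm{tr}((\nabla\bg)^2)]$ for smooth enough vector fields $\bg$ on $\R^n$ under the standard Gaussian measure. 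One derives it by expanding the square, applying the first-order Gaussian integration by parts $\E[\bz_j h(\bz)] = \E[\partial_j h(\bz)]$ to $h(\bz) = \bg_j(\bz)\,\bz^\top\bg(\bz)$, and then a second application of integration by parts to reduce $\sum_{j,k}\E[\bg_j \bz_k \partial_j \bg_k]$ to $\E[\bg^\top \nabla(\mathrm{div}\,\bg)] + \E[\mathrm{tr}((\nabla\bg)^2)]$, before cancellation with the cross-terms. Density/approximation issues (since the Lasso is only weakly differentiable) are handled by truncation and mollification, exactly as in Section~4 of Bellec--Zhang.

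Finally, combining the identity with $\mathrm{div}\,\bff = |\check S|$ and $\mathrm{tr}(J^2) = |\check S| \le n$ gives
\begin{equation*}
    \E\big[(\bep^\top \bar\bX(\check\bbeta-\bbeta) - \sigma^2|\check S|)^2 \,\big|\, \bar\bX\big]
    \le \sigma^2\,\E\big[\|\bar\bX(\check\bbeta-\bbeta)\|_2^2 \,\big|\, \bar\bX\big] + \sigma^4 n,
\end{equation*}
which is the claim. The main obstacle is the second-order Stein identity itself: one must justify the two successive Gaussian integrations by parts for a vector field that is only Lipschitz (hence weakly, not classically, differentiable), and verify that the Jacobian at non-differentiability points of the Lasso contributes zero thanks to the density assumption on $\bar\bX$. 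All the rest is algebraic bookkeeping and the trace identity for the idempotent $\bP$.
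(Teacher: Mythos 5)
The paper does not give its own proof of \Cref{lemma:divergence}; it cites Section~4 of the reference. Your reconstruction follows exactly the route that reference takes: view $\bff(\bep)=\bar\bX(\check\bbeta(\bep)-\bbeta)$ as a Lipschitz vector field of $\bep$ with a.e.\ Jacobian equal to the orthogonal projection $\bP$ onto $\mathrm{span}(\bar\bX_{\check S})$, apply a second-order Stein identity/inequality, and use $\operatorname{div}\bff=\operatorname{tr}(\bP)=|\check S|$ together with $\operatorname{tr}(\bP^2)=\operatorname{tr}(\bP)=|\check S|\le n$.

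One small caveat worth flagging. For a $C^2$ vector field the second-order Stein relation is an equality, but for a merely Lipschitz map (the Lasso map is piecewise affine, so its Jacobian is piecewise constant and jumps across activation boundaries) the second integration by parts produces singular boundary contributions. The natural statement obtainable by mollification is the one-sided bound
\begin{equation*}
\E\big[(\bz^\top\bg-\operatorname{div}\bg)^2\big]\ \le\ \E\big[\|\bg\|_2^2\big]+\E\big[\|\nabla\bg\|_F^2\big],
\end{equation*}
rather than an identity; this is what Bellec--Zhang's second-order Stein result actually delivers for Lipschitz maps. For the Lasso the Jacobian is symmetric idempotent, so $\|\nabla\bg\|_F^2=\operatorname{tr}(\bP^2)=|\check S|\le n$ and your final inequality goes through unchanged. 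So stating it as an identity is slightly too strong, but since you only use the inequality direction, the argument is correct; I would just phrase the intermediate step as ``$\le$'' rather than ``$=$'' to avoid claiming something you haven't justified for nonsmooth $\bff$. The rest---local constancy of $\check S$ on the full-measure set where KKT holds strictly (invoking \Cref{lemma:kkt-strict} conditionally on $\bar\bX$), the closed-form Jacobian, and the idempotence observation---is correct and matches the cited source.
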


\section{Proofs of bounds on sparse eigenvalues}
\label{appendix:3}

\lemmaFullRank*

\begin{proof}[Proof of \Cref{lemma:full-rank-2m}]
    (i). Let $\sigma_{\min}(\cdot)$ denote the smallest singular value of matrix. 
    Thanks to the scale equivariance \eqref{scale-equi-variant},
    we take the scale $C_0=\|\bSigma^{-1/2}\ba_0\|_2=1$ 
    without loss of generality, so that \eqref{standard} holds. 
    By construction of the path \eqref{bz(t)}, 
    $\bz_0(t) = \bP_{\bep}\bz_0+\bP_{\bep}^\perp\{(\cos t)\bz_0+(\sin t)\bg\}$ 
    where $(\bz_0,\bg)\in \R^{n\times 2}$ is standard Gaussian and independent of $\bep$. 
    Let $\bar t = \argmin_t \|\bP_{\bep}^\perp\bz_0(t)\|_2$. As $\bP_{\bep}(\bz_0,\bg)$ is 
    independent of $\{\bP_{\bep}^\perp\bz_0(t), t>0\}$, $\min_t\|\bz_0(t)\|_2^2$ has the same 
    distribution as 
    $\|\bP_{\bep}^\perp\bz_0(\bar t)\|_2^2+\|\bP_{\bep}((\cos \bar t)\bz_0+(\sin \bar t)\bg)\|_2^2$, 
    which is no smaller than $\min_{t>0} \|(\cos t)\bz_0+(\sin t)\bg\|_2^2 =\sigma_{\min}^2(\bz_0,\bg)$. 
    Thus, 
    \bes
    \P\Big(\inf_{t\ge 0}\|\bz_0(t)\|_2 = 0\Big) = \P\Big(\sigma_{\min}(\bz_0,\bg)=0\Big) = 0. 
    \ees 
    (ii). We will prove that for any $\bep\ne0$, conditionally on $\bep$,
    for any set $A$ as above, 
    $$\P\big(\forall \bu: \supp(\bu)=A, \quad\bSigma^{1/2}\bu\ne 0 \Rightarrow \bX(t)\bu \ne 0 \big| \bep\big) =1.$$
    This implies that $\bX_A(t)$ is of full rank $|A|$ with probability one.

    Let $\tba_0=\bSigma^{-1/2}\ba_0$. As $\|\tba_0\|_2=C_0=1$, there exists 
a matrix $\tbQ_0\in\R^{(p-1)\times p}$ with $(p-1)$ orthonormal rows such that
$\tbQ{}_0^\top \tbQ_0 = \bI_{p}-\tba_0\tba_0^\top = \bSigma^{1/2}\bQ_0\bSigma^{-1/2}$
    and $\tbQ_0\tbQ{}_0^\top = \bI_{p-1}$. Similarly for 
    any $\bep\ne 0$, $\bP_\bep^\perp$ is an orthogonal projection onto a subspace of dimension $n-1$
    and there exists a matrix $\bM_\bep\in\R^{(n-1)\times n}$
    with $(n-1)$ orthonormal rows such that
    $\bM_\bep^\top \bM_\bep = \bP_\bep^\perp$
    and $\bM_\bep\bM_\bep^\top = \bI_{n-1}$. 
    Conditionally on $\bep$, 
    $\bP_\bep^\perp\bX\bQ_0\bSigma^{-1/2}=\bP_\bep^\perp\bX\bSigma^{-1/2}\tbQ{}_0^\top \tbQ_0$, 
    $\bP_\bep^\perp\bz_0 = \bP_\bep^\perp\bX\bSigma^{-1/2}\tba_0$ and 
    $\bP_\bep^\perp\tbz_0 = \bP_\bep^\perp\bg$ are mutually independent,
    and we may write 
    $\bW = \bM_\bep(\bX\bSigma^{-1/2}\tbQ{}_0^\top,\bz_0,\bg)$ as a 
    standard Gaussian matrix in $\R^{(n-1)\times(p+1)}$. It follows that 
     \bes 
     \bP_{\bep}^\perp\bX(t)\bu 
&=& \bP_{\bep}^\perp\big\{\bX\bQ_0 + (\cos t) \bz_0\ba_0^\top 
+ (\sin t) \bg\ba_0^\top\big\}\bu 
\cr &=& \bM_\bep^\top \bM_\bep\big(\bX\bSigma^{-1/2}\tbQ{}_0^\top, \bz_0,\bg\big)
\begin{pmatrix}
    \tbQ_0\bSigma^{1/2}\bu
    \cr (\cos t)\langle \ba_0,\bu\rangle
    \cr (\sin t) \langle \ba_0,\bu\rangle
\end{pmatrix}
\\ \nonumber &=& \bM_\bep^\top\bW\bv(t)
\ees 
    with $\bv(t)= \big(\big(\tbQ_0\bSigma^{1/2}\bu\big){}^\top, (\cos t)\langle \ba_0,\bu\rangle, 
    (\sin t) \langle \ba_0,\bu\rangle\big){}^\top$.
    We note that 
    \bes 
    \quad & 
    \|\bv(t)\|_2^2 = \E\|\bW\bv(t)\|_2^2/(n-1) = \E\| \bP_{\bep}^\perp\bX(t)\bu\|_2^2/(n-1)
    = \|\bSigma^{1/2}\bu\|_2^2. 
    \ees 
    When $\supp(\bu)\subseteq A$ with $|A\setminus S| \le 2(m+k)$, $\{\bv(t), t>0\}$ lives in a subspace of 
    dimension $2(m+k)+|S|+1$ in $\R^{p+1}$. Since $\bW$ is standard Gaussian, 
    $\bW$ is full rank in this subspace almost surely when \eqref{condition-for-full-rank-2m} holds. 
    In this event, $\bSigma^{1/2}\bu\neq 0$ implies 
    $\inf_{t}\|\bX(t)\bu\|_2\ge\inf_{t}\|\bW\bv(t)\|_2 > 0$
    and the second claim is proved.
\end{proof}

\lemmaChiSquare*

\begin{proof}[Proof of \Cref{lemma:chi2-(t)}]
    Again we take the scale $C_0=\|\bSigma^{-1/2}\ba_0\|_2=1$ without loss of generality. 
    Similar to the proof of \Cref{lemma:full-rank-2m} (i), 
    a standard bound on the singular value of the standard Gaussian matrix 
    $(\bz_0,\bg)\in \R^{n\times 2}$ 
    (cf. \cite[Theorem II.13]{DavidsonS01}) yields 
    \bes
    \P\Big(\inf_{t\ge 0}\|\bz_0(t)\|_2 \le \sqrt{n} - \sqrt{2} - t\Big) 
    \le \P\Big(\sigma_{\min}(\bz_0,\bg)\le \sqrt{n} - \sqrt{2} - t\Big) \le e^{-t^2/2}. 
    \ees
This inequality and its counterpart for $\sup_{t\ge 0}\|\bz_0(t)\|_2$ completes the proof. 
\end{proof}

\lemmaIso*

\begin{proof}[Proof of \Cref{lm-5-new-iso}]
Let $A\subset[p]$ such that $|A\setminus S|\le m+k$
and let $\scrU_A=\{\bu\in\R^p: \bu_{A^c}={\bf 0}, \|\bSigma^{1/2}\bu\|_2=1 \}$.
Define $\bW\in\R^{(n-1)\times (p+1)}$
as in the proof
of \Cref{lemma:full-rank-2m}.
Similarly, for a given $\bu$ define $\bv(t)$ as in the proof of \Cref{lemma:full-rank-2m}.
Then $\bW$ has iid $N(0,1)$ entries and
$\|\bP_\bep^\perp \bX(t)\bu\|_2 = \|\bW \bv(t)\|_2$.
If $\bu\in \scrU_A$ then $\bv(t)$ has unit norm and
lives in a linear subspace of dimension
$|A|+1=s_*+1\le n-1$. By \cite[Theorem II.13]{DavidsonS01},
$$
    \big|\|\bP_\bep^\perp\bX(t)\bu\|_2-(n-1)^{1/2}\big|\le (s_*+1)^{1/2}+\eps_2n^{1/2} /2
\qquad
\forall t,\forall \bu\in\scrU_A
$$
with probability at least $1- 2e^{-n \eps_2^2/8}$.
The elementary inequality $|n^{1/2}-(n-1)^{1/2}|+(s_*+1)^{1/2} \le (s_*+2)^{1/2}$ holds
for $s_*\in[1,n-2]$, which is granted by \eqref{conditions-epsilons-2}.
Hence on the event of the previous display,
\begin{equation}
    \label{intersection-event-1-iso}
\big|\|\bP_\bep^\perp\bX(t)\bu\|_2- n^{1/2}\big|\le (s_*+2)^{1/2}+\eps_2n^{1/2} /2
\le (\eps_1+\eps_2)n^{1/2}/2
\end{equation}
for all $t\ge0$ and $\bu\in\scrU_A$
thanks to \eqref{conditions-epsilons-2}.
Since $\bep^\top\bX(t)=\bep^\top\bX$ for all $t\ge0$ and $\|\bSigma^{1/2}\bu\|_2=1$
for $\bu\in \scrU_A$,
the supremum $\sup_{t\ge 0, \bu\in\scrU_A}\|\bP_\bep \bX(t)\bu\|_2$ 
is a 1-Lipschitz function of
the random variable $\|\bep\|_2^{-1}\bep^\top\bX\bSigma^{-1/2}$
which has standard normal $N(0,\bI_p)$ distribution.
By the Gaussian concentration theorem
(e.g. \cite[Theorem 5.5]{boucheron2013concentration}),
\begin{equation}
    \label{intersection-event-2-iso}
    0 \le \|\bP_\bep \bX(t)\bu\|_2 \le (s_*+1)^{1/2} + \eps_2 n^{1/2} /2
    \qquad \forall t\ge0, \forall \bu\in\scrU_A
\end{equation}
has probability at least $1- e^{-n\eps_2t^2/8}$.
On this event, $\|\bP_\bep\bX(t)\bu\|_2\le(\eps_1+\eps_2)n^{1/2}/2$.
Consequently, $|\|\bX(t)\bu\|_2 - n^{1/2}|\le (\eps_1+\eps_2)n^{1/2}$ holds
simultaneously for all $t \ge0$ and $\bu\in\scrU_A$ on the intersection
of of \eqref{intersection-event-1-iso} and \eqref{intersection-event-2-iso}.
By the union bound, this intersection
has probability at least $1-3e^{-n \eps_2^2/8}$.

Since there are $\binom{p-|S|}{k+m}$ possible
sets $A\subset [p]$ with $|A\setminus S|\le k+m$, the event \eqref{def-Omega_iso} holds with probability at least
$1-\binom{p-|S|}{k+m} 3e^{-n\eps_2^2/8}\ge 1- 3e^{-\eps_4n}$
since $\log \binom{p-|S|}{k+m} \le \eps_3 n$
and $\eps_4+\eps_3=\eps_2^2/8$ are provided
by \eqref{conditions-epsilons-1} and \eqref{conditions-epsilons-2}.
\end{proof}

\section{Proofs for bounds on false positives}
\label{appendix:4}

\propositionFPbetabar*

\begin{proof}[Proof of \Cref{proposition:eta_1-eta_2-bar-beta}]
    The Lasso estimator must satisfy the KKT condition 
    $\bg\in \lam\pa \|{\lasso}\|_1$
    where $\bg = \bar\bX^\top(\by - \bar\bX{\lasso})/n$ is the negative gradient of the loss 
    $\|\by-\bar\bX\bb\|_2^2/(2n)$. 
    For $j\not\in \bar S$, the KKT conditions implies 
    \bes
    \bar\bx_j^\top(\by - \bar\bX\bbetabar)/(n\lam) - \bar\bx_j^\top\bar\bX\bu/n 
    = \bar\bx_j^\top(\by - \bar\bX{\lasso})/(n\lam) = \pa |u_j|, 
    \ees
    where $\bu=({\lasso}-\bbetabar)/\lambda$, so that for $\lam\ge\mu_0$ 
    \bes
    \Big| u_j (\bSigmabar\bu)_j + |u_j|\Big|= \Big| u_j \bar\bx_j^\top(\by - \bar\bX\bbetabar)/(n\lam)\Big| 
    \le \eta_2 |u_j| 
    \ees
    due to $u_j = {\lasso}_j/\lam$ for $j\not\in \bar S$. Moreover, using \eqref{condition-eta_1} for $\lam\ge\mu_0$ we get
    \bes
    \big\|(\bSigmabar\bu)_{\bar S}\big\|_2 
    \le \big\|\bg_{\bar S}\big\|_2/\lam + \big\|\bar\bX_{\bar S}^\top(\by-\bar\bX\bbetabar)\big\|_2/(n\mu_0)
    \le (1+\eta_1)|{\bar S}|^{1/2}. 
    \ees
    Hence for all $\lam\ge \mu_0$, vector $\bu$ belongs to the set 
    $\scrU_0({\bar S}, \bSigmabar, \eta_1,\eta_2)$ in \eqref{srcU_0-norm-2}, 
    so that \eqref{conclusion-support-lasso} follows from \Cref{lem-false-positive}. 
\end{proof}

\lemmaFPdeterministic*

\begin{proof}[Proof of Lemma \ref{lem-false-positive}.]
Let $\scrU_1=\scrU_0({S},\bSigma;\eta_1,\eta_2)$.  
For each $\bu\in \scrU_1$, there exists a small $\eps>0$ for which  
$\bu\in \scrU_0({S},\bSigma + \eps^2\bI_{p\times p};\eta_1+\eps,\eta_2+\eps)$. 
Thus, as $\phi_{\rm cond}(m;{S},\bSigma + \eps^2\bI_{p\times p}) 
\le \phi_{\rm cond}(m;{S},\bSigma)$ and the conclusion is continuous in 
$(\eta_1,\eta_2)$, we assume without loss of generality that $\bSigma$ is positive definite. 

Let $B_{\bu} = \{j\in {S}^c: |(\bSigma \bu)_j|\ge 1-\eta_2\}$. 
We have $\supp(\bu)\setminus {S} \subseteq B_{\bu}$.  
Define 
\bes
k^* = \max\Big\{|B_{\bu}|: \bu\in\scrU_1\Big\},\quad 
t^* = \frac{\{\phi_{\rm cond}(m;{S},\bSigma)-1\}|{S}|}{2(1-\eta_2)^2/(1+\eta_1)^2}. 
\ees
We split the proof into two-steps. In the first step, we prove that 
for any integer $k\in [0, k^*]$, there exists a vector $\bu\in\scrU_1$ and $A$ satisfying 
\bel{sec3eq37}
\qquad
k = |A\setminus{S}|,\quad {S}\cup\supp(\bu)\subseteq A \subseteq {S}\cup 
\{j: |(\bSigma \bu)_j|\ge 1-\eta_2\}.\eel
In the second step, we prove that when (\ref{sec3eq37}) holds with $k\le m$, 
\bel{sec3eq38}
\|\bu_{{S}^c}\|_1\phi_{\max}(\bSigma_{{S},{S}})/(1-\eta_2) + k \le t^*. 
\eel
As $t^* < m$ by the SRC, $k\le m$ implies $k<m$ by the second step, 
so that $m\not\in [0,k^*]$ by the first step. The conclusion follows as $|\supp(\bu)\setminus{S}|\le k^*$. 

{\it Step 1.} Let $\bu^*\in\scrU_1$ with $|B_{\bu^*}|=k^*$. 
Let $B=B_{\bu^*}$. 
Define a vector $\bz$ by 
\bes
\bz_{B} = (\bSigma\bu^*)_B+\sgn(\bu^*_B),\quad \bz_{{S}} = (\bSigma\bu^*)_{S}. 
\ees 
As $\bu^*\in\scrU_1$, we have $\|\bz_B\|_\infty\le \eta_2$ and 
$\|\bz_{S}\|_2\le (1+\eta_1)|S|^{1/2}$.   
Consider an auxiliary optimization problem
\bes
\bb(\lam) = \argmin_{\bb\in\R^p}\Big\{\bb^T\bSigma\bb/2 - \bz^T\bb + \lam\|\bb_B\|_1: 
\supp(\bb)\subseteq {S}\cup B \Big\}. 
\ees
The KKT conditions for $\bb(\lam)$ can be written as 
\bes
\begin{cases}
|\big(\bSigma\bb(\lam) - \bz\big)_j|\le\lam, & j\in B, \cr
b_j(\lam)\big(\bSigma\bb(\lam) - \bz\big)_j  + \lam\,|b_j(\lam)|=0, & j\in {S}^c, \cr
\big(\bSigma\bb(\lam) - \bz\big)_j  =0, & j\in {S},\cr 
b_j(\lam)=0, & j\not\in {S}\cup B.
\end{cases}
\ees
Note that $b_j$ is penalized only for $j\in B$, but $j\in B$ does not guarantee $b_j(\lam)\neq 0$. 
Due to the positive-definiteness of $\bSigma$, the objective function of the auxiliary minimization 
problem is strictly convex, so that $\bb(\lam)$ is uniquely defined by the KKT conditions 
and continuous in $\lam$. 

Let $\bu(\lam)=\bb(\lam)/\lam$. 
For $\lam\ge 1$, the KKT conditions imply
\bes
\big|u_j(\lam)\big(\bSigma\bu(\lam)\big)_j  + |u_j(\lam)|\big|
= |b_j(\lam)z_j|/\lam^2
\le \eta_2|u_j(\lam)|,\  \forall\ j\in B, 
\ees 
and $\|\big(\bSigma\bu(\lam)\big)_{S}\|_2 = \|\bz_{S}\|_2 /\lam {\le\|\bz_{S}\|_2} \le(1+\eta_1)|S|^{1/2}$, 
so that $\bu(\lam)\in\scrU_1$. Let 
\bes
B(\lam) = \{j\in B: |(\bSigma \bu(\lam))_j|\ge 1-\eta_2\}. 
\ees
For $\lam=1$, the KKT conditions yield $\bb(1)=\bu^*$. Let 
\bes
\lam^* = \|\bSigma_{B,{S}}\bSigma_{{S},{S}}^{-1}\bz_{S}- \bz_B\|_\infty. 
\ees
For $\lam \ge \lam^*$, the solution is given by 
\bes
\bb_{S}(\lam)=\bSigma_{{S},{S}}^{-1}\bz_{S},\quad \bb_B(\lam)=0. 
\ees
Thus, $\bu(\lam)$ is a continuous path in $\scrU_1$ with 
$\supp(\bu(\lam^*))={S}$, $\bu(1)=\bu^*$ and 
$\supp\big(\bu(\lam)\big) \subseteq {S}\cup B(\lam)$. 
Let $k \in [0,k^*]$ and 
\bes
\lam_k = \sup\big\{\lam\in [1,\lam^*]: |B(\lam)| \ge k\ \hbox{ or }\ \lam = \lam^*\big\}. 
\ees
If $B(\lam^*)\ge k$, then (\ref{sec3eq37}) is feasible with $\bu=\bu(\lam^*)$ due to 
$\supp(\bu(\lam^*))={S}$. Otherwise, $\lam_k \in [1,\lam^*)$, 
$\supp(\bu(\lam_k))\subseteq {S}\cup B(\lam_k+)$, $|B(\lam_k+)|<k$, and $|B(\lam_k)|\ge k$ 
due to the continuity of $\bu(\lam)$ and the fact that $k \le k^*=|B(1)|$. 
Thus, (\ref{sec3eq37}) is feasible with $\bu=\bu({\lam_k})$. 

{\it Step 2.} Suppose (\ref{sec3eq37}) holds for certain $A$, $k\le m$ and $\bu\in\scrU_1$. 
We need to prove (\ref{sec3eq38}). 
Let $B=A\setminus {S}$, $\bv = (\bSigma\bu)_A\in \R^A$, 
$\bv_{({S})}=(v_jI\{j\in {S}\}, j\in A)\in \R^A$ and $\bv_{(B)}=\bv - \bv_{({S})}$. 
By algebra, 
\bes
\bv^{T}\bSigma_{A,A}^{-1}\bv+\bv_{(B)}^{T}\bSigma_{A,A}^{-1}\bv_{(B)}
- \bv_{({S})}^{T}\bSigma_{A,A}^{-1}\bv_{({S})}
= 2\bv^{T}\bSigma_{A,A}^{-1}\bv_{(B)}. 
\ees
Because $\bv^{T}\bSigma_{A,A}^{-1}\bv_{(B)}
= (\bv^{T}\bSigma_{A,A}^{-1})_{B}\bv_{B}= \bu_{B}^{T}(\bSigma\bu)_{B}\le - (1-\eta_2)\|\bu_B\|_1$, 
\bes
\frac{\|\bv_{(B)}\|_2^2+\|\bv\|_2^2}{\phi_{\max}(\bSigma_{A,A})}
&\le& \bv_{({S})}^{T}\bSigma_{A,A}^{-1}\bv_{({S})} - 2(1-\eta_2)\|\bu_B\|_1
\cr &\le& \frac{\|\bv_{({S})}\|_2^2}{\phi_{\min}(\bSigma_{A,A})} - 2(1-\eta_2)\|\bu_B\|_1.  
\ees
Since $\|\bv_{(B)}\|_2^2\ge (1-\eta_2)^2|B|$ and 
$\|\bv\|_2^2-\|\bv_{(B)}\|_2^2 = \|\bv_{({S})}\|_2^2\le(1+\eta_1)^2|{S}|$, 
\bes
&& 2(1-\eta_2)\|\bu_B\|_1\phi_{\max}(\bSigma_{A,A})+2(1-\eta_2)^2|B| 
\cr &\le& (1+\eta_1)^2|{S}|\Big(\frac{\phi_{\max}(\bSigma_{A,A})}{\phi_{\min}(\bSigma_{A,A})}-1\Big). 
\ees
As $|A\setminus {S}|=k\le m$ and ${S}\subseteq A$, 
$\phi_{\max}(\bSigma_{A,A})/\phi_{\min}(\bSigma_{A,A})\le \phi_{\rm cond}(m;{S},\bSigma)$ 
and $\phi_{\max}(\bSigma_{A,A})\ge \phi_{\max}(\bSigma_{{S},{S}})$. It follows that 
\bes
\|\bu_B\|_1\phi_{\max}(\bSigma_{{S},{S}})/(1-\eta_2)+|B| 
\le \frac{|{S}|\big(\phi_{\rm cond}(m;{S},\bSigma)-1\big)}{2(1-\eta_2)^2/(1+\eta_1)^2} = t^*. 
\ees
This completes Step 2 and thus the proof of the lemma.
\end{proof}

\propositionFPprobability*

\begin{proof}[Proof of \Cref{proposition:false-positive-probability-bound}]
    (i)
    Assume that the four events $\{\mu_0 \le \lambda \}$,
    $\Omega_{noise}^{(1)}$, $\Omega_{noise}^{(2)}$
    and $\Omega_{iso}(\ba_0)$ hold hereafter.
    By construction of the path in \eqref{bz(t)}, the vector $\bep^\top\bX(t)=\bep^\top\bX$
    is the same for all $t\ge0$ and both \eqref{event-noise-1} and \eqref{event-noise-2}
    also hold if $\bX$ is replaced by $\bX(t)$.
    The right hand side of \eqref{event-noise-1} satisfies
    ${k \|\bep\|_2^2}/(L_k^2 + 2)^{-1}
        =
        k \left(n\eta_2\mu_0 - \|\bep\|_2L_k\right)^2
    $
    by definition of $\mu_0$ in \eqref{def-mu_0}.
    Thus by the triangle inequality, on the event \eqref{event-noise-1},
    \bes
    \eta_2\mu_0 |\tilde S\setminus S|^{1/2} 
    &\le& \|\bX_{\tilde S\setminus S}(t)^\top \bep\|_2/n
    \\ &<& L_k\|\bep\|_2|\tilde S\setminus S|^{1/2}/n + (\eta_2\mu_0 - \|\bep\|_2 L_k/n) k^{1/2}
    ,
    \ees
    for all $t\ge0$
    which implies $|\tilde S\setminus S| < k$.  
    This gives the bound $|\tilde S|<|S|+k$, which we now improve further as follows.
    On the intersection of \eqref{event-noise-1} and \eqref{event-noise-2}
    we have
    \bel{pf-dim-bd-2}
    && \|\bX_{\tilde S}^\top\bep\|_2^2
    \\ \nonumber &=& \|\bX_{S}^\top\bep\|_2^2+\|\bX_{\tilde S\setminus S}^\top\bep\|_2^2  
    \\ \nonumber &<& \|\bep\|_2^2|S|\big(L_k+(L_k^2+2)^{-1/2}\big)^2 
    + \Big[(\|\bep\|_2L_k)|\tilde S\setminus S|^{1/2}+\frac{k^{1/2}\|\bep\|_2}{(L_k^2+2)^{-1/2}}\Big]^2
    \\ \nonumber & <&  \|\bep\|_2^2(|S|+k)\big(L_k+(L_k^2+2)^{-1/2}\big)^2 
    \\ \nonumber & =&  (n \eta_2 \mu_0)^2(|S|+k).
    \eel
    Hence, if we define $\eta_1$ by $\eta_1 = \|\bX_{\tilde S}^\top\bep\|_2/(n\mu_0|\tilde S|^{1/2})$, we have proved
    $(\eta_1/\eta_2)^2 |\tilde S| < |S|+ k$.
    Together with $|\tilde S\setminus S|<k$, this implies the improved bound
    \bel{size-tilde-S}
        \quad\qquad
        |\tilde S|(1+\eta_1)^2/(1+\eta_2)^2 
        \le
        |\tilde S| \max( (\eta_1/\eta_2)^2, 1 \big) 
        <
        |S|+k.
    \eel
    Next we apply \Cref{proposition:eta_1-eta_2-bar-beta} to $\bar S=\tilde S$
    to prove the second inequality in
    \eqref{conclusion-support-lasso-purpose-present-paper}  
    based on \eqref{size-tilde-S}.
    This means to check the following version of \eqref{SRC-tilde-S}, \eqref{def-S-bar} and
    \eqref{condition-eta_1}:  
    \bel{pf-dim-bd-1}
    & |\tilde S| < 2(1-\eta_2)^2m\big/
    \big[(1+\eta_1)^2\big\{\phi_{\rm cond}(m;\tilde S,\bSigmabar(t))- 1\big\}\big]\quad \forall t, 
    \\ \nonumber 
    & \tilde S \supseteq \supp(\bbeta) \cup\{j\in[p]: |\bx_j^\top(t)(\by(t)-\bX(t)\bbeta)/n|  \ge \eta_2\mu_0\} \quad \forall t, 
    \\ \nonumber 
    & \|\bX_{\tilde S}^\top(t)(\by(t) - \bX(t)\bbeta)\|_2/n \le \eta_1 \mu_0|\tilde S|^{1/2}\quad \forall t, 
    \eel
    with $\bar\bX = \bX(t)$, $\bbetabar = \bbeta$, $\bSigmabar(t)=\bX^\top(t)\bX(t)/n$
    and $\by(t)=\bep+\bX(t)\bbeta$.
    For all $t\ge0$,
    $\bX^\top(t)(\by(t)-\bX(t)\bbeta)=\bX^\top\bep$ so that the second line in \eqref{pf-dim-bd-1} holds with equality by definition of $\tilde S$ and
    the third line holds with equality by definition of $\eta_1$ given after \eqref{pf-dim-bd-2}.
    For the first inequality in \eqref{pf-dim-bd-1}, combining 
    \eqref{size-tilde-S}, \eqref{SRC-population-final} and \eqref{condition-number-X(t)} gives
    \bes 
        && |\tilde S| (1+\eta_1)^2/(1+\eta_2)^2 \\
        &<& |S|+k  \\
        &<& 2(1-\eta_2)^2m\big/
        \big[(1+\eta_2)^2\big\{(\tau^*/\tau_*)\phi_{\rm cond}(m+k;S,\bSigma)- 1\big\}\big] 
        \cr &\le& 2(1-\eta_2)^2m\big/
        \big[(1+\eta_2)^2\big\{\phi_{\rm cond}(m+k;S,\bSigmabar(t))- 1\big\}\big]
        \cr &\le& 2(1-\eta_2)^2m\big/
    \big[(1+\eta_2)^2\big\{\phi_{\rm cond}(m;\tilde S,\bSigmabar(t))- 1\big\}\big].  
    \ees
    Multiplying both sides by $(1+\eta_2)^2/(1+\eta_1)^2$ yields the first inequality
    in \eqref{pf-dim-bd-1}.
    
    (ii)
    For every $j=1,...,p$ the random variable $\|\bep\|_2^{-1}\bx_j^\top\bep$ has standard normal distribution
    hence by \cite[Lemma~B.1(ii)]{bellec_zhang2018second_order_stein} we have
    \bel{bound-expectation-sum-over-p}
    \E \sum_{j=1}^p \big(|\bx_j^\top\bep|\,\|\bep\|_2^{-1} -
    L_k \big)_+^2 
    \le
    \frac{4k \exp\left(\log(p/k)-L_k^2/2\right)
    }{(L_k^2 +2 )(2\pi L_k^2+4)^{1/2}}
    .
    \eel
    With $L_k=\sqrt{2\log(p/k)}$, the numerator of the right hand side equals $4k$
    and by Markov's inequality, event \eqref{event-noise-1} has
    probability at least $1-4/(2\pi L_k^2+4)^{1/2}$.
    Furthermore, $\E[\|\bX_S^\top \bep\|_2^2\|\bep\|_2^{-2}]= 
    \hbox{trace}(\bSigma_{S,S}) 
    \le |S|$.
    Hence by Markov's inequality, the probability of \eqref{event-noise-2}
    is at least
    $1- (L_k+(L_k^2+2)^{-1/2})^{-2}$.
    The union bound completes the proof.
\end{proof}

\section{Necessity of degrees-of-freedom adjustment}
\label{sec:proof-th-1}

\thmNecessityDofGaussianDesign*

\begin{proof}[Proof of \Cref{th-1}]
Thanks to the scale equivariance \eqref{scale-equi-variant},
we take the scale $C_0=\|\bSigma^{-1/2}\ba_0\|_2=1$ without loss of generality, so that \eqref{standard} holds.

Let $\bhlasso = \lasso-\bbeta$. 
As $\bz_0=\bX\bu_0$,  by simple algebra,
\bes
\big\langle \bz_0,\by - \bX\lasso\big\rangle 
= \big\langle \bz_0, \bep\big\rangle - \|\bz_0\|_2^2\big\langle \ba_0,\bhlasso\big\rangle
 - \big\langle \bz_0,\bX\bQ_0\bhlasso\big\rangle
\ees
with $\bQ_0 = \bI_{p\times p} - \bu_0\ba_0^\top$ as in (\ref{decomp-beta}). 
Thus, by (\ref{LDPE-df}), 
\begin{equation}
    \label{pf-th-1-1}
\quad
(1 - \nu/n)\big(\htheta_{\nu} - \theta\big) = 
\frac{\big\langle \bz_0, \bep\big\rangle}{\|\bz_0\|_2^2}
- (\nu/n)\big\langle \ba_0,\bhlasso\big\rangle 
- \frac{\big\langle \bz_0,\bX\bQ_0\bhlasso\big\rangle}{\|\bz_0\|_2^2}. 
\end{equation}
We note that as $\bu_0=\bSigma^{-1}\ba_0/\big\langle\ba_0,\bSigma^{-1}\ba_0\big\rangle$, 
$\bz_0 = \bX\bu_0$ is independent of $\bX\bQ_0$. However, $\bz_0$ is not independent of 
$\bX\bQ_0\bhlasso$. 
We will use throughout the proof that the operator norm of
$(\bX_S/\sqrt n)\bSigma_{S,S}^{-1/2}$ is $O_\P(1)$ so that
\begin{equation}
\|(\bX_S/\sqrt n)\bSigma_{S,S}^{-1/2}\|_{op} = O_\P(1), 
\|\bSigma_{S,S}^{-1/2}(\bX_S^\top\bX_S/n)\bSigma_{S,S}^{-1/2}\|_{op} = O_\P(1).
\label{operator-norm-X_S-davidson-szarek}
\end{equation}
This holds because the singular values of
a matrix of size $|S|\times n$ with standard normal entries and $|S|/n \lll 1$ are
bounded away from 0, cf. for instance \cite{DavidsonS01}.

We will also use throughout that when $\sgn(\lasso) = \sgn(\bbeta)$,
the KKT conditions can be equivalently written as one of
\begin{equation}
\label{pf-th-1-2}
\begin{split}
    \bX_S^\top\bep&= (n\lam)\sgn(\bbeta_S) + \bX_S^\top\bX_S \bhlasso, \\
    (\bX_S^\top\bX_S)^{-1}\bX_S^\top\bep&= (n\lam)(\bX_S\bX_S)^{-1}\sgn(\bbeta_S) + \bhlasso.
\end{split}
\end{equation}
We decompose (\ref{pf-th-1-1}) as follows
\begin{equation}
    \begin{split}
\label{pf-th-1-3}
& (1 - \nu/n)\big(\htheta_{\nu} - \theta\big) 
\\ &= 
\frac{\big\langle \bz_0, \bep\big\rangle}{\|\bz_0\|_2^2}
- \frac{s_0-\nu}{n}\Big\langle (\ba_0)_S,\lam(\bX_S^\top\bX_S/n)^{-1}\sgn(\bbeta_S)\Big\rangle +\sum_{j=1}^3 \Rem_j
    \end{split}
\end{equation}
with 
\begin{align*}
\Rem_1 &= \frac{s_0-\nu}{n}\Big(\big\langle \ba_0,\bhlasso\big\rangle 
+ \big\langle (\ba_0)_S,\lam(\bX_S^\top\bX_S/n)^{-1}\sgn(\bbeta_S)\big\rangle\Big), 
\\
\Rem_2 &= \bigg\{\frac{\big\langle \bz_0, \bP_S \bz_0\big\rangle}{\|\bz_0\|_2^2}  - \frac{s_0}{n}\bigg\}
\big\langle \ba_0,\bhlasso\big\rangle, 
\\
\Rem_3 &= \frac{\langle \bz_0, (\bX \bQ_0)_S \bhlasso \rangle + \langle \bz_0, \bP_S \bz_0 \rangle \langle \ba_0, \bhlasso \rangle}{\|\bz_0\|_2^2}
\end{align*}
where $\bP_S$ is the orthogonal projection onto the column space of $(\bX\bQ_0)_S$.
We now prove that each $(|\Rem_j|)_{j=1,2,3}$ is of order at most $\eta_n$, i.e., $|\Rem_j| = O_\P(\eta_n)$.
Since $\bep$ is independent of $\bX_S$,
the random variable 
$$Z= \langle (\bX_S^\dagger(\ba_0)_S, \bep\rangle
/\|\bX_S^\dagger(\ba_0)_S\|_2$$ has $N(0,\sigma^2)$ distribution and we have by
\eqref{pf-th-1-2}
\bes
\Big|\big\langle \ba_0,\bhlasso\big\rangle 
+  \big\langle (\ba_0)_S,\lam(\bX_S^\top\bX_S/n)^{-1}\sgn(\bbeta_S)\big\rangle\Big|
= |Z| n^{-1/2}\|(\bX_S /\sqrt n)^\dagger (\ba_0)_S\|_2
\ees
and $|Z| = O_\P(\sigma)$.
This proves that
$|\Rem_1| \le O_{\P}(\sigma|\nu-s_0|/n^{3/2})$.
Next, by \eqref{operator-norm-X_S-davidson-szarek} we get
$$\|(\bX_S /\sqrt n)^\dagger (\ba_0)_S\|_2^2=
(\ba_0)_S^\top \bSigma_{S,S}^{-1/2}(\bSigma_{S,S}^{-1/2}(\bX_S^\top\bX_S/n)\bSigma_{S,S}^{-1/2})^{-1}
\bSigma_{S,S}^{-1/2}(\ba_0)_S = O_\P(1)$$
due to $\|\bSigma_{S,S}^{-1/2}(\ba_0)_S\|_2\le \|\bSigma^{-1/2}\ba_0\|_2=C_0=1$
by \eqref{upper-bound-submatrix-ba_0}.
Furthermore, by definition of $C_{\bbeta}$ we have similarly
\bes
\Big|\lam \big\langle (\ba_0)_S,(\bX_S^\top\bX_S/n)^{-1}\sgn(\bbeta_S)\big\rangle\Big|
= O_{\P}(\lam C_{\bbeta}\sqrt{s_0}). 
\ees
Thus we have proved that
\begin{equation}
\label{pf-th-1-4}
\Big|\big\langle \ba_0,\bhlasso\big\rangle \Big|
= O_\P \big(\sigma/n^{1/2} +\lam C_{\bbeta}\sqrt{s_0}\big).
\end{equation}
As $\bz_0\sim N({\bf 0}, \bI_n)$
and $\bP_S$ is independent of $\bz_0$ we have
\begin{equation}
\label{pf-th-1-5}
\begin{split}
    &\|\bz_0\|_2 = \big\{\sqrt{n}+O_{\P}(1)\big\},\quad \|\bP_S\bz_0\|_2 = \big\{\sqrt{s_0}+O_{\P}(1)\big\}, \\
& \|\bP_S\bz_0\|^2/\|\bz_0\|^2 - s_0/n
    = O_\P(s_0^{1/2}n^{-1})
    .
\end{split}
\end{equation}
Applying (\ref{pf-th-1-4}) and (\ref{pf-th-1-5}) 
to bound the remainder term $|\Rem_2|$, we find that 
\bes
|\Rem_2| &\le& 
O_{\P}\left(s_0^{1/2}n^{-1}
\Big(\sigma n^{-1/2}+C_{\bbeta}\lam\sqrt{s_0}\Big)
\right)
\ees
We now bound $|\Rem_3|$.
Let $\bP_{\bep}$ be the orthogonal projection onto $\bep$
and let $\bP_{\bep}^\perp= \bI_n - \bP_{\bep}$.
Define $\tbz_0 = \bP_{\bep} \bz_0 + \bP_{\bep}^\perp \bg$,
where $\bg$ is independent of $(\bep,\bX)$ and $\bg$ is equal in distribution to $\bz_0$.
Hence, $\bep^\top \bz_0 = \bep^\top \tbz_0$ holds almost surely,
while conditionally on $\bep$, the two vectors $\bP_{\bep}^\perp \bz_0$ and $\bP_{\bep}^\perp
\tilde\bz_0$ are independent and identically distributed.
We define similarly 
$\tilde\bX = \bX \bQ_0 + \tilde\bz_0 \ba_0^\top$,
the Lasso estimator $\tilde\bbeta$ as the minimizer of
\begin{equation*}
    \tilde\bbeta
    = \argmin_{\bb\in\R^p}\Big\{ \|\bep +\tilde\bX\bbeta - \tilde\bX\bb\|_2^2/(2n) + \lam\|\bb\|_1\Big\}
\end{equation*}
and set $\tilde\bh = \tilde\bbeta - \bbeta$.
Note that $(\tilde\bz_0,\tilde\bX,\tilde\bbeta,\tilde\bh)$ has the same distribution as $(\bz_0,\bX,\lasso,\bhlasso)$ so that support recovery
\eqref{selection} is also granted to $\tilde\bbeta$.

On the event $\{\sgn(\lasso) = \sgn(\bbeta) = \sgn(\tilde \bbeta)\}$,
since $\bX^\top \bep = \tilde\bX^\top\bep$ holds,
the KKT conditions for the Lasso imply
\begin{equation}
\label{same-XTXh}
\bX_S^\top\bep - (n\lam)\sgn(\bbeta_S) 
= \bX_S^\top\bX \bhlasso 
= \tilde\bX_S^\top\tilde\bX \tilde\bh.
\end{equation}
Let $((\bX\bQ_0)_S^\top)^\dag$ be the Moore-Penrose generalized inverse 
of $(\bX\bQ_0)_S^\top$ and $\bP_S$ the orthogonal projection to the range of $(\bX\bQ_0)_S$ in $\R^n$. 
As $\supp(\bhlasso)\subseteq S$, 
$(\bX\bQ_0)\bhlasso$ lives in the range of $(\bX\bQ_0)_S$, so that 
\bes
 \big\langle \bz_0,(\bX\bQ_0)\bhlasso \big\rangle
+ \langle \bz_0, \bP_S \bz_0 \rangle \langle \ba_0, \bhlasso \rangle
&=& \langle \bz_0, \bP_S \bX_S \bhlasso \rangle. 
\ees
By \eqref{same-XTXh} and simple algebra we have
\begin{align*}
(\bX \bQ_0)_S^\top \bX \bhlasso
&= 
- (\ba_0)_S \bz_0^\top \bX \bhlasso
+\tilde\bX_S^\top \tilde\bX \tilde\bh
, \\
&=
 - (\ba_0)_S \bz_0^\top \bX \bhlasso
 + (\ba_0)_S \tilde\bz_0^\top \tilde \bX \tilde \bh
 + (\bX \bQ_0)_S^\top \tilde\bX \tilde\bh
 .
\end{align*}
Hence the quantity
    $| \langle \bP_S \bz_0, \bX \bhlasso \rangle |$
is bounded from above by
\begin{align*}
    \Big|
    \langle \bP_S \bz_0, \tilde \bX \tilde \bh \rangle
    \Big|
     +
     \Big| \langle \bP_S \bz_0, ( (\bX\bQ_0)_S^\top )^\dagger (\ba_0)_S \rangle \Big|
     \left( \|\bz_0\|_2\|\bX \bhlasso \|_2 \vee  \|\tilde\bz_0\|_2\|\tilde\bX \tilde\bh \|_2 \right).
\end{align*}
Note that $\|\bP_{\bep}\bz_0\|_2 = O_\P(1)$ while 
$\bP_{\bep}^\perp\bz_0$ is independent of $\bP_S\tilde\bX \tilde \bh$
and of $\bP_S((\bX\bQ_0)_S^\top )^\dagger (\ba_0)_S$.
Thus, since the operator norm of $\bP_s$ is at most $1$,
we have established that
\begin{multline*}
    \|\bz_0\|_2^2|\Rem_3| \le O_\P(1)
    \bigg[
    \|\tilde \bX \tilde \bh \|_2 
     \\ + \|((\bX\bQ_0)_S^\top )^\dagger (\ba_0)_S\|_2 
    \left( \|\bz_0\|_2\|\bX \bhlasso \|_2 \vee  \|\tilde\bz_0\|_2\|\tilde\bX \tilde\bh \|_2 \right)
    \bigg]
    .
\end{multline*}
Since $\bSigma^{1/2}\bQ_0\bSigma^{-1/2}$ is an orthogonal projection in $\R^p$,
\begin{equation*}
    \|((\bX\bQ_0)_S^\top )^\dagger (\ba_0)_S\|_2 
    =
    \|((\bX\bQ_0)_S^\top \bSigma^{-1/2} )^\dagger \bSigma^{-1/2}(\ba_0)_S\|_2 
    \le O_\P(1/\sqrt n).
\end{equation*}
Finally, using \eqref{pf-th-1-5} for $\bz_0$ and $\tbz_0$,
as well as $\|\bX\bhlasso\|_2+\|\tilde\bX\tilde\bh\|_2
\le O_\P(1) (\sigma/\sqrt n + \lambda C_{\bbeta} \sqrt{s_0})$,
we get that $|\Rem_3| \le (O_\P(1)/\sqrt n) (\sigma/\sqrt n + C_{\bbeta} \lambda\sqrt{s_0})$.

Combining the upper bounds for the remaining terms, we obtain
\bel{sum-of-Rem_j-th-1}
&& \bigg|\sum_{j=1}^3\Rem_j\bigg| 
\le \frac{O_{\P}(1)}{n^{1/2}}
\Big(\frac{\sigma|\nu-s_0|}{n^{3/2}} + \frac{\sigma}{n^{1/2}}+C_{\bbeta}\lam\sqrt{s_0}+\frac{\sigma\sqrt{s_0}}{\sqrt{n}}\Big)
.
\eel
If $\sqrt{(1\vee s_0)/n}+C_{\bbeta}\sqrt{s_0}(\lam/\sigma)\big) \le \eta_n$, 
(\ref{pf-th-1-3}) and \eqref{sum-of-Rem_j-th-1} complete the proof 
when $\htheta_\nu$ is given by \eqref{LDPE-df}.

Finally, we prove the equivalence of \eqref{LDPE-df}, \eqref{htheta-JM} and \eqref{est-ZZ}.
It follows from (\ref{pf-th-1-5}) that 
\bes
\frac{\langle \bz_0,\bX\ba_0\rangle}{\|\bz_0\|_2^2\|\ba_0\|_2^2}-1 
= \frac{\langle \bz_0,\bX\bQ_0\ba_0\rangle}{\|\bz_0\|_2^2\|\ba_0\|_2^2}
= O_{\P}(1)\frac{C_0\|\bSigma^{1/2}\ba_0\|_2}{n^{1/2}\|\ba_0\|_2^2} = O_{\P}(n^{-1/2})
\ees
when $C_0\|\bSigma^{1/2}\ba_0\|_2/\|\ba_0\|_2^2 = O(1)$. 
Let $\htheta_{\nu}$ be as in (\ref{LDPE-df}) and $\htheta_{\nu}'$ be the $\htheta_{\nu}$ in (\ref{est-ZZ}). 
It follows from \eqref{hat-theta-equivalence} that
\bes
 \sqrt{F_\theta n}(1 - \nu/n)\big(\htheta_{\nu} - \htheta_{\nu}'\big) 
 &\le& O_\P(1) |T_n|/\sqrt n + O_\P(1) \|\bX\bhlasso\|_2/(\sigma\sqrt n) \\
 &\le& O_{\P}(\eta_n). 
\ees
A similar argument yields the equivalence between (\ref{LDPE-df}) and (\ref{htheta-JM}).
\end{proof}

\section{Subgaussian design}
\label{sec:appendix-proof-subgaussian}
We provide here the proof of Theorem~\ref{thm:1-subgaussian},
    restated here for convenience.
\subGaussianTheoremDofNecessary*
\begin{proof}
On event \eqref{selection},
the Lasso and its error vector $\bh={\lasso}-\bbeta$
have a closed form expression, namely,
$\bh = (\bX_S^\top\bX_S)^{-1}[\bX_S^\top \bep - \lambda n \sgn(\bbeta)_S]$
thanks to the KKT conditions
$\bX_S^\top(\by-\bX{\lasso}) = \lambda n \sgn(\bbeta)_S$ on $S$.
If $\nu$ is a degrees-of-freedom adjustment
and $\hat\theta_\nu$ is the variant \eqref{htheta-JM},
we have
on the selection event
\bel{subgaussian-terms}
\nonumber
&&\sqrt n (1-\nu/n)(\hat\theta_\nu - \theta)
\\ &=&
\nonumber
\sqrt n (1-\nu/n)
\Big(
    \langle \ba_0,\bh\rangle + n^{-1}\langle \ba_0,\bSigma^{-1}\bX^\top(\by-\bX{\lasso})\rangle
\Big)
\\&=& 
\label{subgaussian-first-term}
\sqrt n (1-\nu/n)
\langle \ba_0, (\bX_S^\top\bX_S)^{-1} \bX_S^\top \bep\rangle
\\&&
- \lambda \sqrt n
(s_0-\nu)
\ba_0^\top\left[
(\bX_S^\top\bX_S)^{-1}
\right]
\sgn(\bbeta)_S
\label{subgaussian-second-term}
\\&&
- \lambda \sqrt n
\ba_0^\top\left[
(n-s_0)
(\bX_S^\top\bX_S)^{-1}
-
\bSigma^{-1}
\right]
\sgn(\bbeta)_S
.
\label{subgaussian-third-term}
\eel
The first term \eqref{subgaussian-first-term}
is unbiased asymptotically normal, while
the second \eqref{subgaussian-second-term} term represents the bias 
present when $|s_0-\nu|$ is large (i.e. an incorrect adjustment).
We now show that the third term \eqref{subgaussian-third-term} is negligible.
If $\ba_0=\bSigma\,\sgn(\bbeta)/\sqrt{s_0}$ the third term
\eqref{subgaussian-third-term} 
is equal to
\begin{equation}
    \label{subgaussian-quadratic-form-in-sgn-bbeta}
    \begin{split}
&-
\lambda \sqrt{n/s_0} (n-s_0) Q(\sgn(\bbeta)_S)
\quad\text{ where }
\\
&
Q(\sgn(\bbeta)_S)
=
\sgn(\bbeta)_S^\top\left[
\bSigma_{S,S}(\bX_S^\top\bX_S)^{-1}
-
(n-s_0)^{-1}
\bI_{S,S}
\right]
\sgn(\bbeta)_S
\end{split}
\end{equation}
The above $Q(\sgn(\bbeta)_S)$ is a quadratic form in the random vector $\sgn(\bbeta)_S$
and its expectation conditionally on $\bX$ is
\begin{equation}
\label{conditiona-expected-with-resect-to-signs-of-bbeta}
\E[Q(\sgn(\bbeta)_S)|\bX] = 
\trace\left[
\bSigma_{S,S}^{1/2}(\bX_S^\top\bX_S)^{-1}\bSigma_{S,S}^{1/2}
-
(n-s_0)^{-1}
\bI_{S,S}
\right].
\end{equation}
It holds that $\trace \bI_{S,S}= s_0$ for the second term,
while the first term, we must compute
$\|(\bX_S^\top \bX_S)^{-1/2}\bSigma^{1/2}_{S,S}\|_F^2 = \|(\bX_S\bSigma_{S,S}^{-1/2})^\dagger\|_F^2$.
Let $\bA\in\R^{n\times|S|}$ be the matrix
$\bA = \bX_S\bSigma_{S,S}^{-1/2}$, which has iid entries by assumption.
Since the Penrose pseudo-inverse satisfies
$\bA^\dagger \bA = \bI_{S,S}$,
the $j$-th row $\br_j$ of $\bA^\dagger$ satisfies
$\br_j^\top \ba_j = 1$
and 
$\br_j^\top \ba_k = 0$
for $k\in S\setminus \{j\}$
where $(\ba_j)_{j\in S}$ are the columns of $\bA$.
Furthermore by definition of the pseudo-inverse,
each row $\br_j$ of $\bA^\dagger$ belongs to the linear span
of the columns $\{\ba_k,k\in S\}$ of $\bA$.
This implies by algebra that $\br_j = \|\bQ_j\ba_j\|^{-2}\bQ_j\ba_j$
where $\bQ_j\in\R^{n\times n}$
is the projection onto the 
orthogonal complement of the span of $\{\ba_k, k\in S\setminus \{j\}\}$,
hence 
$$\trace[\bSigma_{S,S}(\bX_S^\top\bX_S)^{-1}] = \sum_{j\in S} \|\bQ_j\ba_j\|^{-2}.$$
Since $\bA$ has iid entries, the projector $\bQ_j$ is independent of $\ba_j$
and since $\ba_j$ has $n$ iid subgaussian entries,
$|\|\bQ_j\ba_j\| - \sqrt{n-s_0+1}| \le Kt$ with probability at least $1-2e^{-C t^2}$
where $K$ is the subgaussian norm of the entries of $\bA$,
cf. \cite[Theorem 6.3.2]{vershynin2018high}.
Here, we assume that $K$ is constant independent of $n,s_0,p$.
Set $t=C \log s_0$
for sufficiently large $C$, by the union bound
we have $\max_{j\in S}|\|\bQ_j\bx_j\|-\sqrt{n-s_0}| \le O_p(\sqrt{\log s_0})$
which implies
$\max_{j\in S}|\|\bQ_j\bx_j\|^{-2}-(n-s_0)^{-1}| \le O_p((n-s_0)^{-3/2}\sqrt{\log s_0})$.
Hence
\begin{equation}
\label{subgaussian-estimation-trace}
    \trace[\bSigma_{S,S}(\bX_S^\top\bX_S)^{-1}]
    = s_0(n-s_0)^{-1}\left(1 + O_P(\sqrt{\log(s_0)/(n-s_0)})\right),
\end{equation}
and the main terms of order $s_0(n-s_0)^{-1}$ cancel each other (this is the key!):
\bes
\trace[
\bSigma_{S,S}(\bX_S^\top\bX_S)^{-1}
-
(n-s_0)^{-1}
\bI_{S,S}]
&=&  O_P(s_0n^{-3/2}\sqrt{\log(s_0)}).
\ees
Hence the conditional expectation \eqref{conditiona-expected-with-resect-to-signs-of-bbeta}
is equal to $O_P(s_0n^{-3/2}\sqrt{\log(s_0)})$
and $\lambda\sqrt{n/s_0}(n-s_0)\E[Q(\sgn(\bbeta)_S)|\bX]$ 
is $O_P(\lambda\sqrt{s_0\log(s_0)})$.

If $\br$ has iid Rademacher  entries
and $\bM$ is a symmetric matrix then
$\Var[\br^\top\bM\br] = 2\|\bM-\diag(\bM)\|_F^2$
(cf., e.g., \cite[6.2.2]{Petersen2008matrix_cookbook}),
while if $\bM$ is not symmetric
$\Var[\br^\top\bM\br]
= \Var[\br^\top((\bM+\bM^\top)/2)\br]
= 2\|(\bM + \bM^\top)/2-\diag(\bM)\|_F^2
\le 2\|\bM - \diag(\bM)\|_F^2$. 
Let $\bB_S =\bSigma_{S,S}(\bX_S^\top\bX_S)^{-1}$.  
Since $\sgn(\bbeta)_S$ has iid Rademacher entries,
conditionally on $\bX$
\bes
&& \Var\big(Q(\sgn(\bbeta)_S) |\bX\big) 
\\ &\le& 2\|\bB_S - \diag(\bB_S)\|_F^2
\\ &\le& 2\|\bB_S - {\bI_S/n}\|_F^2
\\ &=& 2\big\|\bSigma_{S,S}^{1/2}
\big(\bI_S - \bSigma_{S,S}^{-1/2}(\bX_S^\top\bX_S/n)\bSigma_{S,S}^{-1/2}\big)
\bSigma_{S,S}^{1/2} (\bX_S^\top\bX_S)^{-1}\big\|_F^2
\\ & \le & 2\big\|\bI_S - \bSigma_{S,S}^{-1/2}(\bX_S^\top\bX_S/n)\bSigma_{S,S}^{-1/2}\big\|_F^2
\big\|\bSigma_{S,S}^{1/2} (\bX_S^\top\bX_S)^{-1}\bSigma_{S,S}^{1/2}\big\|_{op}^2
\phi_{\rm cond}(\bSigma_{S,S})
\\ & = & O_{\P}(s_0^2/n^3)\phi_{\rm cond}(\bSigma_{S,S})
\ees
thanks to
$\|\bI_S - \bSigma_{S,S}^{-1/2}(\bX_S^\top\bX_S/n)\bSigma_{S,S}^{-1/2}\|_{op}^2 = O_\P(s_0/n)$
\cite[Theorem 4.6.1]{vershynin2018high}
because the matrix $\bX_S\bSigma_{S,S}^{-1/2}$ has iid entries.
In summary, the conditional variance of $Q(\sgn(\bbeta)_S)$
given $\bX$
is at most 
$\phi_{\rm cond}(\bSigma_{S,S}) {O_\P(s_0^2/n^3)}
$
and the standard deviation of $\lambda\sqrt{n/s_0}(n-s_0)Q(\sgn(\bbeta)_S)$
is at most
$\phi_{\rm cond}(\bSigma_{S,S})^{1/2} {O_\P(\lambda \sqrt{s_0})}$.
By Chebyshev's inequality, the first claim is proved.

Finally,
$\lambda\sqrt n \ba_0^\top(\bX_S^\top\bX_S)^{-1}\sgn(\bbeta)_S
= \lambda\sqrt{n/s_0}\left(
    s_0(n-s_0)^{-1}+Q(\bbeta)
\right)$.
Due to the bound on the conditional expectation and variance of $Q(\bbeta)$,
this quantity is of order
$$\lambda\sqrt{n/s_0}\left(
    s_0(n-s_0)^{-1} 
    + {O_\P\Big(s_0n^{-3/2}\big(\sqrt{\log(s_0)} + \phi_{\rm cond}(\bSigma_{S,S})^{1/2}\big)\Big)}
    \right)
$$
which is equal to $\lambda\sqrt{s_0/n}(1-o_P(1))$.
\end{proof}

\end{document}